%% file: Perturbation_Analysis_of_LICQ.tex
\documentclass[11pt]{article}
\usepackage[top=1in, bottom=1in, left=1in, right=1in]{geometry}

\usepackage[linktocpage,colorlinks,linkcolor=blue,anchorcolor=blue,citecolor=blue,urlcolor=blue,pagebackref]{hyperref}
\usepackage{euscript,mdframed}
\usepackage{microtype,todonotes,relsize}
\usepackage{amsmath,amsthm}
\usepackage{algpseudocode}
\usepackage{bbding}
\usepackage{pifont}
\usepackage{diagbox}

\newtheorem{statement}{Statement}

\usepackage{accents}
\input{_macros}
\allowdisplaybreaks
\usepackage[authoryear,round]{natbib}
\renewcommand*{\backref}[1]{\ifx#1\relax \else Page #1 \fi}
\renewcommand*{\backrefalt}[4]{%
  \ifcase #1 \footnotesize{(Not cited.)}%
  \or        \footnotesize{(Cited on page~#2.)}%
  \else      \footnotesize{(Cited on pages~#2.)}%
  \fi
}

\newcommand*{\colorboxed}{}
\def\colorboxed#1#{%
  \colorboxedAux{#1}%
}
\newcommand*{\colorboxedAux}[3]{%
  \begingroup
    \colorlet{cb@saved}{.}%
    \color#1{#2}%
    \boxed{%
      \color{cb@saved}%
      #3%
    }%
  \endgroup
}
\numberwithin{equation}{section}
\usepackage{xcolor} 
\usepackage{marginnote}

\newcommand{\todol}[2][]{{%
 \let\marginpar\marginnote
 \reversemarginpar
 \renewcommand{\baselinestretch}{0.8}%
 \todo[color=yellow]{#2}}}

\title{On the Nature of Regularity Assumptions in Bilevel Optimization with Constrained Lower-level Problem}

\author{Xiaotian Jiang, Chang He, Mingyi Hong, Shuzhong Zhang\footnote{X.~Jiang and S.~Zhang are with Department of Industrial and System Engineering, University of Minnesota, Minneapolis, MN, USA. E-mails: \texttt{jian0851@umn.edu}, \texttt{zhangs@umn.edu}. 
C.~He is with Research Institute for Interdisciplinary Sciences, Shanghai University of Finance and Economics, Shanghai, China. E-mail: \texttt{ischanghe@gmail.com}. M.~Hong is with Department of Electrical and Computer Engineering, University of Minnesota, Minneapolis, MN, USA. E-mail: \texttt{mhong@umn.edu}.}}

\date{\today}

\begin{document}
\maketitle

\begin{abstract}
In this paper, we study the regularity assumptions commonly adopted in bilevel optimization with constrained lower-level problems, including the linear independence constraint qualification, the strict complementary slackness condition, and the second-order sufficient condition. These conditions are typically required to hold for the lower-level problem at every upper-level variable $x$. We first show that the requirement that these conditions hold at every
upper-level variable $x$ is strong, in the sense that it is non-prevalent: there exist problems for which no sufficiently small perturbation of the lower-level objective and constraints can make the conditions hold at every $x$. To establish the result, we prove rigidity theorems showing that certain structural quantities of the lower-level problem must remain invariant across all $x$ whenever these conditions hold everywhere. We then construct explicit counterexamples in which these invariants differ between two values of $x$. In contrast, we show that the weaker requirement, that these conditions hold at almost every $x$, is a weak assumption, in the sense that it is prevalent: with probability one over a random perturbation of the lower-level objective and constraints, each condition holds at almost every $x$. We further analyze the gap between the two requirements. Although the ``every $x$'' and ``almost every $x$'' versions differ only on a measure-zero set, we show that this difference introduces fundamental difficulties in both theory and computation for bilevel optimization.
\end{abstract}

\section{Introduction}\label{sec:intro}

\noindent BiLevel Optimization (BLO) has emerged as a fundamental modeling paradigm for hierarchical decision-making problems across machine learning, operations research, and game theory. It has been applied widely in modern learning systems, including hyperparameter optimization \citep{maclaurin2015gradient,franceschi2018bilevel}, meta-learning \citep{finn2017model,franceschi2018bilevel}, reinforcement learning \citep{zeng2024demonstrations,hong2023ttsa}, and more recently machine unlearning \citep{reisizadeh2025blur} and large-scale model alignment \citep{li2024getting,li2025joint}. In such applications, BLO provides a principled framework for representing two-stage interactions in which a decision at the upper level depends implicitly on the solution to a lower-level optimization problem. Mathematically, a BLO problem consists of an upper-level objective that must be optimized over a set of solutions determined by a lower-level problem, often described as
\begin{align}\label{eq:blo_intro_basic}
\min_{x\in\mathcal{X}\subset\mathbb{R}^n} \ f(x,y^\ast(x))
\qquad\operatorname{s.t.}\qquad
y^\ast(x)\in S(x)\coloneq \argmin_{y\in\mathcal{Y}(x)\subset\mathbb{R}^m} g(x,y),
\end{align}
where $f$ is the upper-level objective and $g$ defines the lower-level problem, and $\mathcal{Y}(x)\subset\mathbb{R}^m$ denotes the lower-level feasible set.
In many applications, the lower-level feasible set $\mathcal{Y}(x)$ depends explicitly on the upper-level variable $x$. Constraints of this form are referred to as coupled constraints. In contrast, if the feasible set does not depend on $x$, the lower-level constraints are called uncoupled. In this work, we focus on the case where 
$$
\mathcal{Y}(x)\coloneq \{y\in\mathbb{R}^m : h_i(x,y)\le 0,\ i=1,\ldots,k\}
$$
is specified by a family of inequality constraint functions that may depend on $x$.

Much recent research has focused on BLO problems with functional constrained lower-level structures \citep{lu2024firstorderpenaltymethodsbilevel,kornowski2024firstordermethodslinearlyconstrained,jiang2024barrier,shen2025single}. The theoretical analysis of these models commonly requires regularity assumptions on the lower-level problem such as the linear independence constraint qualification~\citep{pmlr-v202-khanduri23a,xu2023efficient,kwon2023penalty,jiang2024primal}, strict complementary slackness condition~\citep{fliege2021gauss,giovannelli2021inexact,nolasco2025sensitivity}, and second-order sufficient conditions~\citep{hatz2013regularizing,giovannelli2021inexact,nolasco2025sensitivity} hold for the lower-level problem at every upper-level variable $x$. We recall how these conditions are defined at a given upper-level variable $x$. 

\begin{definition}\label{def:licq}
Linear Independence Constraint Qualification (LICQ) is said to hold at a feasible point 
$(x,y)$ of the lower-level problem at upper-level variable $x$ if the active constraint gradients with 
respect to the lower-level variable, $\{\nabla_y h_i(x,y): i\in \mathcal{J}(x,y)\}$, are linearly 
independent, where $\mathcal{J}(x,y)=\{i:h_i(x,y)=0\}$.
\end{definition}

\begin{definition}\label{def:scsc}
strict complementary slackness condition (SCSC) is said to hold at a KKT point $(y^\ast,\lambda^\ast)$ 
of the lower-level problem at upper-level variable $x$ if, for every constraint index $i$, either $h_i(x,y^\ast)<0$ and 
$\lambda_i^\ast=0$, or $h_i(x,y^\ast)=0$ and $\lambda_i^\ast>0$.
\end{definition}

\begin{definition}\label{def:sosc}
Second-Order Sufficient Conditions (SOSC) are said to hold at a KKT point $(y^\ast,\lambda^\ast)$ 
of the lower-level problem at upper-level variable $x$ if
\[
d^\top \nabla_{yy}^2 \mathcal{L}(x,y^\ast,\lambda^\ast)\, d > 0
\quad \text{for all nonzero } d \in \mathcal{C}(x, y^\ast),
\]
where $\mathcal{L}(x,y,\lambda) = g(x,y) + \sum_{i=1}^k \lambda_i h_i(x,y)$, and the critical cone $\mathcal{C}(x, y^\ast)$ is defined as
\[
\mathcal{C}(x, y^\ast)
\coloneq
\Big\{ d \in \mathbb{R}^m :
\nabla_y h_i(x,y^\ast)^\top d = 0 \ \text{if } \lambda_i^\ast > 0;
\quad
\nabla_y h_i(x,y^\ast)^\top d \le 0 \ \text{if } \lambda_i^\ast = 0,
\ \forall i\in \mathcal{J}(x,y^\ast)
\Big\}.
\]
\end{definition}

\begin{remark}[Uniform SOSC]\label{remark:uniform_sosc}

Beyond the standard pointwise SOSC, existing works \citep{giovannelli2021inexact, nolasco2025sensitivity} often assume a strengthened uniform version: there exists $\sigma > 0$, independent of $x$, such that $d^\top \nabla_{yy}^2 \mathcal{L}(x,y^\ast,\lambda^\ast)\, d \geq \sigma \|d\|^2$ for all $d \in \mathcal{C}(x, y^\ast)$.
In this paper, the prevalence results in Section~\ref{section:prevalence} concern the pointwise version (Definition~\ref{def:sosc}), while the rigidity and gap analysis in Sections~\ref{sec:nonprevalence} and \ref{section:obstructionforbilevel} require the uniform version. The distinction between the two and its implications for bilevel optimization are discussed in Section~\ref{subsec:discussion}.
\end{remark}

In the bilevel setting, these conditions are imposed on the lower-level problem viewed as a parametric optimization problem in $y$ with upper-level variable $x$. Specifically, when we say ``LICQ holds for the lower-level problem at upper-level variable $x$,'' we mean that LICQ holds at every feasible point $y \in \mathcal{Y}(x)$. Similarly, ``SCSC holds for the lower-level problem at upper-level variable $x$'' means that SCSC holds at every KKT point of the lower-level problem at $x$, and ``SOSC holds for the lower-level problem at upper-level variable $x$'' means that the uniform version (Remark~\ref{remark:uniform_sosc}) holds at every local minimizer of the lower-level problem at $x$.

These three conditions are standard in the optimization literature and are commonly assumed in works on bilevel optimization with constrained lower-level problems. In the bilevel setting, the specific combination of regularity conditions required depends on the structure of the lower-level problem. The following three combinations are commonly adopted in the literature:
\begin{itemize}[itemsep=1.5pt, topsep=1.5pt]\label{three_combinations}
    \item Strongly convex lower-level with LICQ~\citep{pmlr-v202-khanduri23a,xu2023efficient,kornowski2024firstordermethodslinearlyconstrained,jiang2024primal,shen2025single};
    \item Strongly convex lower-level with LICQ and SCSC~\citep{kwon2023penalty,jiang2025efficient};
    \item Non-strongly-convex lower-level with LICQ, SCSC, and uniform SOSC~\citep{giovannelli2021inexact,nolasco2025sensitivity}.
\end{itemize}
We formalize these requirements as the following statement.

\begin{statement}[``Every $x$'' regularity]\label{stmt:every_x}
For every $x \in \mathcal{X}$, LICQ (Definition~\ref{def:licq}), SCSC (Definition~\ref{def:scsc}), and uniform SOSC (Remark~\ref{remark:uniform_sosc}) hold for the lower-level problem.
\end{statement}

In this work, we aim to justify the validity of these conditions within the context of BLO, testing the three standard combinations listed above. A natural question that arises at this point is:

\begin{center}
\fbox{%
\parbox{0.92\linewidth}{%
\centering
\textit{\textbf{Question~1:} How strong is Statement~\ref{stmt:every_x}?}
}%
}
\end{center}
To address this question, we need a notion for quantifying the strength of an assumption. 
In this work, we adopt the perspective of prevalence to evaluate this strength.

\begin{definition}[Prevalence (informal)]\label{def:prevalent}
A property is said to be prevalent within a given function space if there exists a finite-dimensional family of perturbations such that, for any element in this space, a random perturbation from this family makes the perturbed element satisfy the property with probability one.
\end{definition}

The formal definition is given in Definition~\ref{def:prevalence-formal}. In this work, the function space is taken to be the space of smooth functions. In other words, if a property is prevalent, then after any sufficiently small perturbation, 
almost every perturbed problem satisfies the property. Such a property should therefore be viewed 
as a weak assumption. To contrast with this notion, we now introduce non-prevalent property.

\begin{definition}[Non-prevalence (informal)]\label{def:non-prevalent}
A property is said to be non-prevalent within a given function space if there exists an element in this space such that, for any finite-dimensional family of perturbations, the set of perturbations under which the perturbed element does not satisfy this property has positive measure.
\end{definition}

Formally, a property is non-prevalent if it is not prevalent in the sense of Definition~\ref{def:prevalence-formal}. In practice, we focus on a stronger form: there exist an element $v$ and a constant $\epsilon > 0$ such that every perturbation with norm less than $\epsilon$ yields a perturbed element $v + p$ that does not satisfy the property (see Section~\ref{sec:nonprevalence}). Such a property should therefore be regarded as a strong assumption. Based on the above definition, Question~1 can be restated as follows.

\begin{center}
\fbox{%
\parbox{0.92\linewidth}{%
\centering
\textit{\textbf{Question~1.1:} Is Statement~\ref{stmt:every_x} prevalent?}
}%
}
\end{center}
Regrettably, as a primary contribution of this work, we provide a negative answer to Question 1.1. The non-prevalence of ``every-$x$‘’ type regularity assumptions has also been observed in other bilevel settings~\citep{jiang2025correspondence}. 
Nonetheless, ``almost-every-$x$'' type conditions have already been adopted in the literature as a workable basis for algorithm design~\citep{pmlr-v202-khanduri23a, jiang2025correspondence}, which motivates us to formalize the following version of our regularity requirement:

\begin{statement}[``Almost every $x$'' regularity]\label{stmt:almost_every_x}
For almost every $x \in \mathcal{X}$, LICQ (Definition~\ref{def:licq}), SCSC (Definition~\ref{def:scsc}), and SOSC (Definition~\ref{def:sosc}) hold for the lower-level problem.
\end{statement}

This motivates a two-fold exploration of Question 1.1, again 
focusing on the three standard combinations listed above:
\begin{center}
\fbox{%
\parbox{0.92\linewidth}{%
\centering
\textit{\textbf{Question~2:} Is Statement~\ref{stmt:almost_every_x} prevalent? Why is Statement~\ref{stmt:every_x} non-prevalent?}
}%
}
\end{center}
Our analysis reveals a clear separation: Statement~\ref{stmt:almost_every_x} is prevalent~(Section~\ref{section:prevalence}), while Statement~\ref{stmt:every_x} is not~(Section~\ref{sec:nonprevalence}). We establish the prevalence of the former by utilizing Sard’s Theorem and transversality theory. To understand why the ``at every $x$'' requirement fails to be prevalent, we identify certain persistent structural patterns or invariants that cannot be eliminated by any $C^2$ perturbation of the lower-level objective and constraints whose norm is sufficiently small. These inherent obstructions offer a critical perspective for understanding the fundamental non-prevalence of such strong assumptions.

Having established the prevalence of Statement~\ref{stmt:almost_every_x} and identified the structural obstructions that prevent Statement~\ref{stmt:every_x} from being prevalent, it is natural to ask how different Statement~\ref{stmt:almost_every_x} really is from Statement~\ref{stmt:every_x}. Although these two statements differ only on a measure-zero set of upper-level variables, it is well known that in optimization, properties that differ only on a measure-zero set can still belong to fundamentally different problem classes. For example, a smooth function and a nonsmooth function may differ only on a measure-zero set of nondifferentiable points, yet they require entirely different algorithmic treatments. A similar phenomenon may arise here: the difference between Statement~\ref{stmt:every_x} and Statement~\ref{stmt:almost_every_x}, despite occurring only on a measure-zero set of upper-level variables, can lead to fundamentally different outcomes in a BLO problem with a constrained lower-level problem. This motivates the following question:
\begin{center}
\fbox{%
\parbox{0.92\linewidth}{%
\centering
\textit{\textbf{Question~3:} What is the gap between Statement~\ref{stmt:every_x} and Statement~\ref{stmt:almost_every_x}?}
}%
}
\end{center}
We will show that, despite differing only on a measure-zero set of 
upper-level variables, the two requirements lead to fundamentally different consequences: failure on this measure-zero set can cause the Lagrange multiplier to lose uniqueness and continuity, the hyperfunction to lose differentiability, or a branch of local minimizers to terminate abruptly. In all cases, the sensitivity formulas used for hypergradient computation become ill-conditioned in an entire neighborhood of each failure point (Section~\ref{section:obstructionforbilevel}).

\subsection{Related work}

\noindent\textbf{Regularity Assumptions in Bilevel Optimization with Constrained Lower-Level Problems:} A number of recent works on bilevel optimization with constrained lower-level problems rely on regularity assumptions, including LICQ, SCSC, and SOSC. \citet{giovannelli2021inexact} introduced a hyperfunction-based method to solve~\eqref{eq:blo_intro_basic} with nonconvex lower-level objective $g$ and nonconvex functional lower-level constraints $h$. To ensure the differentiability of hyperfunction, they assume $y^\ast(x)$ is unique for any $x$, and LICQ, SCSC, and SOSC hold at the lower-level solution $y^\ast(x)$ for every upper-level variable $x$. \citet{xu2023efficient} study bilevel problems~\eqref{eq:blo_intro_basic} with strongly convex lower-level objective $g$ and convex functional lower-level constraints $h$. They analyze the hypergradient at differentiable points and study the Clarke subdifferential of the hyperfunction at non-differentiable points. In their analysis, they assume that LICQ holds at the lower-level solution $y^\ast(x)$ for every upper-level variable $x$. \citet{kwon2023penalty} study problem \eqref{eq:blo_intro_basic} with nonconvex lower-level objective $g$ and convex lower-level constraint functions $h$. They introduce a penalized hyper-objective and analyze its relationship with the original bilevel problem~\eqref{eq:blo_intro_basic}. To ensure sufficient regularity of the lower-level solution set of \eqref{eq:blo_intro_basic}, they assume that for every upper-level variable $x$, there exists at least one lower-level solution that satisfies LICQ and SCSC. \citet{pmlr-v202-khanduri23a} consider \eqref{eq:blo_intro_basic} with strongly convex lower-level objective $g$ and uncoupled linear constraints $h$ of the form $\{y:Ay\leq b\}$. Under some technical assumptions, they show that adding a single random linear perturbation $q^\top y$ to the lower-level objective $g$ ensures that the resulting hyperfunction is differentiable at all iterates generated by a gradient-based algorithm with probability one. In their analysis, they also assume that LICQ holds at the lower-level solution $y^\ast(x)$ for every upper-level variable $x$. \citet{kornowski2024firstordermethodslinearlyconstrained} consider \eqref{eq:blo_intro_basic} with strongly convex lower-level objective $g$ and two kinds of lower-level constraints $h$: coupled linear equality constraints $\{y:Ax+By=b\}$ and coupled linear inequality constraints $\{y:Ax+By\leq b\}$. They require that LICQ holds for every feasible point $y$ and for every upper-level variable $x$ of the lower-level problem. This LICQ assumption ensures that the hypergradient is computable, and the Lagrange multipliers remain Lipschitz continuous with respect to the upper-level variable $x$ under the inequality constrained case. \citet{jiang2024primal} consider a Lagrangian-based penalty reformulation for bilevel problems~\eqref{eq:blo_intro_basic} with strongly convex lower-level objective $g$ and convex functional lower-level constraints $h$. To guarantee that the Lagrange multipliers associated with the lower-level problem remain well-defined and continuous with respect to the upper-level variable $x$, they assume that LICQ holds for every upper-level variable $x$ and for all feasible lower-level solutions $y$. \citet{nolasco2025sensitivity} study problem \eqref{eq:blo_intro_basic} with a convex lower-level objective $g$ and convex lower-level constraint functions $h$. In order to ensure that the lower-level solution $y^\ast(x)$ and its associated multipliers $\lambda^\ast(x)$ are differentiable with respect to the upper-level variable $x$, they assume that LICQ, SCSC, and SOSC hold at $y^\ast(x)$ for every $x$. \citet{shen2025single} consider problem \eqref{eq:blo_intro_basic} with a strongly convex lower-level objective $g$ and linear lower-level constraint functions of the form $\{y:Ay+Bx\leq b\}$. To ensure the existence and uniqueness of the lower-level Lagrange multipliers, they assume that LICQ holds at every feasible point $y$ for every upper-level variable $x$.\\

\noindent\textbf{Genericity of Regularity Assumptions in Single Level Constrained Optimization:} 

The genericity of regularity conditions in nonlinear optimization has been studied
under different notions of genericity. 
One notion of genericity is measure-theoretic and is formulated through
finite-dimensional perturbations. In the single-level setting,
\citet{spingarnrockafellar1979} considered linear perturbations of the objective
and constant perturbations of the constraints, and showed that,
for almost every perturbation parameter in the Lebesgue sense, LICQ, SCSC, and
SOSC hold. This notion of genericity is the same as the prevalence notion used in this paper.

A different notion of genericity is formulated in smooth function 
spaces: a property is called generic if the set of smooth functions 
satisfying it is open and dense in the corresponding strong $C^\infty$ 
topology. \citet{jongen2000nonlinear} developed a transversality-based theory establishing that, for an open and dense set of smooth problem data, LICQ holds at every feasible point and every critical point is nondegenerate. In particular, every local minimizer of such a problem satisfies LICQ, SCSC, and SOSC. For parametric programs $\min_y g(x,y)$ s.t.\ $h(x,y) \leq 0$ with 
parameter $x \in \mathbb{R}^n$, building on the transversality-based 
viewpoint of \cite{jongen2000nonlinear}, \citet{still2019genericity} 
shows that, for generic smooth problem data, the singular set where any of LICQ, SCSC, or SOSC fails is 
confined to strata of codimension at least $m+1$ in $(x,y)$-space, 
where $m$ is the dimension of $y$.

However, for parameter-dependent problems, a measure-theoretic genericity theory based on concrete finite-dimensional perturbations remains missing. Moreover, \cite{jongen2000nonlinear} and \cite{still2019genericity} show that, for generic smooth problem data,
degeneracies, if present, occur only on small subsets of the parameter-state
space. They do not show that degeneracies are unavoidable for a robust class of problems.
In other words, they do not construct robust examples for which no sufficiently small perturbation
can make the every-\(x\) regularity property hold.

Beyond the classical NLP setting, related genericity and stability results 
have been established for several structured problem classes. These include 
linear conic and semidefinite programming~\citep{alizadeh1997complementarity,
dur2017genericity}; semi-algebraic, definable, and tame optimization, with 
emphasis on constraint qualifications, active manifolds, and continuity of 
set-valued maps~\citep{daniilidis2011continuity,drusvyatskiy2016generic,
lee2017generic,bolte2018qualification}; and bilevel programs, where 
genericity of partial calmness has been established via singularity-theoretic 
classifications of generalized critical points~\citep{ke2022generic}.

\subsection{Contributions}

In this work, we study the strength and limitations of the regularity assumptions. Our main contributions can be summarized as follows:

\begin{itemize}[itemsep=1.5pt, topsep=1.5pt]

\item We prove that Statement~\ref{stmt:every_x} is non-prevalent in the sense of Definition~\ref{def:non-prevalent}, and hence is a strong assumption. The argument proceeds in two stages. First, we establish rigidity theorems showing that if a regularity condition holds at every $x$, then certain structural quantities must remain invariant across all $x \in \mathcal{X}$: LICQ implies the stratification of the feasible set $\mathcal{Y}(x)$ is preserved (Theorem~\ref{thm:main_licq}); LICQ and SCSC together imply that no continuous KKT trajectory can migrate across different strata of $\mathcal{Y}(x)$ (Theorem~\ref{thm:main_scsc}); and LICQ, SCSC, and uniform SOSC together imply the number of local minimizers on each stratum is constant (Theorem~\ref{thm:main_sosc}). Second, we construct explicit counterexamples in which these structural invariants differ between two different $x$, and prove perturbation stability results (Theorems~\ref{thm:stab_strat_perturb},~\ref{thm:stab_scsc},~\ref{thm:stab_strat_lmins_perturb_noM}) guaranteeing that these differences persist under any sufficiently small perturbation. The rigidity theorems then certify that the corresponding regularity condition must fail at some $x$ for any perturbed problem (Counterexamples~\ref{counterexample:licq2},~\ref{counterexample:licq3},~\ref{counterexample:scsc1},~\ref{counterexample:sosc}). Notably, Theorem~\ref{thm:main_sosc} is different in nature from classical Robinson-type stability results~\cite{robinson1980strongly}. Robinson-type results are local: they provide a local continuation of a given solution or KKT point in a neighborhood of itself. Theorem~\ref{thm:main_sosc} is global: it asserts that the stratum-wise count of local minimizers is constant across the entire upper-level variable space $\mathcal{X}$, which in particular rules out the emergence of new local minimizers at any point of $\mathcal{X}$. Establishing this global statement requires, beyond local continuation, a closedness property of the graph of local minimizers.

\item We show that Statement~\ref{stmt:almost_every_x} is prevalent in the sense of Definition~\ref{def:prevalent}, and hence is a weak assumption. Specifically, after applying an arbitrarily small constant perturbation to the constraint functions and a linear perturbation to the objective, each of these three conditions holds at almost every $x$ with probability one (Theorems~\ref{thm:LICQ},~\ref{thm:SCSC},~\ref{thm:SOSC}). This fills a gap in the measure-theoretic genericity theory for parameter-dependent problems based on concrete finite-dimensional perturbations. The single-level case follows immediately by taking the upper-level variable space to be a singleton, recovering the genericity results presented in \citet{spingarnrockafellar1979}.

\item We analyze the gap between Statement~\ref{stmt:every_x} and Statement~\ref{stmt:almost_every_x} by examining the structural consequences of regularity failure on a measure-zero set. We show that LICQ failure causes the Lagrange multiplier to lose uniqueness and continuity (Section~\ref{subsec:role_licq}); SCSC failure under strongly convex lower-level causes the hyperfunction to lose differentiability, turning the bilevel problem from smooth into nonsmooth (Section~\ref{subsec:role_scsc}); and SCSC or uniform SOSC failure under non-strongly-convex lower-level can cause a branch of local minimizers to terminate abruptly, making the solution mapping discontinuous (Section~\ref{subsec:role_sosc}). In all three cases, the sensitivity formulas~\eqref{eq:sensitivity_formula}\eqref{eq:complementarity_sensitivity} used for hypergradient computation become ill-conditioned not only at the failure points but throughout an entire neighborhood surrounding them;

\item We show that LICQ, SCSC, and uniform SOSC together imply uniform local quadratic growth over the upper-level variable space (Theorem~\ref{prop:uniform_quadratic_growth}). This uniform growth condition is the structural foundation underlying the global smooth continuation of local minimizer branches (Theorem~\ref{thm:global_continuation}). We further clarify the distinct roles played by each condition. Uniform SOSC controls curvature within the critical cone. SCSC provides a uniform lower bound on the multipliers at active constraints, ensuring quadratic growth in directions outside the critical cone.

\end{itemize}

\section{Non-Prevalence of LICQ, SCSC, and SOSC: ``Every-$x$'' Case}\label{sec:nonprevalence}

In this section, we address Question~1.1 from the introduction: whether Statement~\ref{stmt:every_x}, which requires LICQ, SCSC, and uniform SOSC to hold at every $x$ under the three settings listed in Section~\ref{sec:intro}, is prevalent. We show that the answer is negative. For each of LICQ, SCSC, and uniform SOSC, we construct explicit counterexamples demonstrating that no sufficiently small $C^2$ perturbation of the lower-level objective and constraints can make the corresponding condition hold at all upper-level variables $x$ simultaneously.

Our approach has two stages. In the first stage, we prove rigidity theorems: if a regularity condition holds at every $x$, then certain structural features of the lower-level problem must remain invariant across all $x \in \mathcal{X}$. The three rigidity results below correspond to the three settings commonly adopted in the bilevel literature, respectively: (i) strongly convex lower-level with LICQ, (ii) strongly convex lower-level with LICQ and SCSC, and (iii) non-strongly-convex lower-level with LICQ, SCSC, and uniform SOSC (see Remark~\ref{remark:uniform_sosc}):
\begin{itemize}
    \item \textbf{LICQ for every $x$} implies that the feasible set $\mathcal Y(x)$, viewed as a manifold with generalized boundary (Figure~\ref{fig:MGB}), has a natural stratification (indexed by active constraint patterns) that is preserved across all $x \in \mathcal X$ up to a stratification-preserving diffeomorphism (Theorem~\ref{thm:main_licq}); this result is independent of the convexity of the lower-level objective.

    \item \textbf{LICQ and SCSC for every $x$} further imply that the active constraint index set along any continuous KKT trajectory is constant (Theorem~\ref{thm:main_scsc}); this result does not require strong convexity, although strong convexity automatically provides such a trajectory via the unique minimizer. Here, a KKT trajectory refers to a continuous map $x \mapsto y^*(x)$ such that $y^*(x)$ is a KKT point of the lower-level problem at each $x$.

    \item \textbf{LICQ, SCSC, and uniform SOSC for every $x$} further imply that the number of local minimizers on each stratum of $\mathcal Y(x)$ stays constant across all $x \in \mathcal X$ (Theorem~\ref{thm:main_sosc}).
\end{itemize}

In the second stage, we use these rigidity theorems to construct concrete counterexamples. We exhibit specific problems in which the relevant structural quantity (the stratification type, the active set along a KKT trajectory, or the stratum-wise count of local minimizers) differs between two upper-level variables $x_1$ and $x_2$. To ensure that this mismatch is robust, we establish perturbation stability results (Theorems~\ref{thm:stab_strat_perturb}, \ref{thm:stab_scsc}, \ref{thm:stab_strat_lmins_perturb_noM}) showing that each structural quantity is preserved at any fixed $x$ under sufficiently small $C^2$ perturbations of the lower-level data. Combined with the rigidity theorems, these stability results imply that the corresponding regularity condition must fail at some $x$ for every problem in a $C^2$-neighborhood of our counterexamples.

The rigidity theorems also provide necessary conditions for the ``every-$x$'' requirement that serve as practical screening criteria. To verify whether the ``every-$x$'' requirement is achievable for a given problem, one can first examine the original (unperturbed) problem: if the relevant structural quantity, such as the stratification type of $\mathcal{Y}(x)$ or the number of local minimizers on each stratum, differs between two different points $x_1$ and $x_2$, then the corresponding regularity condition cannot hold at every $x$. Moreover, the stability results guarantee that this conclusion is robust: since these quantities do not change under small perturbations at each fixed $x$, the same obstruction persists for all nearby problems.

\subsection{Perturbation Setup and Technical Preliminaries}
\label{subsection:technical_preliminaries}

Before presenting the main results, we clarify the perturbation setup,
the compact-local notion of smallness, and several standing definitions and
assumptions.

\paragraph{Perturbation setup.}
For a
lower-level problem
\begin{align*}
    \min_y \ g(x,y) \quad \operatorname{s.t.} \quad h_i(x,y)\le 0,\quad i=1,\ldots,k,
\end{align*}
we consider perturbed data of the form
\begin{align}\label{eq:pertubedproblem}
    \widetilde g(x,y)\coloneq g(x,y)+s(x,y),\qquad
    \widetilde h_i(x,y)\coloneq h_i(x,y)+r_i(x,y),\quad i=1,\ldots,k,
\end{align}
where \(s\) and \(r_i\)'s are smooth perturbations. In the LICQ-only setting,
only the constraints are relevant, and one may take \(s\equiv 0\). We next describe the compact-local \(C^2\)-norm used throughout this section to measure perturbation size.
\begin{definition}[Compact-local \(C^2\)-norm]\label{def:localc2norm}
Let \(K\subset \mathbb R^m\) be compact, and let
\(\phi:\mathbb R^m\to\mathbb R\) be twice continuously differentiable on a
neighborhood of \(K\). We define
\[
    \|\phi\|_{C^2(K)}
    \coloneq 
    \max\Big\{
        \sup_{y\in K}|\phi(y)|,\,
        \sup_{y\in K}\|\nabla_y \phi(y)\|,\,
        \sup_{y\in K}\|\nabla^2_{yy}\phi(y)\|_{\rm op}
    \Big\},
\]
where \(\|\cdot\|_{\rm op}\) denotes the operator norm. For a function
\(\phi(x,y)\), we use \(\|\phi(x,\cdot)\|_{C^2(K)}\) to denote the above norm
with respect to the lower-level variable \(y\), with \(x\) fixed.
\end{definition}

Throughout this section, the size of a perturbation is understood in this
compact-local sense. Namely, after a compact set \(K\subset\mathbb R^m\) is
chosen, a perturbation is said to be small if its \(C^2(K)\)-norm is small. For
the perturbed lower-level data
\[
    \widetilde g(x,y)\coloneq g(x,y)+s(x,y),\qquad
    \widetilde h_i(x,y)\coloneq h_i(x,y)+r_i(x,y),
\]
this means that, for fixed \(x\), the following term
\[
    \|s(x,\cdot)\|_{C^2(K)}
    +\max_{i=1,\ldots,k}\|r_i(x,\cdot)\|_{C^2(K)}
\]
is sufficiently small. This formulation does not require the perturbation to be
globally small on all of \(\mathbb R^m\). It is therefore compatible with
standard perturbations such as linear perturbations of the objective: a linear
function does not have a finite global \(C^2\)-norm, but its \(C^2\)-norm on any fixed compact set is finite and can be
made arbitrarily small by scaling.

\paragraph{Inner and outer compact sets.}
In the stability results (Theorem~\ref{thm:stab_strat_perturb}, \ref{thm:stab_scsc}, \ref{thm:stab_strat_lmins_perturb_noM}), we use two compact sets \(K_{\rm in}\) and
\(K_{\rm out}\). We briefly explain their roles here.
After fixing an upper-level value \(x\), we choose compact sets
\[
    \mathcal Y(x)\subset \operatorname{int}(K_{\rm in}),\ \text{and} \ 
    K_{\rm in}\subset \operatorname{int}(K_{\rm out}).
\]
The set \(K_{\rm in}\) is the region where the local geometry of the lower-level
problem is compared. For example, we only consider the relevant feasible-set strata, KKT points,
or local minimizers inside \(K_{\rm in}\). The larger set
\(K_{\rm out}\) is a region on which the perturbation is controlled:
\[
    \|s(x,\cdot)\|_{C^2(K_{\rm out})}
    +\max_{i=1,\ldots,k}\|r_i(x,\cdot)\|_{C^2(K_{\rm out})}\le \epsilon .
\]
The use of two compact sets is technical. To use the stability results of \citet{jongen2000nonlinear}, which are stated
in the strong \(C^2\) topology (see proof of Theorem~\ref{thm:stab_strat_perturb} in Appendix~\ref{proof:thm:stab_strat_perturb}), under our compact-local \(C^2(K_{\rm out})\)
control, we use a cutoff argument to localize the perturbation. For this purpose, the perturbation is required to be small on a slightly larger
compact set \(K_{\rm out}\), while the structural comparison is made on the
smaller compact set \(K_{\rm in}\). The strict inclusion
\(K_{\rm in}\subset \operatorname{int}(K_{\rm out})\) leaves room for choosing a
smooth cutoff function that equals one near \(K_{\rm in}\) and vanishes outside
\(K_{\rm out}\). Thus \(K_{\rm in}\) is the region of comparison, while
\(K_{\rm out}\) is the region where the perturbation is controlled.

For clarity and to emphasize the insight, the counterexamples in this section
are stated without explicitly displaying the auxiliary compact sets
\(K_{\rm in}\) and \(K_{\rm out}\). The fully localized version of the argument is given in
Appendix~\ref{sec:local_version}.

\paragraph{KKT trajectories and assumption.}
We will use the following terminology. A KKT trajectory is a continuous map $x \mapsto y^\ast(x)$ defined on a subset of $\mathcal{X}$ such that $y^\ast(x)$ is a KKT point of the lower-level problem at each $x$. A local minimizer branch is a continuous map $x \mapsto y^\ast(x)$ such that $y^\ast(x)$ is a local minimizer of the lower-level problem at each $x$. A continuation of a local minimizer $y_0$ at $x_0 \in \mathcal{X}$ is a local minimizer branch $y^\ast(\cdot)$ passing through $y_0$, meaning $y^\ast(x_0) = y_0$. We impose the following assumption for the results in Sections~\ref{sec:nonprevalence} and~\ref{section:obstructionforbilevel}.
\begin{assumption}\label{assumption:general2} The following conditions hold.
    \begin{enumerate}
        \item The lower-level objective $g(x,y)$ and constraints $h_i(x,y)$ are all smooth, i.e., $g,h_i\in C^\infty$.\label{assumption:regularity1}
        \item The lower-level feasible set $\mathcal{Y}(x)$ is nonempty for every $x\in\mathcal{X}$.\label{assumption:regularity2}
    \end{enumerate}
\end{assumption}
The first condition ensures the regularity needed for the Implicit Function Theorem, the differentiability of KKT systems, and the topological tools from \citet{jongen2000nonlinear} used throughout the proofs. In principle, finite-order differentiability suffices, but we assume $C^\infty$ for simplicity.

\subsection{Non-prevalence of LICQ}\label{subsection:licq}

We begin by recalling some terminology from \citet{jongen2000nonlinear} that will be used in the rigidity and stability results below; formal definitions are given in Appendix~\ref{sec:appendix_def}. A manifold with generalized boundary (MGB) is a set that is locally diffeomorphic to $\mathbb{R}^q \times \mathbb{R}^p_+$ for some integers $q, p \geq 0$. Informally, it is a set with smooth interior and well-behaved corners where multiple boundary faces meet. A feasible set defined by smooth inequality constraints satisfying LICQ is a typical example of an MGB \citep[page~87]{jongen2000nonlinear}. Every MGB carries a natural stratification: a decomposition into layers called strata, each consisting of points that share the same local boundary structure. For a constraint set satisfying LICQ, this stratification corresponds to the partition by active constraint patterns together with connectivity: each stratum is a connected component of the set $S_{\mathcal{J}}(x) \coloneq  \{y : \mathcal{J}(x,y) = \mathcal{J}\}$ for some index set $\mathcal{J}$, where $\mathcal{J}(x,y) = \{i : h_i(x,y) = 0\}$. Two feasible points lie in the same stratum if and only if they have the same active index set and belong to the same connected component of $S_{\mathcal{J}}$. For example, in Figure~\ref{fig:MGB}, the strata are the interior ($\mathcal{J} = \emptyset$), the four boundary arcs (each with a distinct singleton $\mathcal{J}$), and the four vertices (each with a distinct two-element $\mathcal{J}$). Two MGBs are said to share the same stratification if there exists a diffeomorphism between them that maps each stratum of one onto a corresponding stratum of the other.

\begin{figure}
    \centering
\tikzset{every picture/.style={line width=0.75pt}} 

\tikzset{every picture/.style={line width=0.75pt}} 

\begin{tikzpicture}[x=0.75pt,y=0.75pt,yscale=-1,xscale=1]

\draw    (56.27,154.4) -- (220.27,154.4) ;
\draw [shift={(222.27,154.4)}, rotate = 180] [color={rgb, 255:red, 0; green, 0; blue, 0 }  ][line width=0.75]    (10.93,-3.29) .. controls (6.95,-1.4) and (3.31,-0.3) .. (0,0) .. controls (3.31,0.3) and (6.95,1.4) .. (10.93,3.29)   ;
\draw    (132.27,244.4) -- (132.27,74.4) ;
\draw [shift={(132.27,72.4)}, rotate = 90] [color={rgb, 255:red, 0; green, 0; blue, 0 }  ][line width=0.75]    (10.93,-3.29) .. controls (6.95,-1.4) and (3.31,-0.3) .. (0,0) .. controls (3.31,0.3) and (6.95,1.4) .. (10.93,3.29)   ;
\draw    (100,101) .. controls (176.09,83.26) and (233.09,116.26) .. (236.09,120.26) ;
\draw    (86.09,227.26) .. controls (126.09,197.26) and (112.09,79.26) .. (152.09,49.26) ;
\draw    (56.27,154.4) .. controls (96.27,124.4) and (165.09,226.26) .. (205.09,196.26) ;
\draw    (169.09,246.26) .. controls (152.09,226.26) and (144.09,121.26) .. (184.09,91.26) ;
\draw  [fill={rgb, 255:red, 0; green, 0; blue, 0 }  ,fill opacity=1 ] (112.09,164.59) .. controls (112.09,163.21) and (113.21,162.09) .. (114.59,162.09) .. controls (115.97,162.09) and (117.09,163.21) .. (117.09,164.59) .. controls (117.09,165.97) and (115.97,167.09) .. (114.59,167.09) .. controls (113.21,167.09) and (112.09,165.97) .. (112.09,164.59) -- cycle ;
\draw  [fill={rgb, 255:red, 0; green, 0; blue, 0 }  ,fill opacity=1 ] (153.09,190.59) .. controls (153.09,189.21) and (154.21,188.09) .. (155.59,188.09) .. controls (156.97,188.09) and (158.09,189.21) .. (158.09,190.59) .. controls (158.09,191.97) and (156.97,193.09) .. (155.59,193.09) .. controls (154.21,193.09) and (153.09,191.97) .. (153.09,190.59) -- cycle ;
\draw  [fill={rgb, 255:red, 0; green, 0; blue, 0 }  ,fill opacity=1 ] (125.09,96.59) .. controls (125.09,95.21) and (126.21,94.09) .. (127.59,94.09) .. controls (128.97,94.09) and (130.09,95.21) .. (130.09,96.59) .. controls (130.09,97.97) and (128.97,99.09) .. (127.59,99.09) .. controls (126.21,99.09) and (125.09,97.97) .. (125.09,96.59) -- cycle ;
\draw  [fill={rgb, 255:red, 0; green, 0; blue, 0 }  ,fill opacity=1 ] (174.09,98.59) .. controls (174.09,97.21) and (175.21,96.09) .. (176.59,96.09) .. controls (177.97,96.09) and (179.09,97.21) .. (179.09,98.59) .. controls (179.09,99.97) and (177.97,101.09) .. (176.59,101.09) .. controls (175.21,101.09) and (174.09,99.97) .. (174.09,98.59) -- cycle ;
\draw [line width=1.5]    (114.59,164.59) .. controls (121.04,132.62) and (122.04,116.12) .. (127.59,96.59) ;
\draw [line width=1.5]    (127.59,96.59) .. controls (147.04,95.12) and (163.04,96.12) .. (179.09,98.59) ;
\draw [line width=1.5]    (155.59,193.09) .. controls (155.04,158.12) and (156.04,125.12) .. (176.59,98.59) ;
\draw [line width=1.5]    (114.59,164.59) .. controls (125.04,171.12) and (139.04,182.12) .. (155.59,190.59) ;

\draw (152,27.4) node [anchor=north west][inner sep=0.75pt]  [color={rgb, 255:red, 0; green, 0; blue, 0 }  ,opacity=1 ]  {$h_{1}^{-1}( 0)$};
\draw (29,123.4) node [anchor=north west][inner sep=0.75pt]  [color={rgb, 255:red, 0; green, 0; blue, 0 }  ,opacity=1 ]  {$h_{2}^{-1}( 0)$};
\draw (236,96.4) node [anchor=north west][inner sep=0.75pt]  [color={rgb, 255:red, 0; green, 0; blue, 0 }  ,opacity=1 ]  {$h_{3}^{-1}( 0)$};
\draw (171,232.4) node [anchor=north west][inner sep=0.75pt]  [color={rgb, 255:red, 0; green, 0; blue, 0 }  ,opacity=1 ]  {$h_{4}^{-1}( 0)$};
\draw (227,151.4) node [anchor=north west][inner sep=0.75pt]    {$y_{1}$};
\draw (117,64.4) node [anchor=north west][inner sep=0.75pt]    {$y_{2}$};
\draw (135.09,131.99) node [anchor=north west][inner sep=0.75pt]    {$\mathcal{Y}$};

\end{tikzpicture}
    \caption{A two-dimensional feasible set $\mathcal{Y}$ defined by four inequality constraints $h_1, h_2, h_3, h_4 \leq 0$. The curves $h_i^{-1}(0)$ are the constraint boundaries. Under LICQ, the feasible set is an MGB whose natural stratification consists of three types of strata: the interior (points where no constraint is active), the boundary arcs (one-dimensional pieces where exactly one constraint is active), and the vertices (zero-dimensional points where exactly two constraints are active). In this example, there are one interior component, four boundary arcs, and four vertices.}
    \label{fig:MGB}
\end{figure}

We are now ready to state the rigidity theorem for LICQ. It says that if LICQ holds for the lower-level problem at every $x$, then the stratification of $\mathcal{Y}(x)$ cannot change as $x$ varies. As a technical requirement, we assume $\mathcal{X}$ to be a connected MGB. Intuitively, ``connected'' means $\mathcal{X}$ consists of a single piece rather than several disjoint components; the formal definition is recalled in Appendix~\ref{sec:appendix_def}. This connectedness requirement is mild and is satisfied by most upper-level variable spaces arising in practice, such as boxes, balls, convex sets, or smooth manifolds with a single component. In particular, $\mathcal{X}$ is a connected MGB whenever it is defined by smooth inequality constraints satisfying LICQ over a connected set.

\begin{theorem}[Rigidity of stratification]\label{thm:main_licq}
    Suppose Assumption~\ref{assumption:general2} holds, the upper-level feasible set $\mathcal{X}$ is a connected manifold with generalized boundary (MGB), and the lower-level feasible set $\mathcal{Y}(x)$ is uniformly bounded. Furthermore, assume that, at every $x \in \mathcal{X}$, LICQ holds for the lower-level problem. Then, for any $x \in \mathcal{X}$, the lower-level feasible set $\mathcal{Y}(x)$ shares the same natural stratification as a manifold with generalized boundary (MGB). Specifically, for any $x_1, x_2 \in \mathcal{X}$, there exists a stratification-preserving diffeomorphism between $\mathcal{Y}(x_1)$ and $\mathcal{Y}(x_2)$.
\end{theorem}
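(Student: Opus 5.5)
The plan is to go through the total space and Jongen--Jonker--Twilt-style isotopy/trivialization results. Consider
\[
M \coloneq \{(x,y): x\in\mathcal X,\ h_i(x,y)\le 0,\ i=1,\dots,k\}
\]
together with the projection \(\pi:M\to\mathcal X\), \(\pi(x,y)=x\).

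The first step is to show that \(M\) is itself an MGB whose strata are indexed by pairs (stratum of \(\mathcal X\), active set \(\mathcal J\)). Pointwise LICQ in \(y\) makes the gradients \(\nabla_{(x,y)}h_i\), \(i\in\mathcal J(x,y)\), linearly independent in \((x,y)\). Combined with the local MGB charts of \(\mathcal X\), this gives local charts of product type \(\mathbb R^{q}\times\mathbb R^{p}_+\) around each point of \(M\).

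The second step is to show that \(\pi\), restricted to each stratum of \(M\), is a submersion onto the corresponding stratum of \(\mathcal X\). Again this follows from LICQ in \(y\). The constraints \(h_i=0\), \(i\in\mathcal J\), can be solved locally for \(|\mathcal J|\) of the \(y\)-coordinates by the implicit function theorem. So locally near any \((x_0,y_0)\) we get a chart in which \(\pi\) looks like
\[
(x,u,v)\mapsto x,\qquad u\in\mathbb R^{m-|\mathcal J|},\ v\in\mathbb R^{|\mathcal J|}_+ .
\]
In other words, \(\pi\) is a stratified submersion that is locally trivial.

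The third step is to globalize. Uniform boundedness of \(\mathcal Y(x)\) together with closedness makes \(\pi\) proper. I would then build, for any smooth path \(\gamma\) in \(\mathcal X\), a controlled lift of \(\dot\gamma\) to a vector field on \(M\) that is tangent to every stratum. This is done by gluing the local chart lifts \(\partial_x\) with a partition of unity. Tangency to every stratum is preserved under convex combinations because in each chart the lift preserves \(v\) and keeps the zero pattern of \(v\). Integrating this field, which is possible since properness gives compact fibers and hence no escape in finite time, produces diffeomorphisms \(\mathcal Y(\gamma(0))\to\mathcal Y(\gamma(t))\) that map strata to strata. This is the stratified Ehresmann argument; alternatively one can cite the corresponding stability/isotopy theorem in \citet{jongen2000nonlinear}. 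Since \(\mathcal X\) is a connected MGB, hence path connected by piecewise smooth paths, composing these diffeomorphisms along a path from \(x_1\) to \(x_2\) gives the claim.

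The main obstacle is the lifting step when the path runs along or hits the boundary strata of \(\mathcal X\). There the lift must also respect the chart structure of \(\mathcal X\), and the piecewise-smooth concatenations must stay compatible with the charts. To handle this, I would first extend \(\mathcal X\) locally to an open set on which LICQ still holds for the \(y\)-constraints. LICQ is an open condition, and compactness of the relevant fibers makes this extension possible near any compact piece of the path. On that open set I would work with an ordinary (interior) path, which avoids the boundary issue. A secondary subtlety is checking that the glued vector field is smooth and strata-tangent, which comes down to the chart form above.
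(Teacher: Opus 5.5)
Your proposal is correct. It has the same skeleton as the paper's proof: connect $x_1,x_2$ by a piecewise smooth path, get a fibre diffeomorphism over each smooth segment, and compose.

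The difference is in how you obtain the segment step. The paper sets $\mathcal Z(t)=\mathcal Y(\gamma(t))$ and cites the one-parameter result \citet[Theorem~10.4.5]{jongen2000nonlinear}. It then uses \citet[Corollary~3.1.29]{jongen2000nonlinear} to upgrade ``diffeomorphic MGBs'' to ``same stratification''. You instead reprove the one-parameter step with a stratified Ehresmann construction: product charts $(x,u,v)$ with $v_i=-h_i$ for active $i$, a partition-of-unity lift of $\dot\gamma$, and integration of the resulting field. This is self-contained and gives a stronger conclusion. Your flow $\Phi_t$ satisfies $\mathcal J(\gamma(t),\Phi_t(y))=\mathcal J(\gamma(0),y)$, so it preserves the labelled active-set strata directly and the corollary is not needed. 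The paper's route is shorter only because this construction is hidden inside the citation.

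For the gluing step, make one point explicit. Shrink each chart domain so that constraints inactive at its center are strictly negative on the whole domain. Then at any point with $h_i=0$, every chart with positive weight preserves $h_i$. Hence $\dot v_i$ vanishes on $\{v_i=0\}$, and a uniqueness/Gronwall argument shows trajectories never enter or leave $\{h_i=0\}$.

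Your Step~1 and the boundary ``obstacle'' are unnecessary. The $(u,v)$-chart is built from $h$ on an open subset of the ambient $(x,y)$-space and leaves $x$ untouched. So the lift $(\dot\gamma(t),0,0)$ is fine even when $\gamma$ runs along the boundary of $\mathcal X$. More simply, you can pull back to $H_i(t,y)=h_i(\gamma(t),y)$ on $[a,b]$, exactly as the paper does.

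If you keep the extension to an open $U\supset\mathcal X$, restrict it to a neighbourhood of the compact set $\pi^{-1}(\gamma([a,b]))$. Fibres over points of $U\setminus\mathcal X$ need not be bounded or satisfy LICQ.
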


The proof builds on \citet[Theorem~10.4.5]{jongen2000nonlinear}, which 
establishes the corresponding diffeomorphism result for feasible sets 
depending on a single parameter. Extending this to our multi-parameter 
setting requires the connected MGB structure of $\mathcal{X}$: for any 
$x_1, x_2 \in \mathcal{X}$, we use local path-connectedness to connect 
them by a piecewise smooth path $\gamma$, apply the one-parameter result 
on each smooth segment of $\gamma$, and compose the resulting 
diffeomorphisms to obtain a diffeomorphism between $\mathcal{Y}(x_1)$ 
and $\mathcal{Y}(x_2)$. The stratification preservation then follows 
from \citep[Corollary~3.1.29]{jongen2000nonlinear}. The full proof is 
given in Appendix~\ref{proof:thm:main_licq}.

Theorem~\ref{thm:main_licq} establishes a necessary condition for the ``every $x$'' LICQ requirement: the stratification of $\mathcal{Y}(x)$ must remain invariant across all $x \in \mathcal{X}$. To show that this requirement is non-prevalent, our strategy is to construct a specific problem where the stratification of $\mathcal{Y}(x)$ strictly differs between two upper-level variables $x_1$ and $x_2$. However, to ensure that the mismatch between $x_1$ and $x_2$ persists even after perturbing the lower-level constraints, we first need to establish that the stratification at any fixed upper-level variable $x$ is stable under small perturbations. The following theorem provides exactly this stability guarantee.

\begin{theorem}[Stability of stratification under perturbation]\label{thm:stab_strat_perturb}
    Fix any \(x\in\mathcal X\). Suppose Assumption~\ref{assumption:general2} holds,
    the lower-level feasible set \(\mathcal Y(x)\) is bounded, and LICQ holds for the
    lower-level problem at \(x\). For any compact sets \(K_{\rm in},K_{\rm out}\subset\mathbb R^m\)
    such that
    \(\mathcal Y(x)\subset \operatorname{int}(K_{\rm in})\) and
    \(K_{\rm in}\subset \operatorname{int}(K_{\rm out})\), there exists a constant \(\epsilon(x,K_{\rm in},K_{\rm out})>0\) such that the following holds: for any \(C^2\) perturbations of the lower-level constraint functions, as in \eqref{eq:pertubedproblem}, satisfying
    \[
    \max_i\|r_i(x,\cdot)\|_{C^2(K_{\rm out})}\le \epsilon(x,K_{\rm in},K_{\rm out}),
    \]
    (see Definition~\ref{def:localc2norm} for the compact-local \(C^2\)-norm), the set \(\widetilde{\mathcal Y}(x)\cap K_{\rm in}\), where
    \[
    \widetilde{\mathcal Y}(x)\coloneq \{y:h_i(x,y)+r_i(x,y)\le 0,\ i=1,\ldots,k\},
    \]
    has the same natural stratification as \(\mathcal Y(x)\). Specifically, there
    exists a stratification-preserving diffeomorphism between \(\mathcal Y(x)\)
    and \(\widetilde{\mathcal Y}(x)\cap K_{\rm in}\).
\end{theorem}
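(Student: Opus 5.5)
The plan is to reduce the statement to the structural stability theorem for feasible sets in \citet{jongen2000nonlinear}. That theorem says: if a compact set defined by finitely many smooth inequalities satisfies LICQ at every feasible point, then every constraint system close to the original in the strong (Whitney) $C^2$ topology defines a feasible set diffeomorphic to the original via a stratification-preserving diffeomorphism. The only real work is to pass from our compact-local $C^2(K_{\rm out})$ control to a globally small perturbation. We do this with a cutoff argument and use the strict inclusions $\mathcal Y(x)\subset\operatorname{int}(K_{\rm in})$ and $K_{\rm in}\subset\operatorname{int}(K_{\rm out})$ to keep spurious feasible points from entering $K_{\rm in}$.

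\textbf{Step 1 (margins).} Since $\mathcal Y(x)$ is compact and contained in $\operatorname{int}(K_{\rm in})$, we choose an open set $U$ with $\mathcal Y(x)\subset U\subset\overline U\subset\operatorname{int}(K_{\rm in})$. On the compact set $K_{\rm out}\setminus U$, the continuous function $\max_i h_i(x,\cdot)$ is strictly positive, so it is bounded below by some $\delta>0$. This gives the first smallness requirement $\epsilon<\delta$: every point of $K_{\rm out}\setminus U$ remains infeasible for the perturbed system, so $\widetilde{\mathcal Y}(x)\cap K_{\rm out}\subset U$.

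Next, we use LICQ on the compact set $\mathcal Y(x)$ to obtain a uniform lower bound on the smallest singular value of the active-gradient matrices. This bound also covers nearly active indices, i.e.\ those $i$ with $|h_i|\le\eta$, on a neighborhood of $\mathcal Y(x)$; we shrink $U$ into that neighborhood if necessary. The bound gives the second smallness requirement on $\epsilon$: $C^1$-small perturbations of the $h_i$ preserve LICQ on $\widetilde{\mathcal Y}(x)\cap U$.

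\textbf{Step 2 (cutoff).} We pick a smooth $\chi$ with $\chi\equiv 1$ on a neighborhood of $K_{\rm in}$ and $\operatorname{supp}\chi\subset\operatorname{int}(K_{\rm out})$, and set $\hat r_i\coloneq\chi\, r_i(x,\cdot)$. By the product rule, $\|\hat r_i\|_{C^2(\mathbb R^m)}\le C_\chi\|r_i(x,\cdot)\|_{C^2(K_{\rm out})}$, and $\hat r_i$ vanishes outside a compact set. It follows that $\hat r_i$ is small in the strong $C^2$ topology, because on a compact support the strong and uniform $C^2$ neighborhoods agree.

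Outside $K_{\rm out}$ the modified system $h_i+\hat r_i$ coincides with $h_i$, so it has no feasible points there. Inside $K_{\rm out}$, Step 1 places all feasible points in $U\subset K_{\rm in}$, where $\hat r_i=r_i$. Hence $\hat{\mathcal Y}(x)\coloneq\{y:h_i(x,y)+\hat r_i(y)\le0\}=\widetilde{\mathcal Y}(x)\cap K_{\rm in}$.

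\textbf{Step 3 (apply Jongen et al.).} The original system is compact and satisfies LICQ, so it is stable in the strong $C^2$ topology. Choosing $\epsilon$ small enough that $(h_i+\hat r_i)_i$ lies in the stability neighborhood, we obtain a diffeomorphism $\Phi:\mathcal Y(x)\to\hat{\mathcal Y}(x)$. By Step 1, $\hat{\mathcal Y}(x)$ satisfies LICQ, so it is an MGB; \citep[Corollary~3.1.29]{jongen2000nonlinear} then shows that $\Phi$ maps strata onto strata, since a diffeomorphism of MGBs preserves the local type $\mathbb R^q\times\mathbb R^p_+$.

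If the cited stability theorem is stated only for one-parameter families, we instead use the homotopy $h_i+t\hat r_i$ for $t\in[0,1]$. Step 1 gives LICQ uniformly in $t$, and the feasible sets stay in the fixed compact set $\overline U$. The family version \citep[Theorem~10.4.5]{jongen2000nonlinear}, exactly as used for Theorem~\ref{thm:main_licq}, then yields the diffeomorphism.

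\textbf{Main obstacle.} The delicate step is Step 1 together with Step 2: we must show that the localized perturbation creates no new feasible components, including near $\partial K_{\rm in}$, and that LICQ survives uniformly. The positive margin $\delta$ from compactness and the uniform singular value bound are what make both claims work. Everything else is bookkeeping so that the constants depend only on $(x,K_{\rm in},K_{\rm out})$ and $\chi$.
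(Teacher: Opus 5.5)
Your proposal is essentially the paper's proof. The paper cuts off the perturbation as $\chi r_i$ so that it becomes small in the strong $C^2$ topology. It then uses a positive margin of $\max_i h_i(x,\cdot)$ on $K_{\rm out}\setminus\operatorname{int}(K_{\rm in})$ (your $K_{\rm out}\setminus U$) to show that the cutoff feasible set equals $\widetilde{\mathcal Y}(x)\cap K_{\rm in}$, and concludes with Jongen et al.'s structural stability and Corollary~3.1.29. The one difference is in how the stability theorem is invoked. The paper's Theorem~6.3.1 is stated for optimization problems, so the paper introduces an auxiliary objective $f$ with $f|_{\mathcal Y(x)}$ nondegenerate and separating (it exists by Theorem~7.1.17) and uses only the feasible-set conclusion. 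Your formulation of the theorem drops this hypothesis, but your homotopy fallback via $h_i+t\hat r_i$ and Theorem~10.4.5, supported by your uniform LICQ estimate, avoids the auxiliary objective altogether.
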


The proof combines the structural stability theorem of \citet[Theorem~6.3.1]{jongen2000nonlinear} with a cutoff argument. \citet[Theorem~6.3.1]{jongen2000nonlinear} yields a stratification-preserving diffeomorphism under any sufficiently small perturbation in the strong $C^2$-topology of $\mathbb{R}^m$. Since our perturbation is only assumed small on $K_{\mathrm{out}}$ rather than globally, we multiply each $r_i$ by a smooth cutoff equal to one near $K_{\mathrm{in}}$ and supported in $K_{\mathrm{out}}$, producing a globally small perturbation to which the cited theorem applies. The full proof is given in Appendix~\ref{proof:thm:stab_strat_perturb}.

\begin{figure}
    \centering

\tikzset{every picture/.style={line width=0.75pt}} 

\begin{tikzpicture}[x=0.75pt,y=0.75pt,yscale=-1,xscale=1]

\draw    (432.27,129.4) -- (628.09,129.4) ;
\draw [shift={(630.09,129.4)}, rotate = 180] [color={rgb, 255:red, 0; green, 0; blue, 0 }  ][line width=0.75]    (10.93,-3.29) .. controls (6.95,-1.4) and (3.31,-0.3) .. (0,0) .. controls (3.31,0.3) and (6.95,1.4) .. (10.93,3.29)   ;
\draw    (508.27,219.4) -- (508.27,49.4) ;
\draw [shift={(508.27,47.4)}, rotate = 90] [color={rgb, 255:red, 0; green, 0; blue, 0 }  ][line width=0.75]    (10.93,-3.29) .. controls (6.95,-1.4) and (3.31,-0.3) .. (0,0) .. controls (3.31,0.3) and (6.95,1.4) .. (10.93,3.29)   ;
\draw [line width=1.5]    (481.27,72.9) .. controls (478,104.26) and (470,150.26) .. (481.27,191.9) ;
\draw [line width=1.5]    (447.77,156.4) .. controls (537,142.26) and (525,173.26) .. (566.27,156.4) ;
\draw [line width=1.5]    (508.27,73.9) .. controls (502,124.26) and (503,154.26) .. (508.27,192.9) ;
\draw [line width=1.5]    (447.77,129.4) .. controls (507,116.26) and (520,145.26) .. (566.27,129.4) ;
\draw  [color={rgb, 255:red, 255; green, 255; blue, 255 }  ,draw opacity=0 ][fill={rgb, 255:red, 0; green, 0; blue, 0 }  ,fill opacity=0.25 ] (476,125.26) .. controls (477.06,125.48) and (503.94,128.04) .. (504,129.26) .. controls (504.06,130.48) and (504.94,153.37) .. (504,153.26) .. controls (503.06,153.14) and (476.39,153.14) .. (476,152.26) .. controls (475.61,151.37) and (474.94,125.04) .. (476,125.26) -- cycle ;
\draw    (89.27,129.4) -- (285.09,129.4) ;
\draw [shift={(287.09,129.4)}, rotate = 180] [color={rgb, 255:red, 0; green, 0; blue, 0 }  ][line width=0.75]    (10.93,-3.29) .. controls (6.95,-1.4) and (3.31,-0.3) .. (0,0) .. controls (3.31,0.3) and (6.95,1.4) .. (10.93,3.29)   ;
\draw    (165.27,219.4) -- (165.27,49.4) ;
\draw [shift={(165.27,47.4)}, rotate = 90] [color={rgb, 255:red, 0; green, 0; blue, 0 }  ][line width=0.75]    (10.93,-3.29) .. controls (6.95,-1.4) and (3.31,-0.3) .. (0,0) .. controls (3.31,0.3) and (6.95,1.4) .. (10.93,3.29)   ;
\draw [color={rgb, 255:red, 0; green, 0; blue, 0 }  ,draw opacity=1 ][line width=1.5]    (165.27,73.9) -- (165.27,192.9) ;
\draw [color={rgb, 255:red, 0; green, 0; blue, 0 }  ,draw opacity=1 ][line width=1.5]    (104.77,129.4) -- (223.27,129.4) ;
\draw [color={rgb, 255:red, 0; green, 0; blue, 0 }  ,draw opacity=1 ][line width=1.5]    (138.27,72.9) -- (138.27,191.9) ;
\draw [color={rgb, 255:red, 0; green, 0; blue, 0 }  ,draw opacity=1 ][line width=1.5]    (104.77,156.4) -- (223.27,156.4) ;
\draw  [color={rgb, 255:red, 0; green, 0; blue, 0 }  ,draw opacity=1 ][fill={rgb, 255:red, 0; green, 0; blue, 0 }  ,fill opacity=0.25 ] (138.27,129.4) -- (165.09,129.4) -- (165.09,156.4) -- (138.27,156.4) -- cycle ;

\draw (509.27,55.8) node [anchor=north west][inner sep=0.75pt]  [color={rgb, 255:red, 0; green, 0; blue, 0 }  ,opacity=1 ]  {$\widetilde H_1$};
\draw (425,109.4) node [anchor=north west][inner sep=0.75pt]  [color={rgb, 255:red, 0; green, 0; blue, 0 }  ,opacity=1 ]  {$\widetilde H_{3}$};
\draw (458,53.4) node [anchor=north west][inner sep=0.75pt]  [color={rgb, 255:red, 0; green, 0; blue, 0 }  ,opacity=1 ]  {$\widetilde H_{2}$};
\draw (424,159.4) node [anchor=north west][inner sep=0.75pt]  [color={rgb, 255:red, 0; green, 0; blue, 0 }  ,opacity=1 ]  {$\widetilde H_{4}$};
\draw (167.27,54.8) node [anchor=north west][inner sep=0.75pt]  [color={rgb, 255:red, 0; green, 0; blue, 0 }  ,opacity=1 ]  {$H_{1}$};
\draw (81,109.4) node [anchor=north west][inner sep=0.75pt]  [color={rgb, 255:red, 0; green, 0; blue, 0 }  ,opacity=1 ]  {$H_{3}$};
\draw (115,53.4) node [anchor=north west][inner sep=0.75pt]  [color={rgb, 255:red, 0; green, 0; blue, 0 }  ,opacity=1 ]  {$H_{2}$};
\draw (81,158.4) node [anchor=north west][inner sep=0.75pt]  [color={rgb, 255:red, 0; green, 0; blue, 0 }  ,opacity=1 ]  {$H_{4}$};
\draw (139.27,136.8) node [anchor=north west][inner sep=0.75pt]  [font=\footnotesize]  {$\mathcal{Y}( x)$};
\draw (477,129.66) node [anchor=north west][inner sep=0.75pt]  [font=\footnotesize]  {$\widetilde{\mathcal{Y}}( x)$};

\end{tikzpicture}
    \caption{Left: a feasible set $\mathcal{Y}(x)$ defined by four inequality constraints, where $H_i \coloneq  (h_i(x, \cdot))^{-1}(0)$ denotes the boundary of the $i$-th constraint. The feasible set is bounded by four arcs meeting at four corners. Right: the perturbed feasible set $\widetilde{\mathcal{Y}}(x)$ obtained by a small $C^2$ perturbation of the constraints, with perturbed boundaries $\widetilde{H}_i$. Although the constraint boundaries are deformed, the stratification is preserved: the perturbed set still has one interior region, four boundary arcs, and four corner vertices.}
    \label{fig:stablethm}
\end{figure}

Figure~\ref{fig:stablethm} illustrates this stability: although the constraint boundaries may deform under perturbation, the stratification of the feasible set is preserved. We now apply Theorems~\ref{thm:main_licq} and~\ref{thm:stab_strat_perturb} to construct counterexamples showing that the ``every-$x$'' LICQ requirement is non-prevalent. The argument is as follows. We find a problem where the stratification of $\mathcal{Y}(x)$ differs between two upper-level variables $x_1$ and $x_2$. By Theorem~\ref{thm:stab_strat_perturb}, the stratification at each fixed $x$ is preserved under small perturbations. So the mismatch between $x_1$ and $x_2$ persists after any small perturbation. By Theorem~\ref{thm:main_licq}, this means LICQ cannot hold at every $x$ for the perturbed problem.

A common cause for such a mismatch is the presence of an upper-level variable $x$ at which some feasible point has more active constraints than the number of lower-level variables, making linear independence impossible. The following counterexample constructs exactly this scenario.

\begin{counterexample}\label{counterexample:licq2}
Consider the lower-level feasible set in $\mathbb{R}^2$ defined by
\begin{align*}
    h_1(x,y) &\coloneq  y_1 \leq 0, \qquad h_2(x,y) \coloneq  -y_1 - 2 \leq 0, \qquad h_3(x,y) \coloneq y_2 \leq 0, \\
    h_4(x,y) &\coloneq  -y_2 - 2, \qquad h_5(x,y) \coloneq  y_1 + y_2 + x \leq 0,
\end{align*}
where $y = (y_1, y_2) \in \mathbb{R}^2$ and the upper-level variable $x \in [-1, 1]$. There exists $\epsilon > 0$ such that the following holds: for any $C^2$ perturbations $r_i(x,y)$, $i = 1, \ldots, 5$, with
\begin{align*}
    \|r_i\| \leq \epsilon, \quad \|\nabla_y r_i\| \leq \epsilon,\quad \text{and}\quad \|\nabla^2_{yy} r_i\| \leq \epsilon \qquad i = 1, \ldots, 5,
\end{align*}
and the perturbed constraints $\widetilde{h}_i(x, y) \coloneq  h_i(x, y) + r_i(x, y)$, there exist $x^\ast \in [-1, 1]$ and a feasible point $y^\ast \in \mathbb{R}^2$ of the perturbed problem such that LICQ fails at $(x^\ast, y^\ast)$.
 \begin{figure}
        \centering

\tikzset{every picture/.style={line width=0.75pt}} 

\begin{tikzpicture}[x=0.75pt,y=0.75pt,yscale=-1,xscale=1]

\draw    (36.27,134.4) -- (232.09,134.4) ;
\draw [shift={(234.09,134.4)}, rotate = 180] [color={rgb, 255:red, 0; green, 0; blue, 0 }  ][line width=0.75]    (10.93,-3.29) .. controls (6.95,-1.4) and (3.31,-0.3) .. (0,0) .. controls (3.31,0.3) and (6.95,1.4) .. (10.93,3.29)   ;
\draw    (112.27,224.4) -- (112.27,54.4) ;
\draw [shift={(112.27,52.4)}, rotate = 90] [color={rgb, 255:red, 0; green, 0; blue, 0 }  ][line width=0.75]    (10.93,-3.29) .. controls (6.95,-1.4) and (3.31,-0.3) .. (0,0) .. controls (3.31,0.3) and (6.95,1.4) .. (10.93,3.29)   ;
\draw [color={rgb, 255:red, 0; green, 0; blue, 0 }  ,draw opacity=1 ][line width=1.5]    (112.27,78.9) -- (112.27,197.9) ;
\draw [color={rgb, 255:red, 0; green, 0; blue, 0 }  ,draw opacity=1 ][line width=1.5]    (51.77,134.4) -- (170.27,134.4) ;
\draw [color={rgb, 255:red, 0; green, 0; blue, 0 }  ,draw opacity=1 ][line width=1.5]    (72.77,80.4) -- (191.09,198.72) ;
\draw [color={rgb, 255:red, 0; green, 0; blue, 0 }  ,draw opacity=1 ][line width=1.5]    (85.27,77.9) -- (85.27,196.9) ;
\draw [color={rgb, 255:red, 0; green, 0; blue, 0 }  ,draw opacity=1 ][line width=1.5]    (51.77,161.4) -- (170.27,161.4) ;
\draw    (358.27,134.4) -- (554.09,134.4) ;
\draw [shift={(556.09,134.4)}, rotate = 180] [color={rgb, 255:red, 0; green, 0; blue, 0 }  ][line width=0.75]    (10.93,-3.29) .. controls (6.95,-1.4) and (3.31,-0.3) .. (0,0) .. controls (3.31,0.3) and (6.95,1.4) .. (10.93,3.29)   ;
\draw    (434.27,224.4) -- (434.27,54.4) ;
\draw [shift={(434.27,52.4)}, rotate = 90] [color={rgb, 255:red, 0; green, 0; blue, 0 }  ][line width=0.75]    (10.93,-3.29) .. controls (6.95,-1.4) and (3.31,-0.3) .. (0,0) .. controls (3.31,0.3) and (6.95,1.4) .. (10.93,3.29)   ;
\draw [color={rgb, 255:red, 0; green, 0; blue, 0 }  ,draw opacity=1 ][line width=1.5]    (434.27,78.9) -- (434.27,197.9) ;
\draw [color={rgb, 255:red, 0; green, 0; blue, 0 }  ,draw opacity=1 ][line width=1.5]    (373.77,134.4) -- (492.27,134.4) ;
\draw [color={rgb, 255:red, 0; green, 0; blue, 0 }  ,draw opacity=1 ][line width=1.5]    (366.77,80.4) -- (485.09,198.72) ;
\draw [color={rgb, 255:red, 0; green, 0; blue, 0 }  ,draw opacity=1 ][line width=1.5]    (407.27,77.9) -- (407.27,196.9) ;
\draw [color={rgb, 255:red, 0; green, 0; blue, 0 }  ,draw opacity=1 ][line width=1.5]    (373.77,161.4) -- (492.27,161.4) ;
\draw  [color={rgb, 255:red, 0; green, 0; blue, 0 }  ,draw opacity=1 ][fill={rgb, 255:red, 0; green, 0; blue, 0 }  ,fill opacity=0.25 ] (85.27,134.4) -- (112.09,134.4) -- (112.09,161.4) -- (85.27,161.4) -- cycle ;
\draw  [fill={rgb, 255:red, 0; green, 0; blue, 0 }  ,fill opacity=0.25 ] (407.27,134.4) -- (421.66,134.4) -- (435.09,147.83) -- (435.09,161.26) -- (407.27,161.26) -- cycle ;

\draw (113.27,59.8) node [anchor=north west][inner sep=0.75pt]  [color={rgb, 255:red, 0; green, 0; blue, 0 }  ,opacity=1 ]  {$H_{1}$};
\draw (28,114.4) node [anchor=north west][inner sep=0.75pt]  [color={rgb, 255:red, 0; green, 0; blue, 0 }  ,opacity=1 ]  {$H_{3}$};
\draw (192.2,178.23) node [anchor=north west][inner sep=0.75pt]  [color={rgb, 255:red, 0; green, 0; blue, 0 }  ,opacity=1 ]  {$H_{5}$};
\draw (96,247.4) node [anchor=north west][inner sep=0.75pt]    {$x=-1$};
\draw (62,58.4) node [anchor=north west][inner sep=0.75pt]  [color={rgb, 255:red, 0; green, 0; blue, 0 }  ,opacity=1 ]  {$H_{2}$};
\draw (28,164.4) node [anchor=north west][inner sep=0.75pt]  [color={rgb, 255:red, 0; green, 0; blue, 0 }  ,opacity=1 ]  {$H_{4}$};
\draw (436.27,59.8) node [anchor=north west][inner sep=0.75pt]  [color={rgb, 255:red, 0; green, 0; blue, 0 }  ,opacity=1 ]  {$H_{1}$};
\draw (350,114.4) node [anchor=north west][inner sep=0.75pt]  [color={rgb, 255:red, 0; green, 0; blue, 0 }  ,opacity=1 ]  {$H_{3}$};
\draw (486.2,179.23) node [anchor=north west][inner sep=0.75pt]  [color={rgb, 255:red, 0; green, 0; blue, 0 }  ,opacity=1 ]  {$H_{5}$};
\draw (418,247.4) node [anchor=north west][inner sep=0.75pt]    {$x=1$};
\draw (383,58.4) node [anchor=north west][inner sep=0.75pt]  [color={rgb, 255:red, 0; green, 0; blue, 0 }  ,opacity=1 ]  {$H_{2}$};
\draw (351,163.4) node [anchor=north west][inner sep=0.75pt]  [color={rgb, 255:red, 0; green, 0; blue, 0 }  ,opacity=1 ]  {$H_{4}$};
\draw (77.27,141.8) node [anchor=north west][inner sep=0.75pt]  [font=\footnotesize]  {$\mathcal{Y}( -1)$};
\draw (408.27,141.8) node [anchor=north west][inner sep=0.75pt]  [font=\footnotesize]  {$\mathcal{Y}( 1)$};

\end{tikzpicture}
        \caption{The feasible set $\mathcal{Y}(x)$ (shaded) in Counterexample~\ref{counterexample:licq2} at $x = -1$ (left) and $x = 1$ (right), where $H_i \coloneq  (h_i(x, \cdot))^{-1}(0)$ denotes the boundary of the $i$-th constraint. At $x = -1$, $H_5$ does not intersect the rectangle bounded by $H_1, \ldots, H_4$, so the feasible set is that rectangle, with four vertices and four boundary edges. At $x = 1$, $H_5$ cuts off a corner, producing a pentagon with five vertices and five boundary edges. The two feasible sets have different stratifications.}
        \label{fig:licq_counterexample1}
    \end{figure}

    Figure~\ref{fig:licq_counterexample1} shows the feasible set at $x = -1$ and $x = 1$. At $x = -1$, the feasible set $\mathcal{Y}(-1)$ is a square, and the constraint boundaries $H_1$ and $H_3$ intersect at a feasible vertex $(0,0)$. At $x = 1$, the boundary $H_5$ cuts off this corner, making the intersection $(0,0)$ infeasible and producing a pentagon. This topological transition strictly requires that at some intermediate upper-level variable (specifically $x=0$ for the unperturbed problem), the moving boundary $H_5$ must pass exactly through the intersection of $H_1$ and $H_3$. At this critical moment, three constraints are simultaneously active at a single point in $\mathbb{R}^2$, forcing their gradients to be linearly dependent and causing LICQ to fail. Since the stratifications at $x=-1$ and $x=1$ are completely different (four vertices versus five), Theorem~\ref{thm:stab_strat_perturb} guarantees this mismatch persists under any small perturbation, and Theorem~\ref{thm:main_licq} ensures LICQ cannot hold at every $x$ for any nearby problem.
    
\end{counterexample}

\begin{remark}\label{remark:licq_lp_screening}
The failure of LICQ in Counterexample~\ref{counterexample:licq2} is driven by a vertex switching between feasible and infeasible as $x$ varies. This suggests a screening criterion for lower-level problems with affine constraints. Suppose $\mathcal{Y}(x)$ is uniformly bounded and each $h_i(x,\cdot)$ is affine. If $m$ active surfaces $\{y : h_i(x_0, y) = 0\}$ intersect at a single feasible point at some $x_0 \in \mathcal{X}$, then for LICQ to hold at every $x$, the same $m$ surfaces must intersect at a single feasible point for all $x \in \mathcal{X}$. If this fails at any $x$, no small perturbation of the constraints can restore LICQ everywhere.
\end{remark}

\begin{remark}[LICQ at the minimizer only]\label{remark:licq_optimal}
The non-prevalence conclusion remains valid even when LICQ is only required at 
the lower-level minimizer $y^\ast(x)$ rather than at every feasible point. 
Consider Counterexample~\ref{counterexample:licq2} with the lower-level objective 
$g(x,y) = -(y_1+y_2)$, so the minimizer $y^\ast(x)$ lies at the upper-right corner 
of $\mathcal{Y}(x)$. At $x = -1$, the constraint $H_5$ is inactive near this 
corner, and the minimizer $y^\ast(-1) = (0,0)$ has only $h_1$ and $h_3$ active; 
their gradients are linearly independent, so LICQ holds. As $x$ increases from 
$-1$ to $1$, the boundary $H_5$ translates toward the lower-left and eventually 
cuts off this corner, producing the pentagonal feasible set. At the intermediate 
value $x = 0$, $H_5$ passes through $(0,0)$, so $h_1$, $h_3$, and $h_5$ are 
simultaneously active at the minimizer. Three gradients in $\mathbb{R}^2$ cannot 
be linearly independent, so LICQ fails at $y^\ast(0)$. Sufficiently small 
perturbations preserve both the location of the minimizer near this corner and 
the continuous motion of $H_5$ across it, so the failure of LICQ at the minimizer 
persists under perturbation. Hence the ``every $x$'' requirement remains 
non-prevalent.
\end{remark}

In Counterexample~\ref{counterexample:licq2}, the failure of LICQ is caused by having too many active constraints at a single point: three constraints are simultaneously active at a point in $\mathbb{R}^2$, making linear independence of their gradients impossible. The next counterexample illustrates a different mechanism. Here the number of active constraints never exceeds the dimension of the lower-level variable, so the issue is not an excess of active constraints. Instead, LICQ fails because two constraint surfaces become tangent at certain upper-level variable values, causing their gradients to become parallel. As the upper-level variable $x$ varies, the two constraint boundaries transition from transversal intersection to tangency, which changes the number of intersection points. Both mechanisms, however, are manifestations of the same underlying phenomenon: a change in the stratification of the feasible set as $x$ varies.

\begin{counterexample}\label{counterexample:licq3}
Consider the lower-level feasible set in $\mathbb{R}^2$ defined by
\begin{align*}
    h_1(x,y) \coloneq  y_1^2 + y_2^2 - 1 \leq 0, \qquad h_2(x,y) \coloneq  y_1^2 + \left(\frac{y_2}{x}\right)^2 - \frac{1}{2} \leq 0,
\end{align*}
where $y = (y_1, y_2) \in \mathbb{R}^2$ and the upper-level variable $x \in [1, 2]$. There exists $\epsilon > 0$ such that the following holds: for any $C^2$ perturbations $r_i(x,y)$, $i = 1, 2$, with
\begin{align*}
    \|r_i\| \leq \epsilon, \quad \|\nabla_y r_i\| \leq \epsilon,\quad \text{and}\quad \|\nabla^2_{yy} r_i\| \leq \epsilon \qquad i = 1, 2,
\end{align*}
and the perturbed constraints $\widetilde{h}_i(x, y) \coloneq  h_i(x, y) + r_i(x, y)$, there exist $x^\ast \in [1, 2]$ and a feasible point $y^\ast \in \mathbb{R}^2$ of the perturbed problem such that LICQ fails at $(x^\ast, y^\ast)$.

\begin{figure}
    \centering

\tikzset{every picture/.style={line width=0.75pt}} 

\begin{tikzpicture}[x=0.75pt,y=0.75pt,yscale=-1,xscale=1]

\draw    (36.27,134.4) -- (232.09,134.4) ;
\draw [shift={(234.09,134.4)}, rotate = 180] [color={rgb, 255:red, 0; green, 0; blue, 0 }  ][line width=0.75]    (10.93,-3.29) .. controls (6.95,-1.4) and (3.31,-0.3) .. (0,0) .. controls (3.31,0.3) and (6.95,1.4) .. (10.93,3.29)   ;
\draw    (112.27,224.4) -- (112.27,54.4) ;
\draw [shift={(112.27,52.4)}, rotate = 90] [color={rgb, 255:red, 0; green, 0; blue, 0 }  ][line width=0.75]    (10.93,-3.29) .. controls (6.95,-1.4) and (3.31,-0.3) .. (0,0) .. controls (3.31,0.3) and (6.95,1.4) .. (10.93,3.29)   ;
\draw   (74.27,134.4) .. controls (74.27,113.41) and (91.28,96.4) .. (112.27,96.4) .. controls (133.25,96.4) and (150.27,113.41) .. (150.27,134.4) .. controls (150.27,155.39) and (133.25,172.4) .. (112.27,172.4) .. controls (91.28,172.4) and (74.27,155.39) .. (74.27,134.4) -- cycle ;
\draw  [fill={rgb, 255:red, 0; green, 0; blue, 0 }  ,fill opacity=0.25 ] (91.9,134.08) .. controls (91.9,123.01) and (100.88,114.03) .. (111.95,114.03) .. controls (123.02,114.03) and (132,123.01) .. (132,134.08) .. controls (132,145.16) and (123.02,154.13) .. (111.95,154.13) .. controls (100.88,154.13) and (91.9,145.16) .. (91.9,134.08) -- cycle ;
\draw    (166,97.26) .. controls (170.85,105.02) and (170.05,117.48) .. (153.57,126.44) ;
\draw [shift={(152,127.26)}, rotate = 333.43] [color={rgb, 255:red, 0; green, 0; blue, 0 }  ][line width=0.75]    (10.93,-3.29) .. controls (6.95,-1.4) and (3.31,-0.3) .. (0,0) .. controls (3.31,0.3) and (6.95,1.4) .. (10.93,3.29)   ;
\draw    (68,95.26) .. controls (67.04,105.87) and (73.52,117.42) .. (90.15,124.5) ;
\draw [shift={(92,125.26)}, rotate = 201.25] [color={rgb, 255:red, 0; green, 0; blue, 0 }  ][line width=0.75]    (10.93,-3.29) .. controls (6.95,-1.4) and (3.31,-0.3) .. (0,0) .. controls (3.31,0.3) and (6.95,1.4) .. (10.93,3.29)   ;
\draw    (360.27,135.4) -- (556.09,135.4) ;
\draw [shift={(558.09,135.4)}, rotate = 180] [color={rgb, 255:red, 0; green, 0; blue, 0 }  ][line width=0.75]    (10.93,-3.29) .. controls (6.95,-1.4) and (3.31,-0.3) .. (0,0) .. controls (3.31,0.3) and (6.95,1.4) .. (10.93,3.29)   ;
\draw    (436.27,225.4) -- (436.27,55.4) ;
\draw [shift={(436.27,53.4)}, rotate = 90] [color={rgb, 255:red, 0; green, 0; blue, 0 }  ][line width=0.75]    (10.93,-3.29) .. controls (6.95,-1.4) and (3.31,-0.3) .. (0,0) .. controls (3.31,0.3) and (6.95,1.4) .. (10.93,3.29)   ;
\draw   (398.27,135.4) .. controls (398.27,114.41) and (415.28,97.4) .. (436.27,97.4) .. controls (457.25,97.4) and (474.27,114.41) .. (474.27,135.4) .. controls (474.27,156.39) and (457.25,173.4) .. (436.27,173.4) .. controls (415.28,173.4) and (398.27,156.39) .. (398.27,135.4) -- cycle ;
\draw    (490,98.26) .. controls (494.85,106.02) and (494.05,118.48) .. (477.57,127.44) ;
\draw [shift={(476,128.26)}, rotate = 333.43] [color={rgb, 255:red, 0; green, 0; blue, 0 }  ][line width=0.75]    (10.93,-3.29) .. controls (6.95,-1.4) and (3.31,-0.3) .. (0,0) .. controls (3.31,0.3) and (6.95,1.4) .. (10.93,3.29)   ;
\draw    (392,96.26) .. controls (391.04,106.87) and (397.52,118.42) .. (414.15,125.5) ;
\draw [shift={(416,126.26)}, rotate = 201.25] [color={rgb, 255:red, 0; green, 0; blue, 0 }  ][line width=0.75]    (10.93,-3.29) .. controls (6.95,-1.4) and (3.31,-0.3) .. (0,0) .. controls (3.31,0.3) and (6.95,1.4) .. (10.93,3.29)   ;
\draw   (415.9,135.26) .. controls (415.9,107.64) and (424.88,85.26) .. (435.95,85.26) .. controls (447.02,85.26) and (456,107.64) .. (456,135.26) .. controls (456,162.87) and (447.02,185.26) .. (435.95,185.26) .. controls (424.88,185.26) and (415.9,162.87) .. (415.9,135.26) -- cycle ;
\draw  [color={rgb, 255:red, 0; green, 0; blue, 0 }  ,draw opacity=0 ][fill={rgb, 255:red, 0; green, 0; blue, 0 }  ,fill opacity=0.25 ] (422.03,100.59) .. controls (430.83,96.59) and (442.22,94.19) .. (450,100.26) .. controls (457.78,106.32) and (458.42,167.86) .. (449.23,170.99) .. controls (440.05,174.13) and (428.98,174.32) .. (422,170.26) .. controls (415.02,166.19) and (413.23,104.59) .. (422.03,100.59) -- cycle ;
\draw  [fill={rgb, 255:red, 0; green, 0; blue, 0 }  ,fill opacity=1 ] (448,100.26) .. controls (448,99.25) and (448.81,98.44) .. (449.82,98.44) .. controls (450.82,98.44) and (451.63,99.25) .. (451.63,100.26) .. controls (451.63,101.26) and (450.82,102.07) .. (449.82,102.07) .. controls (448.81,102.07) and (448,101.26) .. (448,100.26) -- cycle ;
\draw  [fill={rgb, 255:red, 0; green, 0; blue, 0 }  ,fill opacity=1 ] (420.03,100.59) .. controls (420.03,99.59) and (420.85,98.78) .. (421.85,98.78) .. controls (422.85,98.78) and (423.67,99.59) .. (423.67,100.59) .. controls (423.67,101.6) and (422.85,102.41) .. (421.85,102.41) .. controls (420.85,102.41) and (420.03,101.6) .. (420.03,100.59) -- cycle ;
\draw  [fill={rgb, 255:red, 0; green, 0; blue, 0 }  ,fill opacity=1 ] (419.37,170.26) .. controls (419.37,169.25) and (420.18,168.44) .. (421.18,168.44) .. controls (422.19,168.44) and (423,169.25) .. (423,170.26) .. controls (423,171.26) and (422.19,172.07) .. (421.18,172.07) .. controls (420.18,172.07) and (419.37,171.26) .. (419.37,170.26) -- cycle ;
\draw  [fill={rgb, 255:red, 0; green, 0; blue, 0 }  ,fill opacity=1 ] (448.23,170.99) .. controls (448.23,169.99) and (449.05,169.18) .. (450.05,169.18) .. controls (451.05,169.18) and (451.87,169.99) .. (451.87,170.99) .. controls (451.87,172) and (451.05,172.81) .. (450.05,172.81) .. controls (449.05,172.81) and (448.23,172) .. (448.23,170.99) -- cycle ;

\draw (142.27,76.8) node [anchor=north west][inner sep=0.75pt]  [color={rgb, 255:red, 0; green, 0; blue, 0 }  ,opacity=1 ]  {$H_{1}$};
\draw (96,247.4) node [anchor=north west][inner sep=0.75pt]    {$x=1$};
\draw (44,75.4) node [anchor=north west][inner sep=0.75pt]  [color={rgb, 255:red, 0; green, 0; blue, 0 }  ,opacity=1 ]  {$H_{2}$};
\draw (466.27,78.8) node [anchor=north west][inner sep=0.75pt]  [color={rgb, 255:red, 0; green, 0; blue, 0 }  ,opacity=1 ]  {$H_{1}$};
\draw (420,248.4) node [anchor=north west][inner sep=0.75pt]    {$x=2$};
\draw (368,76.4) node [anchor=north west][inner sep=0.75pt]  [color={rgb, 255:red, 0; green, 0; blue, 0 }  ,opacity=1 ]  {$H_{2}$};
\draw (101.27,120.8) node [anchor=north west][inner sep=0.75pt]  [font=\footnotesize]  {$\mathcal{Y}( 1)$};
\draw (426.27,121.8) node [anchor=north west][inner sep=0.75pt]  [font=\footnotesize]  {$\mathcal{Y}( 2)$};

\end{tikzpicture}
   \caption{The feasible set $\mathcal{Y}(x)$ (shaded) in Counterexample~\ref{counterexample:licq3} at $x = 1$ (left) and $x = 2$ (right), where $H_i \coloneq  (h_i(x, \cdot))^{-1}(0)$ denotes the boundary of the $i$-th constraint. At $x = 1$, the ellipse $H_2$ lies entirely inside the circle $H_1$, so the feasible set is a disk with no vertices. At $x = 2$, $H_2$ protrudes beyond $H_1$, and the two constraint boundaries intersect transversally at four points (marked), producing a feasible set with four vertices and four boundary arcs. The two feasible sets have different stratifications.}
    \label{fig:licq_counterexample2}
\end{figure}

Figure~\ref{fig:licq_counterexample2} shows the feasible set at $x = 1$ and $x = 2$. At $x = 1$, the ellipse $h_2^{-1}(1, 0)$ lies entirely inside the circle $h_1^{-1}(1, 0)$, so the feasible set is the disk bounded by the ellipse. It has no vertices and a single boundary arc. At $x = 2$, the ellipse extends beyond the circle, and the two constraint boundaries intersect transversally at four points (marked). The feasible set now has four vertices and four boundary arcs. The two feasible sets have different stratifications (zero vertices and one boundary arc versus four vertices and four boundary arcs). 
By Theorem~\ref{thm:stab_strat_perturb}, this difference persists under any small perturbation, implying that LICQ cannot hold at every $x$ for any nearby problem.

\end{counterexample}

\subsection{Non-prevalence of SCSC}

Throughout this section and Section~\ref{subsection:sosc}, to simplify the exposition, we treat each active-set manifold $S_{\mathcal{J}}(x) \coloneq  \{y : \mathcal{J}(x,y) = \mathcal{J}\}$ as a single stratum, rather than further refining it into connected components. This convention suffices for our counterexample constructions; the same arguments go through under the standard stratification by connected components.

The rigidity theorem for LICQ (Theorem~\ref{thm:main_licq}) concerns the topology of the feasible set: if LICQ holds at every $x$, the stratification of $\mathcal{Y}(x)$ is constant. We now show that adding SCSC imposes a further rigidity on the behavior of KKT points within this stratification. Specifically, if both LICQ and SCSC hold at every $x$, then no continuous trajectory of KKT points can move from one stratum to another as $x$ varies. In other words, the active constraint pattern at any KKT point is locked in place.

\begin{theorem}[Rigidity of active set]\label{thm:main_scsc}
    Suppose Assumption~\ref{assumption:general2} holds, the upper-level feasible set $\mathcal{X}$ is a connected manifold with generalized boundary (MGB), and the lower-level feasible set $\mathcal{Y}(x)$ is uniformly bounded for all $x \in \mathcal{X}$. Furthermore, assume that, at every $x \in \mathcal{X}$, LICQ and SCSC hold for the lower-level problem. Then, for any continuous trajectory of KKT points $y^\ast(x)$ parameterized by $x$, the active constraint index set remains constant. Consequently, no KKT point can migrate across different strata of $\mathcal{Y}(x)$ as $x$ varies.
\end{theorem}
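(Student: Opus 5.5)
The plan is to show that the active index set $\mathcal J(x,y^\ast(x))$ is a locally constant function of $x$ along the trajectory, and then use connectedness of the domain to conclude it is constant. I take the domain of the trajectory $y^\ast(\cdot)$ to be a connected subset of $\mathcal X$, for instance $\mathcal X$ itself or a path in it; if it had several components the statement would apply componentwise. The first step is to note that under LICQ at $(x,y^\ast(x))$ the multiplier $\lambda^\ast(x)$ is unique. Moreover $x\mapsto \lambda^\ast(x)$ is continuous along the trajectory. To see this, on the active set $\mathcal J$ near $x_0$ we have $\lambda_{\mathcal J}^\ast=-(A^\top A)^{-1}A^\top\nabla_y g$, where $A=[\nabla_y h_i]_{i\in\mathcal J}$ has full column rank. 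For a general argument, I would use uniform boundedness of $\mathcal Y(x)$ together with LICQ to show that multipliers are locally bounded. Any limit point of $\lambda^\ast(x_n)$ is then a KKT multiplier at $(x_0,y^\ast(x_0))$, by continuity of $g,h$ and closedness of the KKT conditions. By uniqueness, this forces convergence.

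The second step is local constancy. Fix $x_0$ and let $\mathcal J_0=\mathcal J(x_0,y^\ast(x_0))$.

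\emph{Inactive indices.} For $i\notin\mathcal J_0$ we have $h_i(x_0,y^\ast(x_0))<0$. By continuity of $h_i$ and of $y^\ast$, $h_i(x,y^\ast(x))<0$ for $x$ near $x_0$, so $i$ stays inactive.

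\emph{Active indices.} For $i\in\mathcal J_0$, SCSC at $x_0$ gives $\lambda_i^\ast(x_0)>0$. By continuity of $\lambda^\ast$ from the first step, $\lambda_i^\ast(x)>0$ for $x$ near $x_0$. Complementary slackness then forces $h_i(x,y^\ast(x))=0$, so $i$ stays active.

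Hence $\mathcal J(x,y^\ast(x))=\mathcal J_0$ in a neighborhood of $x_0$. A locally constant map into the finite set $2^{\{1,\dots,k\}}$ on a connected domain is constant: each level set is open, and the complement of any level set is a union of open level sets, so each level set is also closed. The domain is connected because $\mathcal X$ is a connected MGB, hence locally path-connected, as used in the proof of Theorem~\ref{thm:main_licq}. The conclusion about strata follows immediately. Since $y^\ast(x)\in S_{\mathcal J_0}(x)$ for all $x$, with the convention of this section that $S_{\mathcal J}(x)$ is a single stratum, the KKT point never migrates. Under the finer connected-component convention, one additionally uses Theorem~\ref{thm:main_licq} to track components continuously.

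I expect the main obstacle to be the continuity of $\lambda^\ast$ at $x_0$ when the active set of nearby points is not yet known to equal $\mathcal J_0$. The explicit formula presupposes the active set, so the argument must avoid circularity. I would handle this with the boundedness-plus-uniqueness argument above. Local boundedness follows from LICQ by contradiction: normalize $\lambda^\ast(x_n)/\|\lambda^\ast(x_n)\|\to\mu\neq0$. In the limit, $\mu\geq0$, $\mu$ is supported on $\mathcal J_0$ (inactive constraints eventually have zero multiplier by the first bullet), and $\sum\mu_i\nabla_y h_i(x_0,y^\ast(x_0))=0$. This contradicts LICQ. Note that only SCSC at $x_0$ and LICQ are used, so the argument does not need uniform SOSC or strong convexity, consistent with the remark preceding the theorem.
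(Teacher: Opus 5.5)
Your proposal is correct and follows essentially the same route as the paper: SCSC at each $\overline x$, together with continuity of $\lambda^\ast$ and of $x\mapsto h_i(x,y^\ast(x))$ along the trajectory, makes the active set locally constant, and connectedness of $\mathcal X$ then makes it globally constant. The one difference is that the paper gets uniqueness and continuity of the multiplier by citing Proposition~\ref{prop:multiplier_continuity}, whereas you prove it directly (LICQ-based local boundedness plus uniqueness of limit multipliers); your argument is valid and makes the proof self-contained, and in fact uniform boundedness of $\mathcal Y(x)$ is not needed there, since continuity of $y^\ast$ already gives $y^\ast(x_n)\to y^\ast(x_0)$.
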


Theorem~\ref{thm:main_scsc} shows that under the ``every $x$'' SCSC assumption, the active constraint pattern of a KKT point is invariant. The proof idea is as follows: along any continuous KKT trajectory, SCSC ensures that the active set is locally constant. By the connectedness of $\mathcal{X}$, it is globally constant. The full proof is given in Appendix~\ref{proof:thm:main_scsc}.

As in the LICQ case, we now turn to perturbation stability. Unlike the stratification in the LICQ case, the active constraint pattern at a KKT point is not stable under general perturbations, because perturbing the objective may destroy the KKT point itself. Stability can be restored if additional nondegeneracy conditions are imposed (such as the nonsingularity of the KKT matrix in Theorem~\ref{thm:stab_strat_lmins_perturb_noM}). In the setting of strongly convex lower-level with LICQ and SCSC, the lower-level problem has a unique global minimizer $y^\ast(x)$ that persists under small perturbations, and the active constraint pattern at $y^\ast(x)$ is stable, as the following theorem makes precise.

\begin{theorem}[Stability of the active set under perturbation]\label{thm:stab_scsc}
    Fix any \(x\in\mathcal X\). Suppose Assumption~\ref{assumption:general2} holds,
    the lower-level feasible set \(\mathcal Y(x)\) is bounded, and each lower-level constraint function \(h_i(x,\cdot)\), \(i=1,\ldots,k\), is convex. LICQ and SCSC hold for the lower-level problem, and \(g(x,\cdot)\) is strongly convex.
    Let \(y^*(x)\) denote the unique global minimizer and let
    \(\mathcal J(x,y^*(x))\) denote its active constraint index set. For any compact
    sets \(K_{\rm in},K_{\rm out}\subset\mathbb R^m\) such that
    \(\mathcal Y(x)\subset \operatorname{int}(K_{\rm in})\) and
    \(K_{\rm in}\subset \operatorname{int}(K_{\rm out})\), there exists
    \(\epsilon(x,K_{\rm in},K_{\rm out})>0\) such that the following holds: for any
    \(C^2\) perturbations \(s(x,\cdot)\) and \(r_i(x,\cdot)\), as in \eqref{eq:pertubedproblem}, satisfying
    \[
    \|s(x,\cdot)\|_{C^2(K_{\rm out})}
    +
    \max_i\|r_i(x,\cdot)\|_{C^2(K_{\rm out})}
    \le \epsilon(x,K_{\rm in},K_{\rm out}),
    \]
    (see Definition~\ref{def:localc2norm} for the compact-local \(C^2\)-norm), the perturbed problem restricted to \(K_{\rm in}\) has a unique minimizer
    \(\widetilde y^*(x)\), and $\widetilde{\mathcal J}(x,\widetilde y^*(x))=\mathcal J(x,y^*(x))$, where $\widetilde{\mathcal J}(x,\widetilde y^*(x))\coloneq \{i:\widetilde h_i(x,\widetilde y^*(x))=0\}$
    denotes the active constraint index set of the perturbed problem at \(\widetilde y^*(x)\).
\end{theorem}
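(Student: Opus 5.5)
The approach is to treat the minimizer as a nondegenerate solution of the KKT system, continue it with the Implicit Function Theorem in the perturbation size, and then use convexity and compactness to rule out any other minimizer inside \(K_{\rm in}\). Fix \(x\) and write \(y^*=y^*(x)\), \(\lambda^*\) for its (unique, by LICQ) multiplier, and \(\mathcal J=\mathcal J(x,y^*)\). By SCSC, \(\lambda^*_i>0\) for \(i\in\mathcal J\) and \(\lambda^*_i=0\) with \(h_i(x,y^*)<0\) for \(i\notin\mathcal J\). Strong convexity of \(g\) and convexity of the \(h_i\) make \(\nabla^2_{yy}\mathcal L(x,y^*,\lambda^*)\) positive definite. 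Together with LICQ, this makes the reduced KKT Jacobian
\[
\begin{pmatrix} \nabla^2_{yy}\mathcal L & \nabla_y h_{\mathcal J}^\top \\ \nabla_y h_{\mathcal J} & 0\end{pmatrix}
\]
nonsingular.

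\emph{Step 1 (continuation of the KKT point).} Embed the perturbation as \((s,r)\) with \(C^2(K_{\rm out})\)-size \(\le\epsilon\). Consider the equality system
\[
\nabla_y\widetilde g+\sum_{i\in\mathcal J}\lambda_i\nabla_y\widetilde h_i=0,\qquad \widetilde h_i=0\ (i\in\mathcal J).
\]
Its left-hand side differs from the unperturbed one by at most \(O(\epsilon)\) in \(C^1\) on a neighborhood of \((y^*,\lambda^*_{\mathcal J})\), because the \(C^2\)-norm controls \(r,\nabla r,\nabla^2 r\). A quantitative inverse-function argument (a Newton/contraction mapping on a fixed ball, using only the \(C^1\)-closeness) then gives a unique solution \((\widetilde y,\widetilde\lambda_{\mathcal J})\) within \(O(\epsilon)\) of \((y^*,\lambda^*_{\mathcal J})\). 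For \(\epsilon\) small:
\begin{itemize}
\item \(\widetilde\lambda_i>0\) for \(i\in\mathcal J\), since \(\lambda^*_i>0\);
\item \(\widetilde h_i(x,\widetilde y)<0\) for \(i\notin\mathcal J\), by continuity and the strict slack \(\min_{i\notin\mathcal J}-h_i(x,y^*)>0\).
\end{itemize}
Setting \(\widetilde\lambda_i=0\) off \(\mathcal J\) gives a KKT point of the perturbed problem with active set exactly \(\mathcal J\), satisfying SCSC.

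\emph{Step 2 (it is the unique minimizer in \(K_{\rm in}\)).} On the compact set \(K_{\rm out}\), \(\nabla^2\widetilde g\succeq(\mu-\epsilon)I\) and \(\nabla^2\widetilde h_i\succeq-\epsilon I\), so the perturbed data are only \emph{nearly} convex and a global-convexity argument is unavailable as stated. I would handle this in two parts.

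\emph{Local optimality.} The Lagrangian Hessian at \((\widetilde y,\widetilde\lambda)\) stays positive definite, so SOSC holds and \(\widetilde y\) is a strict local minimizer.

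\emph{Ruling out other minimizers, by compactness and contradiction.} Suppose there are \(\epsilon_n\to0\) and perturbed minimizers (or local minimizers) \(z_n\in K_{\rm in}\) of the restricted problem with \(z_n\neq\widetilde y_n\). Passing to a subsequence, \(z_n\to\bar z\in K_{\rm in}\).
\begin{itemize}
\item Feasibility passes to the limit, and \(\mathcal Y(x)\subset\operatorname{int}K_{\rm in}\) together with uniform convergence of \(\widetilde h\) ensures \(\widetilde{\mathcal Y}(x)\cap K_{\rm in}\) converges to \(\mathcal Y(x)\) in the Hausdorff sense. In particular, no feasible points accumulate at \(\partial K_{\rm in}\), so restriction to \(K_{\rm in}\) introduces no artificial minimizers.
\item If \(z_n\) are global minimizers of the restricted problem, the optimal values converge, so \(\bar z\) minimizes \(g(x,\cdot)\) on \(\mathcal Y(x)\). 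Hence \(\bar z=y^*\).
\item Near \(y^*\), the perturbed problem is strongly convex in the sense that the local KKT solution is unique (Step 1). This contradicts \(z_n\neq\widetilde y_n\).
\end{itemize}

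\emph{Step 3.} Conclude that the unique minimizer on \(K_{\rm in}\) is \(\widetilde y^*(x)=\widetilde y\), whose active set is \(\mathcal J\).

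The main obstacle is the second bullet of Step 2: interpreting ``unique minimizer'' correctly and controlling behaviour near \(\partial K_{\rm in}\). The perturbed problem loses exact convexity, and \(\widetilde{\mathcal Y}(x)\) may acquire small spurious components outside the original set. My first move is the Hausdorff-convergence argument above, which uses LICQ: LICQ implies the Mangasarian–Fromovitz condition, and hence metric regularity of the constraint system. This shows that perturbed feasible points in \(K_{\rm in}\) lie within \(O(\epsilon)\) of \(\mathcal Y(x)\), and conversely, which is exactly what the contradiction argument needs.
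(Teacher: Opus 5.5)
Your argument is correct if ``unique minimizer'' is read as the unique \emph{global} minimizer of the restricted problem, but it takes a different route from the paper. The paper does not argue directly. It notes that, by convexity, \(y^*(x)\) is the only KKT point and its KKT matrix is nonsingular, and then invokes Theorem~\ref{thm:stab_strat_lmins_perturb_noM}. That theorem's proof puts original and perturbed KKT pairs in \(K_{\rm in}\) in bijection:
\begin{itemize}
\item local persistence comes from Lemma~\ref{lemma:stab-zero}, which is your Step~1;
\item new perturbed KKT pairs are excluded using the near-LICQ estimate of Lemma~\ref{lemma:near_licq}, a uniform multiplier bound, and a positive lower bound on the KKT residual away from the original pairs.
\end{itemize}
The paper thus obtains exactly one KKT point, hence exactly one \emph{local} minimizer, in \(K_{\rm in}\). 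Your Step~2 instead uses that \(y^*\) is the unique global minimizer. Optimal values converge, so perturbed global minimizers collapse onto \(y^*\), and local uniqueness finishes. This is more elementary and needs no multiplier control away from \(y^*\). In fact it does not need the Hausdorff convergence or metric regularity you flag as the main obstacle. It uses only that limits of perturbed feasible points are feasible, together with \(\widetilde g_n(\widetilde y_n)\to g(y^*)\).

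Two points need tightening.

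\textbf{(i) The final bullet.} ``The local KKT solution is unique'' only yields a contradiction once \(z_n\) is placed in the fixed uniqueness ball of Step~1 with active set \(\mathcal J\). Argue as follows for large \(n\):
\begin{itemize}
\item Constraints outside \(\mathcal J\) are strictly inactive at \(z_n\).
\item \(\nabla_y\widetilde h_{\mathcal J}(x,z_n)\) is close to the full-row-rank \(\nabla_y h_{\mathcal J}(x,y^*)\), so LICQ holds and \(z_n\) (interior to \(K_{\rm in}\)) is a KKT point.
\item Its multipliers solve a full-rank linear system, so they converge to \(\lambda^*_{\mathcal J}\).
\item SCSC then forces every \(i\in\mathcal J\) to be active, so \((z_n,\mu_n)\) solves the reduced system in that ball, whence \(z_n=\widetilde y_n\).
\end{itemize}

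\textbf{(ii) The parenthetical ``(or local minimizers)''.} Your argument does not cover this case. The value-convergence bullet applies only to global minimizers, and perturbed local minimizers could a priori accumulate at some \(\bar z\neq y^*\). To reach the stronger conclusion the paper actually proves, proceed in three steps:
\begin{itemize}
\item First show the perturbed feasible set in \(K_{\rm in}\) stays away from \(\partial K_{\rm in}\). This is a compactness argument, not metric regularity.
\item Next, use a uniform lower bound on \(\sigma_{\min}\) of the active perturbed gradients near \(\mathcal Y(x)\), as in Lemma~\ref{lemma:near_licq}. Then perturbed local minimizers are KKT points with uniformly bounded multipliers.
\item Passing to the limit, \(\bar z\) is a KKT point of the original convex problem, hence \(\bar z=y^*\), and (i) finishes.
\end{itemize}
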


This theorem guarantees that under small perturbations, the unique solution may shift slightly, but its active constraint pattern remains exactly the same. It is a special case of Theorem~\ref{thm:stab_strat_lmins_perturb_noM}; the full proof is given in Appendix~\ref{proof:thm:stab_scsc}. We construct our counterexample in the setting of strongly convex lower-level with LICQ and SCSC.

\begin{counterexample}\label{counterexample:scsc1}
    Consider the lower-level problem
\begin{align*}
    \min_{y\in\mathbb{R}^2}\ g(x,y)=(y_1-x)^2+y_2^2\quad \operatorname{s.t.}\quad h(x,y)=\|y\|^2\leq 1,
\end{align*}
where $y \in \mathbb{R}^2$ and the upper-level variable $x \in [0, 2]$. There exists $\epsilon > 0$ such that the following holds: for any $C^2$ perturbation $s(x,y)$ and $C^2$ perturbation $r(x,y)$ with
\begin{align*}
    \|s\|, \, \|\nabla_y s\|, \, \|\nabla^2_{yy} s\|, \, \|r\|, \, \|\nabla_y r\|, \, \|\nabla^2_{yy} r\| \leq \epsilon,
\end{align*}
and the perturbed problem
\begin{align*}
    \min_{y\in\mathbb{R}^2}\ \widetilde{g}(x,y)\coloneq (y_1-x)^2+y_2^2+s(x,y)\quad
    \operatorname{s.t.}\quad \widetilde{h}(x,y)\coloneq \|y\|^2-1+r(x,y)\leq 0,
\end{align*}
there exist $x^\ast \in [0, 2]$ and a KKT point $y^\ast \in \mathbb{R}^2$ of the perturbed problem such that SCSC fails at $(x^\ast, y^\ast)$. Since $\epsilon$ is small, the perturbed problem remains strongly convex with a convex feasible set, so it admits a unique global minimizer $y^\ast(x)$ that depends continuously on $x$.
    \begin{figure}
        \centering

\tikzset{every picture/.style={line width=0.75pt}} 

\begin{tikzpicture}[x=0.75pt,y=0.75pt,yscale=-1,xscale=1]

\draw  (28,156.91) -- (219.72,156.91)(123.91,60.28) -- (123.91,252) (212.72,151.91) -- (219.72,156.91) -- (212.72,161.91) (118.91,67.28) -- (123.91,60.28) -- (128.91,67.28)  ;
\draw  [fill={rgb, 255:red, 0; green, 0; blue, 0 }  ,fill opacity=0.25 ] (99.95,156.91) .. controls (99.95,143.67) and (110.68,132.94) .. (123.91,132.94) .. controls (137.15,132.94) and (147.88,143.67) .. (147.88,156.91) .. controls (147.88,170.14) and (137.15,180.87) .. (123.91,180.87) .. controls (110.68,180.87) and (99.95,170.14) .. (99.95,156.91) -- cycle ;
\draw  [color={rgb, 255:red, 0; green, 0; blue, 0 }  ,draw opacity=1 ][fill={rgb, 255:red, 0; green, 0; blue, 0 }  ,fill opacity=1 ] (121.03,156.91) .. controls (121.03,155.31) and (122.32,154.02) .. (123.91,154.02) .. controls (125.51,154.02) and (126.8,155.31) .. (126.8,156.91) .. controls (126.8,158.5) and (125.51,159.8) .. (123.91,159.8) .. controls (122.32,159.8) and (121.03,158.5) .. (121.03,156.91) -- cycle ;
\draw  (231,156.91) -- (422.72,156.91)(326.91,60.28) -- (326.91,252) (415.72,151.91) -- (422.72,156.91) -- (415.72,161.91) (321.91,67.28) -- (326.91,60.28) -- (331.91,67.28)  ;
\draw  [fill={rgb, 255:red, 0; green, 0; blue, 0 }  ,fill opacity=0.25 ] (302.8,156.91) .. controls (302.8,143.59) and (313.59,132.79) .. (326.91,132.79) .. controls (340.24,132.79) and (351.03,143.59) .. (351.03,156.91) .. controls (351.03,170.23) and (340.24,181.03) .. (326.91,181.03) .. controls (313.59,181.03) and (302.8,170.23) .. (302.8,156.91) -- cycle ;
\draw  [color={rgb, 255:red, 0; green, 0; blue, 0 }  ,draw opacity=1 ][fill={rgb, 255:red, 0; green, 0; blue, 0 }  ,fill opacity=1 ] (348.14,156.91) .. controls (348.14,155.31) and (349.44,154.02) .. (351.03,154.02) .. controls (352.63,154.02) and (353.92,155.31) .. (353.92,156.91) .. controls (353.92,158.5) and (352.63,159.8) .. (351.03,159.8) .. controls (349.44,159.8) and (348.14,158.5) .. (348.14,156.91) -- cycle ;

\draw (103,251) node [anchor=north west][inner sep=0.75pt]  [color={rgb, 255:red, 74; green, 74; blue, 74 }  ,opacity=1 ]  {$y^{\ast }( 0)$};
\draw (306,251) node [anchor=north west][inner sep=0.75pt]  [color={rgb, 255:red, 0; green, 0; blue, 0 }  ,opacity=1 ]  {$y^{\ast }( 2)$};

\end{tikzpicture}
        \caption{The feasible set $\mathcal{Y}(x)$ (shaded) and the unique minimizer $y^\ast(x)$ (marked) in Counterexample~\ref{counterexample:scsc1} at $x = 0$ (left) and $x = 2$ (right). At $x = 0$, the minimizer lies in the interior with no active constraint. At $x = 2$, the minimizer lies on the constraint boundary. The minimizer migrates across strata as $x$ varies.}
        \label{fig:scsc_counterexample}
    \end{figure}

Figure~\ref{fig:scsc_counterexample} shows the feasible set and the unique minimizer at $x = 0$ and $x = 2$. At $x = 0$, the minimizer $y^\ast(0) = (0,0)$ lies in the interior of $\mathcal{Y}(0)$, so no constraint is active. At $x = 2$, the minimizer $y^\ast(2) = (1,0)$ lies on the constraint boundary $\{y : h(x,y) = 0\}$. As $x$ increases from $0$ to $2$, the minimizer moves from the interior to the boundary, crossing from one stratum to another. By Theorem~\ref{thm:stab_scsc}, the active constraint pattern at each fixed $x$ is preserved under small perturbations. By Theorem~\ref{thm:main_scsc}, this stratum crossing is incompatible with SCSC holding at every $x$.

\end{counterexample}

\subsection{Non-prevalence of uniform SOSC}\label{subsection:sosc}

The rigidity theorems for LICQ and SCSC concern the topology of the feasible set and the active set along KKT trajectories, respectively. We now turn to the third combination: LICQ, SCSC, and uniform SOSC, the standard setting for non-strongly-convex lower-level problems. Adding uniform SOSC yields a further rigidity: if LICQ, SCSC, and uniform SOSC all hold at every $x$, then the number of local minimizers on each stratum remains constant as $x$ varies over $\mathcal{X}$. Before stating the rigidity theorem, we recall the natural stratification of $\mathcal Y(x)$ indexed by active sets. For any $y \in \mathcal Y(x)$, let $\mathcal{J}(x,y) \coloneq  \{j \in \{1, \ldots, k\} : h_j(x, y) = 0\}$ denote its active index set, and define
\[
\mathcal S_\mathcal{J}(x) \coloneq  \{y \in \mathcal Y(x) : \mathcal{J}(x,y) = \mathcal{J}\}, \qquad \mathcal{J} \subseteq \{1, \ldots, k\}.
\]
The collection $\{\mathcal S_\mathcal{J}(x)\}_\mathcal{J}$ partitions $\mathcal Y(x)$ into strata.

\begin{theorem}[Rigidity of stratum-wise minimizer count]\label{thm:main_sosc}
    Suppose Assumption~\ref{assumption:general2} holds, the upper-level feasible set
    $\mathcal{X}$ is a connected manifold with generalized boundary (MGB), and the
    lower-level feasible set $\mathcal{Y}(x)$ is uniformly bounded for all
    $x \in \mathcal{X}$. Assume that, for every $x \in \mathcal{X}$, LICQ, SCSC,
    and uniform SOSC hold for the lower-level problem, and the lower-level problem
    admits finitely many local minimizers. Then, for every
    $\mathcal J\subseteq\{1,\ldots,k\}$ and any $x_1,x_2\in\mathcal X$, we have
    \begin{align*}
    &\#\bigl\{\text{local minimizers of the lower-level problem at }x_1
    \text{ in }\mathcal S_{\mathcal J}(x_1)\bigr\} \\
    =&\#\bigl\{\text{local minimizers of the lower-level problem at }x_2
    \text{ in }\mathcal S_{\mathcal J}(x_2)\bigr\}.
    \end{align*}
\end{theorem}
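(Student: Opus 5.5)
Our plan is to show that, for each fixed $\mathcal J$, the map $N_{\mathcal J}(x)\coloneq\#\{\text{local minimizers at }x\text{ in }\mathcal S_{\mathcal J}(x)\}$ is locally constant on $\mathcal X$. Connectedness of $\mathcal X$ then makes it globally constant. Local constancy splits into two halves at a fixed $x_0$: \emph{lower semicontinuity}, meaning no minimizer disappears, and \emph{upper semicontinuity}, meaning no new minimizer appears.

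For lower semicontinuity we take a local minimizer $y_0\in\mathcal S_{\mathcal J}(x_0)$ with multiplier $\lambda_0$, which is unique by LICQ. At $(y_0,\lambda_0)$, LICQ, SCSC and SOSC together make the reduced KKT system
\[
\nabla_y g+\sum_{i\in\mathcal J}\lambda_i\nabla_y h_i=0,\qquad h_i=0\ (i\in\mathcal J)
\]
have a nonsingular Jacobian. The reason is that SCSC turns the critical cone into the subspace $\{d:\nabla_y h_i^\top d=0,\ i\in\mathcal J\}$, and SOSC makes the Hessian positive definite on that subspace. The Implicit Function Theorem then gives a smooth branch $(y(x),\lambda(x))$ near $x_0$. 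By continuity, inactive constraints stay strictly inactive and $\lambda_i(x)>0$ for $i\in\mathcal J$. Also, SOSC on the fixed subspace persists by continuity of the Hessian and of the constraint gradients. So $y(x)$ is a strict local minimizer lying in $\mathcal S_{\mathcal J}(x)$. Distinct minimizers at $x_0$ are isolated, since there are finitely many, so their branches remain distinct for $x$ near $x_0$. This gives $N_{\mathcal J}(x)\ge N_{\mathcal J}(x_0)$ nearby.

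Upper semicontinuity is the step we expect to be the main obstacle; it is where the global closedness property mentioned in the contributions enters. We argue by contradiction. Suppose there are $x_n\to x_0$ and local minimizers $y_n\in\mathcal S_{\mathcal J}(x_n)$ that are not among the continued branches. Uniform boundedness of $\mathcal Y(x)$ gives a subsequence $y_n\to\bar y\in\mathcal Y(x_0)$. LICQ at $(x_0,\bar y)$ bounds the multipliers $\lambda_n$, so after passing to a further subsequence $\lambda_n\to\bar\lambda$, and $(\bar y,\bar\lambda)$ is a KKT point at $x_0$. The key claim is that $\bar y$ is a local minimizer, not merely a KKT point. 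For this we use the uniform quadratic growth coming from uniform SOSC together with SCSC (the paper's Theorem~\ref{prop:uniform_quadratic_growth}). Concretely, there are $\sigma,\delta>0$, independent of $n$, with $g(x_n,y)\ge g(x_n,y_n)+\tfrac{\sigma}{2}\|y-y_n\|^2$ for feasible $y$ within $\delta$ of $y_n$. If we cannot cite that result directly, we would prove it by hand: uniform SOSC controls curvature on the critical subspace, and a uniform lower bound on the active multipliers controls the growth off it. To get that multiplier bound, we would first show that the active multipliers cannot tend to zero. If they did, the limit would be a KKT point with some $\bar\lambda_i=0$ at an active constraint $i$, violating SCSC at $x_0$. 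Passing to the limit in the growth inequality, using continuity of $g$ and $h$ and lower semicontinuity of feasibility, shows that $\bar y$ is a local minimizer at $x_0$.

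SCSC at $\bar y$ then forces the active set at $\bar y$ to be exactly $\mathcal J$. Indeed, limits of active constraints stay active, and any additional constraint active at $\bar y$ would have $\bar\lambda_i=0$, contradicting SCSC. Hence $\bar y$ is one of the $N_{\mathcal J}(x_0)$ minimizers, and by the local uniqueness from the Implicit Function Theorem, $y_n$ must coincide with its branch $y(x_n)$ for large $n$. This contradicts the choice of $y_n$. Combining both halves gives $N_{\mathcal J}(x)=N_{\mathcal J}(x_0)$ near $x_0$. Since $\mathcal X$ is a connected MGB, $\{x:N_{\mathcal J}(x)=c\}$ is open and closed, which yields the theorem.
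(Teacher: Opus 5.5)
Your argument is correct, and its skeleton is exactly the paper's. Lower semicontinuity of $N_{\mathcal J}$ comes from the Implicit Function Theorem on the reduced KKT system. Upper semicontinuity is proved by contradiction using uniform boundedness. SCSC pins the active set of the limit point to $\mathcal J$, and local uniqueness of the continuation closes the argument.

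Where you differ is the key step showing that the limit $\bar y$ is a local minimizer. The paper (Lemma~\ref{lem:closedness_minimizers}) passes the uniform SOSC inequality itself to the limit. Once SCSC forces the active set along the sequence to equal $\mathcal J(x_0,\bar y)$, the critical cones are kernels of full-rank matrices converging to $\nabla_y h_{\mathcal J}(x_0,\bar y)$. So every unit $d\in\mathcal C(x_0,\bar y)$ is a limit of some $v_n\in\mathcal C(x_n,y_n)$, which gives $d^\top\nabla^2_{yy}L(x_0,\bar y,\bar\lambda)d\ge\sigma$, and SOSC sufficiency finishes the step. You instead transport quadratic growth to the limit. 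This works, but it is heavier in three ways.

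\begin{itemize}
\item Citing Theorem~\ref{prop:uniform_quadratic_growth} is circuitous. Its proof in the paper rests on that same closedness lemma, and it assumes $\mathcal X$ compact. You would have to apply it on a compact neighborhood of $x_0$, which exists because an MGB is locally compact.
\item Proving growth by hand along your sequence needs more than the multiplier lower bound you derive from SCSC at the limit KKT point. You also need a lower bound on $\sigma_{\min}(\nabla_y h_{\mathcal J}(x_n,y_n))$, which follows from LICQ at $\bar y$ since $\mathcal J\subseteq\mathcal J(x_0,\bar y)$. The resulting growth constant is then some $c>0$ depending on these quantities, not $\sigma/2$.
\item Passing the growth inequality to the limit requires \emph{inner} semicontinuity of $x\mapsto\mathcal Y(x)$ at $x_0$, which is what your ``lower semicontinuity of feasibility'' must mean. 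Each feasible $y$ near $\bar y$ at $x_0$ must be approximated by points $y_n'\in\mathcal Y(x_n)$. This holds because LICQ implies MFCQ and hence Robinson-type metric regularity, but it should be stated and justified; the paper's route does not need it.
\end{itemize}

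In exchange, your route yields a quantitative conclusion: quadratic growth at $\bar y$ with constants controlled along the sequence. The paper's route is shorter and purely qualitative.
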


The proof proceeds as follows. First, LICQ and uniform SOSC imply the
nonsingularity of the KKT Jacobian, so the Implicit Function Theorem gives a
locally unique \(C^1\) continuation of each local minimizer. SCSC keeps the
active set, and hence the stratum, fixed along this continuation. This yields
lower semicontinuity of the number of local minimizers on each stratum, denoted
\(N_{\mathcal J}(x)\). Upper semicontinuity follows from the closedness of the
graph of local minimizers (Lemma~\ref{lem:closedness_minimizers}). Therefore,
\(N_{\mathcal J}(x)\) is a continuous integer-valued function on the connected
set \(\mathcal X\), and hence is constant. The full proof is given in
Appendix~\ref{proof:thm:main_sosc}.

Theorem~\ref{thm:main_sosc} tells us that if LICQ, SCSC, and uniform SOSC hold at every upper-level variable $x$, and the total number of local minimizers is finite, then the number of local minimizers on each stratum is constant across $\mathcal{X}$. To build counterexamples, we again need a perturbation stability result ensuring that this count cannot be changed by small perturbations at a fixed $x$. However, uniform SOSC alone is not sufficient for this purpose. Uniform SOSC guarantees nondegeneracy only at local minimizers, but says nothing about other KKT points such as saddle points or local maximizers on the feasible set. When the objective is perturbed, a degenerate non-minimizing KKT point can undergo a bifurcation, splitting into multiple KKT points, some of which may be new local minimizers. This would change the stratum-wise count of local minimizers even though the original local minimizers themselves persist; see \citet[Example~4.1]{jiang2025correspondence} for a concrete illustration. To rule out this phenomenon, we require a stronger condition: the KKT matrix must be nonsingular at every KKT point, not just at local minimizers. This nondegeneracy guarantees that the stratum-wise count of local minimizers is preserved under small perturbations. We will specifically construct our counterexample to satisfy this condition at the chosen upper-level variables $x_1$ and $x_2$. 

\begin{theorem}[Stability of stratum-wise minimizer count under perturbation]\label{thm:stab_strat_lmins_perturb_noM}
Fix any \(x\in\mathcal X\). Suppose Assumption~\ref{assumption:general2} holds, the
lower-level feasible set \(\mathcal Y(x)\) is bounded, LICQ and SCSC hold for the
lower-level problem, and the KKT matrix
\[
    \begin{pmatrix}
        \nabla^2_{yy}L(x,y,\lambda) & \nabla_y h_{\mathcal{J}(x,y)}(x,y)^\top\\
        \nabla_y h_{\mathcal{J}(x,y)}(x,y) & 0
    \end{pmatrix}
\]
is nonsingular at every KKT pair \((y,\lambda)\). For any compact sets
\(K_{\rm in},K_{\rm out}\subset\mathbb R^m\) satisfying $
\mathcal Y(x)\subset \operatorname{int}(K_{\rm in})$ and $ 
K_{\rm in}\subset \operatorname{int}(K_{\rm out}),$
there exists a constant \(\epsilon(x,K_{\rm in},K_{\rm out})>0\) such that the
following holds: for any \(C^2\) perturbations of the lower-level objective and
constraint functions satisfying
\[
\|s(x,\cdot)\|_{C^2(K_{\rm out})}
+
\max_i\|r_i(x,\cdot)\|_{C^2(K_{\rm out})}
\le \epsilon(x,K_{\rm in},K_{\rm out}),
\]
(see Definition~\ref{def:localc2norm} for the compact-local \(C^2\)-norm), letting \(\widetilde{\mathcal S}_{\mathcal J}(x)\) denote the strata of the
perturbed feasible set, for every \(\mathcal J\subseteq\{1,\dots,k\}\), we have
\begin{align*} &\#\bigl\{\text{local minimizers of the original problem in }\mathcal S_{\mathcal J}(x)\bigr\}\\ =& \#\bigl\{\text{local minimizers of the perturbed problem in }\widetilde{\mathcal S}_{\mathcal J}(x)\cap K_{\rm in}\bigr\}. \end{align*}
\end{theorem}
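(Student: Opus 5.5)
The plan is to show that, at the fixed $x$, the KKT pairs of the original problem are isolated and finite in number, that each continues to exactly one KKT pair of the perturbed problem, and that this continuation preserves both the active set and the local-minimizer property. No other KKT pairs may appear in $K_{\rm in}$.

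\emph{Step 1 (finiteness).} Nonsingularity of the KKT matrix at every KKT pair, together with LICQ, makes each KKT pair an isolated solution of the reduced system
\[
\nabla_y g+\textstyle\sum_{i\in\mathcal J}\lambda_i\nabla_y h_i=0,\qquad h_{\mathcal J}=0 .
\]
Here $\mathcal J$ is its active set. Since $\mathcal Y(x)$ is compact and the multipliers are bounded under LICQ, the set of KKT pairs is compact, so there are finitely many: $(y^1,\lambda^1),\dots,(y^N,\lambda^N)$.

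\emph{Step 2 (continuation).} Write the perturbed KKT system with fixed active set $\mathcal J_\ell=\mathcal J(x,y^\ell)$ as $F_\ell(y,\lambda;s,r)=0$. Its Jacobian at $(y^\ell,\lambda^\ell;0,0)$ is the nonsingular KKT matrix. A quantitative inverse function theorem then gives radii $\delta_\ell>0$ and, for $C^2(K_{\rm out})$-small $(s,r)$, a unique solution $(\widetilde y^\ell,\widetilde\lambda^\ell)$ in the $\delta_\ell$-ball. The key point is that $F_\ell$ depends only on first derivatives of $s,r$, and its Jacobian only on second derivatives, so smallness in $C^2(K_{\rm out})$ controls everything uniformly. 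SCSC gives $\lambda^\ell_{\mathcal J_\ell}>0$ and $h_i(x,y^\ell)<0$ for $i\notin\mathcal J_\ell$. By continuity both inequalities persist, so $(\widetilde y^\ell,\widetilde\lambda^\ell)$ is a genuine KKT pair of the perturbed problem, satisfies SCSC, and lies in $\widetilde{\mathcal S}_{\mathcal J_\ell}(x)$.

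Under LICQ and SCSC, a KKT point is a local minimizer iff the reduced Hessian $Z^\top\nabla^2_{yy}L\,Z$ on the tangent space $\ker\nabla_y h_{\mathcal J}$ is positive definite. A singular reduced Hessian would make the KKT matrix singular, so this reduced Hessian is nonsingular, and its inertia is therefore locally constant under small perturbation. Hence $\widetilde y^\ell$ is a local minimizer iff $y^\ell$ is. This is where nondegeneracy at non-minimizing KKT points is essential: it rules out a saddle turning into a minimizer.

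\emph{Step 3 (no new KKT points) — the main obstacle.} I must show that every KKT pair $(\widetilde y,\widetilde\lambda)$ of the perturbed problem with $\widetilde y\in K_{\rm in}$ is one of the continued pairs. I would argue by contradiction with a sequence of perturbations $\epsilon_n\to0$ and KKT pairs $(\widetilde y_n,\widetilde\lambda_n)$ lying outside $\bigcup_\ell B_{\delta_\ell}$.
\begin{itemize}
\item First, $\widetilde y_n$ lies in $\widetilde{\mathcal Y}(x)\cap K_{\rm in}$, which converges into $\mathcal Y(x)$; this uses $\mathcal Y(x)\subset\operatorname{int}K_{\rm in}$ and uniform convergence of $h+r$ on $K_{\rm out}$.
\item Second, the multipliers are bounded, since otherwise normalizing produces a nontrivial nonnegative combination of active gradients equal to zero at a limit point, contradicting LICQ.
\item Extracting a convergent subsequence gives a KKT pair of the original problem; active constraints at the limit contain the limiting active set.
\end{itemize}
That limit must be some $(y^\ell,\lambda^\ell)$, so the sequence eventually enters $B_{\delta_\ell}$, a contradiction. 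Within the ball, the pair has active set $\mathcal J_\ell$ by SCSC persistence, so it solves $F_\ell=0$ and coincides with the continued pair by uniqueness.

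Combining Steps 2 and 3 gives a bijection between local minimizers in $\mathcal S_{\mathcal J}(x)$ and local minimizers in $\widetilde{\mathcal S}_{\mathcal J}(x)\cap K_{\rm in}$, using the standard fact that under LICQ every local minimizer is a KKT point. As in Theorem~\ref{thm:stab_strat_perturb}, the $K_{\rm in}$/$K_{\rm out}$ distinction is needed only to keep all derivative bounds uniform.
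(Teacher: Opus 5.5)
Your proposal is correct and follows essentially the same route as the paper's proof. Both arguments establish finiteness of the KKT pairs by isolation plus compactness, continue each pair by a quantitative inverse-function (contraction) argument with SCSC pinning the active set, exclude new KKT pairs in $K_{\rm in}$ by compactness, and show that minimizer status is preserved. Your sequential-contradiction Step~3 and your inertia argument for the reduced Hessian are repackagings of the paper's KKT-residual lower bound and its separate Steps~4--5.

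Three small points should be tightened:
\begin{enumerate}
\item The isolation claim in your Step~1 needs SCSC, to force nearby KKT pairs onto the same active set; nonsingularity of the KKT matrix alone only gives isolation within one reduced system.
\item The equivalence ``local minimizer iff positive definite reduced Hessian'' holds only at nondegenerate KKT points (via the second-order necessary condition), not under LICQ and SCSC alone.
\item Concluding that perturbed local minimizers in $K_{\rm in}$ are KKT points requires LICQ for the perturbed problem near $\mathcal Y(x)$. This follows from the same compactness argument you use for the multiplier bound; compare Lemma~\ref{lemma:near_licq}.
\end{enumerate}
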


The proof has five steps. First, the boundedness of $\mathcal{Y}(x)$ together with the nonsingularity of the KKT matrix ensures that every KKT point is isolated, so the total number of KKT points is finite. Second, a quantitative fixed-point argument shows that each original KKT point admits a unique nearby KKT point of the perturbed problem. Third, we rule out new KKT points away from the original ones by establishing a uniform positive lower bound on the KKT residual outside their neighborhoods. Fourth, the nonsingularity of the KKT matrix together with the second-order necessary condition prevents a non-minimizing KKT point from becoming a local minimizer after perturbation. Finally, SCSC ensures that each perturbed KKT point lies on the same stratum as the original, and SOSC at each original local minimizer is preserved under small perturbations, so it remains a local minimizer. The full proof is given in Appendix~\ref{proof:thm:stab_strat_lmins_perturb_noM}.

We now apply Theorems~\ref{thm:main_sosc} and~\ref{thm:stab_strat_lmins_perturb_noM} to construct a counterexample showing that the ``every-$x$'' uniform SOSC requirement is non-prevalent. The idea is the same as before. We find a problem where the number of local minimizers on a particular stratum differs between two upper-level variables $x_1$ and $x_2$. By Theorem~\ref{thm:stab_strat_lmins_perturb_noM}, this count is preserved under small perturbations at each fixed $x$. So the mismatch persists after any small perturbation. By Theorem~\ref{thm:main_sosc}, this means uniform SOSC cannot hold at every $x$ for the perturbed problem.

\begin{counterexample}\label{counterexample:sosc}
    Consider the lower-level problem
    \begin{align*}
        &\min_{y\in\mathbb{R}^2} \quad g(x,y)=(y_1+1)^2 + 0.1\,x\,y_2^4 + (1-6x)\,y_2^2\\
        &\operatorname{s.t.}\quad h_1(x,y)=-y_1\leq 0,\ h_2(x,y)=y_1-2\leq 0,\\
        &\quad \quad \ \ h_3(x,y)=y_2-6\leq 0,\ h_4(x,y)=-y_2-6\leq 0
    \end{align*}
    where $y = (y_1, y_2) \in \mathbb{R}^2$ and the upper-level variable $x \in [0, 1]$. There exists $\epsilon > 0$ such that for any $C^2$ perturbations $s(x,y)$ and $r_i(x,y)$ with
    \begin{align*}
        \|s\|, \, \|\nabla_y s\|, \, \|\nabla^2_{yy} s\|, \, \|r_i\|, \, \|\nabla_y r_i\|, \, \|\nabla^2_{yy} r_i\| \leq \epsilon,\qquad i=1,2,3,4
    \end{align*}
    the perturbed problem
    \begin{align*}
        \min_{y\in\mathbb{R}^2}\ \widetilde{g}(x,y)\coloneq g(x,y)+s(x,y)\quad
        \operatorname{s.t.}\quad \widetilde{h}_i(x,y)\coloneq h_i(x,y)+r_i(x,y)\leq 0,\qquad i=1,2,3,4
    \end{align*}
    fails to satisfy uniform SOSC over $x \in [0,1]$.
    
    \begin{figure}
        \centering
        \includegraphics[width=1\linewidth]{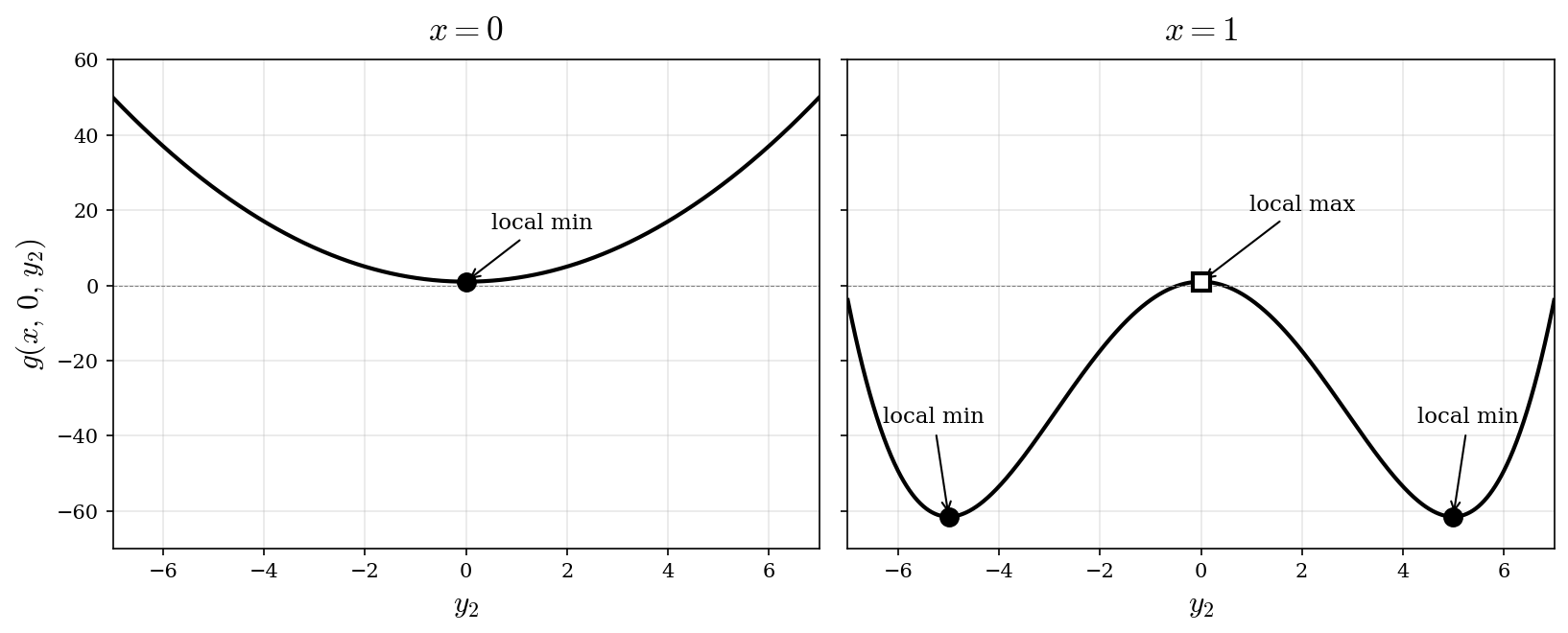}
        \caption{The objective function $g(x, y)$ restricted to the constraint boundary $\{y_1 = 0\}$. At $x = 0$ (left), there is a unique local minimizer at $y_2 = 0$ (filled circle). At $x = 1$ (right), the point $y_2 = 0$ has become a local maximizer (open square), and two new local minimizers have appeared at $y_2 = \pm 5$ (filled circles). The number of local minimizers on the constraint boundary stratum changes from one to two as $x$ varies.}
        \label{fig:sosc_counterexample}
    \end{figure}
    The constraints $h_1$, $h_2$, $h_3$, $h_4$ define the box $[0,2]\times[-6,6]$, 
    so LICQ holds at every feasible point and is preserved under any sufficiently small 
    $C^2$ perturbation. We next verify that SCSC is also preserved under any sufficiently small 
    $C^2$ perturbation. On the feasible set, 
    $\frac{\partial g}{\partial y_1} = 2(y_1+1) \ge 2$, so the KKT stationarity condition 
    $\frac{\partial g}{\partial y_1} - \lambda_1 + \lambda_2 = 0$ together with 
    $\lambda_1, \lambda_2 \ge 0$ forces $\lambda_1 \ge 2 > 0$. Hence every KKT point lies 
    on $y_1=0$, with $h_1$ active and $h_2$ inactive. Similarly, 
    $\frac{\partial g}{\partial y_2}\big|_{y_2=\pm 6} = \pm(14.4x+12) \ne 0$ for $x \in [0,1]$, 
    so $h_3$ and $h_4$ cannot be active at a KKT point without forcing a negative 
    multiplier. Therefore, at every KKT point, $h_1$ is the only active constraint and 
    $\lambda_1 \ge 2 > 0$, so SCSC holds. For any sufficiently small $C^2$ perturbation, 
    the same argument holds, so LICQ and SCSC are preserved.
    
    Figure~\ref{fig:sosc_counterexample} shows the lower-level objective restricted to the feasible set at $x = 0$ and $x = 1$. At $x = 0$, the problem has a unique local minimizer on the constraint boundary defined by $y_1=0$. At $x = 1$, the quartic term creates two additional local minimizers on the boundary, changing the stratum-wise count. By Theorem~\ref{thm:stab_strat_lmins_perturb_noM}, this difference in the number of local minimizers on the boundary stratum persists under any small perturbation. By Theorem~\ref{thm:main_sosc}, uniform SOSC cannot hold at every $x$ for any nearby problem.
\end{counterexample}

\begin{remark}[Uncoupled constraints]
Since LICQ depends only on the constraints, when the constraints are uncoupled, i.e., when $h(x,y) = h(y)$ does not depend on $x$, the ``every-$x$'' version of LICQ is prevalent by Section~\ref{subsection:pre_licq}. In contrast, the ``every-$x$'' version of SCSC and uniform SOSC remains non-prevalent even when the constraints are uncoupled: indeed, Counterexamples~\ref{counterexample:scsc1} and~\ref{counterexample:sosc} both use constraints that do not depend on $x$, yet no sufficiently small $C^2$ perturbation can make SCSC or uniform SOSC hold at every $x$.
\end{remark}

In summary, Statement~\ref{stmt:every_x} is non-prevalent: for each condition, there exist robust counterexamples that no small perturbation can fix. Having established the non-prevalence of Statement~\ref{stmt:every_x}, we now turn to its weaker counterpart Statement~\ref{stmt:almost_every_x} and show in Section~\ref{section:prevalence} that it is prevalent.

\section{Prevalence of LICQ, SCSC, and SOSC: ``Almost-Every-$x$'' Case}\label{section:prevalence}

In Section~\ref{sec:nonprevalence}, we established that Statement~\ref{stmt:every_x} is non-prevalent: the ``every $x$'' requirement cannot be enforced by any sufficiently small $C^2$ perturbation. We now turn to its weaker counterpart, Statement~\ref{stmt:almost_every_x}, and show that it is prevalent. To establish the prevalence of Statement~\ref{stmt:almost_every_x}, we fix a small positive constant $\nu > 0$ and draw $a = (a_1,\ldots,a_k)$ uniformly from $[0,\nu]^k$ and $b = (b_1,\ldots,b_m)$ uniformly from $[0,\nu]^m$. We then apply the following perturbation to the objective and constraint functions of the lower-level problem:
\begin{align}\label{eq:perturbation}
\widetilde{g}(x,y) &= g(x,y) + \langle y, b \rangle, \\
\widetilde{h}_i(x,y) &= h_i(x,y) + a_i. \nonumber
\end{align}
We impose the following assumption for the prevalence results in this section.

\begin{assumption}\label{assumption:general} The following conditions hold.
    \begin{enumerate} 
        \item The upper-level set \(\mathcal X\subset\mathbb R^n\) is Lebesgue measurable. Moreover, the functions \(g\) and \(h_i\) are smooth (i.e., \(C^\infty\)) on an open neighborhood of \(\mathcal X\times\mathbb R^m\);\label{assumption:general1}
        \item The lower-level feasible set $\mathcal{Y}(x)\coloneq \{y:h_i(x,y)\leq0,\ i=1,...,k\}$ satisfies a uniform Slater condition. Specifically, there exists a constant $\rho>0$ such that, for any $x\in\mathcal{X}$, there exists a point $y$ in the interior of $\mathcal{Y}(x)$ satisfying $h_i(x,y)\leq-\rho$ for any $i=1,...,k$.\label{assumption:general12}
    \end{enumerate}
\end{assumption}
The first assumption ensures that the measure-theoretic statements in this
section are well defined. The smoothness on an open neighborhood of 
$\mathcal{X}$ is needed because Sard-type arguments require an open domain 
to define critical points and critical values; the resulting measure-zero 
conclusions on this neighborhood then restrict to measure-zero subsets of 
$\mathcal{X}$. We assume \(C^\infty\) smoothness for simplicity, although
finite-order differentiability would suffice in principle. The second
assumption guarantees that the lower-level feasible set remains nonempty under
the constant perturbations in \eqref{eq:perturbation}: choosing
\(\nu<\rho\) ensures \(\widetilde{\mathcal Y}(x)\neq\emptyset\) for every
\(x\in\mathcal X\). Both assumptions are mild.  We now state the prevalence results for each of the three regularity conditions in turn. 

Unlike Section 2, where the non-prevalence of SCSC is analyzed under the assumption that LICQ holds, and the non-prevalence of SOSC is analyzed under the assumption that both LICQ and SCSC hold, here we establish the prevalence of LICQ, SCSC, and SOSC separately. This suffices because finite intersections of prevalent sets remain prevalent, so the three results together imply that, with probability one, a random perturbation makes LICQ, SCSC, and SOSC hold simultaneously at almost every $x \in \mathcal{X}$.

\begin{remark}
The perturbation scheme~\eqref{eq:perturbation} restricts the parameters
\((b,a)\) to the small box \([0,\nu]^m\times[0,\nu]^k\). This formulation is
slightly more restrictive than Definition~\ref{def:prevalence-formal}, which
requires full Lebesgue measure on the entire finite-dimensional probe space
\[
P=\{(b^\top y,a_1,\ldots,a_k):(b,a)\in\mathbb R^m\times\mathbb R^k\}.
\]
The proofs of Theorems~\ref{thm:LICQ}--\ref{thm:SOSC} rely on Sard's theorem
and the parametric transversality theorem, and the exceptional parameter set
has measure zero on every bounded box in \(P\), regardless of its size.
Taking a countable union over boxes yields measure
zero on the whole probe space \(P\), and hence gives prevalence in the sense
of Definition~\ref{def:prevalence-formal}. We state the results in the
bounded form only because it reflects the setting of interest: in
applications, one only wants arbitrarily small perturbations of the
lower-level objective and constraints, and the bound \(\nu\) also keeps the
perturbed feasible set \(\widetilde{\mathcal Y}(x)\) nonempty under
Assumption~\ref{assumption:general}(\ref{assumption:general12}).
\end{remark}

\subsection{Prevalence of LICQ}\label{subsection:pre_licq}

We begin with LICQ. Since LICQ depends only on the constraint functions and not on the objective, it suffices to only perturb the constraints.

\begin{theorem}[Prevalence of ``almost-every-$x$'' LICQ]\label{thm:LICQ}
Suppose Assumption~\ref{assumption:general} holds. For any fixed positive constant $\nu<\rho$, where $\rho$ is from Assumption~\ref{assumption:general}(\ref{assumption:general12}), draw $a=(a_1,...,a_k)$ uniformly from $[0,\nu]^k$, and perturb $h_i(x,y)$ as follows:
\begin{align}\label{eq:perturb}
    \widetilde{h}_i(x,y)=h_i(x,y)+a_i,\quad\forall i.
\end{align}
Then, with probability one (i.e., almost surely with respect to the random choice of $a$), the set of $x\in\mathcal{X}$, at which the linear independence constraint qualification (LICQ) fails for the perturbed constraints $\{\widetilde{h}_i(x,y)\}_{i=1}^k$ at some feasible $y$, has Lebesgue measure zero in $\mathcal{X}$.
\end{theorem}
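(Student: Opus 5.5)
The plan is to reduce the statement, via Fubini, to a parametric transversality argument applied separately for each index set $\mathcal J\subseteq\{1,\dots,k\}$. LICQ for the perturbed constraints fails at $(x,y)$ exactly when, for $\mathcal J=\mathcal J(x,y)\neq\emptyset$, the active gradients $\{\nabla_y h_i(x,y)\}_{i\in\mathcal J}$ are linearly dependent. Here we have $\widetilde h_i(x,y)=0$ for $i\in\mathcal J$, and $\nabla_y\widetilde h_i=\nabla_y h_i$ because the perturbation is constant. Fix such a $\mathcal J$ and write $U\supset\mathcal X\times\mathbb R^m$ for the open neighborhood from Assumption~\ref{assumption:general}(\ref{assumption:general1}). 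Define the smooth map
\[
F_{\mathcal J}:U\times\mathbb R^{|\mathcal J|}\to\mathbb R^{|\mathcal J|},\qquad F_{\mathcal J}(x,y,a_{\mathcal J})=\bigl(h_i(x,y)+a_i\bigr)_{i\in\mathcal J}.
\]
Its derivative with respect to $a_{\mathcal J}$ is the identity, so $0$ is a regular value of $F_{\mathcal J}$. By the parametric transversality theorem (equivalently, Sard's theorem applied to the projection $M_{\mathcal J}\coloneq F_{\mathcal J}^{-1}(0)\to\mathbb R^{|\mathcal J|}$, $(x,y,a_{\mathcal J})\mapsto a_{\mathcal J}$), for almost every $a_{\mathcal J}$ the map $(x,y)\mapsto (h_i(x,y)+a_i)_{i\in\mathcal J}$ has $0$ as a regular value. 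For such $a$, the set $Z_{\mathcal J}(a)=\{(x,y)\in U:\widetilde h_{\mathcal J}(x,y)=0\}$ is a smooth submanifold of dimension $n+m-|\mathcal J|$. Taking the finite union over all $\mathcal J$, almost every $a\in[0,\nu]^k$ is good for every $\mathcal J$ simultaneously.

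Now fix a good $a$. The bad set is
\[
B_{\mathcal J}(a)=\{(x,y)\in Z_{\mathcal J}(a):\operatorname{rank}\nabla_y\widetilde h_{\mathcal J}(x,y)<|\mathcal J|\}.
\]
Full rank of the whole Jacobian $D_{(x,y)}\widetilde h_{\mathcal J}$ on $Z_{\mathcal J}(a)$ means that the projection $\pi:Z_{\mathcal J}(a)\to\mathbb R^n$, $(x,y)\mapsto x$, fails to be a submersion exactly at the points where the $y$-block is rank deficient. The reason is that the kernel of $D\widetilde h_{\mathcal J}$ is the tangent space of $Z_{\mathcal J}(a)$, and $d\pi$ is onto iff for every $\dot x$ there is a $\dot y$ with $\nabla_x\widetilde h_{\mathcal J}\dot x+\nabla_y\widetilde h_{\mathcal J}\dot y=0$, which holds for all $\dot x$ iff $\nabla_y \widetilde h_{\mathcal J}$ is onto. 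Hence $B_{\mathcal J}(a)$ is precisely the set of critical points of $\pi$. By Sard's theorem, the critical values $\pi(B_{\mathcal J}(a))$ have Lebesgue measure zero in $\mathbb R^n$.

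The set of $x\in\mathcal X$ where LICQ fails at some feasible $y$ is contained in $\bigcup_{\mathcal J\neq\emptyset}\pi(B_{\mathcal J}(a))\cap\mathcal X$. This is a finite union of null sets, so it has measure zero, and measurability within $\mathcal X$ follows from completeness of Lebesgue measure. The condition $\nu<\rho$ guarantees only that $\widetilde{\mathcal Y}(x)\neq\emptyset$ and plays no role in the measure argument.

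The main obstacle I expect is the Sard step for $\pi$ when $|\mathcal J|$ may exceed $m$ or the manifold dimension is below $n$. Sard's theorem in the form used here needs $C^r$ smoothness with $r>\max(0,\dim Z-n)=\max(0,m-|\mathcal J|)$, which $C^\infty$ covers. When $\dim Z_{\mathcal J}(a)<n$, every point is critical, but the image still has measure zero. I would handle this case uniformly by the general (mini-)Sard statement, and I would rely on second countability of $Z_{\mathcal J}(a)$ to pass from local charts to the global null set.
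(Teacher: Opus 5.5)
Your argument is correct, but it follows a different route from the paper's.

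\textbf{The paper's route.} It applies Sard's theorem once, to the augmented map $G_{\mathcal J}(x,y)=(x,h_{\mathcal J}(x,y))$ on $\mathbb R^{n+m}$. It observes that $(x,-a_{\mathcal J})$ is a critical value exactly when some $y$ with $\widetilde h_{\mathcal J}(x,y)=0$ has $\operatorname{rank}\nabla_y h_{\mathcal J}(x,y)<|\mathcal J|$. It then applies Fubini to the null set $\{(x,a):(x,-a_{\mathcal J})\in C_{\mathcal J}\}$, which gives null $x$-slices for almost every $a$.

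\textbf{Your route.} You apply Sard twice. First you apply it to $h_{\mathcal J}$ itself (your parametric transversality step), so that $Z_{\mathcal J}(a)$ is a manifold for a.e.\ $a$. Then you apply it to the projection $\pi:Z_{\mathcal J}(a)\to\mathbb R^n$, whose critical points you correctly identify with the rank-deficient points. The inclusion you actually need is $B_{\mathcal J}(a)\subseteq\operatorname{Crit}(\pi)$, and this is where the regular-value property from your first step is used. If $\pi$ were a submersion at such a point, then $\operatorname{im}\nabla_x h_{\mathcal J}\subseteq\operatorname{im}\nabla_y h_{\mathcal J}$. Surjectivity of $[\nabla_x h_{\mathcal J},\ \nabla_y h_{\mathcal J}]$ would then force $\nabla_y h_{\mathcal J}$ to be onto, a contradiction.

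\textbf{What each buys.} The paper's route is shorter. It needs only Euclidean Sard plus Fubini, with no manifold structure or tangent-space computation. Your route avoids Fubini and gives a sharper, explicit conclusion. The set $\pi(B_{\mathcal J}(a))$ is precisely the $(-a_{\mathcal J})$-slice of the paper's critical-value set $C_{\mathcal J}$. So you show this slice is null for every $a$ such that each $-a_{\mathcal J}$ is a regular value of $h_{\mathcal J}$, not merely for an unspecified full-measure set of $a$. Along the way you also get that each active-set manifold $Z_{\mathcal J}(a)$ has the expected dimension $n+m-|\mathcal J|$, and is empty when $|\mathcal J|>n+m$.
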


The key idea is to reformulate LICQ failure as a critical value condition: the gradients $\{\nabla_y \widetilde{h}_i(x,y)\}_{i \in \mathcal{J}}$ are linearly dependent at a feasible point if and only if $(x, -a_\mathcal{J})$ is a critical value of the map $(x,y) \mapsto (x, h_{\mathcal J}(x,y))$. By Sard's Theorem, the set of such critical values has measure zero. Fubini's Theorem then transfers this to the $x$-space: for almost every $a$, the set of $x$ at which LICQ fails has measure zero. The full proof is given in Appendix~\ref{proof:thm:LICQ}. 

The following example illustrates Theorem~\ref{thm:LICQ}: a problem whose LICQ failure set has positive measure before perturbation is reduced to a single point after perturbation.

\begin{example}\label{ex:licq-prevalence}
Consider the lower-level feasible set in $\mathbb{R}$ defined by
\[
h_1(x,y) = y,\quad h_2(x,y) = y - \phi(x),\quad h_3(x,y) = -y - 1,
\]
where $y \in \mathbb{R}$, $x \in [-\tfrac{1}{2}, \tfrac{3}{2}]$, and
\[
\phi(x) = \begin{cases} x^3 & x \le 0,\\ 0 & 0 \le x \le 1,\\ (x-1)^3 & x \ge 1. \end{cases}
\]
The function $\phi$ is $C^2$ with $\phi \equiv 0$ on $[0,1]$, $\phi(x) < 0$ for $x \in [-\tfrac{1}{2}, 0)$, and $\phi(x) > 0$ for $x \in (1, \tfrac{3}{2}]$; as discussed under Assumption~\ref{assumption:general2}, $C^2$ suffices here. For $x \in [0,1]$, both $h_1$ and $h_2$ are active at $y = 0$ with $\nabla_y h_1 = \nabla_y h_2 = 1$, so LICQ fails on the entire interval $[0,1]$.

Now consider the constant perturbation $\widetilde{h}_i(x,y) = h_i(x,y) + a_i$ as in \eqref{eq:perturb} (see Figure~\ref{fig:licq_almost}). Since $m = 1$, LICQ fails at a feasible point if and only if at least two perturbed constraints are simultaneously active at that point. We analyze each pair in turn. The pair $(\widetilde{h}_1, \widetilde{h}_2)$: simultaneous activity at $y = -a_1$ requires $\phi(x) = a_2 - a_1$. Since $\phi \equiv 0$ on $[0,1]$, this cannot hold for any $x \in [0,1]$ when $a_1 \neq a_2$. If $a_1 > a_2$, the equation $x^3 = a_2 - a_1$ has exactly one solution $x = (a_2 - a_1)^{1/3} \in [-\tfrac{1}{2}, 0)$ and no solution on $[1, \tfrac{3}{2}]$ since $\phi > 0$ there. If $a_2 > a_1$, the equation $(x-1)^3 = a_2 - a_1$ has exactly one solution $x = 1 + (a_2 - a_1)^{1/3} \in (1, \tfrac{3}{2}]$ and no solution on $[-\tfrac{1}{2}, 0]$ since $\phi < 0$ there. The pair $(\widetilde{h}_1, \widetilde{h}_3)$: simultaneous activity at $y = -a_1 = -1 + a_3$ requires $a_1 + a_3 = 1$, which fails when $\|a\|$ is sufficiently small. The pair $(\widetilde{h}_2, \widetilde{h}_3)$: simultaneous activity at $y = \phi(x) - a_2 = -1 + a_3$ requires $\phi(x) = 1 - a_2 - a_3$, which has no solution when $\|a\|$ is sufficiently small.

Therefore, except when $a_1 = a_2$ (a measure-zero event in $[0,\nu]^3$), the LICQ failure set reduces from the interval $[0,1]$ to a single point, consistent with Theorem~\ref{thm:LICQ}.
\begin{figure}
    \centering
    \includegraphics[width=1\linewidth]{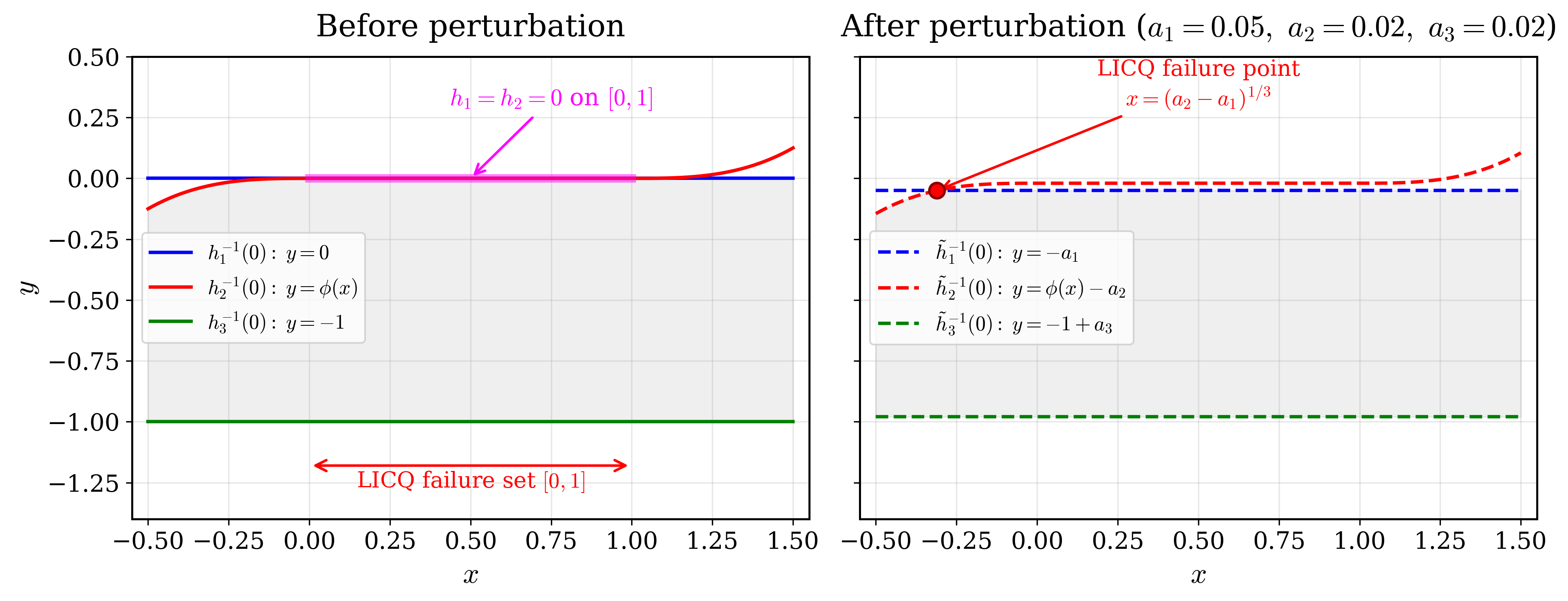}
    \caption{The constraint boundaries and feasible set $\mathcal{Y}(x)$ (shaded) in Example~\ref{ex:licq-prevalence}. Left: before perturbation, $h_1$ and $h_2$ coincide on $[0,1]$, so LICQ fails on the entire interval. Right: after perturbation with $a_1 \neq a_2$, the two constraint boundaries separate, and the LICQ failure set reduces to a single point.}
    \label{fig:licq_almost}
\end{figure}
\end{example}

\subsection{Prevalence of SCSC}

Next, we analyze SCSC for the lower-level problem. Unlike LICQ, SCSC depends on both the constraints and the behavior of the objective function. Therefore, we introduce simultaneous perturbations to the constraint functions and to the objective, as in \eqref{eq:perturb2}. The following theorem establishes that, for almost every perturbation $a$ and $b$, the strict complementarity holds for the lower-level at almost every upper-level variable $x$.

\begin{theorem}[Prevalence of ``almost-every-$x$'' SCSC]\label{thm:SCSC}
    Suppose Assumption~\ref{assumption:general} holds. For any fixed positive constant $\nu<\rho$, where $\rho$ is from Assumption~\ref{assumption:general}(\ref{assumption:general12}), draw $b=(b_1,...,b_m)$ uniformly from $[0,\nu]^m$, $a=(a_1,...,a_k)$ uniformly from $[0,\nu]^k$, and perturb $g(x,y)$ and $h_i(x,y)$ as follows:
    \begin{align}\label{eq:perturb2}
        \widetilde{g}(x,y)&=g(x,y)+\left<y,b\right>,\\ 
        \widetilde{h}_i(x,y)&=h_i(x,y)+a_i,\quad\forall i.\nonumber
    \end{align}
    Then, with probability one (i.e., almost surely with respect to the random choice of $b$ and $a$), the set of $x\in\mathcal{X}$, at which the strict complementary slackness condition (SCSC) does not hold at some KKT point $y$ of the perturbed problem, has Lebesgue measure zero in $\mathcal{X}$.
\end{theorem}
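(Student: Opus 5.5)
The plan is to mirror the Sard–Fubini argument behind Theorem~\ref{thm:LICQ}, now with the enlarged parameter $(b,a)$. First reduce SCSC failure to a finite union of events indexed by active sets. Suppose SCSC fails at a KKT point $y$ of the perturbed problem at $x$. Then there is an index set $\mathcal J$ of active constraints ($\widetilde h_i(x,y)=0$ for $i\in\mathcal J$) and a nonempty subset $\mathcal I\subseteq\mathcal J$ of indices with $\lambda_i=0$. Dropping the constraints in $\mathcal I$, we get
\[
\nabla_y g(x,y)+b+\sum_{i\in\mathcal J\setminus\mathcal I}\lambda_i\nabla_y h_i(x,y)=0,\qquad h_i(x,y)=-a_i\ (i\in\mathcal J).
\]
So $(y,\lambda_{\mathcal J\setminus\mathcal I})$ satisfies $m+|\mathcal J\setminus\mathcal I|$ stationarity and active-constraint equations together with $|\mathcal I|\ge1$ extra equations. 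Since there are finitely many pairs $(\mathcal I,\mathcal J)$, it suffices to handle each pair separately and take the union.

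For a fixed pair, write $\mathcal K=\mathcal J\setminus\mathcal I$ and define the smooth map
\[
\Phi_{\mathcal I,\mathcal J}:(x,y,\mu)\mapsto\Bigl(x,\;-\nabla_y g(x,y)-\textstyle\sum_{i\in\mathcal K}\mu_i\nabla_y h_i(x,y),\;-h_{\mathcal J}(x,y)\Bigr),
\]
defined on an open neighborhood of $\mathcal X\times\mathbb R^m\times\mathbb R^{|\mathcal K|}$, which is allowed by Assumption~\ref{assumption:general}(\ref{assumption:general1}). The domain has dimension $n+m+|\mathcal K|$ and the target has dimension $n+m+|\mathcal J|$, which is strictly larger because $|\mathcal I|\ge1$. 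A SCSC failure of type $(\mathcal I,\mathcal J)$ at $x$ means exactly that $(x,b,a_{\mathcal J})$ lies in the image of $\Phi_{\mathcal I,\mathcal J}$. By the easy case of Sard's theorem (every point is critical when the domain dimension is smaller), this image has Lebesgue measure zero in $\mathbb R^{n+m+|\mathcal J|}$. The coordinates $a_i$ with $i\notin\mathcal J$ are free, so the set of $(x,b,a)$ for which the failure occurs is also null in $\mathbb R^{n}\times\mathbb R^m\times\mathbb R^k$. The multipliers here need no sign condition; enlarging the set only helps.

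Next apply Fubini/Tonelli to the indicator of this null set $E$, which is measurable since it is contained in a $\sigma$-compact image. Integrating first over $x\in\mathcal X$ gives that for almost every $(b,a)\in[0,\nu]^m\times[0,\nu]^k$ the slice $\{x:(x,b,a)\in E\}$ has measure zero. Taking the finite union over $(\mathcal I,\mathcal J)$ completes the argument. Feasibility of $y$ plays no role beyond keeping the perturbed problem nonempty, which is guaranteed by $\nu<\rho$.

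The step I expect to be the main obstacle is measurability and the justification of Sard on a noncompact domain. I would handle it by exhausting the domain with countably many compact sets, so that each image is compact and null, and $E$ is $\sigma$-compact and hence Borel. I would also check that the statement needs only the SCSC failure set to be contained in a null set, which avoids measurability issues for the exact failure set itself. If one prefers to avoid the dimension-count shortcut, an alternative is the parametric transversality theorem applied to the KKT map in the parameter $(b,a)$. The dimension-count argument seems cleanest, so I would commit to it.
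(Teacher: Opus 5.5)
Your proposal is correct. It shares the paper's dimension count (one more equation than unknowns) but implements it with a different tool.

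\textbf{The paper's route.} The paper fixes $(\mathcal J,i)$ and keeps all of $\lambda_{\mathcal J}$ as unknowns. It appends the equation $\lambda_i=0$, giving the overdetermined system $H_{\mathcal J,i}(z;x,b,a)=0$ with $m+|\mathcal J|+1$ equations in $m+|\mathcal J|$ unknowns. It then checks that the full family map has full row rank, using the identity blocks in $b$ and $a_{\mathcal J}$ and the row $e_i^\top$. Next it invokes the Parametric Transversality Theorem~\ref{thm:param_transversality}: for almost every $(x,b,a_{\mathcal J})$ the slice is transverse to $\{0\}$, hence has no zeros. The same Fubini step you use finishes the proof.

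\textbf{Your route.} You drop the zero multipliers and solve the equations for the parameters, which is possible because $b$ and $a$ enter additively. The bad parameter set then appears directly as the image of $\Phi_{\mathcal I,\mathcal J}$, a map from an $(n+m+|\mathcal K|)$-dimensional domain into $\mathbb R^{n+m+|\mathcal J|}$. With $\mathcal I=\{i\}$, your $\Phi_{\mathcal I,\mathcal J}$ is exactly the projection of the paper's zero set $\mathcal M_{\mathcal J,i}$ onto $(x,b,a_{\mathcal J})$, written in explicit coordinates. So the paper's transversality step, whose proof is itself Sard applied to that projection, reduces to the trivial case of Sard.

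\textbf{What each buys.} Your version has several advantages:
\begin{itemize}
\item It is more elementary and uniform with the paper's own Sard--Fubini proofs of Theorems~\ref{thm:LICQ} and~\ref{thm:SOSC}.
\item It makes measurability of the exceptional set explicit through $\sigma$-compactness, which the paper passes over.
\item It applies to every KKT pair without sign or uniqueness assumptions on the multipliers. It therefore does not need the paper's preliminary appeal to Theorem~\ref{thm:LICQ} to make SCSC ``well-defined''.
\end{itemize}
The transversality formulation is more robust. It only requires the family to be a submersion along its zero set, so it would still work for perturbation families that enter nonlinearly and cannot be solved for the parameters explicitly.
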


The proof uses the Parametric Transversality Theorem~\ref{thm:param_transversality}. For each active set $\mathcal{J}$ and each index $i \in \mathcal{J}$, we construct a system $H_{\mathcal{J},i}(z; x, b, a) = 0$ that encodes the existence of a KKT point violating SCSC (i.e., $\lambda_i = 0$ for some active constraint $i$). We show that the full map $H_{\mathcal{J},i}(\cdot\,; \cdot, \cdot, \cdot)$ is transverse to $\{0\}$. By the Parametric Transversality Theorem, for almost every $(x, b, a)$, the slice map $H_{\mathcal{J},i}(\cdot\,; x, b, a)$ is also transverse to $\{0\}$. Since this slice map has more equations than unknowns, transversality forces the solution set $(H_{\mathcal{J},i}(\cdot\,; x, b, a))^{-1}(0)$ to be empty. The full proof is given in Appendix~\ref{proof:thm:SCSC}.

The following example illustrates Theorem~\ref{thm:SCSC}: a problem whose SCSC failure set has positive measure before perturbation, while its perturbed version has a single point.

\begin{example}\label{ex:scsc-prevalence}
Consider the lower-level problem in $\mathbb{R}$ defined by
\[
\min_{y \in \mathbb{R}} \; g(x,y) = (y - \phi(x))^2 \quad \operatorname{s.t.} \quad h_1(x,y) = y \leq 0, \quad h_2(x,y) = -y - 1 \leq 0,
\]
where $x \in [-\tfrac{1}{2}, \tfrac{3}{2}]$ and $\phi$ is defined as in Example~\ref{ex:licq-prevalence}. The feasible set $\mathcal{Y}(x) = [-1, 0]$ is independent of $x$, so LICQ holds for every $x$. The KKT system reads
\[
2(y - \phi(x)) + \mu_1 - \mu_2 = 0, \quad \mu_1 y = 0, \quad \mu_2(-y-1) = 0, \quad \mu_1, \mu_2 \geq 0,
\]
with $y \in [-1, 0]$. The unique minimizer is $y^*(x)=\min\{0,\max(\phi(x),-1)\}$, with multipliers
\[
\mu_1(x) = \max(2\phi(x), 0), \quad \mu_2(x) = \max(-2\phi(x) - 2, 0).
\]
SCSC fails precisely when the unconstrained minimizer $\phi(x)$ lies on the boundary of $\mathcal{Y}(x)$, so that the corresponding multiplier vanishes while the constraint is active. Since $\phi \equiv 0$ on $[0,1]$, this occurs for every $x \in [0,1]$, so the SCSC failure set has positive measure. We choose the perturbation radius \(\nu<1/8\). Now consider the perturbation in \eqref{eq:perturb2} (see Figure~\ref{fig:scsc_almost}). The unconstrained minimizer of $\widetilde{g}(x,y) = (y - \phi(x))^2 + by$ is $y = \phi(x) - \tfrac{b}{2}$. SCSC fails when this value coincides with a perturbed boundary. The perturbed feasible set is $\widetilde{Y}(x) = [-1+a_2,\, -a_1]$, so there are two boundary values to consider. For the upper boundary $y = -a_1$: the unconstrained minimizer hits this boundary when $\phi(x) - \tfrac{b}{2} = -a_1$, i.e., $\phi(x) = \tfrac{b}{2} - a_1$. On the interval $[0,1]$, we have $\phi \equiv 0$, so this condition becomes $a_1 = \tfrac{b}{2}$. Since $a_1$ and $b$ are drawn independently and uniformly, the event $a_1 = \tfrac{b}{2}$ has probability zero. When $a_1 \neq \tfrac{b}{2}$, the original failure interval $[0,1]$ is entirely eliminated. On $[-\tfrac{1}{2}, 0)$, the condition becomes $x^3 = \tfrac{b}{2} - a_1$, which has exactly one solution $x = (\tfrac{b}{2} - a_1)^{1/3}$ when $a_1 > \tfrac{b}{2}$, and no solution when $a_1 < \tfrac{b}{2}$ since $x^3 < 0$ there. On $(1, \tfrac{3}{2}]$, the condition becomes $(x-1)^3 = \tfrac{b}{2} - a_1$, which has exactly one solution $x = 1 + (\tfrac{b}{2} - a_1)^{1/3}$ when $a_1 < \tfrac{b}{2}$, and no solution when $a_1 > \tfrac{b}{2}$ since $(x-1)^3 > 0$ there. For the lower boundary $y = -1 + a_2$: the unconstrained minimizer hits this boundary when $\phi(x) = -1 + a_2 + \tfrac{b}{2}$. Since $|\phi(x)| \leq \tfrac{1}{8}$ on $[-\tfrac{1}{2}, \tfrac{3}{2}]$ and $-1 + a_2 + \tfrac{b}{2} \leq -1 + \tfrac{3\nu}{2} < -\tfrac{1}{8}$ for $\nu < \tfrac{1}{8}$, this condition has no solution.

Therefore, for almost every $(a, b)$, the SCSC failure set reduces from the interval $[0,1]$ to a single point, consistent with Theorem~\ref{thm:SCSC}.

\begin{figure}
    \centering
    \includegraphics[width=1\linewidth]{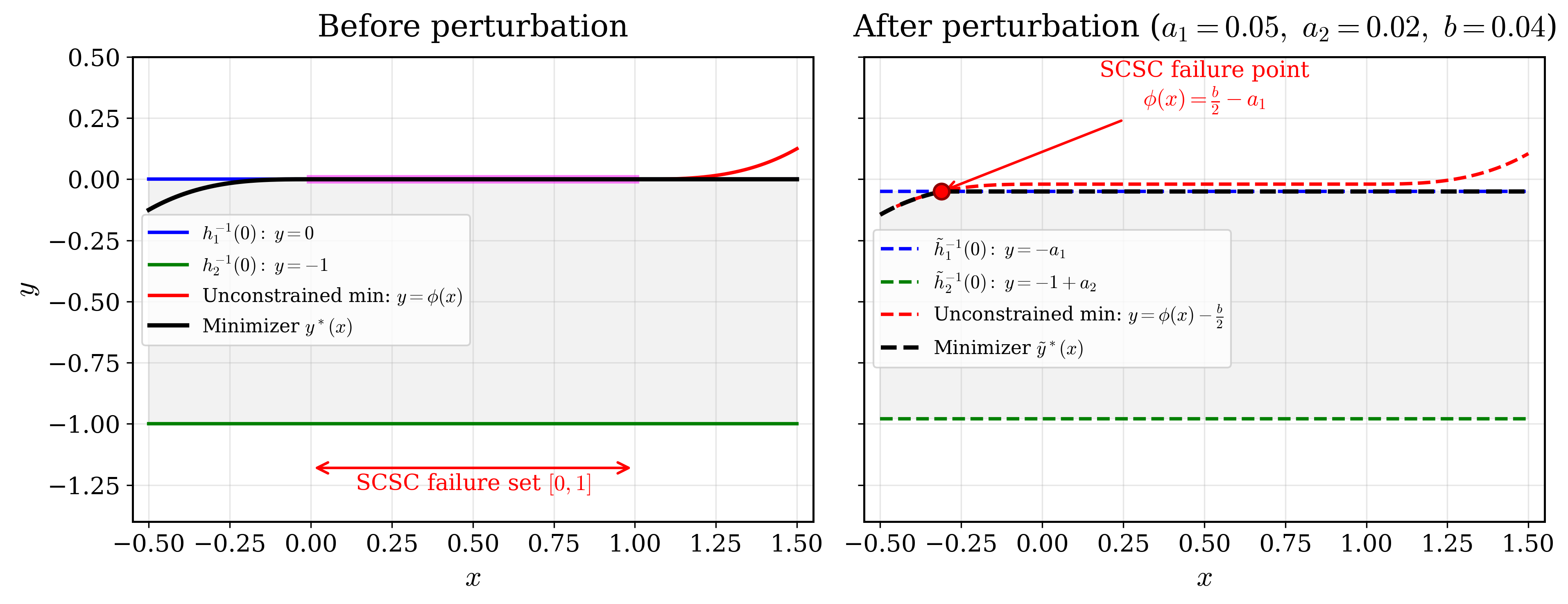}
    \caption{The feasible set (shaded), unconstrained minimizer, and constrained minimizer in Example~\ref{ex:scsc-prevalence}. Left: before perturbation, the unconstrained minimizer coincides with the boundary on $[0,1]$, so SCSC fails on the entire interval. Right: after perturbation, the SCSC failure set reduces to a single point (marked).}
    \label{fig:scsc_almost}
\end{figure}
\end{example}

\subsection{Prevalence of SOSC}

Finally, we consider SOSC for the lower-level problem. Similar to SCSC, SOSC depends on both the constraint structure and the curvature of the objective function, and therefore requires simultaneous perturbations of the constraint functions and to the objective, as in \eqref{eq:perturb3}. The following result establishes that, for almost every perturbation $a$ and $b$, the second-order sufficient optimality condition holds for the lower-level at almost every upper-level variable $x$.

\begin{theorem}[Prevalence of ``almost-every-$x$'' SOSC]\label{thm:SOSC}
    Suppose Assumption~\ref{assumption:general} holds. For any fixed positive constant $\nu<\rho$, where $\rho$ is from Assumption~\ref{assumption:general}(\ref{assumption:general12}), draw $b=(b_1,...,b_m)$ uniformly from $[0,\nu]^m$, $a=(a_1,...,a_k)$ uniformly from $[0,\nu]^k$, and perturb $g(x,y)$ and $h_i(x,y)$ as follows:
    \begin{align}\label{eq:perturb3}
        \widetilde{g}(x,y)&=g(x,y)+\left<y,b\right>,\\ 
        \widetilde{h}_i(x,y)&=h_i(x,y)+a_i,\quad\forall i.\nonumber
    \end{align}
    Then, with probability one (i.e., almost surely with respect to the random choice of $b$, and $a$), the set of $x\in\mathcal{X}$, at which the second-order sufficient optimality conditions do not hold at some local minimizer $y$ of the perturbed problem, has Lebesgue measure zero in $\mathcal{X}$.
\end{theorem}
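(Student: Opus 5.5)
We reduce SOSC failure to the same kind of ``too many equations'' transversality argument used for Theorem~\ref{thm:SCSC}. By Theorems~\ref{thm:LICQ} and~\ref{thm:SCSC}, and since a countable union of null sets is null, it suffices to work on the full-probability event where, for almost every $x$, LICQ holds at every feasible point and SCSC holds at every KKT point. The exceptional $x$'s are collected into a null set $E_0$.

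Fix $x\notin E_0$ and a local minimizer $y$ of the perturbed problem. LICQ gives a unique multiplier $\lambda$, so $(y,\lambda)$ is a KKT pair, and SCSC gives $\lambda_i>0$ for every $i\in\mathcal J=\mathcal J(x,y)$. The critical cone is therefore the subspace $T=\{d:\nabla_y h_{\mathcal J}(x,y)d=0\}$. The second-order necessary condition, valid under LICQ, says $\nabla^2_{yy}\widetilde{\mathcal L}$ is positive semidefinite on $T$. Hence SOSC fails only if the reduced Hessian $Z^\top\nabla^2_{yy}\widetilde{\mathcal L}Z$ is singular, where $Z$ spans $T$. Equivalently, the bordered KKT matrix
\[
M(x,y,\lambda_{\mathcal J})=\begin{pmatrix}\nabla^2_{yy} g+\sum_{i\in\mathcal J}\lambda_i\nabla^2_{yy}h_i & \nabla_y h_{\mathcal J}^\top\\ \nabla_y h_{\mathcal J} & 0\end{pmatrix}
\]
is singular. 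Linear and constant perturbations do not change second derivatives, so $M$ does not depend on $(a,b)$.

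For each $\mathcal J\subseteq\{1,\dots,k\}$ with $|\mathcal J|\le m$, define
\[
F_{\mathcal J}(y,\lambda_{\mathcal J};x,b,a)=\Bigl(\nabla_y g(x,y)+b+\nabla_y h_{\mathcal J}(x,y)^\top\lambda_{\mathcal J},\; h_{\mathcal J}(x,y)+a_{\mathcal J}\Bigr)\in\mathbb R^{m+|\mathcal J|}.
\]
Its Jacobian with respect to $(b,a_{\mathcal J})$ is the identity, so $F_{\mathcal J}$ is a submersion in all variables and $0$ is a regular value. By the Parametric Transversality Theorem~\ref{thm:param_transversality}, for almost every $(b,a)$ the map $(y,\lambda_{\mathcal J},x)\mapsto F_{\mathcal J}(y,\lambda_{\mathcal J};x,b,a)$ has $0$ as a regular value. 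Its zero set $W_{b,a}$ is then a smooth manifold of dimension $n$, the KKT manifold over $x$-space.

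On $W_{b,a}$, the differential of $F_{\mathcal J}$ in the $(y,\lambda_{\mathcal J})$ directions is exactly $M$. So the points of $W_{b,a}$ where $M$ is singular are precisely the critical points of the projection $\pi:W_{b,a}\to\mathbb R^n$, $(y,\lambda,x)\mapsto x$: $d\pi$ is onto iff the $(y,\lambda)$-block is invertible, given that the full differential is onto. By Sard's theorem, the critical values of $\pi$ form a Lebesgue-null set $E_{\mathcal J}(b,a)\subset\mathbb R^n$.

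Combine these. Set $E=E_0\cup\bigcup_{\mathcal J}E_{\mathcal J}(b,a)$, a finite union of null sets, intersected with $\mathcal X$. For every $x\in\mathcal X\setminus E$, every local minimizer $y$ lies with its multiplier on some $W_{b,a}$ for $\mathcal J=\mathcal J(x,y)$ and has $M$ nonsingular. So the reduced Hessian is nonsingular and positive semidefinite, hence positive definite on $T=\mathcal C(x,y)$, which is SOSC. The ``almost every $(b,a)$'' statements are countably many, so they combine to probability one, and restricting to the box $[0,\nu]^{m+k}$ is harmless. Measurability of the exceptional $x$-set follows because it is contained in a null set and Lebesgue measure is complete.

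The step I expect to need the most care is the reduction from ``SOSC fails at a local minimizer'' to ``$M$ is singular''. It uses SCSC to turn the critical cone into a subspace and uses second-order necessity; without SCSC the cone is only polyhedral and semidefiniteness does not give a clean determinant condition. That is why Theorem~\ref{thm:SCSC} must be invoked first. A secondary point is making Sard's theorem apply to $\pi$ on the open neighborhood of $\mathcal X$ from Assumption~\ref{assumption:general}, and restricting $\lambda$ to $\lambda_{\mathcal J}>0$ so that the relevant part of $W_{b,a}$ is an open subset.
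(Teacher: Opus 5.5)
Your proof is correct. Its reduction step is the same as the paper's: on the full-probability event of Theorems~\ref{thm:LICQ} and~\ref{thm:SCSC}, LICQ, SCSC and second-order necessity turn SOSC failure at a local minimizer into singularity of the bordered KKT matrix, and that matrix does not depend on $(a,b)$.

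Where you differ is the measure-zero argument.

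\textbf{Paper.} It applies Sard once, on the joint space, to the square map $G_{\mathcal J}(x,y,\lambda_{\mathcal J})=(x,\nabla_y g+\nabla_y h_{\mathcal J}^\top\lambda_{\mathcal J},h_{\mathcal J})$. Because $x$ is carried along as an output coordinate, the critical points of $G_{\mathcal J}$ are exactly the points where the KKT matrix is singular. A bad local minimizer at $x$ therefore puts $(x,-b,-a_{\mathcal J})$ in the null critical-value set. Fubini then slices that null set to give a null $x$-section for almost every $(b,a)$.

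\textbf{Yours.} You use parametric transversality in $(b,a_{\mathcal J})$ to make the KKT set $W_{b,a}$ a smooth $n$-manifold for almost every $(b,a)$. You then apply Sard to the projection $W_{b,a}\to x$. The resulting exceptional $x$-set is literally the same set as the paper's $x$-section.

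\textbf{Comparison.} The paper's route is more economical because it needs no manifold structure on the KKT set. Points where the full Jacobian in $(y,\lambda_{\mathcal J},x)$ fails to be onto already have a singular KKT matrix, so they are simply absorbed as critical points of $G_{\mathcal J}$. Your route costs one extra theorem but gives more structure for each fixed generic perturbation. KKT pairs with a given active pattern form an $n$-dimensional manifold over $x$-space, and SOSC can fail only over critical values (folds) of its projection. This is the saddle-node picture of Example~\ref{example:sosc_bifurcation} and the standard Jongen-style viewpoint. The restriction to $\lambda_{\mathcal J}>0$ that you mention is harmless but not needed.
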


The proof applies the same Sard-Fubini strategy as in Theorem~\ref{thm:LICQ}, but to the KKT map $G_{\mathcal J}(x, y, \lambda_{\mathcal J}) = (x, \nabla_y g + \sum_{i \in \mathcal{J}} \lambda_i \nabla_y h_i, h_{\mathcal J})$. If SOSC fails at a local minimizer, then there exists a nonzero critical direction $d$ with $d^\top \nabla^2_{yy} \mathcal{L}\, d = 0$, which implies the KKT matrix is singular. This singularity is equivalent to $(x, -b, -a_{\mathcal J})$ being a critical value of $G_{\mathcal J}$. By Sard's Theorem the set of such critical values has measure zero, and Fubini's Theorem transfers this to the $x$-space as before. The full proof is given in Appendix~\ref{proof:thm:SOSC}.

\begin{remark}
    The proof actually establishes a stronger property than stated in the theorem: for almost all perturbations, the KKT matrix in \eqref{eq:matrix1} is non-singular at every KKT point of the perturbed problem, not just at local minimizers. Since the second-order necessary condition dictates that the reduced Hessian is positive semi-definite at any local minimizer, this inherent non-degeneracy of the KKT matrix strictly precludes the existence of zero eigenvalues, thereby directly elevating second-order necessary condition to the strict positive definiteness required by second-order sufficient condition.
\end{remark}

The following example illustrates Theorem~\ref{thm:SOSC}: a problem whose SOSC failure set has positive measure before perturbation is eliminated after perturbation.

\begin{example}\label{ex:sosc-prevalence}
Consider the lower-level problem in $\mathbb{R}^2$ defined by
\[
\min_{y \in \mathbb{R}^2} \; g(x,y) = y_1^2 + 2y_1 + y_2^4 + \phi(x)^2 y_2^2 \quad \operatorname{s.t.} \quad 0 \leq y_1 \leq 1, \quad -1 \leq y_2 \leq 1,
\]
where $x \in [-\tfrac{1}{2}, \tfrac{3}{2}]$ and $\phi$ is defined as in Example~\ref{ex:licq-prevalence}. The feasible set $\mathcal{Y}(x) = [0,1] \times [-1,1]$ is independent of $x$, so LICQ holds everywhere. Since $\nabla_{y_1} g = 2y_1 + 2 > 0$ on $\mathcal{Y}(x)$, any minimizer must satisfy $y_1^* = 0$, with the constraint $-y_1 \leq 0$ active and multiplier $\lambda_1 = 2 > 0$. In the $y_2$-direction, $\nabla_{y_2} g = 2y_2(2y_2^2 + \phi(x)^2) = 0$ gives $y_2^* = 0$, since $2y_2^2 + \phi(x)^2 > 0$ for all $y_2$ when $\phi(x) \neq 0$, and the factor $2y_2^2 + \phi(x)^2$ vanishes only at $y_2 = 0$ when $\phi(x) = 0$. Thus the unique minimizer is $y^* = (0,0)$ for all $x$, and SCSC holds since $\lambda_1 = 2 > 0$.

The critical cone at $y^*$ is $\mathcal{C} = \{d : d_1 = 0\}$, and for $d = (0, d_2) \in \mathcal{C}$,
\begin{align}\label{eq:counterexample3.3}
    d^\top \nabla_{yy}^2 g(x, y^*) \, d = (12y_2^{*2} + 2\phi(x)^2) d_2^2 = 2\phi(x)^2 d_2^2.    
\end{align}
For $x \in [0,1]$, $\phi(x) = 0$, so \eqref{eq:counterexample3.3} vanishes for all $d \in \mathcal{C}$ and SOSC fails. For $x \notin [0,1]$, $\phi(x) \neq 0$, so \eqref{eq:counterexample3.3} is strictly positive for all nonzero $d \in \mathcal{C}$ and SOSC holds. Thus, the SOSC failure set is the entire interval $[0,1]$.

Now consider the perturbation in \eqref{eq:perturb3}. The linear perturbation $b_2 y_2$ does not affect the Hessian, so the second-order condition along $\mathcal{C}$ remains $12 y_2^2 + 2\phi(x)^2 > 0$. This condition fails only when $y_2 = 0$ and $\phi(x) = 0$ simultaneously. However, the perturbed stationarity condition in $y_2$ becomes $4y_2^3 + 2\phi(x)^2 y_2 + b_2 = 0$, which at $y_2 = 0$ and $\phi(x) = 0$ reduces to $b_2 = 0$. In other words, when $b_2 \neq 0$, the point $y_2 = 0$ is no longer stationary for any $x \in [0,1]$, and the perturbed stationary point $y_2^*$ satisfies $y_2^* \neq 0$, so $12 y_2^{*2} + 2\phi(x)^2 > 0$ and SOSC holds. Since the event $b_2 = 0$ has probability zero, for almost every $b$ the SOSC failure set is empty, consistent with Theorem~\ref{thm:SOSC}.
\end{example}

Now we have established that under the perturbation scheme in \eqref{eq:perturbation}, LICQ, SCSC, and SOSC hold with probability one (in the sense of random choice of $a$ and $b$) for the lower-level problem at almost every upper-level variable $x$. However, as shown in Section 2, the stronger ``every-$x$'' version of these conditions is non-prevalent: there exist problems for which no sufficiently small perturbation can make the conditions hold at every $x$. Although the ``almost-every-$x$'' and ``every-$x$'' versions differ only on a measure-zero set of upper-level variables, this difference can have substantial consequences for bilevel optimization. We analyze this gap in the following section.

\section{Gap Between ``Almost-Every-$x$'' and ``Every-$x$''}\label{section:obstructionforbilevel}
In Section~\ref{sec:nonprevalence}, we showed that Statement~\ref{stmt:every_x} is non-prevalent. In Section~\ref{section:prevalence}, we showed that Statement~\ref{stmt:almost_every_x} is prevalent. Although these two statements differ only on a measure-zero set of upper-level variables, the gap between them is far from negligible. In this section, we examine what is lost when the regularity conditions fail on such a measure-zero set. The structural consequences depend on which condition fails and on the problem setting:
\begin{itemize}
    \item \textbf{LICQ failure} (Section~\ref{subsec:role_licq}): the Lagrange multiplier loses uniqueness and continuity.
    \item \textbf{SCSC failure under strongly convex lower-level} (Section~\ref{subsec:role_scsc}): the unique minimizer $y^*(x)$ loses differentiability, turning the bilevel problem from a smooth into a nonsmooth optimization problem.
    \item \textbf{SCSC or uniform SOSC failure under non-strongly-convex lower-level} (Section~\ref{subsec:role_sosc}): a branch of local minimizers can terminate abruptly, making the solution mapping $x \mapsto y^*(x)$ discontinuous.
\end{itemize}
In each case, we also discuss the numerical consequences: the formulas~\eqref{eq:sensitivity_formula} and \eqref{eq:complementarity_sensitivity} used for hypergradient computation become ill-conditioned not only at the failure points themselves, but throughout an entire neighborhood surrounding them.

\subsection{The gap caused by LICQ failure}\label{subsec:role_licq}

When LICQ holds for the lower-level problem at every $x \in \mathcal{X}$, the Lagrange multiplier associated with any KKT point is well-behaved in the following sense.

\begin{proposition}[Uniqueness and continuity of Lagrange multipliers]\label{prop:multiplier_continuity}
    Suppose Assumption~\ref{assumption:general2} holds, at every $x \in \mathcal{X}$, LICQ holds for the lower-level problem. Then, for any KKT point $(x, y^\ast(x))$, the associated multiplier $\lambda(x)$ is unique. Moreover, for any continuous trajectory of KKT points $(x,y^\ast(x))$, the multiplier $\lambda(x)$ is also continuous.
\end{proposition}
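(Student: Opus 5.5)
Uniqueness follows pointwise from the stationarity equation; continuity follows from a local least-squares formula for the multiplier, valid because the active set cannot grow under small perturbations.

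\emph{Uniqueness.} Fix $x$ and a KKT point $y^\ast=y^\ast(x)$ with active set $\mathcal J=\mathcal J(x,y^\ast)$. Complementary slackness forces $\lambda_i=0$ for $i\notin\mathcal J$, so stationarity reads
\[
\nabla_y g(x,y^\ast)+A_{\mathcal J}(x,y^\ast)^\top\lambda_{\mathcal J}=0,
\qquad A_{\mathcal J}(x,y)\coloneq \nabla_y h_{\mathcal J}(x,y)\in\mathbb R^{|\mathcal J|\times m}.
\]
LICQ says $A_{\mathcal J}$ has full row rank. Hence $A_{\mathcal J}^\top$ is injective, and $\lambda_{\mathcal J}$ is uniquely determined:
\[
\lambda_{\mathcal J}=-\bigl(A_{\mathcal J}A_{\mathcal J}^\top\bigr)^{-1}A_{\mathcal J}\nabla_y g(x,y^\ast).
\]

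\emph{Continuity.} Let $x\mapsto y^\ast(x)$ be a continuous KKT trajectory and fix $x_0$ with $\mathcal J_0=\mathcal J(x_0,y^\ast(x_0))$.

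First, the active set can only shrink nearby. For $i\notin\mathcal J_0$ we have $h_i(x_0,y^\ast(x_0))<0$. By continuity of $h_i$ and of $y^\ast(\cdot)$, this inequality persists near $x_0$. Hence $\mathcal J(x,y^\ast(x))\subseteq\mathcal J_0$ and $\lambda_i(x)=0$ for $i\notin\mathcal J_0$ near $x_0$.

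Second, solve for the multiplier on the larger index set $\mathcal J_0$. At $x_0$, the matrix $A_{\mathcal J_0}(x_0,y^\ast(x_0))$ has full row rank. Full row rank is an open condition and $A_{\mathcal J_0}$ is continuous in $(x,y)$. Therefore $M(x)\coloneq A_{\mathcal J_0}(x,y^\ast(x))A_{\mathcal J_0}(x,y^\ast(x))^\top$ is invertible near $x_0$, with continuous inverse. The vector $\lambda_{\mathcal J_0}(x)$, whose entries outside $\mathcal J(x,y^\ast(x))$ are zero, satisfies stationarity with the full matrix $A_{\mathcal J_0}$. By the injectivity argument above,
\[
\lambda_{\mathcal J_0}(x)=-M(x)^{-1}A_{\mathcal J_0}(x,y^\ast(x))\nabla_y g(x,y^\ast(x)).
\]
This expression is continuous in $x$ because $g,h\in C^\infty$ and $y^\ast(\cdot)$ is continuous. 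Together with $\lambda_i\equiv0$ for $i\notin\mathcal J_0$, this gives continuity of $\lambda$ at $x_0$.

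\emph{Main obstacle.} The key step is justifying the use of $\mathcal J_0$ rather than the current active set. The current active set may jump down, so a formula written on it would change from point to point. The inclusion $\mathcal J(x,y^\ast(x))\subseteq\mathcal J_0$ near $x_0$, combined with openness of full row rank, removes this difficulty. Note that only LICQ at $x_0$ itself is used for continuity at $x_0$.
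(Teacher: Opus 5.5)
Your proof is correct, and your uniqueness argument is exactly the paper's. For continuity, however, the paper gives no argument of its own; it only refers to Lemma~B.2 of \citet{jiang2024barrier}.

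Your route is therefore a self-contained replacement. Strict inactivity persists along the continuous trajectory, so $\mathcal J(x,y^\ast(x))\subseteq\mathcal J_0$ near $x_0$. You then solve stationarity on the fixed, possibly larger, index set $\mathcal J_0$, where full row rank persists by openness. This yields the explicit local representation $\lambda_{\mathcal J_0}(x)=\Lambda(x,y^\ast(x))$, with $\Lambda(x,y)\coloneq-\bigl(A_{\mathcal J_0}(x,y)A_{\mathcal J_0}(x,y)^\top\bigr)^{-1}A_{\mathcal J_0}(x,y)\nabla_y g(x,y)$ smooth near $(x_0,y^\ast(x_0))$.

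This representation buys more than continuity. First, $\lambda$ inherits any regularity of the trajectory: it is locally Lipschitz or $C^1$ whenever $y^\ast$ is. Second, as you note, LICQ is needed only at points of the trajectory, since the full rank of $A_{\mathcal J_0}$ propagates it to nearby points.

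The standard alternative is the sequential argument the paper itself runs inside the proof of Lemma~\ref{lem:closedness_minimizers}, around \eqref{eq:stabilityofactiveset}. There, uniform full row rank bounds the multipliers, a subsequence converges to a KKT multiplier at the limit, and uniqueness identifies that limit. That argument yields continuity but no modulus, so the paper's citation buys only brevity.
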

The uniqueness of the multiplier is immediate from LICQ: the active constraint gradients are linearly independent, so the KKT system has a unique solution in $\lambda$. For the continuity, we refer the reader to \citet[Lemma~B.2]{jiang2024barrier}.

However, the prevalence results of Section 3 only guarantee that LICQ holds at almost every $x$, leaving open the possibility of failure on a measure-zero set. By definition, the failure of LICQ at an upper-level variable $x$ means there exists some lower-level feasible point $y$ where the condition is violated. As discussed in Remark~\ref{remark:licq_optimal}, this feasible point can in 
fact coincide with the lower-level minimizer $y^*(x)$: there exist problems in which no sufficiently small perturbation can separate the minimizer from an LICQ failure point. Therefore, in the following discussion regarding the measure-zero set of $x$ where LICQ fails, we specifically consider the case where this failure occurs at a local minimizer $y^*(x)$. At such a failure point $x_0$, the Lagrange multiplier associated with $y^*(x_0)$ may lose both uniqueness and continuity, as the following example demonstrates.

\begin{example}\label{example:multiplier_discontinuity}
Consider the lower-level problem
\[
\min_{y \in \mathbb{R}}\ -y \quad \operatorname{s.t.}\quad y \leq 1,\quad y \leq x,
\]
where $x \in [0, 2]$. For $x > 1$, the unique minimizer is $y^\ast(x) = 1$ with multipliers $\lambda_1 = 1,\, \lambda_2 = 0$. For $x < 1$, the unique minimizer is $y^\ast(x) = x$ with multipliers $\lambda_1 = 0,\, \lambda_2 = 1$. At $x = 1$, both constraints are active and their gradients coincide ($\nabla_y h_1 = \nabla_y h_2 = 1$), so LICQ fails. The KKT condition requires only $\lambda_1 + \lambda_2 = 1$, leaving the multiplier non-unique. Since $\lambda_1(x) = 1$ for $x > 1$ and $\lambda_1(x) = 0$ for $x < 1$, no selection of multipliers at $x = 1$ can make $\lambda(x)$ continuous.
\end{example}

Beyond the loss of multiplier continuity, the failure of LICQ at $y^\ast(x)$ also affects the sensitivity analysis that underpins hypergradient computation in bilevel optimization. To see this, suppose $(x, y^\ast(x))$ is a continuous KKT curve along which LICQ and SCSC hold, and the lower-level problem $g(x,\cdot)$ is strongly convex. Then the sensitivity of $y^\ast(x)$ with respect to $x$ takes the form (see, e.g., \citet{xu2023efficient})
\begin{align}\label{eq:sensitivity_formula}
    \begin{pmatrix}
        \nabla_x y^\ast(x) \\
        \nabla_x \lambda_{\mathcal{J}(x,y^\ast(x))}(x,y^\ast(x))
    \end{pmatrix}
    = -M_+(x)^{-1} N_+(x),
\end{align}
where $\mathcal{J}(x,y^\ast(x))$ denotes the active constraint index set at $y^\ast(x)$, and
\[
M_+(x) = \begin{pmatrix}
    \nabla_{yy}^2 \mathcal{L}(x, y^\ast(x), \lambda) & \nabla_y h_{\mathcal{J}(x,y^\ast(x))}(x, y^\ast(x))^\top \\
    \nabla_y h_{\mathcal{J}(x,y^\ast(x))}(x, y^\ast(x)) & 0
\end{pmatrix}, \quad
N_+(x) = \begin{pmatrix}
    \nabla_{xy}^2 \mathcal{L}(x, y^\ast(x), \lambda)^\top \\
    \nabla_x h_{\mathcal{J}(x,y^\ast(x))}(x, y^\ast(x))
\end{pmatrix}.
\]
A second sensitivity formula, due to~\citet{giovannelli2021inexact}, works directly with the full complementarity system without splitting constraints into active and inactive subsets. The sensitivity formula takes the form
\begin{equation}\label{eq:complementarity_sensitivity}
    \nabla v(x) = -\bigl(\nabla_v G(x, v(x))\bigr)^{-1} \nabla_x G(x, v(x)),
\end{equation}
where $v(x) \coloneq (y(x), \lambda(x))$, $G(x, v(x)) \coloneq \bigl(\nabla_y \mathcal{L}(x, y(x), \lambda(x)),\ \lambda(x) \circ h(x, y(x))\bigr)$ with $\circ$ denoting the elementwise (Hadamard) product, and
\[
\nabla_v G(x, v(x)) = \begin{pmatrix}
    \nabla_{yy}^2 \mathcal{L}(x, y(x), \lambda(x)) & \nabla_y h(x, y(x))^\top \\
    \operatorname{diag}(\lambda(x))\,\nabla_y h(x, y(x)) & C_h(x, y(x))
\end{pmatrix},
\]
\[
\nabla_x G(x, v(x)) = \begin{pmatrix}
    \nabla_{yx}^2 \mathcal{L}(x, y(x), \lambda(x)) \\
    \operatorname{diag}(\lambda(x))\,\nabla_x h(x, y(x))
\end{pmatrix},
\]
with $C_h(x, y) \coloneq \operatorname{diag}(h(x, y))$.

The nonsingularity of $M_+(x)$ in~\eqref{eq:sensitivity_formula} and that of $\nabla_v G(x, v(x))$ in~\eqref{eq:complementarity_sensitivity} both require LICQ. If there exists a point $x_0 \in \mathcal{X}$ at which LICQ fails, the numerical consequences for both~\eqref{eq:sensitivity_formula} and~\eqref{eq:complementarity_sensitivity} manifest at two levels:
\begin{itemize}
    \item At $x_0$ itself, LICQ failure means that there exists a nonzero vector $\beta$ indexed by $\mathcal{J}(x_0,y^*(x_0))$ such that $\sum_{i\in\mathcal{J}(x_0,y^*(x_0))}\beta_i\nabla_y h_i(x_0,y^*(x_0))=0$. This directly makes $M_+(x_0)$ in \eqref{eq:sensitivity_formula} singular. For $\nabla_v G(x, v(x))$ in~\eqref{eq:complementarity_sensitivity}, extending $\beta$ by zeros outside $\mathcal{J}(x_0,y^*(x_0))$, we obtain $\nabla_vG(x_0,v) (0,\beta)=0$ for any KKT pair $v=(y^*(x_0),\lambda)$. Hence both sensitivity formulas fail at $x_0$;
    \item In a neighborhood of $x_0$, the two formulas degrade through different mechanisms. For~\eqref{eq:sensitivity_formula}, applying the formula becomes numerically problematic because it requires identifying the exact active index set $\mathcal{J}(x,y^\ast(x))$. As shown in Remark~\ref{remark:licq_optimal} and Example~\ref{example:multiplier_discontinuity}, when LICQ fails at a minimizer $y^*(x_0)$, the active constraint pattern at $y^*(x)$ typically changes as $x$ crosses $x_0$: some constraint that is strictly inactive on one side becomes active exactly at $x_0$, or vice versa. Consequently, in a neighborhood of $x_0$, certain constraints have values $h_i(x, y^*(x))$ close to zero without being strictly active, and numerical solvers under finite precision cannot reliably distinguish these nearly-active constraints from strictly active ones. This leads the algorithm to identify an incorrect active set $\widehat{\mathcal{J}} \neq \mathcal{J}(x,y^\ast(x))$, assemble the wrong KKT system, and compute an incorrect hypergradient. For~\eqref{eq:complementarity_sensitivity}, the failure mode is different. The complementarity Jacobian becomes nearly singular because the active-gradient block loses rank. Since LICQ fails at $(x_0,y^*(x_0))$, there exists a nonzero vector $\beta$ such that $\sum_{i\in\mathcal{J}(x_0,y^*(x_0))}\beta_i\nabla_y h_i(x_0,y^*(x_0))=0$. Extend $\beta$ by zeros outside this active set and take $z=(0,\beta)$, where the first component corresponds to the $y$-variables and the second to the multiplier variables. Along a KKT trajectory with $y^*(x)\to y^*(x_0)$, we have $\nabla_vG(x,v(x))z=(\sum_i\beta_i\nabla_y h_i(x,y^*(x)),\, C_h(x,y^*(x))\beta)\to0$, because the first block tends to zero by the linear dependence above and the second block tends to zero since $h_i(x,y^*(x))\to0$ for every $i\in\mathcal{J}(x_0,y^*(x_0))$. Hence the smallest singular value of $\nabla_vG(x,v(x))$ tends to zero, and the inverse in~\eqref{eq:complementarity_sensitivity} becomes ill-conditioned near the LICQ failure point.
\end{itemize}

To make these failure modes concrete, we revisit Example~\ref{example:multiplier_discontinuity}, where $x_0 = 1$ is the LICQ failure point. Since $\nabla_y h_1 = \nabla_y h_2 = 1$ and $\nabla_{yy}^2 \mathcal{L} = 0$, at $x_0$ both constraints are active and
$M_+(x_0) = \begin{pmatrix} 0 & 1 & 1 \\ 1 & 0 & 0 \\ 1 & 0 & 0 \end{pmatrix}$ has two identical rows and is singular. Thus~\eqref{eq:sensitivity_formula} is not well defined at the LICQ failure point. The complementarity Jacobian also fails pointwise at $x_0$. Indeed,
\[
\nabla_v G(x, v(x)) = \begin{pmatrix} 0 & 1 & 1 \\ \lambda_1(x) & h_1(x, y^*(x)) & 0 \\ \lambda_2(x) & 0 & h_2(x, y^*(x)) \end{pmatrix}.
\]
At $x_0 = 1$, $h_1=h_2=0$, so rows $2,3$ reduce to $(\lambda_1(x_0),0,0)$ and $(\lambda_2(x_0),0,0)$, which are linearly dependent for any multiplier satisfying $\lambda_1+\lambda_2=1$; hence $\nabla_v G(x_0,v(x_0))$ is singular.

In a neighborhood of $x_0$, the difficulty for~\eqref{eq:sensitivity_formula} is active-set identification: with active set $\{1\}$ for $x>1$ and $\{2\}$ for $x<1$, $M_+(x)=\begin{pmatrix} 0 & 1 \\ 1 & 0 \end{pmatrix}$ is nonsingular on each side, but the inactive constraint satisfies $|h_i(x,y^*(x))|=|1-x|$, which is within floating-point tolerance of zero near $x_0=1$, so numerical solvers cannot reliably distinguish the strictly active constraint from the nearly-active one. For~\eqref{eq:complementarity_sensitivity}, the issue is instead ill-conditioning: 
\[
x > 1:\quad \det \nabla_v G(x, v(x)) = x - 1; \qquad x < 1:\quad \det \nabla_v G(x, v(x)) = 1 - x.
\]
In both cases $\det \nabla_v G(x, v(x))\to0$ as $x\to1$, so $\|(\nabla_v G(x,v(x)))^{-1}\|\to\infty$.

Therefore, although LICQ fails only on a measure-zero set of upper-level variables, its consequences extend to a neighborhood of positive measure around each failure point: both sensitivity formulas~\eqref{eq:sensitivity_formula} and~\eqref{eq:complementarity_sensitivity} are undefined at $x_0$ itself and numerically unreliable nearby, the former due to active set misidentification and the latter due to ill-conditioning of $\nabla_v G(x, v(x))$. Consequently, any bilevel algorithm that relies on~\eqref{eq:sensitivity_formula}~\citep{pmlr-v202-khanduri23a,xu2023efficient,kornowski2024firstordermethodslinearlyconstrained} or on~\eqref{eq:complementarity_sensitivity}~\citep{giovannelli2021inexact} to compute the hypergradient suffers not just at the measure-zero failure set, but throughout an entire neighborhood surrounding it.

\subsection{The gap caused by SCSC failure under strongly convex lower-level}\label{subsec:role_scsc}

When both LICQ and SCSC hold for the lower-level problem at every $x \in \mathcal{X}$, and the lower-level objective $g(x, \cdot)$ is strongly convex, the unique minimizer $y^\ast(x)$ and its associated multiplier $\lambda(x)$ vary smoothly with $x$.

\begin{proposition}[Smoothness of the minimizer and multiplier]\label{prop:smooth_kkt_curve}
    Suppose Assumption~\ref{assumption:general2} holds, the lower-level objective $g(x, \cdot)$ is strongly convex and each lower-level constraint function \(h_i(x,\cdot)\), \(i=1,\ldots,k\), is convex for every $x \in \mathcal{X}$, and that LICQ and SCSC hold for the lower-level problem at every $x \in \mathcal{X}$. Then the unique minimizer $y^\ast(x)$ and its associated Lagrange multiplier $\lambda(x)$ are smooth functions of $x$.
\end{proposition}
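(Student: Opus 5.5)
The plan is to derive smoothness from the Implicit Function Theorem applied to the reduced KKT system on the (locally constant) active set, then glue the local smooth pieces together using uniqueness of the minimizer. First, by strong convexity of $g(x,\cdot)$ and convexity of each $h_i(x,\cdot)$, together with Assumption~\ref{assumption:general2} (nonempty feasible set), the lower-level problem at each $x$ has a unique minimizer $y^\ast(x)$. Convexity makes KKT points and global minimizers coincide. LICQ guarantees that $y^\ast(x)$ is a KKT point with a unique multiplier $\lambda(x)$ (Proposition~\ref{prop:multiplier_continuity}). Continuity of $x\mapsto y^\ast(x)$ will follow from a standard Berge-type argument, using the uniform strong convexity on compacts and the local boundedness of the feasible sets. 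Once $y^\ast$ is known to be a continuous KKT trajectory, Theorem~\ref{thm:main_scsc} gives that the active set $\mathcal J\coloneq \mathcal J(x,y^\ast(x))$ is constant along it. Actually only local constancy is needed, and that follows directly from SCSC together with continuity of $\lambda$.

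Next, fix $x_0$ and consider the reduced system
\[
F(x,y,\lambda_{\mathcal J})\coloneq\Bigl(\nabla_y g(x,y)+\textstyle\sum_{i\in\mathcal J}\lambda_i\nabla_y h_i(x,y),\ h_{\mathcal J}(x,y)\Bigr)=0 .
\]
Its Jacobian in $(y,\lambda_{\mathcal J})$ is exactly $M_+(x_0)$. To show nonsingularity, suppose $M_+(x_0)(d,\mu)=0$. Then $\nabla_y h_{\mathcal J}d=0$, so $d^\top\nabla^2_{yy}\mathcal L\, d=-d^\top\nabla_y h_{\mathcal J}^\top\mu=0$. Now $\nabla^2_{yy}\mathcal L$ is positive definite, since $\nabla^2 g\succ0$, each $\nabla^2 h_i\succeq0$, and $\lambda\ge0$. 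Hence $d=0$, and LICQ then forces $\mu=0$. Since $F$ is $C^\infty$, the Implicit Function Theorem yields a $C^\infty$ local solution $(\hat y(x),\hat\lambda_{\mathcal J}(x))$ near $x_0$. When $\mathcal X$ is an MGB with boundary, I would apply the theorem to the smooth extension of $g,h_i$ on an open neighborhood of $x_0$.

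Then I would check that this local branch is the true minimizer. By continuity and SCSC at $x_0$, for $x$ near $x_0$ we have $\hat\lambda_i(x)>0$ for $i\in\mathcal J$ and $h_i(x,\hat y(x))<0$ for $i\notin\mathcal J$. So $(\hat y(x),\hat\lambda(x))$, with $\hat\lambda_i=0$ off $\mathcal J$, satisfies the full KKT system. By convexity $\hat y(x)$ is a global minimizer, and uniqueness gives $\hat y(x)=y^\ast(x)$. Uniqueness of multipliers under LICQ then gives $\hat\lambda(x)=\lambda(x)$. Smoothness is a local property, so $y^\ast$ and $\lambda$ are smooth on all of $\mathcal X$.

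The main obstacle is the identification step: ensuring that the IFT branch is admissible, with feasibility and nonnegative multipliers. Here SCSC is essential, since strict positivity of the multipliers and strict inactivity persist under small perturbations of $x$. A secondary technical point is handling boundary points of $\mathcal X$, which is resolved by the smoothness of the data on a neighborhood.
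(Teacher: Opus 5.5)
Your proposal is correct and follows the paper's proof: you get existence and uniqueness of $y^\ast(x)$ from strong convexity, nonsingularity of $M_+$ from $\nabla^2_{yy}\mathcal L\succ 0$ together with LICQ, and smoothness from the Implicit Function Theorem on the reduced KKT system, with SCSC keeping the active set fixed. The one difference is how the IFT branch is identified with $y^\ast(x)$: the paper relies on Berge continuity of $y^\ast$ together with Theorem~\ref{thm:main_scsc}, whereas your check (SCSC at $x_0$ makes the branch a KKT pair, hence by convexity the unique minimizer) is self-contained, and it makes your continuity step unnecessary, which helps because the hypotheses that step needs, such as local boundedness of $\mathcal Y(x)$, are not actually given.
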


The proof outline is as follows. By Theorem~\ref{thm:main_scsc}, the active set is constant across all $x \in \mathcal{X}$. The result then follows from the Implicit Function Theorem applied to the reduced KKT system. The full proof is given in Appendix~\ref{proof:prop:smooth_kkt_curve}.

Proposition~\ref{prop:smooth_kkt_curve} implies that under the setting of strongly convex lower-level with LICQ and SCSC, the sensitivity formula~\eqref{eq:sensitivity_formula} is well-defined at every $x \in \mathcal{X}$, and the hyperfunction $x \mapsto f(x, y^\ast(x))$ is smooth. In particular, the bilevel problem is a smooth optimization problem.

However, the prevalence results of Section~\ref{section:prevalence} only guarantee that SCSC holds at almost every $x$, leaving open the possibility of failure on a measure-zero set. At such failure points, the active constraint index set may jump, and the minimizer $y^\ast(x)$ may lose differentiability, as the following example demonstrates.

\begin{example}\label{example:scsc_kink}
    Consider the lower-level problem
    \[
    \min_{y \in \mathbb{R}}\ y^2 \quad \operatorname{s.t.}\quad y - x \leq 0,
    \]
    where $x \in [-1, 1]$. For $x > 0$, the unconstrained minimizer $y = 0$ is feasible, so $y^\ast(x) = 0$ with multiplier $\lambda(x) = 0$ and the constraint inactive. For $x < 0$, the feasible set is $\{y : y \leq x < 0\}$, which does not contain $y = 0$, so the minimizer is $y^\ast(x) = x$ with multiplier $\lambda(x) = -2x > 0$ and the constraint active. At $x = 0$, the constraint $y \leq 0$ is active at $y^\ast(0) = 0$ with multiplier $\lambda(0) = 0$, so SCSC fails. The minimizer
    \[
    y^\ast(x) = \min(0, x) = \begin{cases} x & \text{if } x < 0, \\ 0 & \text{if } x \geq 0, \end{cases}
    \]
    is continuous but not differentiable at $x = 0$.
\end{example}

Example~\ref{example:scsc_kink} illustrates that even a single point of SCSC failure can introduce a nondifferentiable point in the mapping $x \mapsto y^\ast(x)$, and consequently in the hyperfunction $x \mapsto f(x, y^\ast(x))$. In other words, the bilevel problem becomes a nonsmooth optimization problem. Since smooth and nonsmooth optimization problems belong to fundamentally different problem classes requiring qualitatively different algorithmic tools, the loss of SCSC on even a measure-zero set can have a significant impact on the tractability of the bilevel problem.

We next explain how SCSC failure affects the two sensitivity formulas~\eqref{eq:sensitivity_formula} and~\eqref{eq:complementarity_sensitivity} introduced in Section~\ref{subsec:role_licq}. The reduced formula~\eqref{eq:sensitivity_formula} relies on a fixed active pattern. When SCSC fails at $x_0$, there may exist an active constraint satisfying $h_i(x_0,y^*(x_0))=0$ and $\lambda_i(x_0)=0$. Such a constraint can be active on one side of $x_0$ and inactive on the other side, so the minimizer mapping may switch between two reduced KKT systems at $x_0$. Consequently, \eqref{eq:sensitivity_formula} may fail to provide a well-defined derivative of $y^*(x)$ at $x_0$. In a neighborhood of $x_0$, as in the LICQ case (see Section~\ref{subsec:role_licq}), the same active-pattern change also makes active-set identification numerically unreliable, since a solver may misidentify a nearly active constraint as active, or an active constraint as inactive.

For the complementarity formula~\eqref{eq:complementarity_sensitivity}, SCSC failure directly degenerates the complementarity Jacobian. If SCSC fails at $x_0$, then for some index $i$ we have $\lambda_i(x_0)=0$ and $h_i(x_0,y^*(x_0))=0$. The corresponding row of \(\nabla_vG(x_0,v(x_0))\) contains the terms
\(\lambda_i(x_0)\nabla_y h_i(x_0,y^*(x_0))\) and
\(h_i(x_0,y^*(x_0))\); hence this row vanishes at $x_0$, and $\nabla_vG(x_0,v(x_0))$ is singular. In a neighborhood of $x_0$, as $\lambda_i(x)\to0$ and $h_i(x,y^*(x))\to0$, the same row becomes small, so the inverse in~\eqref{eq:complementarity_sensitivity} can become ill-conditioned. Thus, even if SCSC fails only on a measure-zero set of upper-level variables, the complementarity-based hypergradient computation can be unreliable around the failure point.

To make this concrete, we revisit Example~\ref{example:scsc_kink}. For the problem $\min_y y^2$ s.t. $y - x \leq 0$, the unique minimizer is $y^*(x) = \min(0, x)$ with multiplier $\lambda(x) = \max(0, -2x)$, and SCSC fails at $x_0 = 0$. The complementarity Jacobian takes the form
\[
\nabla_v G (x,v(x)) = \begin{pmatrix} 2 & 1 \\ \lambda(x) & h(x, y^*(x)) \end{pmatrix}.
\]
We evaluate this matrix on each side of $x_0 = 0$:
\[
x < 0: \ 
\nabla_v G(x,v(x)) = \begin{pmatrix} 2 & 1 \\ -2x & 0 \end{pmatrix},
\quad \det = 2x;
\quad
x > 0: \ 
\nabla_v G(x,v(x)) = \begin{pmatrix} 2 & 1 \\ 0 & -x \end{pmatrix},
\quad \det = -2x.
\]
In both cases, $\det \to 0$ as $x \to 0$, so $\|(\nabla_v G(x,v(x)))^{-1}\| \to \infty$. Hence the sensitivity computation~\eqref{eq:complementarity_sensitivity} is ill-conditioned throughout a neighborhood of $x_0$, not only at $x_0$ itself.

\subsection{The gap caused by SCSC or uniform SOSC failure under non-strongly-convex lower-level}\label{subsec:role_sosc}

In the non-strongly-convex setting, the lower-level problem may have multiple local minimizers. Thus, a bilevel objective defined through a selected local minimizer branch $y^*(x)$ is well behaved only if this branch can be continued as $x$ varies. We first state a positive result showing that LICQ, SCSC, and uniform SOSC guarantee the global existence of such a smooth branch. This justifies the global smooth branch assumed in existing works such as \citet{giovannelli2021inexact,nolasco2025sensitivity}. This conclusion is not an immediate consequence of the Implicit Function Theorem or Robinson-type stability results~\citep{robinson1980strongly}, which only provide local continuation near a given point.

\begin{theorem}[Global smooth continuation of local minimizer branches]\label{thm:global_continuation}
    Suppose Assumption~\ref{assumption:general2} holds, and \(\mathcal X\) is a connected, simply connected 
    smooth manifold with generalized boundary. Suppose also that
    \(\mathcal Y(x)\) is uniformly bounded for all \(x\in\mathcal X\). Furthermore, assume that,
    at every \(x\in\mathcal X\), LICQ, SCSC, and uniform SOSC hold for the lower-level problem.
    Then, for any \(x_0\in\mathcal X\) and any local minimizer \(y_0\) of the lower-level problem
    at \(x_0\), there exists a unique global \(C^\infty\) local-minimizer branch
    \(y^\ast:\mathcal X\to\mathbb R^m\) such that \(y^\ast(x_0)=y_0\).
\end{theorem}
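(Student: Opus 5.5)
The plan is to view the set of all local minimizers as a covering space over \(\mathcal X\) and then use simple connectivity to obtain global sections. Define
\[
\Gamma \coloneq \{(x,y): x\in\mathcal X,\ y \text{ is a local minimizer of the lower-level problem at } x\},
\]
with projection \(\pi:\Gamma\to\mathcal X\), \(\pi(x,y)=x\). A global \(C^\infty\) local-minimizer branch through \((x_0,y_0)\) is exactly a continuous section of \(\pi\) through that point, upgraded to smoothness. So the goal is to show that \(\pi\) is a covering map. Once that is done, the lifting criterion for the simply connected, locally path-connected base \(\mathcal X\) (a connected MGB) does the rest. The identity map of \(\mathcal X\) lifts to a section \(y^\ast\) with \(y^\ast(x_0)=y_0\). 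This lift is unique because lifts through the same point agree on the connected base.

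\textbf{Step 1 (local structure).} Fix \((\bar x,\bar y)\in\Gamma\) with active set \(\mathcal J\). By SCSC the multipliers on \(\mathcal J\) are positive. By LICQ and uniform SOSC, the reduced KKT matrix \(M_+\) of \eqref{eq:sensitivity_formula} is nonsingular. The Implicit Function Theorem applied to the reduced system
\[
\nabla_y g+\sum_{i\in\mathcal J}\lambda_i\nabla_y h_i=0,\qquad h_{\mathcal J}=0
\]
gives a unique \(C^\infty\) map \(x\mapsto(y(x),\lambda_{\mathcal J}(x))\) near \(\bar x\). Near \(\bar x\), continuity keeps \(\lambda_{\mathcal J}>0\) and keeps the inactive constraints strictly negative. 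Uniform SOSC (constant \(\sigma\)) together with continuity of the Hessian and of the critical cone, which equals the subspace \(\ker\nabla_y h_{\mathcal J}\) under SCSC, keeps SOSC valid. Hence \(y(x)\) is a strict local minimizer. At boundary points of the MGB \(\mathcal X\), I would apply the theorem after a local smooth extension to an open chart. This is where Assumption~\ref{assumption:general2}'s smoothness is used.

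Local uniqueness also holds: any local minimizer close to \((\bar x,\bar y)\) is a KKT point with the same active set. Its active set contains no index outside \(\mathcal J\) by continuity. It cannot lose an index of \(\mathcal J\), since the multipliers are continuous and the multipliers near \(\bar\lambda_{\mathcal J}>0\) are unique by LICQ. Such a point therefore solves the reduced system and equals \(y(x)\). This makes \(\pi\) a local homeomorphism with smooth local inverses, and gives the lower-semicontinuity half of the proof of Theorem~\ref{thm:main_sosc}.

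\textbf{Step 2 (the covering property; the main obstacle).} A local homeomorphism is a covering only if branches cannot escape or disappear, i.e.\ \(\pi\) must be proper with finite fibres. I would first prove finiteness of each fibre. Local minimizers at fixed \(x\) are isolated by Step 1. They also cannot accumulate, because a limit point lies in the compact set \(\mathcal Y(x)\). By Lemma~\ref{lem:closedness_minimizers}, the set \(\Gamma\) is closed, so that limit is itself a local minimizer, and Step 1 shows it is isolated, a contradiction.

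Next comes properness. Take a compact \(C\subset\mathcal X\) and a sequence in \(\pi^{-1}(C)\). Uniform boundedness of \(\mathcal Y(x)\) gives a convergent subsequence, and closedness of \(\Gamma\) keeps the limit in \(\Gamma\). The closedness lemma is exactly where uniform SOSC and SCSC enter, through uniform quadratic growth (Theorem~\ref{prop:uniform_quadratic_growth}). Without it, a branch could terminate, which is the phenomenon described in Section~\ref{subsec:role_sosc}. With finite fibres, properness, and local homeomorphism in hand, the standard result says \(\pi\) is a finite covering on each component. The fibre cardinality is locally constant, as in the proof of Theorem~\ref{thm:main_sosc}.

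\textbf{Step 3 (conclusion).} I would lift \(\mathrm{id}_{\mathcal X}\) to get a continuous section \(y^\ast\) with \(y^\ast(x_0)=y_0\). Locally this section coincides with the IFT branches of Step 1, so it is \(C^\infty\), and each \(y^\ast(x)\) is a local minimizer. Uniqueness among continuous local-minimizer branches follows because the set where two branches agree is open by Step 1's local uniqueness, closed by continuity, and nonempty since it contains \(x_0\); connectedness of \(\mathcal X\) then forces the branches to agree everywhere. I expect the real work to sit in Step 2, specifically in verifying that the closedness lemma really yields properness near the generalized boundary of \(\mathcal X\), where path-lifting must be checked on piecewise smooth paths as in the proof of Theorem~\ref{thm:main_licq}.
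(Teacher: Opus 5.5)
Your proposal is correct and follows essentially the paper's route: the IFT with SCSC and uniform SOSC gives a local homeomorphism, Lemma~\ref{lem:closedness_minimizers} together with uniform boundedness gives properness, hence the projection is a covering map, simple connectivity then yields a global section, and the IFT upgrades it to $C^\infty$. You differ from the paper in lifting $\mathrm{id}_{\mathcal X}$ through the lifting criterion and proving uniqueness by an open--closed agreement-set argument, where the paper restricts to the connected component through $(x_0,y_0)$ and shows it is one-sheeted. Three small remarks. Your worry about the generalized boundary is unfounded, since $\pi^{-1}(C)=\Gamma\cap(C\times K)$ is compact regardless and covering-space path lifting needs only continuous paths. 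Surjectivity of $\pi$, which the covering result needs, should be stated; it follows from the existence of a global minimizer on the nonempty compact $\mathcal Y(x)$. The closedness lemma is proved directly from uniform SOSC and SCSC, not through Theorem~\ref{prop:uniform_quadratic_growth}; that theorem itself depends on the lemma and assumes a compact $\mathcal X$, which is not assumed here.
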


Here ``Simply connected'' means every loop in $\mathcal{X}$ can be continuously shrunk to a point. Formal definitions are given in Appendix~\ref{sec:appendix_def}. Any convex subset of $\mathbb{R}^n$ satisfies this property. The proof proceeds in two steps. First, we establish a global continuous continuation of the local minimizer \(y_0\). Although the Implicit Function Theorem gives local continuations, the key difficulty is to assemble them into a globally well-defined map \(y^*:\mathcal X\to\mathbb R^m\) with \(y^*(x_0)=y_0\), where \(y^*(x)\) is a local minimizer of the lower-level problem at every \(x\in\mathcal X\). We achieve this by showing that the projection from the graph of local minimizers to \(\mathcal X\) is a covering map, and then invoking the homotopy lifting property: since \(\mathcal X\) is simply connected, any two paths connecting the same endpoints lift to the same endpoint, ensuring the global continuation is well-defined. Second, we upgrade continuity to smoothness using the Implicit Function Theorem at each point. The full proof is given in Appendix~\ref{proof:thm:global_continuation}.

Theorem~\ref{thm:global_continuation} justifies the global smooth branch assumption implicit in \citet{giovannelli2021inexact,nolasco2025sensitivity}: LICQ, SCSC, and uniform SOSC at every $x$ are sufficient. However, the prevalence results of Section~\ref{section:prevalence} only guarantee that these conditions hold at almost every $x$. When either SCSC or uniform SOSC fails, the global continuation may break down: a branch of local minimizers can abruptly terminate at a failure point. We illustrate these two distinct mechanisms with the following examples.

\begin{example}\label{example:scsc_discontinuity}
Consider the following parametric optimization problem:
\[
\min_{y \in \mathbb{R}^2}\ \frac{1}{2}y_1^2 - 2y_2^2 + x y_2 \quad \operatorname{s.t.}\quad -1 \leq y_1 \leq 1,\quad 0 \leq y_2 \leq 1,
\]
where $x \in [-1, 1]$ is the upper-level variable and $y \in \mathbb{R}^2$ is the decision variable. Since all constraints are linear, the Hessian of the Lagrangian with respect to $y$ is a constant matrix everywhere:
\[
\nabla_{yy}^2 L = \nabla_{yy}^2 g(x,y) = \begin{pmatrix} 1 & 0 \\ 0 & -4 \end{pmatrix}.
\]
Recall that uniform SOSC only needs to be verified at valid local minimizers. For $x \in [-1, 1]$, the set of local minimizers $M(x)$ and their corresponding KKT multipliers $\lambda$ for the active constraints are:
\begin{itemize}
    \item \textbf{Case 1}: $y^\ast = (0,0) \in M(x)$ (requires $x \in (0, 1]$). The active constraint is $-y_2 \leq 0$, with multiplier $\lambda = x > 0$.
    \item \textbf{Case 2}: $y^\ast = (0,1) \in M(x)$ (requires $x \in [-1, 1]$). The active constraint is $y_2 - 1 \leq 0$, with multiplier $\lambda = 4 - x \geq 3 > 0$.
\end{itemize}
For any valid local minimizer $y^\ast \in M(x)$, since SCSC holds, the critical cone $\mathcal{C}(x, y^\ast)$ is restricted to the orthogonal complement of the active constraint gradients. The gradients of the active constraints are $\pm e_2 = (0, \pm 1)^\top$, so the critical cone is
\[
\mathcal{C}(x, y^\ast) = \{d \in \mathbb{R}^2 : d_2 = 0\}.
\]
For any $d = (d_1, 0)^\top \in \mathcal{C}(x, y^\ast)$, we have $d^\top \nabla_{yy}^2 L\, d = d_1^2 = \|d\|^2$, so uniform SOSC holds with modulus $\sigma = 1$.
 
However, the local minimizer set mapping $M(x)$ is not continuous at $x = 0$. For $x \in (0, 1]$, the local minimizer set is $M(x) = \{(0,0), (0,1)\}$. At $x = 0$, strict complementarity fails at $(0,0)$: the constraint $-y_2 \leq 0$ is active with multiplier $\lambda = 0$. This causes the critical cone to expand from $\{d : d_2 = 0\}$ to $\{d : d_2 \geq 0\}$, exposing the negative curvature direction $d = (0, 1)^\top$ of the Hessian. Consequently, $(0,0)$ degenerates into a saddle point and is no longer in $M(0)$, leaving $M(0) = \{(0,1)\}$. The branch of local minimizers at $(0,0)$ terminates abruptly at $x = 0$ and cannot be continued to $x < 0$ (see Figure~\ref{fig:scsc_discontinuity_3d}).

\begin{figure}
    \centering
    \includegraphics[width=1\linewidth]{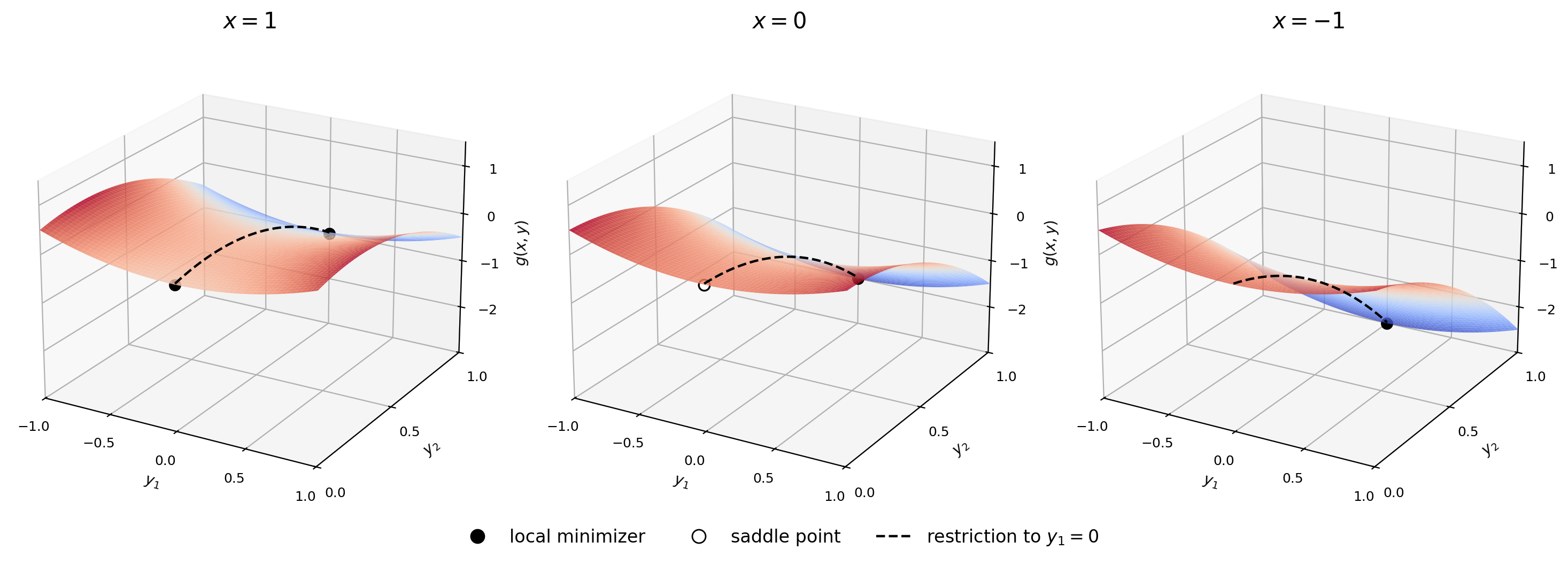}
    \caption{The objective function $g(x,y)$ over the feasible set at $x=1$, $x=0$, and $x=-1$. For $x>0$ (left), $(0,0)$ is a local minimizer. At $x=0$ (center), it degenerates into a saddle point. For $x<0$ (right), $(0,0)$ is no longer a KKT point. The dashed curve on each surface shows the restriction of $g$ to $y_1=0$.}
    \label{fig:scsc_discontinuity_3d}
\end{figure}
\end{example}

Example~\ref{example:scsc_discontinuity} shows that the failure of SCSC can cause a local minimizer branch to terminate by degenerating into a saddle point. The next example illustrates a different mechanism: even when SCSC holds everywhere, the failure of uniform SOSC can produce a bifurcation in which a local minimizer and a local maximizer on a boundary stratum collide and annihilate each other.

\begin{example}\label{example:sosc_bifurcation}
Consider the following parametric optimization problem:
\[
\min_{y \in \mathbb{R}^2}\ y_1 + \frac{1}{2}y_1^2 + \frac{1}{3}y_2^3 - x y_2 \quad \operatorname{s.t.}\quad -y_1 \leq 0,\quad y_1 - 2 \leq 0,\quad -\frac{3}{2} - y_2 \leq 0,\quad y_2 - \frac{3}{2} \leq 0,
\]
where $x \in [-1, 1]$ and $y = (y_1, y_2) \in \mathbb{R}^2$. Since $\frac{\partial g}{\partial y_1} = 1 + y_1 > 0$ on the feasible set, any local minimizer must satisfy $y_1 = 0$. At such points, the constraint $-y_1 \leq 0$ is active with multiplier $\lambda = 1 > 0$, so LICQ and SCSC hold. The critical cone is $\mathcal{C} = \{d : d_1 = 0\}$, and the reduced Lagrangian on the boundary stratum $\{y_1 = 0\}$ is $\ell(y_2) = \frac{1}{3}y_2^3 - xy_2$, with $\ell'(y_2) = y_2^2 - x$ and $\ell''(y_2) = 2y_2$.

We focus on the two critical points of $\ell$ that undergo bifurcation. For $x > 0$, the equation $\ell'(y_2) = 0$ yields $y_2 = \sqrt{x}$ (a local minimizer with $\ell''(\sqrt{x}) = 2\sqrt{x} > 0$) and $y_2 = -\sqrt{x}$ (a local maximizer with $\ell''(-\sqrt{x}) = -2\sqrt{x} < 0$). As $x \to 0^+$, the second-order modulus $2\sqrt{x} \to 0$, and at $x = 0$ the two critical points merge into a single degenerate point at $y_2 = 0$ with $\ell''(0) = 0$. For $x < 0$, neither critical point exists.

Pointwise SOSC holds at every local minimizer for every $x > 0$, but uniform SOSC fails because the modulus $2\sqrt{x} \to 0$ as $x \to 0^+$. This permits the saddle-node bifurcation: the local minimizer and the local maximizer collide and annihilate at $x = 0$, causing the local minimizer branch to terminate (see Figure~\ref{fig:sosc_bifurcation_2d}).

\begin{figure}
    \centering
    \includegraphics[width=1\linewidth]{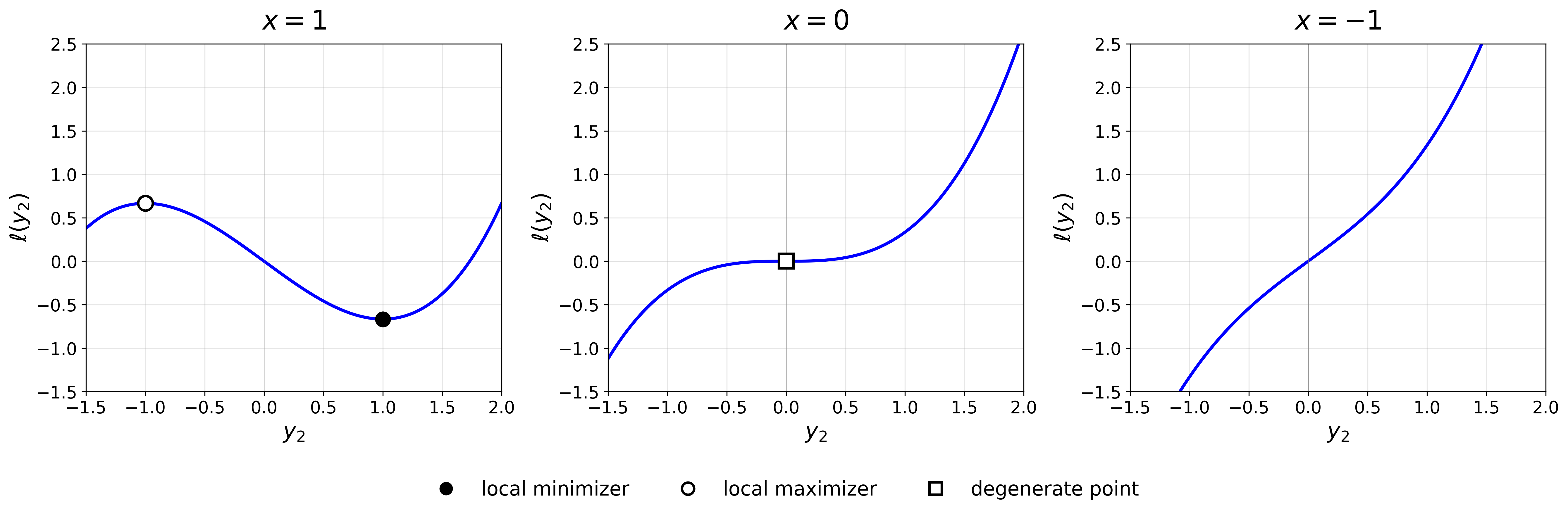}
    \caption{The reduced Lagrangian $\ell(y_2) = \frac{1}{3}y_2^3 - xy_2$ on the boundary stratum $\{y_1 = 0\}$ in Example~\ref{example:sosc_bifurcation}. At $x = 1$ (left), there is a local minimizer at $y_2 = 1$ (filled circle) and a local maximizer at $y_2 = -1$ (open circle). At $x = 0$ (center), the two critical points merge into a single degenerate point at $y_2 = 0$ (open square) with $\ell''(0) = 0$. At $x = -1$ (right), neither critical point exists. The local minimizer and local maximizer collide and annihilate via a saddle-node bifurcation at $x = 0$.}
    \label{fig:sosc_bifurcation_2d}
\end{figure}

\end{example}

Examples~\ref{example:scsc_discontinuity} and~\ref{example:sosc_bifurcation} show that the failure of SCSC or uniform SOSC can cause branches of local minimizers to terminate abruptly. In the bilevel framework, where the hyperfunction is defined by selecting a local minimizer branch $y^\ast(x)$ as in~\citet{giovannelli2021inexact,nolasco2025sensitivity}, such termination makes the solution mapping $x \mapsto y^\ast(x)$ discontinuous. Consequently, the hyperfunction $x \mapsto f(x, y^\ast(x))$ is also discontinuous, placing the bilevel problem outside the class of even nonsmooth continuous optimization. This is a more severe degradation than the loss of differentiability caused by SCSC failure alone (Section~\ref{subsec:role_scsc}).
 
Beyond this structural discontinuity, the failure of LICQ, SCSC, or uniform SOSC also degrades the numerical computation of the hypergradient in a neighborhood of each failure point. The reduced sensitivity formula~\eqref{eq:sensitivity_formula} relies on the nonsingularity of the reduced KKT matrix $M_+(x)$, while the complementarity-based sensitivity formula~\eqref{eq:complementarity_sensitivity} relies on the nonsingularity of the Jacobian $\nabla_v G(x,v(x))$. When uniform SOSC fails along a local minimizer branch, the curvature of the Lagrangian on the critical cone degenerates: the SOSC modulus
$\sigma(x) \coloneq  \min_{\|d\|=1, d \in \mathcal{C}(x,y^*)} d^\top \nabla^2_{yy}\mathcal{L}\, d$
approaches zero as $x$ approaches the failure point. Since this curvature term keeps both $M_+(x)$ and $\nabla_v G(x,v(x))$ uniformly nonsingular, the two matrices may remain nonsingular away from the failure point but become increasingly ill-conditioned in its neighborhood. Consequently, $\|M_+(x)^{-1}\|$ and $\|(\nabla_v G(x,v(x)))^{-1}\|$ become large as $x$ approaches the failure point, so that the computed hypergradient can be dominated by numerical error throughout an entire neighborhood surrounding it.

\subsection{Discussion}\label{subsec:discussion}

We conclude this section with two remarks that provide further perspective on the gap between the ``almost-every-$x$'' and ``every-$x$'' regularity requirements.

\paragraph{The role of uniform quadratic growth.}
Theorem~\ref{thm:global_continuation} shows that LICQ, SCSC, and uniform SOSC together guarantee the global smooth continuation of each local minimizer branch. On the other hand, Examples~\ref{example:scsc_discontinuity} and~\ref{example:sosc_bifurcation} demonstrate that removing either SCSC or uniform SOSC can cause a local minimizer branch to terminate abruptly, destroying even continuity. At the local level, the mechanism behind continuation is clear: the Implicit Function Theorem, applied to the reduced KKT system, provides a locally unique smooth extension of each local minimizer whenever the KKT Jacobian $M_+(x)$ is nonsingular (which requires LICQ and SOSC) and the active set is constant (which requires SCSC). But why can these local extensions be assembled into a global one? And why does global continuation break down when any of the three conditions is removed?

The answer lies in a consequence that LICQ, SCSC, and uniform SOSC jointly imply: uniform local quadratic growth.

\begin{theorem}[Uniform local quadratic growth]\label{prop:uniform_quadratic_growth}
    Suppose Assumption~\ref{assumption:general2} holds, the upper-level variable space $\mathcal{X}$ is a compact set, and the lower-level feasible set $\mathcal{Y}(x)$ is uniformly bounded for all $x \in \mathcal{X}$. Furthermore, assume that the objective $g$ and constraint functions $h_i$ are twice continuously differentiable, and at every $x \in \mathcal{X}$, LICQ, SCSC, and uniform SOSC hold for the lower-level problem. Then, the local minimizers satisfy a uniform local quadratic growth condition over $\mathcal{X}$: there exist uniform constants $c > 0$ and $\delta > 0$ such that for any $x \in \mathcal{X}$ and any local minimizer $y^\ast(x)$ at $x$, it holds that
    \[
    g(x, y) \geq g(x, y^\ast(x)) + c\|y - y^\ast(x)\|^2, \quad \forall\, y \in \mathcal{Y}(x) \cap B_\delta(y^\ast(x)).
    \]
\end{theorem}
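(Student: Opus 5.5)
I will prove the statement by a pointwise quadratic growth estimate at each local minimizer, with every constant expressed through quantities that are controlled uniformly over $x$, followed by a compactness argument. Fix $x$ and a local minimizer $y^\ast=y^\ast(x)$ with active set $\mathcal J$ and multiplier $\lambda$, which is unique by LICQ. Since $\lambda_i=0$ for $i\notin\mathcal J$, for feasible $y$ we have $g(x,y)\ge \mathcal L(x,y,\lambda)$ because $\lambda_i h_i(x,y)\le0$. Taylor expansion of $\mathcal L$ around $y^\ast$, using $\nabla_y\mathcal L=0$, gives
\[
\mathcal L(x,y,\lambda)=g(x,y^\ast)+\tfrac12 d^\top\nabla^2_{yy}\mathcal L\, d+o(\|d\|^2),\qquad d=y-y^\ast .
\]
I also keep the first-order information. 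For $i\in\mathcal J$ we have $\lambda_i h_i(x,y)=\lambda_i\nabla_y h_i^\top d+O(\|d\|^2)$, so that $g(x,y)-g(x,y^\ast)\ge -\sum_{i\in\mathcal J}\lambda_i\nabla_y h_i^\top d - C\|d\|^2$. In this inequality each term $-\lambda_i\nabla h_i^\top d$ is nonnegative up to $O(\|d\|^2)$, by feasibility.

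Next I split $d=d_C+d_\perp$, where $d_C$ is the projection onto $T=\{d:\nabla_y h_{\mathcal J}d=0\}$. Under SCSC this subspace is exactly the critical cone. By LICQ, $\|d_\perp\|\le \kappa\,\|\nabla_y h_{\mathcal J} d\|$, where $\kappa$ is the inverse of the smallest singular value of the active Jacobian. I then consider two regimes. If $\|d_\perp\|\le \eta\|d\|$, uniform SOSC gives $d^\top\nabla^2\mathcal L d\ge \sigma\|d\|^2-O(\eta)\|d\|^2$, and the second-order expansion yields growth with constant about $\sigma/4$. Otherwise, $\sum_i(-\nabla h_i^\top d)\gtrsim\|d\|/\kappa$ up to $O(\|d\|^2)$, and SCSC with $\lambda_i\ge\underline\lambda>0$ gives first-order growth $\underline\lambda\,\eta\|d\|/\kappa - C\|d\|^2$. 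This dominates a quadratic term for $\|d\|\le\delta$. Combining the two regimes gives $c,\delta$ depending only on $\sigma$, $\underline\lambda$, $\kappa$, and $C^2$ bounds on $g,h$ (hence on $\lambda$), together with a uniform modulus of continuity of the second derivatives.

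The main obstacle is making $\kappa$ and $\underline\lambda$ uniform over all $x$ and all local minimizers; the derivative bounds are uniform because $\mathcal X\times\overline{\bigcup_x\mathcal Y(x)}$ is compact. I would argue by contradiction. Suppose there were sequences $(x_n,y_n^\ast)$ with $\min_{i\in\mathcal J_n}\lambda_i^n\to0$ or with the singular values degenerating. Pass to a subsequence with a constant active set $\mathcal J$ and $(x_n,y_n^\ast)\to(\bar x,\bar y)$, using compactness of $\mathcal X$ and uniform boundedness. The multipliers stay bounded, because otherwise normalizing the stationarity equation and passing to the limit contradicts LICQ at $\bar y$. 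The limit $(\bar y,\bar\lambda)$ is then a KKT pair at $\bar x$, with the constraints in $\mathcal J$ active and with a multiplier that is either zero on $\mathcal J$ or has dependent gradients. Gradient dependence contradicts LICQ directly. For a zero multiplier, I need $\bar y$ to be a \emph{local minimizer} at $\bar x$ so that SCSC applies; SCSC is in fact stated at every KKT point, so this already yields the contradiction, since $\bar y$ is a KKT point with $h_i=0$ and $\bar\lambda_i=0$. If needed, I would instead invoke the closedness of the graph of local minimizers (Lemma~\ref{lem:closedness_minimizers}). Uniformity of the $o(\|d\|^2)$ remainder follows from uniform continuity of $\nabla^2 g,\nabla^2 h$ on the compact set. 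Together these give $c>0$ and $\delta>0$ independent of $x$ and of the particular local minimizer.
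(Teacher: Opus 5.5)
Your proof is correct, but it takes a different route from the paper's.

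\textbf{The paper's route.} The paper argues by a single contradiction. It takes $x_k$, local minimizers $y^\ast(x_k)$ and feasible $y_k$ that violate the growth estimate with $c=\delta=1/k$, and passes to a limit $(\bar x,\bar y)$. It then uses the closedness of the graph of local minimizers (Lemma~\ref{lem:closedness_minimizers}) to know that $\bar y$ is a local minimizer, so that SOSC may be invoked there. Next, it uses SCSC at the limit to show that the limit $d$ of the normalized directions $(y_k-y^\ast(x_k))/\|y_k-y^\ast(x_k)\|$ lies in the critical cone. Finally, a second-order expansion of the Lagrangian gives $d^\top\nabla^2_{yy}L(\bar x,\bar y,\bar\lambda)d\le 0$, a contradiction.

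\textbf{Your route.} You instead prove a quantitative growth estimate at each minimizer. You split $d$ into its component in $\ker\nabla_y h_{\mathcal J}$ and its normal component. Uniform SOSC handles the nearly tangential regime, and the first-order term $\sum_{i\in\mathcal J}\lambda_i(-\nabla_y h_i^\top d)$ with $\lambda_i\ge\underline\lambda$ handles the transversal regime. Compactness enters only to make $\kappa$, $\underline\lambda$ and the multiplier upper bound uniform. The paper's two limiting steps are the qualitative shadows of your two regimes: $d\in\mathcal C(\bar x,\bar y)$ corresponds to your transversal regime being impossible for violating sequences, and $d^\top\nabla^2_{yy}L\,d\le 0$ corresponds to your tangential regime.

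\textbf{What each approach buys.}
\begin{itemize}
\item At limit points you use only LICQ, which is assumed at every feasible point, and SCSC, which is assumed at every KKT point. You therefore never need the limit to be a local minimizer, and Lemma~\ref{lem:closedness_minimizers} is genuinely unnecessary; uniform SOSC is applied only at the actual minimizers, where it is assumed.
\item Your argument yields explicit constants: $c$ of order $\sigma$, and $\delta$ depending on $\sigma$, $\underline\lambda$, $\kappa$ and the $C^2$ moduli of $g,h$.
\item It also makes precise the division of labor described informally in Section~\ref{subsec:discussion}.
\item The paper's argument is shorter but non-constructive.
\end{itemize}
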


Uniform quadratic growth means that every local minimizer sits at the bottom of a quadratic bowl whose depth and width are bounded below uniformly over $x$. This prevents local minimizers from degenerating as $x$ varies: no matter how far the upper-level variable moves, every local minimizer retains a definite gap in objective value from all nearby feasible points. It is this uniform gap that allows the local extensions provided by the Implicit Function Theorem to be assembled into a global continuation.

It is worth noting a distinction between the single-level and the parametric (bilevel) settings. In single-level constrained optimization, SOSC at a KKT point directly implies local quadratic growth at that point; this is a standard result in nonlinear programming~\citep{shapiro1992perturbation}. One might therefore expect that in the parametric setting, imposing LICQ and uniform SOSC at every $x$ would suffice to guarantee uniform local quadratic growth. However, this is not the case: SCSC is also indispensable. The reason is that SOSC controls curvature only within the critical cone, while the local quadratic growth in directions outside the critical cone is maintained by the strict separation between active and inactive constraints. More precisely, uniform quadratic growth requires a uniform constant $c > 0$ and radius $\delta > 0$ independent of $x$. Each condition controls a different component:
\begin{itemize}
    \item Uniform SOSC provides a uniform lower bound on the curvature of the Lagrangian Hessian within the critical cone. This lower bound contributes to both the quadratic growth constant $c$ and the uniform radius $\delta$. When it fails, this curvature approaches zero, and the constants in the quadratic growth estimate degenerate. This permits a bifurcation (Example~\ref{example:sosc_bifurcation}).
    \item SCSC, together with the compactness of $\mathcal{X}$, provides a uniform lower bound on the multipliers at active constraints. This uniform gap ensures growth in directions outside the critical cone and also contributes to both the quadratic growth constant $c$ and the uniform radius $\delta$. When SCSC fails at some $x_0$, the multiplier $\lambda_i(x_0) = 0$ for some active constraint, and the constants $c$ and $\delta$ in the quadratic growth estimate may degenerate as $x \to x_0$. At $x_0$ itself, the critical cone expands to include directions with negative curvature, destroying quadratic growth entirely and causing the local minimizer to degenerate into a saddle point (Example~\ref{example:scsc_discontinuity}).
\end{itemize}
In summary, LICQ, SCSC, and uniform SOSC are not merely independent technical assumptions: they work in concert to establish uniform quadratic growth, which is the structural foundation underlying the continuity of local minimizer branches. This stands in contrast to the single-level setting, where SOSC alone suffices for local quadratic growth, and highlights an intrinsic difficulty of bilevel optimization with constrained lower-level problems.

\paragraph{Pointwise SOSC versus uniform SOSC.}
In Section~\ref{section:prevalence}, we showed that pointwise SOSC at almost every $x$ is prevalent under finite-dimensional perturbations. In Section~\ref{sec:nonprevalence}, we showed that uniform SOSC at every $x$ is non-prevalent. A natural intermediate question is whether pointwise SOSC at every $x$ is prevalent. We leave this as an open question, but argue that its resolution is not essential for the bilevel optimization setting.

The reason is that, for bilevel optimization, pointwise SOSC at every $x$ is insufficient to guarantee the well-behavedness of the solution mapping or the hyperfunction. As demonstrated by Example~\ref{example:sosc_bifurcation}, even when LICQ and SCSC hold everywhere and pointwise SOSC holds at every local minimizer for every $x > 0$, the lack of a uniform lower bound on the second-order modulus permits a saddle-node bifurcation at $x = 0$, causing a local minimizer branch to disappear. Consequently, the solution mapping $x \mapsto y^\ast(x)$ is discontinuous and the hyperfunction $x \mapsto f(x, y^\ast(x))$ is ill-defined at the bifurcation point.

This observation highlights that pointwise SOSC at every $x$ is still insufficient for the structural properties established in Sections~\ref{subsec:role_sosc} and~\ref{subsec:discussion}. In particular, it does not ensure a uniform local quadratic growth estimate, and consequently it cannot prevent a local minimizer branch from terminating via a saddle-node bifurcation (Example~\ref{example:sosc_bifurcation}). Once such a termination occurs, the solution mapping $x \mapsto y^\ast(x)$ and the hyperfunction $x \mapsto f(x, y^\ast(x))$ along this branch lose continuity, even though SOSC holds pointwise away from the bifurcation point. Note that SOSC is by definition only imposed at local minimizers, and the bifurcation point itself has degenerated into a saddle point, so SOSC is not required to hold there. Therefore, while the prevalence of pointwise SOSC at every $x$ is of independent mathematical interest, it does not address the specific role of uniform SOSC in the bilevel setting, which is to ensure the global smooth continuation of local minimizer branches and the continuity of the associated hyperfunction.

\section{Conclusion}

In this work, we studied the strength and limitations of the regularity assumptions (LICQ, SCSC, and SOSC) commonly adopted in bilevel optimization with constrained lower-level problems. We showed that the ``every $x$'' versions are strong assumptions in the sense of non-prevalence. We established rigidity theorems identifying structural invariants that must be preserved when these conditions hold everywhere, including the stratification of the feasible set, the active constraint pattern along KKT trajectories, and the stratum-wise count of local minimizers. We then constructed robust counterexamples certifying the failure of each condition. In contrast, the ``almost every $x$'' versions of these conditions are weak assumptions in the sense of prevalence: they hold with probability one after arbitrarily small perturbations of the lower-level objective and constraints. We further analyzed the gap between the two requirements: even though the ``every $x$'' and ``almost every $x$'' versions differ only on a measure-zero set, this difference can cause the Lagrange multiplier to lose uniqueness, the hyperfunction to lose differentiability, or a branch of local minimizers to terminate abruptly. In all cases, the sensitivity formulas used for hypergradient computation become ill-conditioned in an entire neighborhood of each failure point. Finally, we showed that LICQ, SCSC, and uniform SOSC jointly imply uniform local quadratic growth, which serves as the structural foundation for the global smooth continuation of local minimizer branches.

\bibliographystyle{abbrvnat}
\bibliography{reference}

\appendix
\section*{Appendix}

\section{Definitions and Auxiliary Results}\label{sec:appendix_def}

This appendix collects the definitions and results used in the main text and proofs. We begin with the formal definition of prevalence (Definition~\ref{def:prevalence-formal}), the notion adopted throughout this work to quantify how strong or weak an assumption is. We then recall Sard's Theorem and transversality, which are the core techniques for the prevalence results in Section~\ref{section:prevalence}. Next, we collect the topological notions of connectedness, path-connectedness, local path-connectedness, and simple connectivity that appear in the assumptions of the rigidity and continuation theorems in Sections~\ref{sec:nonprevalence} and~\ref{subsec:role_sosc}. We then introduce manifolds with generalized boundary, which underlie the rigidity and stability results in Section~\ref{sec:nonprevalence}. Finally, we recall covering maps, which are used in the global continuation result (Lemma~\ref{lem:global_extension}).

\begin{definition}[Prevalence~\citep{hunt1992prevalence}]\label{def:prevalence-formal}
Let $\mathcal{V}$ be a topological vector space, and let $\mathcal{S} \subseteq \mathcal{V}$ be a Borel-measurable subset. We say that $\mathcal{S}$ is prevalent if there exists a finite-dimensional subspace $P \subseteq \mathcal{V}$, called a probe space, such that for every $v \in \mathcal{V}$, the set $\{p \in P : v + p \in \mathcal{S}\}$ has full Lebesgue measure in $P$. We say that a property is prevalent in $\mathcal{V}$ if the set of elements in $\mathcal{V}$ satisfying this property is prevalent.
\end{definition}

\begin{remark}
For Definition~\ref{def:prevalence-formal}, we take the ambient space to be
\[
\mathcal{V}=C^\infty(\mathcal X\times\mathbb R^m;\mathbb R^{k+1}),
\]
whose elements are the problem data \((g,h_1,\ldots,h_k)\), equipped with the
compact-open \(C^\infty\) topology. Equivalently, this topology is generated by
seminorms given by uniform bounds on derivatives of finite order over compact
subsets. The probe space used in Section~\ref{section:prevalence} is
\[
P=\{(b^\top y,a_1,\ldots,a_k):(b,a)\in\mathbb R^m\times\mathbb R^k\},
\]
which is a finite-dimensional linear subspace of \(\mathcal V\), naturally identified
with \(\mathbb R^{m+k}\) and equipped with the standard Lebesgue measure.
\end{remark}

\subsection*{Sard's Theorem and Transversality}

\begin{theorem}[Sard's Theorem]\label{thm:sard}
    Let $f : \mathbb{R}^p \to \mathbb{R}^q$ be $C^l$, where $l \geq \max\{p - q + 1, 1\}$. Let $X \subset \mathbb{R}^p$ denote the critical set of $f$, which is the set of points $x \in \mathbb{R}^p$ at which the Jacobian matrix of $f$ has rank $< q$. Then the critical value set, that is, the image $f(X)$, has Lebesgue measure $0$ in $\mathbb{R}^q$.
\end{theorem}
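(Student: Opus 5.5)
The plan is to prove the result first for $C^\infty$ maps $f:\mathbb{R}^p\to\mathbb{R}^q$, where the order of differentiability is not a constraint, and then say where the finite-order hypothesis $l\ge\max\{p-q+1,1\}$ enters. Since $\mathbb{R}^p$ is a countable union of compact cubes and a countable union of null sets is null, it suffices to show that $f(X\cap Q)$ has measure zero for each closed cube $Q$. I will argue by induction on $p$; the case $p=0$ is trivial, since the image is then a single point and $q\ge 1$ whenever the critical set can be nonempty.

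The first step is to filter the critical set as $X\supseteq X_1\supseteq X_2\supseteq\cdots$. Here $X_j$ is the set of points where all partial derivatives of $f$ of order $1$ through $j$ vanish. The argument then proceeds in three parts.

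\textbf{(a)} First I show $f(X\setminus X_1)$ is null. Take a point $x_0$ where the Jacobian is nonzero but has rank $<q$. After permuting coordinates, assume $\partial f_1/\partial x_1(x_0)\neq 0$. The map $h(x)=(f_1(x),x_2,\dots,x_p)$ is then a local diffeomorphism, and $g=f\circ h^{-1}$ preserves the first coordinate. For each slice $t$, the map $g_t:=g(t,\cdot)$ on $\mathbb{R}^{p-1}$ has a point as critical exactly when the corresponding point of $g$ is critical. The induction hypothesis makes each slice of the critical value set null in $\{t\}\times\mathbb{R}^{q-1}$, and Fubini's Theorem then gives measure zero. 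Second countability (Lindelöf) covers $X\setminus X_1$ by countably many such neighborhoods.

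\textbf{(b)} Next I show $f(X_j\setminus X_{j+1})$ is null. Near such a point some $j$-th order partial $w=\partial^\alpha f_r$ vanishes on $X_j$ but has a nonzero first derivative. The set $\{w=0\}$ is therefore locally a hypersurface $V$ containing $X_j\setminus X_{j+1}$, and every point of $X_j$ is critical for $f|_V$. Applying the induction hypothesis to $f|_V$, written in a chart of dimension $p-1$, finishes this case, again with countable covering.

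\textbf{(c)} Finally I show $f(X_k\cap Q)$ is null for $k$ large, by a direct volume estimate. Taylor's theorem with vanishing derivatives up to order $k$ gives $\|f(x+h)-f(x)\|\le C\|h\|^{k+1}$ for $x\in X_k\cap Q$. Subdivide $Q$ into $N^p$ subcubes of side $\delta/N$. Each subcube meeting $X_k$ has image inside a ball of radius $C'N^{-(k+1)}$, so the total image volume is at most $N^p\cdot C''N^{-q(k+1)}$. This tends to $0$ as $N\to\infty$ provided $q(k+1)>p$, and choosing $k\ge p/q$ works.

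The main obstacle is the finite-smoothness bookkeeping. The restrictions in (a) and (b) lose no derivatives, but (c) needs roughly $p/q$ derivatives. Part (c) also does not directly give the sharp threshold $p-q+1$. To reach it, I would replace (c) by the Morse--Federer refinement: decompose by the rank $r<q$ of the Jacobian, use the constant-rank theorem to straighten $r$ output directions, and apply the Taylor estimate only to the remaining $q-r$ components along $p-r$ dimensions. This requires $(k+1)(q-r)>p-r$, which is satisfied for $k+1\ge p-q+1$ uniformly in $r$. I would cite this refinement (Sard, 1942) rather than redo it. For the paper's purposes $C^\infty$ suffices, and the theorem is in any case a classical result that is only being recalled.
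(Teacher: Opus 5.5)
\textbf{Overall.} The paper gives no proof of this statement. Sard's theorem is recalled as a classical tool, and it is only ever applied to $C^\infty$ maps on an open neighborhood of $\mathcal X\times\mathbb R^m$ (proofs of Theorems~\ref{thm:LICQ} and~\ref{thm:SOSC}, under Assumption~\ref{assumption:general}).

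Your proposal is Milnor's inductive argument, and for $C^\infty$ maps it is a correct and complete proof. Two small points should be made explicit:
\begin{itemize}
\item Run the induction for maps on open subsets of $\mathbb R^p$. The slice maps $g_t$ and the chart expression of $f|_V$ live on open subsets of $\mathbb R^{p-1}$, and the open-set form is also the one the paper actually uses.
\item The critical set is closed, so its image is $\sigma$-compact, which is what lets you apply Fubini.
\end{itemize}
For the sharp threshold $l\ge\max\{p-q+1,1\}$ you fall back on citing Sard (1942), which is in effect what the paper does. So your route buys a self-contained proof of exactly the case the paper needs. In both treatments the sharp finite-order statement rests on citation.

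\textbf{Your finite-smoothness commentary misplaces the difficulty}, so do not rely on it.
\begin{itemize}
\item \emph{Step (b) does lose derivatives.} Here $w=\partial^\alpha f_r$ with $|\alpha|=j$ is only $C^{l-j}$, so the hypersurface $V=\{w=0\}$ and the chart expression of $f|_V$ are only $C^{l-j}$. At the threshold $l=p-q+1$, the induction hypothesis in dimension $p-1$ (which needs $C^{\max\{p-q,1\}}$) is therefore unavailable as soon as $j\ge 2$.
\item \emph{Step (c) is not the bottleneck.} For $f\in C^k$ with all derivatives of orders $1,\dots,k$ vanishing on $X_k$, Taylor's theorem gives $\|f(x+h)-f(x)\|=o(\|h\|^k)$ uniformly on $Q$. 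So $kq\ge p$ suffices, and $\lceil p/q\rceil\le p-q+1$ whenever $p\ge q$.
\item \emph{The refinement arithmetic is off by one.} At $r=q-1$ with $k+1=p-q+1$, the inequality $(k+1)(q-r)>p-r$ becomes an equality, so it fails as stated.
\item \emph{The refinement sketch has a more substantive problem.} At a rank-$r$ point, the straightened slice maps have only vanishing \emph{first} derivatives. You would again have to pass through the intermediate layers, where the derivative loss of (b) reappears. The sharp proofs (A.~P.~Morse, Sard, Federer~3.4.3) replace (b) with a different decomposition of the critical set.
\end{itemize}
Since you cite this part, none of this is a gap in what you actually prove. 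But the sketch as written would not yield the $C^{p-q+1}$ threshold.
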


\begin{definition}[Transversality]\label{def:transversality}
    Let $f:X\to Y$ be a smooth map between smooth manifolds, and let $Z$ be a submanifold of $Y$. We say that $f$ is transverse to $Z$, denoted $f\pitchfork Z$, if for every $x\in f^{-1}(Z)$,
    $$\mathrm{im}(df_x)+T_{f(x)}Z=T_{f(x)}Y,$$
    where $T_{f(x)}Z$ and $T_{f(x)}Y$ are the tangent spaces of $Z$ and $Y$ at $f(x)$, and $\mathrm{im}(df_x)$ is the image of $df_x: T_x X\to T_{f(x)} Y$.
\end{definition}

Transversality formalizes the notion that $f$ intersects $Z$ in the most generic way possible. In particular, if $\dim X < \mathrm{codim}\, Z$, then $f \pitchfork Z$ implies $f^{-1}(Z) = \emptyset$.

\begin{theorem}[Parametric Transversality Theorem]\label{thm:param_transversality}
    Let $F:X\times S\to Y$ be a smooth map of manifolds, and let $Z$ be a submanifold of $Y$. If $F \pitchfork Z$, then for almost every $s\in S$, the slice map $f_s(\cdot)\coloneq F(\cdot,s)$ satisfies $f_s \pitchfork Z$.
\end{theorem}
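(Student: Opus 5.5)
The plan is the standard reduction to Sard's Theorem (Theorem~\ref{thm:sard}) via the preimage manifold $W \coloneq F^{-1}(Z)\subset X\times S$. First, since $F\pitchfork Z$, the preimage theorem shows that $W$ is a smooth submanifold of $X\times S$. Its codimension equals $\operatorname{codim}_Y Z$. Its tangent space is
\[
T_{(x,s)}W=\{(u,v)\in T_xX\times T_sS:\ dF_{(x,s)}(u,v)\in T_{F(x,s)}Z\}.
\]
Next, consider the restricted projection $\pi\coloneq \mathrm{pr}_S|_W:W\to S$, which is a smooth map between manifolds. By Sard's Theorem, applied in local charts and then combined over a countable atlas (manifolds are second countable), the critical values of $\pi$ form a measure-zero subset of $S$. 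So almost every $s\in S$ is a regular value of $\pi$. The smoothness assumption is needed here, because Sard requires $C^l$ with $l$ large relative to $\dim W-\dim S$.

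The core of the argument is the linear-algebra lemma: if $s$ is a regular value of $\pi$, then $f_s\pitchfork Z$. Fix $x$ with $f_s(x)\in Z$, set $z=F(x,s)$, and take an arbitrary $w\in T_zY$. By transversality of $F$, write
\[
w=dF_{(x,s)}(a,b)+\zeta \quad\text{with } a\in T_xX,\ b\in T_sS,\ \zeta\in T_zZ.
\]
Because $s$ is regular for $\pi$, $d\pi_{(x,s)}$ is onto $T_sS$. Hence there is $(u,b)\in T_{(x,s)}W$, so that $dF_{(x,s)}(u,b)\in T_zZ$. Subtracting gives
\[
w=dF_{(x,s)}(a-u,0)+\bigl[\zeta+dF_{(x,s)}(u,b)\bigr]=df_s(a-u)+\zeta',
\qquad \zeta'\in T_zZ.
\]
Therefore $\mathrm{im}(df_s)_x+T_zZ=T_zY$. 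If $f_s^{-1}(Z)$ is empty, transversality holds vacuously.

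I expect the main obstacle to be this linear-algebra step, namely the bookkeeping showing that surjectivity of $d\pi$ transfers the $S$-directions into $X$-directions modulo $T_zZ$. A secondary technical point is making ``almost every $s$'' precise on a manifold $S$. Measure zero is chart-invariant, since diffeomorphisms between open subsets of $\mathbb{R}^d$ preserve null sets. So I would state it chartwise and use a countable atlas.

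In the paper's application $S$ is an open subset of $\mathbb{R}^{n+m+k}$, so this point is immediate. If $X$ or $S$ carries boundary (an MGB), I would first restrict to interiors, or extend to an open neighborhood as in Assumption~\ref{assumption:general}. This is because the boundary contributes only a null set of parameters.
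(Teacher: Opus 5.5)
The paper states this theorem without proof, as a standard tool from differential topology. Your argument is exactly the textbook proof (e.g.\ Guillemin--Pollack): $W=F^{-1}(Z)$ is a submanifold by transversality, Sard's Theorem applies to the projection $W\to S$, and the linear-algebra step shows that a regular value $s$ of this projection gives $f_s\pitchfork Z$. It is correct as written.

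One small correction to your closing aside concerns the case where $X$ has boundary. There, restricting to the interior of $X$ does not suffice: the null-set argument applies only to the parameter space $S$, and points $x\in\partial X$ with $f_s(x)\in Z$ can occur for every $s$ and still have to be checked. Your other option does handle it: extend $F$ locally across $\partial X$, use that transversality is an open condition, and rerun the argument chartwise.
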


\subsection*{Connectedness, path-connectedness, and simple connectivity}

We recall the topological notions used in the requirements of Theorems~\ref{thm:main_licq},~\ref{thm:main_scsc},~\ref{thm:main_sosc}, and~\ref{thm:global_continuation}. Throughout, $X$ denotes a topological space.

\begin{definition}[Connected space]\label{def:connected}
    $X$ is connected if it cannot be written as a disjoint union $X = U \sqcup V$ of two nonempty open subsets. Equivalently, the only subsets of $X$ that are both open and closed are $\emptyset$ and $X$ itself.
\end{definition}

\begin{definition}[Path-connected space]\label{def:path_connected}
    $X$ is path-connected if for any two points $x_0, x_1 \in X$, there exists a continuous map $\gamma : [0, 1] \to X$ with $\gamma(0) = x_0$ and $\gamma(1) = x_1$. Such a map is called a path from $x_0$ to $x_1$.
\end{definition}

\begin{definition}[Locally path-connected space]\label{def:locally_path_connected}
    $X$ is locally path-connected if every point $x \in X$ has a neighborhood basis of path-connected open sets; that is, for any $x \in X$ and any open neighborhood $U$ of $x$, there exists a path-connected open set $V$ with $x \in V \subseteq U$.
\end{definition}

Path-connectedness implies connectedness, but the converse is false in general. For example, the closure of the topologist's sine curve is connected but not path-connected. However, when $X$ is locally path-connected, the two notions coincide: a connected, locally path-connected space is automatically path-connected. This equivalence is used in the proof of Theorem~\ref{thm:main_licq}: since any MGB is locally path-connected, the assumption that $\mathcal{X}$ is a connected MGB yields path-connectedness for free.

\begin{definition}[Homotopy of paths relative to endpoints]\label{def:homotopy}
    Two paths $\gamma_0, \gamma_1 : [0, 1] \to X$ sharing the same endpoints $\gamma_0(0) = \gamma_1(0) = x_0$ and $\gamma_0(1) = \gamma_1(1) = x_1$ are homotopic relative to endpoints if there exists a continuous map $H : [0, 1] \times [0, 1] \to X$ satisfying $H(s, 0) = \gamma_0(s)$, $H(s, 1) = \gamma_1(s)$, and $H(0, t) = x_0$, $H(1, t) = x_1$ for all $s, t \in [0, 1]$.
\end{definition}

\begin{definition}[Simply connected space]\label{def:simply_connected}
    A path-connected space $X$ is simply connected if every loop in $X$ can be continuously contracted to a point: for any continuous map $\gamma : [0, 1] \to X$ with $\gamma(0) = \gamma(1) = x_0$, the map $\gamma$ is homotopic relative to endpoints to the constant map $s \mapsto x_0$.
\end{definition}

Equivalently, a path-connected space is simply connected if and only if any two paths with the same endpoints are homotopic relative to endpoints. This equivalent formulation is the form used in the proof of Lemma~\ref{lem:global_extension}: when continuing a branch of local minimizers along two different paths joining the same pair of upper-level variables, simple connectivity guarantees that the two resulting continuations agree, so the global branch is single-valued.

\subsection*{Manifolds with generalized boundary}

\begin{definition}[Manifold with generalized boundary~{\citep[Definition~3.1.9]{jongen2000nonlinear}}]\label{def:MGB}
A subset $X$ of $\mathbb{R}^n$ is called a manifold with generalized boundary (MGB) of class $C^r$ if, for each $x\in X$, there exists a local $C^r$-coordinate system of $\mathbb{R}^n$, say $\psi:U\to V$, with $\psi(x)=0$, such that
\[
\psi(U\cap X)
=
\{(v_1,\ldots,v_n)\in V
\mid
v_1=\cdots=v_q=0,\;
v_{q+1}\ge 0,\ldots,v_{q+p}\ge 0
\}.
\]
Here $q=q(x)$ and $p=p(x)$ may depend on $x$. The coordinates
$v_{q+1},\ldots,v_{q+p}$ are called semifree, and
$v_{q+p+1},\ldots,v_n$ are called free.
\end{definition}

\begin{definition}[Dimension and strata~{\citep[Definition~3.1.16]{jongen2000nonlinear}}]\label{def:stratum}
Let $X$ be an MGB in $\mathbb{R}^n$ of dimension $s$, that is,
$q(x)=n-s$ for all $x\in X$. For $0\le \ell\le s$, define
\[
\Sigma^\ell
=
\{x\in X\mid p(x)=s-\ell\}.
\]
Each path-component of $\Sigma^\ell$ is called an $\ell$-dimensional stratum of $X$.
\end{definition}

\begin{definition}[$C^r$-diffeomorphism~{\citep[Definition~3.1.26]{jongen2000nonlinear}}]\label{def:diffeo}
Let $X \subset \mathbb{R}^n$, $Y \subset \mathbb{R}^k$, and $f : X \to Y$ be a map.
\begin{enumerate}
    \item[\textbf{a.}] We say $f$ is of class $C^r$ if for each $x \in X$ there exists an open set $U \subset \mathbb{R}^n$ containing $x$ and a $C^r$-map $F : U \to \mathbb{R}^k$ that agrees with $f$ on $U \cap X$.
    \item[\textbf{b.}] We say $f$ is a $C^r$-diffeomorphism if $f$ is a bijection and both $f$ and $f^{-1}$ are of class $C^r$.
\end{enumerate}
\end{definition}

\subsection*{Covering maps}

The following notions from algebraic topology are used in the proof of Theorem~\ref{thm:global_continuation} to establish the global uniqueness of the continuation map.

\begin{definition}[Covering map]\label{def:covering}
    Let $p: E \to B$ be a continuous surjection between topological spaces. The map $p$ is called a covering map if every point $b \in B$ has an open neighborhood $U$ such that $p^{-1}(U)$ is a disjoint union of open sets in $E$, each of which is mapped homeomorphically onto $U$ by $p$.
\end{definition}

We will use the following standard fact from point-set topology.
\begin{lemma}[Tube lemma]\label{lem:tube}
    Let $X$ and $Y$ be topological spaces with $Y$ compact. If $N$ is an open subset of $X \times Y$ containing the slice $\{x_0\} \times Y$ for some $x_0 \in X$, then there exists an open neighborhood $W$ of $x_0$ in $X$ such that $W \times Y \subset N$. As a direct consequence, the projection $\pi_X: X \times Y \to X$ is a closed map.
\end{lemma}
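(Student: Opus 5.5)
The tube lemma is a standard fact, and I would prove it by a compactness covering argument. I would then deduce closedness of the projection from it by passing to complements.

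\textbf{Tube lemma.} Fix $x_0\in X$ and an open set $N\supset\{x_0\}\times Y$. For each $y\in Y$, the point $(x_0,y)$ lies in $N$, which is open in the product topology. So there are open sets $U_y\ni x_0$ in $X$ and $V_y\ni y$ in $Y$ with $U_y\times V_y\subset N$. The family $\{V_y\}_{y\in Y}$ is an open cover of $Y$, and since $Y$ is compact it has a finite subcover $V_{y_1},\ldots,V_{y_r}$. Set $W\coloneq U_{y_1}\cap\cdots\cap U_{y_r}$. This is a finite intersection of open sets containing $x_0$, so it is an open neighborhood of $x_0$. For any $(x,y)\in W\times Y$, choose $j$ with $y\in V_{y_j}$. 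Then $(x,y)\in U_{y_j}\times V_{y_j}\subset N$, so $W\times Y\subset N$.

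The only delicate point is that finiteness of the subcover is what keeps $W$ open. An infinite intersection of the $U_y$ could fail to be open, and that is exactly where compactness of $Y$ is used.

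\textbf{Closedness of $\pi_X$.} Let $C\subset X\times Y$ be closed; I want to show $X\setminus\pi_X(C)$ is open. Take $x_0\notin\pi_X(C)$. Then the slice $\{x_0\}\times Y$ is disjoint from $C$, so it is contained in the open set $N\coloneq(X\times Y)\setminus C$. The tube lemma gives an open $W\ni x_0$ with $W\times Y\subset N$, so $W\times Y$ misses $C$ and hence $W\cap\pi_X(C)=\emptyset$. Thus every point of $X\setminus\pi_X(C)$ has an open neighborhood inside $X\setminus\pi_X(C)$. Therefore $\pi_X(C)$ is closed.

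I expect no real obstacle here. The plan is to apply this later with $Y$ a compact set containing the uniformly bounded lower-level feasible sets $\mathcal Y(x)$, in order to transfer closedness of graphs to closedness of their projections onto $\mathcal X$.
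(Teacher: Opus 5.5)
Your proof is correct: it is the standard compactness argument (take a finite subcover of the $V_y$ and intersect the finitely many $U_{y_j}$), followed by the usual complement argument for closedness of $\pi_X$. The paper gives no proof of its own and cites the lemma as a standard point-set fact; it uses only the closed-projection consequence, applied with the compact set $K$ in place of $Y$ in Step~1 of the proof of Lemma~\ref{lem:global_extension}, which is the use you anticipate.
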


A continuous map $p : E \to B$ is called proper if the preimage $p^{-1}(C)$ of every compact set $C \subset B$ is compact. The following result is well known in point-set topology.

\begin{proposition}[Proper local homeomorphism implies covering map~{\cite[Lemma~2]{ho1975note}}]\label{prop:proper_covering}
    Let $p: E \to B$ be a surjective proper local homeomorphism between locally compact Hausdorff spaces. Then $p$ is a covering map.
\end{proposition}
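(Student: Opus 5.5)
We argue directly from Definition~\ref{def:covering}. Fix $b\in B$; we must produce an open neighborhood $U$ of $b$ whose preimage $p^{-1}(U)$ splits into disjoint open sheets, each mapped homeomorphically onto $U$.

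\emph{Step 1: the fiber is finite and nonempty.} The fiber $p^{-1}(b)$ is nonempty by surjectivity. It is compact because $\{b\}$ is compact and $p$ is proper. It is also discrete: each $e\in p^{-1}(b)$ has an open neighborhood on which $p$ is injective, and that neighborhood meets the fiber only in $e$. A compact discrete set is finite, so we may write $p^{-1}(b)=\{e_1,\ldots,e_n\}$ with $n\ge 1$.

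\emph{Step 2: separating sheets.} Since $E$ is Hausdorff and $p$ is a local homeomorphism, we choose pairwise disjoint open sets $V_1,\ldots,V_n$ with $e_i\in V_i$. We shrink each $V_i$ so that $p|_{V_i}$ is a homeomorphism onto the open set $p(V_i)\ni b$.

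\emph{Step 3: trapping the preimage (the main obstacle).} The key difficulty is to ensure that points of $E$ far from the $e_i$ do not map close to $b$. Properness and local compactness handle this. Let $C$ be a compact neighborhood of $b$ in $B$. Then $p^{-1}(C)$ is compact, so
\[
D\coloneq p^{-1}(C)\setminus\textstyle\bigcup_i V_i
\]
is compact, being closed in $p^{-1}(C)$. Hence $p(D)$ is compact, and therefore closed in the Hausdorff space $B$. Moreover $b\notin p(D)$, since every preimage of $b$ lies in some $V_i$. Define
\[
U\coloneq \operatorname{int}(C)\cap\textstyle\bigcap_{i=1}^n p(V_i)\setminus p(D).
\]
This is an open neighborhood of $b$. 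If $e\in p^{-1}(U)$, then $e\in p^{-1}(C)$ and $e\notin D$, so $e\in\bigcup_i V_i$. Thus $p^{-1}(U)\subset\bigcup_i V_i$.

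\emph{Step 4: conclusion.} Set $W_i\coloneq V_i\cap p^{-1}(U)$. These sets are open and pairwise disjoint, and by Step~3 their union is exactly $p^{-1}(U)$. Since $U\subset p(V_i)$ and $p|_{V_i}$ is a homeomorphism onto $p(V_i)$, the restriction $p|_{W_i}$ is a homeomorphism onto $U$. This gives the evenly covered neighborhood required by Definition~\ref{def:covering}. Continuity and surjectivity of $p$ are given, so $p$ is a covering map.
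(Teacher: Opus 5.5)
Your proof is correct and complete. The paper does not prove this proposition; it imports it as a standard fact from \cite[Lemma~2]{ho1975note}. Your argument is therefore a self-contained replacement for that citation, not a competing route, and it is the standard direct proof.

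The structure is as follows. The fiber is compact and discrete, hence finite. Hausdorffness of $E$ separates the finitely many fiber points into disjoint injectivity sheets. Properness, applied on a compact neighborhood $C$ of $b$, then traps $p^{-1}(U)$ inside the union of those sheets. Local compactness and Hausdorffness of $B$ enter here: they supply $C$ and make $p(D)$ closed. Step~3 is essentially a localized form of the fact that proper maps into locally compact Hausdorff spaces are closed.

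Your proof also shows that local compactness of $E$ is never used, so the statement holds with $E$ merely Hausdorff. For the paper's application in Lemma~\ref{lem:global_extension} this changes nothing, since there $E=S$ is a closed subset of $X\times Y$ and so is locally compact Hausdorff anyway.
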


The key consequence we use is the following lifting property.

\begin{theorem}[Homotopy lifting property~{\citep[Proposition~1.30]{hatcher2002algebraic}}]\label{thm:homotopy_lifting}
    Let $p: E \to B$ be a covering map. Given a homotopy $H: [0,1] \times [0,1] \to B$ and a lift $\widetilde{H}(\cdot, 0): [0,1] \to E$ of the initial path (i.e., $p \circ \widetilde{H}(\cdot, 0) = H(\cdot, 0)$), there exists a unique continuous lift $\widetilde{H}: [0,1] \times [0,1] \to E$ such that $p \circ \widetilde{H} = H$.
\end{theorem}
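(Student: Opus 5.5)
The plan is the standard Lebesgue-number argument for covering spaces: first prove uniqueness of lifts over connected domains, then build the lift of $H$ square by square on a fine grid of $[0,1]\times[0,1]$, and finally glue the pieces using the pasting lemma.

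\textbf{Step 1 (uniqueness).} Let $Z$ be connected, and let $\widetilde F_1,\widetilde F_2 : Z\to E$ be two lifts of the same map $F:Z\to B$ that agree at one point. I will show the set $A=\{z:\widetilde F_1(z)=\widetilde F_2(z)\}$ is both open and closed, hence equal to $Z$. Take any $z$ and an evenly covered neighborhood $U$ of $F(z)$, as in Definition~\ref{def:covering}. Let $\widetilde U_1,\widetilde U_2$ be the sheets containing $\widetilde F_1(z)$ and $\widetilde F_2(z)$. On the neighborhood $N=\widetilde F_1^{-1}(\widetilde U_1)\cap\widetilde F_2^{-1}(\widetilde U_2)$ of $z$ there are two cases:
\begin{itemize}
\item if $\widetilde U_1=\widetilde U_2$, then since $p$ is injective on that sheet, the lifts agree on all of $N$;
\item if $\widetilde U_1\neq\widetilde U_2$, the sheets are disjoint, so the lifts disagree on all of $N$.
\end{itemize}
This gives both openness of $A$ and openness of its complement. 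Applied to the vertical paths $t\mapsto\widetilde H(s,t)$, which all start on the prescribed bottom edge, this yields the uniqueness of $\widetilde H$.

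\textbf{Step 2 (grid).} Cover the compact square $I^2$ by the open sets $H^{-1}(U)$, with $U$ ranging over evenly covered open sets. By the Lebesgue number lemma there is $N$ such that every small square
\[
R_{ij}=[\tfrac{i-1}{N},\tfrac iN]\times[\tfrac{j-1}{N},\tfrac jN]
\]
is mapped by $H$ into a single evenly covered set $U_{ij}$.

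\textbf{Step 3 (inductive construction).} I define $\widetilde H$ on the squares in lexicographic order: rows $j=1,\dots,N$ from bottom to top, and within each row the columns $i=1,\dots,N$ from left to right. When $R_{ij}$ is reached, $\widetilde H$ is already defined and continuous on $C_{ij}$, the union of the bottom edge of $R_{ij}$ and (for $i>1$) its left edge. The bottom edge comes either from the given lift $\widetilde H(\cdot,0)$ or from the row below, and the left edge comes from $R_{i-1,j}$.

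The set $C_{ij}$ is connected, and its image lies in $p^{-1}(U_{ij})$, which is a disjoint union of open sheets. Hence $\widetilde H(C_{ij})$ lies in a single sheet $\widetilde U$. I then set
\[
\widetilde H=(p|_{\widetilde U})^{-1}\circ H \quad\text{on } R_{ij}.
\]
This agrees with the existing values on $C_{ij}$, because $p|_{\widetilde U}$ is injective and both maps lift $H$ there.

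\textbf{Step 4 (gluing).} Finally, $\widetilde H$ is well defined on all of $I^2$, and continuity follows from the pasting lemma on the finite closed cover $\{R_{ij}\}$. By construction $p\circ\widetilde H=H$.

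\textbf{Main obstacle.} The delicate point is the consistency check in Step 3. I must make sure that the values inherited on $C_{ij}$ really sit in one sheet; this uses exactly the connectedness of $C_{ij}$ together with the disjointness of the sheets. I also need that the top and right edges of $R_{ij}$, once defined, are compatible with the squares processed later. The latter is automatic: those later squares read these edges as part of their own $C$-sets, so no value is ever redefined.
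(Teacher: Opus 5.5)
Your proof is correct, but it does not follow the route the paper relies on. The paper gives no proof of this statement; it cites Hatcher's Proposition~1.30, whose argument is organized differently.

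\textbf{Hatcher's route.} He treats an arbitrary parameter space $Y$; here $Y=[0,1]$ is the $s$-variable. For each $y_0\in Y$ he uses compactness of the single segment $\{y_0\}\times[0,1]$ to find a neighborhood $N$ of $y_0$ and a partition $0=t_0<\dots<t_m=1$ of the $t$-direction with each $H(N\times[t_i,t_{i+1}])$ inside an evenly covered set. He then builds the lift on $N\times[0,1]$ one slab at a time, shrinking $N$ at each step so that the whole slice $N\times\{t_i\}$ lands in a single sheet. Uniqueness of path lifts is proved by induction along such a partition. Finally, the local lifts over different $N$ are glued, using that uniqueness on each vertical segment.

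\textbf{Your route.} You give the grid argument familiar from Munkres. Because the statement only involves $Y=[0,1]$, you use compactness of the whole square and the Lebesgue number lemma to get one global grid. You replace Hatcher's neighborhood-shrinking trick by the connectedness of the L-shaped set $C_{ij}$. Your lexicographic order is exactly what makes this work: every overlap of $R_{ij}$ with an earlier square lies in $C_{ij}$, including the bottom-right corner shared with $R_{i+1,j-1}$. Your open--closed uniqueness argument is also slightly more general than Hatcher's partition induction, since it works on any connected domain.

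\textbf{Trade-off.} Your proof is shorter and self-contained for the case the paper actually uses, namely lifting a homotopy of paths in Lemma~\ref{lem:global_extension}. Hatcher's proof covers homotopies of maps from arbitrary, possibly non-compact, spaces $Y$.
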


\section{Localized Form of the Counterexample Arguments}\label{sec:local_version}

In Section~\ref{sec:nonprevalence}, Counterexamples~\ref{counterexample:licq2}, \ref{counterexample:licq3}, \ref{counterexample:scsc1}, and \ref{counterexample:sosc} are presented without repeatedly displaying
the compact sets \(K_{\rm in}\) and \(K_{\rm out}\). In this appendix,
we provide the localized version of the same argument.

\begin{enumerate}
    \item \textit{Choice of compact sets.}
    For each counterexample, we can choose compact sets
    \(K_{\rm in}\subset \operatorname{int}(K_{\rm out})\) such that the lower-level
    feasible sets are uniformly contained in
    \(\operatorname{int}(K_{\rm in})\), namely,
    \[
        \mathcal Y(x)\subset \operatorname{int}(K_{\rm in}),
        \qquad \forall x\in \mathcal X .
    \]
    We then compare the relevant invariant inside \(K_{\rm in}\). The goal is to show
    that no sufficiently small perturbation can make the \(K_{\rm in}\)-localized
    analogue of Statement~1 hold: the corresponding regularity condition cannot
    hold for the lower-level problem at every \(x\) when restricted to \(K_{\rm in}\).
    Here the restriction to \(K_{\rm in}\) is only a localization of the analysis,
    not an additional constraint in the lower-level problem.

    \item \textit{Endpoint stability.}
    The compact-local stability result is applied only at the two endpoint
    parameters \(x_1\) and \(x_2\) used in the counterexample. It gives two perturbation thresholds, and we choose the smaller one. Then, under any
    perturbation below this size, the relevant invariant inside \(K_{\rm in}\)
    is preserved at both endpoints. Therefore the endpoint mismatch of this
    invariant remains after perturbation.

    \item \textit{Localization by cutoff.}
    To apply the rigidity theorem, we use a smooth cutoff function which equals
    one on a neighborhood of \(K_{\rm in}\) and vanishes outside
    \(K_{\rm out}\). The
    cutoff problem agrees with the original perturbed problem on
    \(K_{\rm in}\). Moreover, by choosing the perturbation sufficiently small,
    the cutoff perturbed feasible set remains inside
    \(\operatorname{int}(K_{\rm in})\) for every \(x\). Thus the regularity
    condition inside \(K_{\rm in}\) for the original perturbed problem is the
    same as the global regularity condition for the cutoff perturbed problem.

    \item \textit{Contradiction with rigidity.}
    If the \(K_{\rm in}\)-localized analogue of Statement~1 held after
    perturbation, then the cutoff perturbed problem would satisfy the
    corresponding every-\(x\) regularity condition globally. The rigidity
    theorem would then force the endpoint invariants at \(x_1\) and \(x_2\) to
    be identical. This contradicts the endpoint mismatch preserved by the
    stability result. Hence the regularity condition must fail for some \(x\)
    inside \(K_{\rm in}\), and this failure is also a failure for the original
    perturbed problem.
\end{enumerate}

\section{Detailed proof}

\subsection{Proof of Theorem~\ref{thm:main_licq}}\label{proof:thm:main_licq}

In \citet[page~87]{jongen2000nonlinear}, the authors show that if LICQ holds
for the lower-level problem, then the lower-level feasible set
\(\mathcal{Y}(x)\) is an MGB. Therefore, to
complete the proof, it suffices to show that \(\mathcal{Y}(x_1)\) and
\(\mathcal{Y}(x_2)\) are diffeomorphic for any \(x_1,x_2\in\mathcal X\). Once
this is established, the desired result follows directly from
\citet[Corollary~3.1.29]{jongen2000nonlinear}, which guarantees that
diffeomorphisms between MGBs preserve stratification.

We next recall the one-parameter result of
\citet[Theorem~10.4.5]{jongen2000nonlinear}. Consider a one-parameter family of
constraint sets
\[
    \mathcal Z(t)\coloneq \{y\in\mathbb R^m: H_i(t,y)\le 0,\ i=1,\ldots,k\},
    \qquad t\in[a,b].
\]
If each \(\mathcal Z(t)\) satisfies LICQ and
\(
    \{(t,y)\in[a,b]\times\mathbb R^m:y\in\mathcal Z(t)\}
\)
is compact, then \(\mathcal Z(t_1)\) and \(\mathcal Z(t_2)\) are diffeomorphic
for any \(t_1,t_2\in[a,b]\).

We now apply this result to the present higher-dimensional set
\(\mathcal X\subset\mathbb R^n\). Since \(\mathcal X\) is a connected MGB, it is
locally path-connected and hence path-connected. Thus, for any
\(x_1,x_2\in\mathcal X\), there exists a continuous path connecting them.
Because the image of this path is compact, it can be covered by finitely many
local coordinate charts of \(\mathcal X\). Within these charts, the path can be
replaced by a piecewise smooth path \(\gamma:[0,1]\to\mathcal X\) with
\(\gamma(0)=x_1\) and \(\gamma(1)=x_2\).

On each smooth segment of \(\gamma\), define the one-parameter family
\[
    \mathcal Z(t)\coloneq \mathcal Y(\gamma(t))
    =\{y\in\mathbb R^m:h_i(\gamma(t),y)\le 0,\ i=1,\ldots,k\}.
\]
Since LICQ holds for every \(x\in\mathcal X\), LICQ holds for
\(\mathcal Z(t)\) for all \(t\). Moreover, the uniform boundedness of
\(\mathcal Y(x)\), together with the continuity of the constraints, ensures
the compactness hypothesis above on each smooth segment. Therefore, we can
apply the theorem on each segment of \(\gamma\). Composing the resulting
diffeomorphisms over the finitely many segments, we conclude that
\(\mathcal Y(x_1)\) and \(\mathcal Y(x_2)\) are diffeomorphic.

\subsection{Proof of Theorem~\ref{thm:stab_strat_perturb}}\label{proof:thm:stab_strat_perturb}

We first recall the structural stability theorem from \citet{jongen2000nonlinear}, which is the main tool used in the proof.

\begin{theorem}[\citet{jongen2000nonlinear}, Theorem 6.3.1]\label{thm:structural_stability}
Let \(M[h,g]\) be a compact, regular constraint set and let \(f|_{M}\) be nondegenerate and separating. Then there exists a \(C^2\)-neighborhood \(\mathcal O\) of
\((f,h_1,\ldots,h_m,g_1,\ldots,g_s)\) such that, for every
\((\widetilde f,\widetilde h_1,\ldots,\widetilde h_m,\widetilde g_1,\ldots,\widetilde g_s)\in\mathcal O\), there exist \(C^\infty\)-diffeomorphisms
\(\widetilde\phi:\mathbb R^n\to\mathbb R^n\) and
\(\widetilde\psi:\mathbb R\to\mathbb R\) satisfying:
\begin{enumerate}
    \item \(\widetilde\phi\), respectively \(\widetilde\psi\), equals the identity outside a compact set in \(\mathbb R^n\), respectively \(\mathbb R\);
    \item \(\widetilde\phi\) maps \(M[h,g]\) onto \(\widetilde M=M[\widetilde h,\widetilde g]\);
    \item \(\widetilde f|_{\widetilde M}=\widetilde\psi\circ f\circ \widetilde\phi^{-1}_{|\widetilde M}\).
\end{enumerate}
\end{theorem}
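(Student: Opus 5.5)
The plan is a homotopy (Thom–Mather/Moser-type) argument. We connect the original data to the perturbed data by the straight-line family
\[
(f_t,h_t,g_t)\coloneq (1-t)(f,h,g)+t(\widetilde f,\widetilde h,\widetilde g),\qquad t\in[0,1].
\]
We then build a time-dependent vector field $X_t$ on $\mathbb R^n$ and a time-dependent vector field $c_t$ on $\mathbb R$, both with compact support. Their flows $\phi_t$ and $\psi_t$ should carry $M[h,g]$ onto $M[h_t,g_t]$ and conjugate $f$ to $f_t$. We then set $\widetilde\phi\coloneq\phi_1$ and $\widetilde\psi\coloneq\psi_1$. Infinitesimally, the requirements are as follows. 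For each $t$ we need
\begin{equation}\label{eq:homological}
\partial_t f_t + \langle \nabla f_t, X_t\rangle = c_t\circ f_t ,
\end{equation}
on a neighborhood of $M_t\coloneq M[h_t,g_t]$. In addition, $X_t$ must preserve the constraint functions: $\partial_t h_{t,j}+\langle\nabla h_{t,j},X_t\rangle=0$ for all $j$. Near the zero set of $g_{t,i}$, it must satisfy $\partial_t g_{t,i}+\langle\nabla g_{t,i},X_t\rangle=0$. These last two conditions make every stratum of $M_t$ move into the corresponding stratum.

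The first step is to show that the good class is open in the $C^2$ topology. Every member of the family must be a compact regular constraint set on which $f_t$ is nondegenerate and separating. Compactness of $M$ and regularity (LICQ at every point) are open conditions, by a uniform lower bound on the smallest singular value of the active-gradient matrix over a compact neighborhood. Nondegeneracy means nonsingular KKT Jacobian together with strict complementarity at every generalized critical point. Because $M$ is compact, there are finitely many critical points. By the Implicit Function Theorem each of them continues uniquely to a nearby nondegenerate critical point with the same active set and the same index. A uniform residual bound away from these points rules out any new critical point. Separation (distinct critical values) persists by continuity. Hence, for $\mathcal O$ small and convex, every $t\in[0,1]$ gives data in the good class.

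The second step solves \eqref{eq:homological} together with the constraint equations, and we split this into local problems glued by a partition of unity. Away from critical points, LICQ and noncriticality let us solve the stacked linear system in the active gradients $\nabla h_{t,j}$, $\nabla g_{t,i}$ and $\nabla f_t$ pointwise, with any choice of $c_t$. The obtained $X_t$ is tangent to the strata automatically, and $C^\infty$ dependence follows from a right inverse of the full-rank Jacobian. Near a critical point $\bar z_t$, we choose $c_t$ on a small interval around the critical value so that $c_t(f_t(\bar z_t))=\partial_t f_t(\bar z_t)$. This is possible because the critical values are distinct. Then, in the Morse-type normal form with corners (coordinates adapted to the active constraints, Jongen's normal form for nondegenerate critical points), we solve for $X_t$ with a division argument: the residual of \eqref{eq:homological} vanishes at $\bar z_t$ and lies in the ideal generated by the partial derivatives of the normal form. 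We glue the local pieces with a partition of unity subordinate to this cover. All the equations are affine in $(X_t,c_t)$, so convex combinations of local solutions remain solutions, provided the same $c_t$ is used on overlaps; we fix $c_t$ globally first. Finally, we multiply $X_t$ by a cutoff that equals one near $\bigcup_t M_t$, so it has compact support and the flows exist on $[0,1]$ and equal the identity outside a compact set.

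The main obstacle is the local solvability near the critical points on the boundary strata. There the semifree coordinates force $X_t$ to preserve the corner structure while absorbing the variation of $f_t$. This is exactly where both strict complementarity (nonzero linear terms in the semifree directions) and nondegeneracy of the reduced Hessian are needed. I would first try the explicit normal form $f=\pm v_1\pm\cdots+\sum v_{\text{semifree}}+c$ and solve coordinatewise. Integrating $X_t$ and $c_t$ from $0$ to $1$ then yields properties (1)--(3) of the statement.
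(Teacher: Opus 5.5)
The paper does not prove this statement. It quotes it from Jongen--Jonker--Twilt and uses only conclusion (2), in the proof of Theorem~\ref{thm:stab_strat_perturb}. Judged on its own, your homotopy scheme has a genuine gap at critical points where two or more constraints are active.

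You impose, on open sets, both the homological equation and exact preservation of the constraint \emph{values}: $\partial_t h_{t,j}+\langle\nabla h_{t,j},X_t\rangle=0$, and $\partial_t g_{t,i}+\langle\nabla g_{t,i},X_t\rangle=0$ near $\{g_{t,i}=0\}$. This system is overdetermined. Here is a concrete case where it fails:
\begin{itemize}
\item Let $M$ near the origin be the quadrant cut out by the unperturbed constraints $g_1=z_1$, $g_2=z_2$, and let $f_t=z_1+z_2+tz_1^2$. The origin is a nondegenerate critical point: a vertex with multipliers $1,1$.
\item Your constraint equations give $\langle\nabla g_1,X_t\rangle=\langle\nabla g_2,X_t\rangle=0$, so $X_t\equiv 0$ near $0$.
\item The homological equation then becomes $z_1^2=c_t(z_1+z_2+tz_1^2)$. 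This already fails on $M$: the points $(s,0)$ and $(0,s+ts^2)$ have the same $f_t$-value but different left-hand sides.
\item Yet a conjugacy exists. The map $\Phi(z)=(z_1+tz_1^2,z_2)$ sends the quadrant onto itself near $0$ and satisfies $f_0\circ\Phi=f_t$. It simply does not preserve the value of $g_1$.
\end{itemize}
The general obstruction is this. Value preservation forces $\phi_t$ to conjugate $f_0$ to $f_t$ on every nearby fiber $\{h=e,\,g_{\mathcal J}=c\}$. So the fiberwise critical value $\kappa_t(e,c)$ would have to satisfy $\kappa_t=\psi_t\circ\kappa_0$ on an open set of parameters in $\mathbb R^{m+|\mathcal J|}$, i.e.\ keep its level sets fixed. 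A generic perturbation destroys this once $m+|\mathcal J|\ge 2$. Your division argument cannot repair it, because it needs exactly the components of $X_t$ along the active gradients that your constraint equations freeze.

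The fix is to transport only the zero sets, not the values:
\begin{itemize}
\item Require that $\partial_t+X_t$ be tangent to $\{(t,z):h_t(z)=0\}$ and to each $\{(t,z):g_{t,i}(z)=0\}$. For example, ask $\partial_t g_{t,i}+\langle\nabla g_{t,i},X_t\rangle=a_{t,i}\,g_{t,i}$ with $a_{t,i}$ free.
\item Impose the homological equation only on $M_t$; that is all conclusion (3) needs.
\end{itemize}
These conditions are still affine in $(X_t,c_t)$, so your partition-of-unity gluing survives. In coordinates where the active constraints are coordinates $v$, you now have semifree components $X^i=a_iv_i$ available. Strict complementarity gives $\partial f_t/\partial v_i\neq 0$ near $\bar z_t$. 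The nondegenerate stratum Hessian makes the constraint coordinates together with the free partials of $f_t$ into local coordinates. Hence every residual vanishing at $\bar z_t$ can be divided on $M_t$. In the example, $X^1=-z_1^2/(1+2tz_1)$, $X^2=0$, $c_t=0$ works.

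Two smaller corrections:
\begin{itemize}
\item Once the constraints move, the normalization at a critical point must be $c_t(f_t(\bar z_t))=\frac{d}{dt}f_t(\bar z_t)=\partial_t f_t-\sum_j\lambda_j\partial_t h_{t,j}-\sum_i\mu_i\partial_t g_{t,i}$, evaluated at $\bar z_t$. It is not $\partial_t f_t(\bar z_t)$; your own constraint equations already force the correct value.
\item The semifree linear terms in the normal form carry the signs of the multipliers. Stratum critical points with negative multipliers are also preserved by any stratum-preserving conjugacy, so they must be handled like the KKT points.
\end{itemize}

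Finally, for the paper's actual use, which is conclusion (2) alone, your constraint equations without the $f$-equation already suffice. LICQ makes the gradients of the $h_{t,j}$ and of the nearly active $g_{t,i}$ linearly independent near $M_t$.
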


We first recall the structural stability theorem from \citet{jongen2000nonlinear}, 
which is the main tool used in the proof.

\paragraph{Step 1 (Setup):}
We apply Theorem~\ref{thm:structural_stability} to the lower-level feasible set 
$\mathcal{Y}(x)$ at a fixed $x \in \mathcal X$. In the notation of 
\citet{jongen2000nonlinear}, equality constraints are denoted by $h_i = 0$ and 
inequality constraints by $g_j \le 0$. In our setting there are no equality 
constraints, and the inequality constraints are $h_i(x, \cdot) \le 0$. Since 
$\mathcal{Y}(x)$ is closed and bounded, it is compact. Combined with LICQ at 
every feasible point, this makes $\mathcal{Y}(x)$ a compact regular constraint 
set in the sense of Theorem~\ref{thm:structural_stability}.

It remains to choose an auxiliary objective \(f\) such that
\(f|_{\mathcal Y(x)}\) is nondegenerate and separating. This auxiliary objective
\(f\) is not the lower-level objective \(g\) in our bilevel problem. It is
introduced only to satisfy the requirements of
Theorem~\ref{thm:structural_stability}, and its particular choice is irrelevant
for our purpose, since we only use the feasible-set diffeomorphism conclusion (2)
of Theorem~\ref{thm:structural_stability}.

Here nondegenerate means that every stationary point is nondegenerate in the
sense of \citet[Definition~3.2.14]{jongen2000nonlinear}, and separating means
that distinct stationary points of \(f|_{\mathcal Y(x)}\) have distinct function
values. Such a choice is possible by a genericity result on manifolds with
generalized boundary. Indeed, since \(\mathcal Y(x)\) is a compact regular
constraint set under LICQ, it is a compact MGB. By
\citet[Theorem~7.1.17]{jongen2000nonlinear}, the set of smooth functions whose
restriction to a compact MGB is nondegenerate and separating is \(C^k\)-open
and \(C^k\)-dense for every \(k\ge 2\). Hence we may choose a smooth auxiliary
objective \(f\) such that \(f|_{\mathcal Y(x)}\) is nondegenerate and separating.

\paragraph{Step 2 (Cutoff to a strong $C^2$-perturbation):}
The neighborhood $\mathcal{O}$ in Theorem~\ref{thm:structural_stability} is 
taken in the strong $C^2$-topology of \citet[Definition~6.1.8]{jongen2000nonlinear}. 
For $F \in C^2(\mathbb R^m, \mathbb R)$, a basic neighborhood has the form
\begin{align}\label{eq:strongtopo}
    \mathcal V_{\rho, F}^2
\coloneq  \left\{
\widetilde F \in C^2(\mathbb R^m, \mathbb R) :
|D^\alpha \widetilde F(y) - D^\alpha F(y)| < \rho(y),\
y \in \mathbb R^m,\ |\alpha| \le 2
\right\},
\end{align}
where $\rho : \mathbb R^m \to (0, \infty)$ is continuous, and the topology on 
finite collections is the product topology.

Our requirement controls the perturbation only on the compact set \(K_{\rm out}\) in the
\(C^2(K_{\rm out})\)-norm. To pass to the strong \(C^2\)-topology, we use a cutoff. Choose
\(\chi \in C_c^\infty(\operatorname{int}(K_{\rm out}))\) such that \(\chi = 1\) on a
neighborhood of \(K_{\rm in}\). Here \(C_c^\infty(\operatorname{int}(K_{\rm out}))\) denotes the space of smooth functions with compact support contained in \(\operatorname{int}(K_{\rm out})\). Define
\[
\widehat h_i(x,y) \coloneq  h_i(x,y) + \chi(y) r_i(x,y), \qquad i = 1, \ldots, k.
\]
Then \(\widehat h_i = \widetilde h_i\) on \(K_{\rm in}\) and \(\widehat h_i = h_i\) outside
\(K_{\rm out}\). The cutoff perturbation \(\chi r_i\) is supported in \(K_{\rm out}\), where the
tolerance function \(\rho\) defining \(\mathcal O\) in \eqref{eq:strongtopo} has a positive lower bound.
Hence sufficiently small \(\|r_i(x, \cdot)\|_{C^2(K_{\rm out})}\) ensures that the cutoff
data \((f, \widehat h_1(x, \cdot), \ldots, \widehat h_k(x, \cdot))\) lies in
\(\mathcal O\).

\paragraph{Step 3 (The cutoff feasible set lies in \(K_{\rm in}\)):}
Since \(\mathcal{Y}(x) \subset \operatorname{int}(K_{\rm in})\), the compact set
\(K_{\rm out} \setminus \operatorname{int}(K_{\rm in})\) is disjoint from \(\mathcal{Y}(x)\), so
there exists \(\eta > 0\) with
\[
\max_i h_i(x, y) \ge \eta, \qquad y \in K_{\rm out} \setminus \operatorname{int}(K_{\rm in}).
\]
For sufficiently small perturbations, \(\max_i \widehat h_i(x, y) > 0\) on this
set, so the cutoff feasible set contains no point in
\(K_{\rm out} \setminus \operatorname{int}(K_{\rm in})\). Outside \(K_{\rm out}\) the cutoff perturbation
vanishes, and the original feasible set is already contained in \(K_{\rm in}\).
Therefore \(\widehat{\mathcal{Y}}(x) \subseteq K_{\rm in}\). Since \(\chi = 1\) on a
neighborhood of \(K_{\rm in}\), the cutoff and original perturbed constraints coincide
there, so
\[
\widehat{\mathcal{Y}}(x) = \widetilde{\mathcal{Y}}(x) \cap K_{\rm in}.
\]

\paragraph{Step 4 (Conclusion):}
By Theorem~\ref{thm:structural_stability} applied to the auxiliary objective \(f\)
and to the constraint functions \(h_i(x,\cdot)\), \(i=1,\ldots,k\), with the cutoff
constraints \(\widehat h_i(x,\cdot)\), \(\mathcal Y(x)\) is diffeomorphic to
\(\widehat{\mathcal Y}(x)=\widetilde{\mathcal Y}(x)\cap K_{\rm in}\). By
\citet[Corollary~3.1.29]{jongen2000nonlinear}, this diffeomorphism preserves
the natural stratification.

\subsection{Proof of Theorem~\ref{thm:main_scsc}}\label{proof:thm:main_scsc}
Let $x \mapsto y^\ast(x)$ be a continuous trajectory of KKT points. By Proposition~\ref{prop:multiplier_continuity}, the associated multiplier vector $\lambda^\ast(x)$ is unique and continuous along this trajectory. We first show that the active constraint index set is locally constant. Fix any $\overline{x} \in \mathcal{X}$ and let $\overline{\mathcal{J}} \coloneq  \{i : h_i(\overline{x}, y^\ast(\overline{x})) = 0\}$ be the active set at $\overline{x}$. By the SCSC assumption, we have $\lambda_i^\ast(\overline{x}) > 0$ for all $i \in \overline{\mathcal{J}}$ and $h_j(\overline{x}, y^\ast(\overline{x})) < 0$ for all $j \notin \overline{\mathcal{J}}$. Since both $\lambda^\ast(x)$ and the map $x \mapsto h_i(x, y^\ast(x))$ are continuous along the trajectory, there exists a neighborhood $U$ of $\overline{x}$ such that:$$\lambda_i^\ast(x) > 0 \quad (i \in \overline{\mathcal{J}}) \quad \text{and} \quad h_j(x, y^\ast(x)) < 0 \quad (j \notin \overline{\mathcal{J}})$$for all $x \in U$. This ensures that the active set $\mathcal{J}(x,y^\ast(x))$ remains identical to $\overline{\mathcal{J}}$ for all $x \in U$, proving that the mapping $x \mapsto \mathcal{J}(x,y^\ast(x))$ is locally constant. Since $\mathcal{X}$ is connected, any locally constant mapping on $\mathcal{X}$ must be globally constant. Consequently, the active constraint index set $\mathcal{J}(x,y^\ast(x))$ is invariant along the entire continuous KKT trajectory.

\subsection{Proof of Theorem~\ref{thm:stab_scsc}}\label{proof:thm:stab_scsc}
This result follows as a special case of
Theorem~\ref{thm:stab_strat_lmins_perturb_noM}. Indeed, since
\(g(x,\cdot)\) is strongly convex and \(\mathcal Y(x)\) is convex, the original
lower-level problem has a unique local minimizer, which is also its unique
global minimizer. Moreover, strong convexity together with LICQ implies the
nonsingularity of the KKT matrix at this minimizer. Hence the assumptions of
Theorem~\ref{thm:stab_strat_lmins_perturb_noM} are satisfied for the unique
local minimizer of the original problem.

By the proof of Theorem~\ref{thm:stab_strat_lmins_perturb_noM}, for sufficiently small
perturbations measured in \(C^2(K_{\rm out})\), the number of local minimizers
in each stratum inside \(K_{\rm in}\) is preserved. Since the original problem
has exactly one local minimizer, the perturbed problem has exactly one local
minimizer in \(K_{\rm in}\), and this minimizer lies on the same stratum as
\(y^*(x)\). Therefore, we have $\mathcal J(x,\widetilde y^*(x))=\mathcal J(x,y^*(x))$. The proof is completed.

\subsection{Proof of Theorem~\ref{thm:main_sosc}}\label{proof:thm:main_sosc}

We prove the two parts separately.

\paragraph{Step 1 (Local continuation):}
Fix any $\overline x\in\mathcal X$ and let $\overline y$ be any local minimizer of the lower-level problem at $\overline x$, with corresponding
multiplier $\overline\lambda\ge 0$ satisfying the KKT conditions. By the standing assumptions, LICQ, SCSC, and uniform SOSC
hold at $(\overline x,\overline y)$. Denote the active set by
\[
\overline A\coloneq \{i:\ h_i(\overline x,\overline y)=0\}.
\]
By SCSC, we have $\overline\lambda_i>0$ for $i\in\overline A$ and $\overline\lambda_i=0$ for $i\notin\overline A$. Consider the reduced KKT
system in variables $(y,\lambda_{\overline A})$:
\begin{equation}\label{eq:reducedKKT_noeq_rewrite}
\Psi(x,y,\lambda_{\overline A})
\coloneq 
\begin{pmatrix}
\nabla_y g(x,y)+\nabla_y h_{\overline A}(x,y)^\top \lambda_{\overline A}\\
h_{\overline A}(x,y)
\end{pmatrix}
=0 .
\end{equation}
Under LICQ and uniform SOSC at $(\overline x,\overline y)$, the Jacobian of $\Psi$ with respect to $(y,\lambda_{\overline A})$ is nonsingular at
$(\overline x,\overline y,\overline\lambda_{\overline A})$. Hence the Implicit Function Theorem yields a neighborhood $U_{\overline x}\subset\mathcal X$
and a unique $C^1$ mapping
\[
x\ \mapsto\ (y_{\overline y}(x),\lambda_{\overline y,\overline A}(x)) \qquad (x\in U_{\overline x}),
\]
satisfying $(y_{\overline y}(\overline x),\lambda_{\overline y,\overline A}(\overline x))=(\overline y,\overline\lambda_{\overline A})$ and
\begin{equation*}
\Psi(x,y_{\overline y}(x),\lambda_{\overline y,\overline A}(x)))
=
\begin{pmatrix}
\nabla_y g(x,y_{\overline y}(x))+\nabla_y h_{\overline A}(x,y_{\overline y}(x))^\top \lambda_{\overline y,\overline A}(x)\\
h_{\overline A}(x,y_{\overline y}(x))
\end{pmatrix}
=0 
\end{equation*}
holds for all $x\in U_{\overline x}$. By continuity and SCSC at $\overline x$, after possibly shrinking
$U_{\overline x}$ we further have $\lambda_{\overline y,\overline A}(x)\in\mathbb{R}^{|\overline A|}_{>0}$ for all $x\in U_{\overline x}$ and
$h_i(x,y_{\overline y}(x))<0$ for all $i\notin\overline A$. Therefore, along this branch the active set remains exactly $\overline A$,
and $x\mapsto y_{\overline y}(x)$ is a $C^1$ trajectory of KKT points that stays on a fixed stratum of $\mathcal{Y}(x)$.

Moreover, the continued point $y_{\overline{y}}(x)$ remains a local minimizer
for all $x \in U_{\overline{x}}$. Indeed, since the active set $\overline{A}$
is constant along the continuation, the critical cone satisfies
\[
\mathcal{C}(x, y_{\overline{y}}(x))
= \ker \nabla_y h_{\overline{A}}(x, y_{\overline{y}}(x))
\]
for all $x \in U_{\overline{x}}$. Under LICQ,
$\nabla_y h_{\overline{A}}(x, y_{\overline{y}}(x))$ has full row rank,
so $\ker \nabla_y h_{\overline{A}}(x, y_{\overline{y}}(x))$ has constant
dimension and admits a smooth basis $\{v_1(x), \ldots, v_r(x)\}$.
At $(\overline{x}, \overline{y})$, uniform SOSC gives
\[
d^\top \nabla_{yy}^2 \mathcal{L}(\overline{x}, \overline{y},
\overline{\lambda}_{\overline{A}})\, d
\geq \sigma \|d\|^2
\quad \text{for all } d \in \mathcal{C}(\overline{x}, \overline{y}).
\]
Expressing $d = V(x)\alpha$ where $V(x) = [v_1(x), \ldots, v_r(x)]$, the
minimum of $d^\top \nabla_{yy}^2 \mathcal{L}\, d$ over $\|d\| = 1$,
$d \in \mathcal{C}(x, y_{\overline{y}}(x))$ equals the smallest generalized
eigenvalue of $(V(x)^\top \nabla_{yy}^2 \mathcal{L}(x)\, V(x),\;
V(x)^\top V(x))$, which is a continuous function of $x$. Since this
quantity equals $\sigma > 0$ at $\overline{x}$, it remains positive in a
sufficiently small neighborhood $U_{\overline{x}}$. This implies that
$y_{\overline{y}}(x)$ satisfies SOSC, and hence is a local minimizer,
for all $x \in U_{\overline{x}}$.

Distinct local minimizers yield locally disjoint branches. Indeed, the Implicit Function Theorem provides local uniqueness for solutions of \eqref{eq:reducedKKT_noeq_rewrite} near
$(\overline y,\overline\lambda_{\overline A})$. Consequently, if $\overline y_1\neq \overline y_2$ are two distinct local minimizers at the same
$\overline x$, then for $x$ sufficiently close to $\overline x$ the two branches
$y_{\overline y_1}(\cdot)$ and $y_{\overline y_2}(\cdot)$ cannot intersect.

\paragraph{Step 2 (The active-index-wise number of local minimizers is constant):}
For each active index set \(\mathcal J \subseteq \{1,\ldots,k\}\), define
\[
S_{\mathcal J}(x)
\coloneq 
\{y\in \mathcal Y(x): \mathcal J(x,y)=\mathcal J\},
\qquad
\mathcal J(x,y)\coloneq \{i:h_i(x,y)=0\}.
\]
Let
\[
N_{\mathcal J}(x)
\coloneq 
\#\{y\in S_{\mathcal J}(x): y \text{ is a local minimizer of the lower-level problem at } x\}.
\]
By assumption, the total number of local minimizers is finite for every \(x\in \mathcal X\), so
\(N_{\mathcal J}(x)\) is finite for every \(\mathcal J\) and every \(x\). 

We first show that \(N_{\mathcal J}\) is lower semicontinuous. Fix \(\overline x\in\mathcal X\), and let $\overline y_1,\ldots,\overline y_q$ be all local minimizers of the lower-level problem at \(\overline x\) that lie in
\(S_{\mathcal J}(\overline x)\), where \(q=N_{\mathcal J}(\overline x)\). By \textbf{Step}~1, each \(\overline y_\ell\)
admits a locally unique \(C^1\) continuation \(y_\ell(x)\) defined for \(x\) in a neighborhood of
\(\overline x\), and this continuation remains a local minimizer with the same active index set
\(\mathcal J\). Shrinking the neighborhood if necessary, the continuations
\(y_1(x),\ldots,y_q(x)\) remain distinct. Hence, for all \(x\) sufficiently close to \(\overline x\), we have $
N_{\mathcal J}(x)\ge q=N_{\mathcal J}(\overline x)$.
Thus \(N_{\mathcal J}\) is lower semicontinuous. 

We next prove upper semicontinuity. Suppose, to the contrary, that upper semicontinuity fails at
\(\overline x\). Then there exist \(x_\ell\to \overline x\) and local minimizers
\(y_\ell\in S_{\mathcal J}(x_\ell)\) such that \(y_\ell\) does not belong to any of the local
continuation neighborhoods generated by the local minimizers in \(S_{\mathcal J}(\overline x)\).
Since the feasible sets \(\mathcal Y(x)\) are uniformly bounded, after passing to a subsequence, we
may assume that $y_\ell\to \overline y.$ By Lemma~\ref{lem:closedness_minimizers}, the graph of local minimizers is closed. Hence \(\overline y\) is a local minimizer of the
lower-level problem at \(\overline x\). We further claim that \(\overline y\in S_{\mathcal J}(\overline x)\). Indeed, by the multiplier-continuity argument used in the proof of
Lemma~\ref{lem:closedness_minimizers} around \eqref{eq:stabilityofactiveset},
the KKT multipliers associated with \(y_\ell\) converge, after passing to a
subsequence, to the unique KKT multiplier associated with \(\overline y\). Since
SCSC holds at \((\overline x,\overline y)\), this convergence, together with the
continuity of the constraints, implies that the active index set is unchanged
in the limit. Therefore, for all sufficiently large \(\ell\),
\[
\mathcal J(\overline x,\overline y)=\mathcal J(x_\ell,y_\ell)=\mathcal J .
\]
Thus \(\overline y\in S_{\mathcal J}(\overline x)\). Now, since \(\overline y\) is a local minimizer in \(S_{\mathcal J}(\overline x)\), \textbf{Step}~1 gives a locally unique
\(C^1\) continuation of \(\overline y\) near \(\overline x\), and this continuation remains on the same active-index
stratum \(S_{\mathcal J}\). Since \(x_\ell\to\overline x\) and \(y_\ell\to\overline y\), the local uniqueness of
this continuation implies that, for all sufficiently large \(\ell\), the point \(y_\ell\) must lie in the
continuation neighborhood of \(\overline y\). This contradicts the choice of \(y_\ell\). Hence
\(N_{\mathcal J}\) is upper semicontinuous.

Combining lower and upper semicontinuity, \(N_{\mathcal J}\) is locally constant on \(\mathcal X\).
Since \(\mathcal X\) is connected and \(N_{\mathcal J}\) is integer-valued, \(N_{\mathcal J}\) is constant
on \(\mathcal X\). Since \(\mathcal J\subseteq\{1,\ldots,k\}\) was arbitrary, the number of local
minimizers on each active-index stratum is constant across all \(x\in\mathcal X\).

\subsection{Proof of Theorem~\ref{thm:stab_strat_lmins_perturb_noM}}\label{proof:thm:stab_strat_lmins_perturb_noM}

To prove Theorem~\ref{thm:stab_strat_lmins_perturb_noM}, we need the following auxiliary lemmas.

\begin{lemma}\label{lemma:stab-zero}
Let $F:\mathbb{R}^m\to\mathbb{R}^m$ be a $C^1$ mapping and let $z^\ast\in\mathbb{R}^m$
satisfy
\[
    F(z^\ast)=0,\qquad \det D F(z^\ast)\neq 0.
\]
Then there exist constants $r>0$ and $\eta>0$ such that the following holds: for every
$C^1$ mapping $R:\mathbb{R}^m\to\mathbb{R}^m$ with
\[
    \sup_{z\in B_r(z^\ast)}\|R(z)\|\le \eta,
    \qquad
    \sup_{z\in B_r(z^\ast)}\|D R(z)\|\le \eta,
\]
the perturbed equation
\[
    F(z)+R(z)=0
\]
admits a unique solution $z_R\in B_r(z^\ast)$, where $B_r(z^\ast)$ is a ball centered at $z^\ast$ with radius $r$.
\end{lemma}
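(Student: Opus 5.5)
The plan is to prove the lemma by a contraction-mapping argument built on the Newton-type map frozen at $z^\ast$. Set $A\coloneq DF(z^\ast)$, which is invertible by assumption, and for each admissible perturbation $R$ define
\[
T_R(z)\coloneq z - A^{-1}\bigl(F(z)+R(z)\bigr).
\]
Zeros of $F+R$ are exactly the fixed points of $T_R$. Existence will therefore follow from showing that $T_R$ maps a closed ball $\overline{B}_{r'}(z^\ast)$, with $r'<r$, into itself and is a contraction there. Uniqueness on the full open ball $B_r(z^\ast)$ comes from the same contraction estimate, provided the contraction holds on all of $B_r(z^\ast)$. To keep this consistent, I will establish the contraction estimate on the closed ball of radius $r$ and choose $\eta$ so that the self-map property also holds on that closed ball; then a fixed point in $B_r(z^\ast)$ is automatic.

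\textbf{Step 1: choose $r$.} Since $F\in C^1$, the derivative $DF$ is continuous at $z^\ast$. Choose $r>0$ such that
\[
\|A^{-1}\|\,\|DF(z)-A\|\le \tfrac14 \quad \text{for all } z\in \overline{B}_r(z^\ast).
\]

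\textbf{Step 2: contraction.} For $z,w\in\overline{B}_r(z^\ast)$, the mean value inequality gives
\[
T_R(z)-T_R(w) = -A^{-1}\int_0^1\bigl(DF(\xi_t)-A+DR(\xi_t)\bigr)\,dt\,(z-w),
\]
where $\xi_t$ runs along the segment from $w$ to $z$, which stays in the ball by convexity. This yields
\[
\|T_R(z)-T_R(w)\|\le\bigl(\tfrac14+\|A^{-1}\|\eta\bigr)\|z-w\| .
\]
Requiring $\|A^{-1}\|\eta\le\frac14$ makes the Lipschitz constant at most $\frac12$.

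\textbf{Step 3: self-map.} Using $F(z^\ast)=0$,
\[
\|T_R(z)-z^\ast\|\le \|T_R(z)-T_R(z^\ast)\|+\|A^{-1}R(z^\ast)\|\le \tfrac12\|z-z^\ast\|+\|A^{-1}\|\eta .
\]
Choosing $\eta\le \min\bigl\{\tfrac{1}{4\|A^{-1}\|},\ \tfrac{r}{4\|A^{-1}\|}\bigr\}$ ensures that $T_R$ maps $\overline{B}_r(z^\ast)$ into $\overline{B}_{3r/4}(z^\ast)$. Banach's fixed point theorem then gives a fixed point $z_R$ lying in $\overline{B}_{3r/4}(z^\ast)\subset B_r(z^\ast)$.

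\textbf{Step 4: uniqueness.} Any zero of $F+R$ in $B_r(z^\ast)$ is a fixed point of $T_R$ in the ball. Since $T_R$ is a contraction there, it has only one fixed point, so $z_R$ is unique.

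\textbf{Main obstacle.} The hypotheses control $R$ and $DR$ only on the open ball, while the estimates above use the closed ball. By continuity of $R$ and $DR$, the bounds $\|R\|\le\eta$ and $\|DR\|\le\eta$ extend to the closure, so this causes no difficulty. The only real care needed is the order of choices: $r$ must be fixed using $F$ alone before $\eta$ is chosen, so that both constants are independent of $R$, as the statement requires.
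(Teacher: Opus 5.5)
Your proposal is correct and follows essentially the same route as the paper: the frozen-Jacobian map $z\mapsto z-DF(z^\ast)^{-1}(F(z)+R(z))$, a radius $r$ fixed from the continuity of $DF$ alone, a contraction constant $\tfrac12$ once $\|DF(z^\ast)^{-1}\|\eta\le\tfrac14$, and Banach's theorem on the closed ball. The one difference is how you get the self-map bound: you use the contraction estimate plus $\|T_R(z^\ast)-z^\ast\|\le\|A^{-1}\|\eta$, whereas the paper uses a Taylor estimate for $F$. Your requirement $\eta\le r/(4\|A^{-1}\|)$ is the right one; the paper's choice $\eta<\sigma^2 r/8$ only yields $\eta/\sigma<\sigma r/8$, so it should read $\eta<\sigma r/8$.
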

    
\begin{proof}
Let $\sigma\coloneq \sigma_{\min}\bigl(D F(z^\ast)\bigr)>0$ and set
$A\coloneq D F(z^\ast)^{-1}$, so that $\|A\|=1/\sigma$. By continuity of $D F$ there exists
$r>0$ such that
\begin{equation}\label{eq:DF-close}
    \|D F(z)-D F(z^\ast)\|\le \frac{\sigma}{4}
    \qquad\text{for all }z\in B_r(z^\ast).
\end{equation}
By the Mean Value Theorem, for any $z\in B_r(z^\ast)$ we have
\[
    F(z)-F(z^\ast)-D F(z^\ast)(z-z^\ast)
    = \int_0^1\bigl(D F(z^\ast+t(z-z^\ast))-D F(z^\ast)\bigr)(z-z^\ast)\,dt,
\]
hence, using \eqref{eq:DF-close}, we have
\begin{equation}\label{eq:Taylor-F}
    \|F(z)-D F(z^\ast)(z-z^\ast)\|
    \le \frac{\sigma}{4}\,\|z-z^\ast\|
    \qquad\forall z\in B_r(z^\ast).
\end{equation}

Fix $0<\eta<\sigma^2 r/8$, and let $R$ satisfy the smallness conditions in the lemma.
Consider the closed ball $B_r\coloneq B_r(z^\ast)$ and define
\[
    \mathcal T:B_r\to\mathbb{R}^m,\qquad
    \mathcal T(z)\coloneq z-A\bigl(F(z)+R(z)\bigr).
\]
\paragraph{Step 1 ($\mathcal T$ maps $B_r$ into itself):}
For any $z\in B_r$, using $F(z^\ast)=0$ and \eqref{eq:Taylor-F},
\begin{align*}
    \mathcal T(z)-z^\ast
    &= z-z^\ast-A\bigl(F(z)-F(z^\ast)\bigr)-A R(z) \\
    &= -A\bigl(F(z)-D F(z^\ast)(z-z^\ast)\bigr)-A R(z),
\end{align*}
It follows that
\[
    \|\mathcal T(z)-z^\ast\|
    \le \|A\|\cdot \frac{\sigma}{4}\,\|z-z^\ast\|
         + \|A\|\cdot \|R(z)\|
    \le \frac{1}{\sigma}\cdot \frac{\sigma}{4}\,r + \frac{1}{\sigma}\,\eta
    = \frac{r}{4} + \frac{\eta}{\sigma}.
\]
By our choice of $\eta<\sigma^2 r/8$, we have $\eta/\sigma<r/8$, hence
\[
    \|\mathcal T(z)-z^\ast\|\le \frac{r}{4}+\frac{r}{8} = \frac{3}{8}r<r,
\]
so $\mathcal T(B_r)\subseteq B_r$.
\paragraph{Step 2 ($\mathcal T$ is a contraction on $B_r$):}
For any $z,z'\in B_r$, by the Mean Value Theorem,
\[
    \mathcal T(z)-\mathcal T(z')
    =\Bigl(I-A\int_0^1 D(F+R)\bigl(z'+t(z-z')\bigr)\,dt\Bigr)(z-z').
\]
Therefore,
\begin{align*}
    \|\mathcal T(z)-\mathcal T(z')\|
    &\le \sup_{\xi\in B_r}\|I-A D(F+R)(\xi)\|\,\|z-z'\| \\
    &= \sup_{\xi\in B_r}\|A\bigl(D F(z^\ast)-D F(\xi)-D R(\xi)\bigr)\|\,\|z-z'\|.
\end{align*}
Using \eqref{eq:DF-close} and $\sup_{B_r}\|D R\|\le\eta$, we get
\[
    \|\mathcal T(z)-\mathcal T(z')\|
    \le \frac{1}{\sigma}\Bigl(\frac{\sigma}{4}+\eta\Bigr)\|z-z'\|.
\]
Choosing $\eta\le \sigma/4$, we obtain
\[
    \|\mathcal T(z)-\mathcal T(z')\|\le \frac{1}{2}\,\|z-z'\|
    \qquad\forall z,z'\in B_r,
\]
so $\mathcal T$ is a contraction on $B_r$.

By the Banach fixed-point theorem, there exists a unique $z_R\in B_r$ such that
$\mathcal T(z_R)=z_R$. By definition of $\mathcal T$, this is equivalent to
$F(z_R)+R(z_R)=0$. Uniqueness of the fixed point yields uniqueness of the zero
of $F+R$ in $B_r(z^\ast)$, which completes the proof.
\end{proof}

\begin{lemma}\label{lemma:near_licq}
Fix $x$ and write
\[
\mathcal{Y}\coloneq \mathcal{Y}(x)=\{y\in\mathbb R^m:h_i(x,y)\le 0,\ i=1,\ldots,k\}.
\]
Let $K\subset\mathbb R^m$ be compact with $\mathcal{Y}\subset \operatorname{int}(K)$.
Assume that LICQ holds at every point of $\mathcal{Y}$. Then there exist constants
$\rho>0$ and $\sigma_\ast>0$ such that for every $y\in K$ satisfying
\[
\max_{1\le i\le k} h_i(x,y)\le \rho,
\]
and every nonempty index set $\mathcal{J}\subseteq\{1,\ldots,k\}$ satisfying
\[
|h_j(x,y)|\le \rho,\qquad j\in \mathcal{J},
\]
we have
\[
\sigma_{\min}\big(\nabla_y h_\mathcal{J}(x,y)\big)\ge \sigma_\ast .
\]
Here $\nabla_y h_\mathcal{J}(x,y)$ denotes the matrix whose rows are
$\nabla_y h_j(x,y)^\top$, $j\in \mathcal{J}$.
\end{lemma}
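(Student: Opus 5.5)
I will argue by contradiction, combining compactness with continuity of the singular value of a submatrix. Suppose no such $\rho,\sigma_\ast$ exist. Then for every $\ell\in\mathbb N$ there are a point $y_\ell\in K$ and a nonempty index set $\mathcal J_\ell$ with
\[
\max_i h_i(x,y_\ell)\le 1/\ell,\qquad |h_j(x,y_\ell)|\le 1/\ell \ \ (j\in\mathcal J_\ell),\qquad \sigma_{\min}\big(\nabla_y h_{\mathcal J_\ell}(x,y_\ell)\big)<1/\ell .
\]
There are only finitely many subsets of $\{1,\ldots,k\}$, so after passing to a subsequence we may take $\mathcal J_\ell\equiv\mathcal J$ fixed and nonempty. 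Since $K$ is compact, a further subsequence gives $y_\ell\to\bar y\in K$.

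Next I pass to the limit using continuity of $h_i(x,\cdot)$. This gives $\max_i h_i(x,\bar y)\le 0$, so $\bar y\in\mathcal Y$. It also gives $h_j(x,\bar y)=0$ for every $j\in\mathcal J$, so $\mathcal J\subseteq\mathcal J(x,\bar y)$.

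The step that needs care is passing the singular value bound to the limit. I use the following facts:
\begin{itemize}
\item the map $y\mapsto\nabla_y h_{\mathcal J}(x,y)$ is continuous, by the smoothness in Assumption~\ref{assumption:general2};
\item the smallest singular value of a $|\mathcal J|\times m$ matrix, meaning its $|\mathcal J|$-th singular value, is a continuous (indeed $1$-Lipschitz) function of the matrix entries, by Weyl's inequality.
\end{itemize}
Together these give $\sigma_{\min}(\nabla_y h_{\mathcal J}(x,\bar y))=0$, so the rows $\{\nabla_y h_j(x,\bar y)\}_{j\in\mathcal J}$ are linearly dependent. The convention matters here. If $|\mathcal J|>m$, then $\sigma_{|\mathcal J|}$ is taken to be zero, which is consistent, since LICQ already forces at most $m$ active constraints. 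So $\sigma_{\min}$ is read as the $|\mathcal J|$-th singular value, and its vanishing is exactly rank deficiency.

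A subfamily of a linearly independent family is independent, so the gradients of the full active set $\mathcal J(x,\bar y)\supseteq\mathcal J$ are dependent as well. This contradicts LICQ at the feasible point $\bar y\in\mathcal Y$, and the lemma follows with uniform constants $\rho,\sigma_\ast$.

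I expect no real obstacle. The only delicate points are keeping the index set fixed, which the finite-subset pigeonhole handles, and fixing the singular-value convention so that the continuity argument is valid. The hypothesis $\mathcal Y\subset\operatorname{int}(K)$ is not actually used; compactness of $K$ is what ensures the limit exists.
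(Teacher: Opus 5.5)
Your proof is correct and follows essentially the same route as the paper: negate with $\rho=\sigma_\ast=1/\ell$, fix the index set by pigeonhole over finitely many subsets, extract a convergent subsequence in $K$, and contradict LICQ at the feasible limit point by continuity of singular values. Your handling of the singular-value convention when $|\mathcal J|>m$ and your remark that the hypothesis $\mathcal Y\subset\operatorname{int}(K)$ is not needed are both accurate; the paper's argument leaves these points implicit.
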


\begin{proof}
Suppose the claim is false. Then there exist $y_n\in K$ and nonempty
sets $\mathcal{J}_n\subseteq\{1,\ldots,k\}$ such that
\[
\max_i h_i(x,y_n)\le \frac1n,\qquad
|h_j(x,y_n)|\le \frac1n\quad j\in \mathcal{J}_n,
\]
but
\[
\sigma_{\min}\big(\nabla_y h_{\mathcal{J}_n}(x,y_n)\big)<\frac1n.
\]
Since there are finitely many index sets, passing to a subsequence gives
$\mathcal{J}_n=\mathcal{J}$. Since $K$ is compact, passing to another subsequence gives
$y_n\to \overline y\in K$. By continuity,
\[
h_i(x,\overline y)\le 0,\quad i=1,\ldots,k,
\]
so $\overline y\in \mathcal{Y}$. Moreover, for every $j\in \mathcal{J}$,
\[
h_j(x,\overline y)=0.
\]
Hence $\mathcal{J}\subseteq \mathcal{J}(x,\overline y)$. LICQ at $\overline y$ implies that
$\nabla_y h_{\mathcal J}(x,\overline y)$ has full row rank, so
\[
\sigma_{\min}\big(\nabla_y h_{\mathcal J}(x,\overline y)\big)>0.
\]
This contradicts the continuity of singular values and
\[
\sigma_{\min}\big(\nabla_y h_{\mathcal J}(x,y_n)\big)<\frac1n.
\]
Therefore, we finish the proof.
\end{proof}

\noindent\textbf{Detailed proof of Theorem~\ref{thm:stab_strat_lmins_perturb_noM}:} Fix $x\in\mathcal X$ throughout. For each $y\in\mathcal Y(x)$ denote the active index set
$\mathcal J(x,y)\coloneq \{j\in I: h_j(x,y)=0\}$, and let $L(x,y,\lambda)\coloneq g(x,y)+\lambda^\top h_{\mathcal J(x,y)}(x,y)$
be the Lagrangian associated with the active constraints at $(x,y)$. Throughout the proof, all perturbations are measured in the local norm
\[
\|s(x,\cdot)\|_{C^2(K_{\rm out})}
+
\max_i\|r_i(x,\cdot)\|_{C^2(K_{\rm out})}.
\]
Since \(K_{\rm in}\subset \operatorname{int}(K_{\rm out})\), smallness in this norm
implies the \(C^2\)-smallness on \(K_{\rm in}\) required in the arguments below.
We will repeatedly reduce the perturbation size; the final constant
\(\epsilon(x,K_{\rm in},K_{\rm out})\) is chosen as the minimum of the finitely
many thresholds obtained in the proof.

\paragraph{Step 1 (Finiteness of KKT pairs):}
Let \(\mathcal K(x)\) denote the set of KKT pairs \((y,\lambda)\) of the lower-level
problem. We first show that \(\mathcal K(x)\) is finite.

Fix a KKT pair \((\overline y,\overline\lambda)\in \mathcal K(x)\), and let
\(A\coloneq \mathcal J(x,\overline y)\) be the active index set at \(\overline y\). We write \(\overline\lambda_A \in \mathbb{R}^{|A|}\) for the subvector of \(\overline\lambda\) indexed by \(A\). Consider the reduced KKT system
\[
\Psi_A(y,\lambda_A)\coloneq 
\begin{pmatrix}
\nabla_y g(x,y)+\nabla_y h_A(x,y)^\top \lambda_A\\
h_A(x,y)
\end{pmatrix}=0.
\]
Its Jacobian with respect to \((y,\lambda_A)\) at
\((\overline y,\overline\lambda_A)\) is
\[
\begin{pmatrix}
\nabla^2_{yy}L(x,\overline y,\overline\lambda) &
\big(\nabla_y h_A(x,\overline y)\big)^\top\\
\nabla_y h_A(x,\overline y) & 0
\end{pmatrix},
\]
which is nonsingular by assumption. Hence, by the Implicit Function Theorem,
\((\overline y,\overline\lambda_A)\) is the unique local solution of this reduced KKT system.

Moreover, by SCSC at \((\overline y,\overline\lambda)\), we have
\(\overline\lambda_i>0\) for all \(i\in A\), and \(h_i(x,\overline y)<0\) for all
\(i\notin A\). Therefore, for any KKT pair \((y,\lambda)\) sufficiently close
to \((\overline y,\overline\lambda)\), we have \(\lambda_i>0\) for all \(i\in A\), and
\(h_i(x,y)<0\) for all \(i\notin A\). By complementarity, this forces
\(h_i(x,y)=0\) for all \(i\in A\), and hence \(\mathcal J(x,y)=A\). Thus
\((y,\lambda_A)\) solves the same reduced KKT system. The local uniqueness above
then implies \((y,\lambda)=(\overline y,\overline\lambda)\). Hence every KKT pair is isolated.

We next show that \(\mathcal K(x)\) is bounded. Since \(\mathcal Y(x)\) is
bounded and closed, it is compact, so the primal components of all KKT pairs
are bounded. Suppose, for contradiction, that the multiplier components are
unbounded. Then there exists a sequence
\((y_s,\lambda_s)\in\mathcal K(x)\) such that \(\|\lambda_s\|\to\infty\).
Passing to a subsequence and relabeling the constraints if necessary, we may
assume that $(\lambda_s)_1=\max_i(\lambda_s)_i\to\infty.$ Since \(y_s\in\mathcal Y(x)\) and \(\mathcal Y(x)\) is compact, passing to a
further subsequence gives \(y_s\to y^\infty\in\mathcal Y(x)\). The stationarity
condition gives
\[
\nabla_y g(x,y_s)+\sum_{i=1}^k(\lambda_s)_i\nabla_y h_i(x,y_s)=0.
\]
Dividing by \((\lambda_s)_1\) and passing to a further subsequence, we obtain
\[
\sum_{i=1}^k \mu_i \nabla_y h_i(x,y^\infty)=0,
\]
where
\[
\mu_i\coloneq \lim_{s\to\infty}\frac{(\lambda_s)_i}{(\lambda_s)_1},
\qquad \mu_1=1.
\]
If \(\mu_i>0\), then \((\lambda_s)_i>0\) for all sufficiently large \(s\), and
hence complementarity gives \(h_i(x,y_s)=0\). Taking limits yields
\(h_i(x,y^\infty)=0\). Therefore, all indices with \(\mu_i>0\) belong to
\(\mathcal J(x,y^\infty)\). Since \(\mu_1=1\), the above identity is a nontrivial linear
dependence among the active constraint gradients at \(y^\infty\), contradicting
LICQ. Hence the multipliers are bounded, and \(\mathcal K(x)\) is bounded.

The set $\mathcal{K}(x)$ consists of pairs $(y,\lambda)$ satisfying the KKT system: the stationarity equation $\nabla_y g(x,y) + \sum_i \lambda_i \nabla_y h_i(x,y) = 0$, the primal feasibility inequalities $h_i(x,y) \leq 0$, the dual feasibility inequalities $\lambda_i \geq 0$, and the complementarity equations $\lambda_i h_i(x,y) = 0$ for $i = 1,\dots,k$. Since all these equations and inequalities are continuous in $(y,\lambda)$, the set $\mathcal{K}(x)$ is closed. Combined with the boundedness established above, $\mathcal{K}(x)$ is compact. A compact set in which every point is isolated must be finite. Hence $\mathcal{K}(x)$ is finite. Consequently, its projection onto the
primal space is also finite, and hence the set of local minimizers is finite.

\paragraph{Step 2 (Local persistence of each KKT pair):}
Fix any KKT pair \((\overline y,\overline\lambda)\in\mathcal K(x)\), and set
\(A\coloneq \mathcal J(x,\overline y)\). Consider the reduced KKT system written as a square
equation in \(z\coloneq (y,\lambda_A)\in\mathbb R^{m+|A|}\):
\[
F_A(z)\coloneq 
\begin{pmatrix}
\nabla_y g(x,y)+(\nabla_y h_A(x,y))^\top \lambda_A\\
h_A(x,y)
\end{pmatrix}=0 .
\]
The nonsingularity of the KKT matrix at \((\overline y,\overline\lambda)\) is precisely
\(\det DF_A(\overline z)\neq 0\), where
\(\overline z\coloneq (\overline y,\overline\lambda_A)\).

Now perturb the objective and constraints by \(C^2\) functions with sufficiently
small \(C^2\)-norm. This induces a perturbed reduced KKT mapping
\(\widetilde F_A\) on the same active set \(A\), and we can write
\(\widetilde F_A=F_A+R_A\), where \(R_A\) is \(C^1\) and satisfies
\[
\sup_{z\in B_r(\overline z)}\|R_A(z)\|\leq \eta,\qquad
\sup_{z\in B_r(\overline z)}\|DR_A(z)\|\leq \eta
\]
for some \(r>0\) and all sufficiently small perturbations. This follows because the \(C^2\)-smallness of the perturbations implies
\(C^1\)-smallness of the induced perturbation \(R_A=\widetilde F_A-F_A\)
on a neighborhood of \(\overline z\). By Lemma~\ref{lemma:stab-zero}, for each sufficiently small perturbation there
exists a unique solution $\widetilde z
= (\widetilde y,\widetilde\lambda_A)
\in B_r(\overline z)$ to \(\widetilde F_A(z)=0\).

It remains to verify that this reduced solution is a KKT pair of the
perturbed problem. By SCSC at \((\overline y,\overline\lambda)\), we have
\[
\overline\lambda_i>0 \quad \text{for all } i\in A,
\qquad
h_i(x,\overline y)<0 \quad \text{for all } i\notin A.
\]
Choosing \(r\) and the perturbation size sufficiently small yields
\[
\widetilde\lambda_i>0 \quad \text{for all } i\in A,
\qquad
\widetilde h_i(x,\widetilde y)<0
\quad \text{for all } i\notin A.
\]
Extending \(\widetilde\lambda_A\) by setting
\(\widetilde\lambda_i=0\) for \(i\notin A\), we obtain a KKT pair
\((\widetilde y,\widetilde\lambda)\) of the perturbed problem.
Moreover, its active constraint set is exactly \(A\).

By \textbf{Step}~1, the set \(\mathcal K(x)\) contains only finitely many KKT pairs.
Applying the above argument to each KKT pair
\((\overline y,\overline\lambda)\in\mathcal K(x)\), and then taking the minimum
over this finite set, we obtain a positive threshold such that, whenever the
perturbation is below this threshold in the \(C^2(K_{\rm out})\)-norm, each
original KKT pair in \(\mathcal K(x)\) admits a unique nearby perturbed KKT pair
with the same active index set.

\paragraph{Step 3 (No new KKT pairs appear):}
Let \(\phi(y)\coloneq \max_{i\in\{1,\ldots,k\}}h_i(x,y)\). By
Lemma~\ref{lemma:near_licq}, applied with \(K=K_{\rm in}\), there exist
constants \(\rho>0\) and \(\sigma_*>0\) such that, for every \(y\in K_{\rm in}\)
satisfying \(\phi(y)\le \rho\), and every nonempty index set
\(\mathcal J\subseteq I\) satisfying \(|h_j(x,y)|\le \rho\) for all
\(j\in\mathcal J\), we have $\sigma_{\min}\big(\nabla_y h_{\mathcal J}(x,y)\big)\ge \sigma_*$. Since \(\mathcal Y(x)\subset \operatorname{int}(K_{\rm in})\), we may choose
\(\eta\in(0,\rho)\) sufficiently small such that $T\coloneq \{y\in K_{\rm in}:\phi(y)\le \eta\}
\subset \operatorname{int}(K_{\rm in})$.

We first exclude perturbed KKT pairs outside \(T\). For any
\(y\in K_{\rm in}\setminus \operatorname{int}(T)\), we have
\(\phi(y)\ge \eta\). If the perturbation is sufficiently small, then
\[
\widetilde\phi(y)\coloneq \max_i\widetilde h_i(x,y)\ge \eta/2>0,
\qquad
y\in K_{\rm in}\setminus \operatorname{int}(T).
\]
Thus no point in \(K_{\rm in}\setminus \operatorname{int}(T)\) is feasible for
the perturbed problem. Hence every perturbed KKT pair with
\(\widetilde y\in K_{\rm in}\) must satisfy \(\widetilde y\in T\).

We next obtain a uniform bound on the multipliers of perturbed KKT pairs in
\(T\). Let \((\widetilde y,\widetilde\lambda)\) be a perturbed KKT pair with
\(\widetilde y\in T\), and let $\widetilde{\mathcal J}\coloneq \{i:\widetilde h_i(x,\widetilde y)=0\}.$
If \(\widetilde{\mathcal J}\neq\emptyset\), then for every
\(j\in\widetilde{\mathcal J}\),
\[
|h_j(x,\widetilde y)|
=
|h_j(x,\widetilde y)-\widetilde h_j(x,\widetilde y)|
\le
\|\widetilde h_j-h_j\|_{C^0(K_{\rm in})}.
\]
Since \(\phi(\widetilde y)\le\eta<\rho\), by taking the perturbation sufficiently small, we can apply Lemma~\ref{lemma:near_licq}. Hence
\[
\sigma_{\min}
\big(\nabla_y h_{\widetilde{\mathcal J}}(x,\widetilde y)\big)
\ge \sigma_* .
\]
Moreover, since \(\nabla_y\widetilde h_{\widetilde{\mathcal J}}\) is uniformly
close to \(\nabla_y h_{\widetilde{\mathcal J}}\) on \(K_{\rm in}\), we have
\[
\sigma_{\min}
\big(\nabla_y\widetilde h_{\widetilde{\mathcal J}}(x,\widetilde y)\big)
\ge \frac{\sigma_*}{2}.
\]
By the KKT stationarity condition, the multiplier on the active set is uniquely
determined by
\[
\widetilde{\lambda}_{\widetilde{\mathcal J}}
=
-\Big(
\nabla_y \widetilde h_{\widetilde{\mathcal J}}(x,\widetilde y)
\nabla_y \widetilde h_{\widetilde{\mathcal J}}(x,\widetilde y)^\top
\Big)^{-1}
\nabla_y \widetilde h_{\widetilde{\mathcal J}}(x,\widetilde y)
\nabla_y \widetilde g(x,\widetilde y).
\] The uniform singular value bound above and the uniform boundedness
of \(\nabla_y\widetilde g\) and \(\nabla_y\widetilde h_i\) on \(K_{\rm in}\)
imply that $\|\widetilde\lambda\|\le M$ for some constant \(M>0\), independent of the specific perturbation. If
\(\widetilde{\mathcal J}=\emptyset\), then \(\widetilde\lambda=0\), so the same
bound also holds. Choose \(M'>M\) and define
\[
\Lambda\coloneq \{\lambda\in\mathbb R^k:\|\lambda\|\le M'\}.
\]

We now exclude perturbed KKT pairs in the tubular region \(T\) away from the
original KKT pairs. For each original KKT pair
\((\bar y,\bar\lambda)\in\mathcal K(x)\), choose an open ball
\(B_{r_{(\bar y,\bar\lambda)}}(\bar y,\bar\lambda)\) contained in the local
uniqueness neighborhood obtained in \textbf{Step}~2. Since \(\mathcal K(x)\) is finite,
we may choose these balls pairwise disjoint and contained in
\(\operatorname{int}(K_{\rm in})\times\operatorname{int}(\Lambda)\). Reducing the
perturbation size if necessary, the perturbed KKT pair produced by \textbf{Step}~2 for
each original KKT pair lies in the corresponding ball. Let \(\mathcal U\) be
the union of these balls, and set
\[
\mathcal C\coloneq (T\times\Lambda)\setminus\mathcal U .
\]
The set \(\mathcal C\) is compact and contains no original KKT pair. Define the
KKT residual
\begin{align*}
r(y,\lambda)
\coloneq &
\big\|\nabla_y g(x,y)+\sum_{i\in I}\lambda_i\nabla_y h_i(x,y)\big\|
+\sum_{i\in \{1,...,k\}}|\lambda_i h_i(x,y)|\\
&+\sum_{i\in \{1,...,k\}}|\min(0,\lambda_i)|
+\sum_{i\in \{1,...,k\}}|\min(0,-h_i(x,y))|.
\end{align*}
This residual vanishes exactly at the KKT pairs. Therefore, since
\(\mathcal C\) is compact and contains no original KKT pair, there exists
\(\delta>0\) such that
\[
r(y,\lambda)\ge\delta,\qquad (y,\lambda)\in\mathcal C.
\]
For sufficiently small perturbations, the perturbed residual
\(\widetilde r\) is uniformly close to \(r\) on \(T\times\Lambda\). Hence
\[
\widetilde r(y,\lambda)\ge \delta/2>0,
\qquad (y,\lambda)\in\mathcal C.
\]

Now let \((\widetilde y,\widetilde\lambda)\) be any perturbed KKT pair with
\(\widetilde y\in K_{\rm in}\). The first part of the argument gives
\(\widetilde y\in T\), and the multiplier bound gives
\(\widetilde\lambda\in\Lambda\). Thus $
(\widetilde y,\widetilde\lambda)\in T\times\Lambda .$
Since \((\widetilde y,\widetilde\lambda)\) is a perturbed KKT pair, we have
\(\widetilde r(\widetilde y,\widetilde\lambda)=0\). The lower bound on
\(\widetilde r\) over \(\mathcal C\) implies that
\((\widetilde y,\widetilde\lambda)\notin\mathcal C\). Hence $
(\widetilde y,\widetilde\lambda)\in\mathcal U.$
 
It remains to identify the point inside each ball. Suppose
\((\widetilde y,\widetilde\lambda)\) lies in the ball centered at
\((\bar y,\bar\lambda)\in\mathcal K(x)\), and let
\(A\coloneq \mathcal J(x,\bar y)\). By choosing the balls sufficiently small and using
SCSC at \((\bar y,\bar\lambda)\), we have
\[
\widetilde\lambda_i>0 \quad \text{for all } i\in A,
\qquad
h_i(x,\widetilde y)<0 \quad \text{for all } i\notin A .
\]
For sufficiently small perturbations,
\[
\widetilde h_i(x,\widetilde y)<0 \quad \text{for all } i\notin A .
\]
Since \((\widetilde y,\widetilde\lambda)\) is a perturbed KKT pair,
complementarity and \(\widetilde\lambda_i>0\) for \(i\in A\) imply
\[
\widetilde h_i(x,\widetilde y)=0 \quad \text{for all } i\in A.
\]
Therefore, the active constraint set of \(\widetilde y\) is exactly \(A\).
Thus any perturbed KKT pair in this ball solves the reduced perturbed KKT
system associated with the same active set \(A\). By the local uniqueness result
in \textbf{Step}~2, the ball contains at most one perturbed KKT pair. Since \textbf{Step}~2 also
gives the existence of one perturbed KKT pair in this ball, each ball contains
exactly one perturbed KKT pair, with the same active constraint set as the
original KKT pair at its center.

Therefore, the original KKT points and the perturbed KKT points within
\(K_{\rm in}\) are in one-to-one correspondence, and this correspondence
preserves the active constraint pattern.

\paragraph{Step 4 (Non-minimizing KKT points remain non-minimizing):}
We show that a KKT point of the original problem that is not a local minimizer
cannot become a local minimizer after perturbation. Let
\((\overline y,\overline\lambda)\in\mathcal K(x)\) be a KKT pair of the original problem that $\overline y$
is not a local minimizer, and let
\[
A\coloneq \mathcal J(x,\overline y).
\]
By \textbf{Step}~3, the corresponding perturbed KKT pair
\((\widetilde y,\widetilde\lambda)\) has the same active constraint set \(A\).
Moreover, by SCSC, the critical cone of the original problem at
\((\overline y,\overline\lambda)\) is
\[
\mathcal{C}(x,\overline y)
=
\{d:\nabla_y h_A(x,\overline y)d=0\}.
\]
Since \(\overline y\) is not a local minimizer, SOSC cannot hold at
\((\overline y,\overline\lambda)\). Together with the nonsingularity of the KKT
matrix, this yields a direction \(d\in \mathcal{C}(x,\overline y)\) such that
\[
d^\top \nabla^2_{yy}L(x,\overline y,\overline\lambda)d<0.
\]
Now consider the perturbed critical cone. Since the active set is preserved and
\(\widetilde\lambda_i>0\) for all \(i\in A\), we have
\[
\widetilde{\mathcal{C}}(x,\widetilde y)
=
\{d:\nabla_y\widetilde h_A(x,\widetilde y)d=0\}.
\]
The matrices \(\nabla_y\widetilde h_A(x,\widetilde y)\) are uniformly close to
\(\nabla_y h_A(x,\overline y)\) and have the same rank. Therefore the corresponding
null spaces vary continuously. Hence there exists
\(\widetilde d\in \widetilde{\mathcal{C}}(x,\widetilde y)\) with
\(\widetilde d\to d\) as the perturbation size tends to zero.

By the \(C^2\)-smallness of the perturbation, we then have
\[
\widetilde d^\top
\nabla^2_{yy}\widetilde L(x,\widetilde y,\widetilde\lambda)
\widetilde d
<0
\]
for all sufficiently small perturbations. Thus the second-order necessary
condition for local minimality fails at
\((\widetilde y,\widetilde\lambda)\). Therefore
\(\widetilde y\) is not a local minimizer of the perturbed problem.

\paragraph{Step 5 (Local minimizers are preserved and counted):}
Let \(\overline y\) be a local minimizer of the original problem with multiplier
\(\overline\lambda\), and let \(A\coloneq \mathcal J(x,\overline y)\). By \textbf{Step}~3, the
corresponding perturbed KKT pair \((\widetilde y,\widetilde\lambda)\) is unique
in a neighborhood of \((\overline y,\overline\lambda)\), and its active set is
preserved:
\[
\widetilde{\mathcal J}(x,\widetilde y)=\mathcal J(x,\overline y)=A.
\]
In particular, we have $
\widetilde y\in \widetilde{\mathcal S}_{A}(x).$

It remains to show that \(\widetilde y\) is still a local minimizer. Since
\(\overline y\) is a local minimizer and the KKT matrix at
\((\overline y,\overline\lambda)\) is nonsingular, the second-order sufficient
condition holds at \((\overline y,\overline\lambda)\). By the preservation of
the active set and the \(C^2\)-smallness of the perturbation, SOSC
also holds at \((\widetilde y,\widetilde\lambda)\). Hence \(\widetilde y\) is a
strict local minimizer of the perturbed problem.

Consequently, each local minimizer on a stratum \(\mathcal S_{\mathcal J}(x)\)
gives rise to a unique perturbed local minimizer on
\(\widetilde{\mathcal S}_{\mathcal J}(x)\cap K_{\rm in}\). Conversely, by
\textbf{Step}~3 every perturbed KKT pair in \(K_{\rm in}\) corresponds to an original KKT
pair, and by \textbf{Step}~4 an original KKT pair whose primal component is not a local
minimizer cannot give rise to a perturbed local minimizer. Therefore no
additional perturbed local minimizers in \(K_{\rm in}\) can appear. After
choosing the perturbation threshold sufficiently small so that all smallness
requirements in Steps~2--5 hold, this gives the constant
\(\epsilon(x,K_{\rm in},K_{\rm out})\) and proves the stated equality of the
stratum-wise counts.

\subsection{Proof of Theorem~\ref{thm:LICQ}}\label{proof:thm:LICQ}

Throughout the proof, Sard's theorem is applied on an open neighborhood of 
the original set \(\mathcal X\), where \(g\) and \(h_i\), \(i=1,\ldots,k\), are smooth. 
For notational simplicity, this neighborhood is still denoted by \(\mathcal X\);
the final measure-zero conclusion is then restricted back to the original set. Let $\mathcal{J}$ be a subset of $\{1,...,k\}$, and $H_\mathcal{J}(x,y)=(h_i(x,y))_{i\in \mathcal{J}}\in\mathbb{R}^{|\mathcal{J}|}$. Consider the mapping:
    $$G_\mathcal{J}(x,y)=(x,H_\mathcal{J}(x,y))\in\mathcal{X}\times\mathbb{R}^{|\mathcal{J}|}\subset\mathbb{R}^{n}\times\mathbb{R}^{|\mathcal{J}|}.$$
    The key idea is as follows. First, by Sard’s Theorem, the critical values of \(G_{\mathcal J}\), namely those values \((x,-a_{\mathcal J})\) for which there exists some \(y\) satisfying \(G_{\mathcal J}(x,y)=(x,-a_{\mathcal J})\) and \(dG_{\mathcal J}(x,y)\) is not full row rank, has Lebesgue measure zero in $\mathcal{X} \times \mathbb{R}^{|\mathcal{J}|}$. Second, by applying Fubini’s Theorem, we transfer the measure-zero property from the joint space of $(x,a)$ to the $x$-space for almost every $a \in [0,\nu]^k$ and obtain that for almost every $a$, the set of $x$ such that $(x,-a)$ is a critical value is of measure zero. Finally, the critical values of $G_\mathcal{J}$ correspond to the points where $\{\widetilde{h}_i(x,y)\}_{i\in \mathcal{J}}$ become linearly dependent at some feasible $y$, thereby establishing the link between the critical values of $G_\mathcal{J}$ and the failure of LICQ.\\

    We define the following sets 
    \begin{itemize}
        \item $C_\mathcal{J}$ be the critical values set of $G_\mathcal{J}(x,y)$;
        \item $E_\mathcal{J}\coloneq \{(x,a)\in\mathcal{X}\times[0,\nu]^k:(x,-a_\mathcal{J})\in C_\mathcal{J}\}$;
        \item $E\coloneq \cup_{\mathcal{J}\subset\{1,...,k\}}E_\mathcal{J}$.
    \end{itemize}
    
    By Sard's Theorem, the Lebesgue measure of $C_\mathcal{J}$, i.e., the critical values set of $G_\mathcal{J}(x,y)$, is zero. Thus, the measure of $E_\mathcal{J}$ in $\mathcal{X}\times[0,\nu]^k$ is also zero. Since the collection of such index subsets $\mathcal{J}\subset\{1,...,k\}$ is finite, the union $E$ also has measure zero.
    
    Define the indicator function \begin{align*}
        \mathcal{I}_{E}(x,a)=\begin{cases}
        1\quad \text{when}\quad (x,a)\in  E  \\
        0\quad \text{when}\quad (x,a)\notin  E.
    \end{cases}
    \end{align*}
    Note that 
    $$\int_{\mathcal{X}\times[0,\nu]^k}\mathcal{I}_{E}(x,a) dx\ da=0.$$
    By Fubini's Theorem, we have
    \begin{align}\label{eq:fubini1}
        \int_{[0,\nu]^k}\int_{\mathcal{X}}\mathcal{I}_{E}(x,a)dx\ da=\int_{\mathcal{X}\times[0,\nu]^k}\mathcal{I}_{E}(x,a) dx\ da=0.
    \end{align}
    We claim that, for almost every $a \in [0,\nu]^k$, the set of $x$ such that $(x,a) \in E$ has Lebesgue measure zero in $\mathcal{X}$. Suppose, to the contrary, that this is not the case. Then there exists a subset $U \subset [0,\nu]^k$ with positive measure such that, for every $a \in U$, the following integral is positive:
    $$\int_{\mathcal{X}}\mathcal{I}_{E}(x,a)dx>0.$$
    This implies
    $$\int_{[0,\nu]^k}\int_{\mathcal{X}}\mathcal{I}_{E}(x,a)dx\ da\geq\int_{U}\int_{\mathcal{X}}\mathcal{I}_{E}(x,a)dx\ da>0$$
    which contradicts \eqref{eq:fubini1}.

    The Jacobian of $G_\mathcal{J}(x,y)$ can be computed as follows:
    \begin{align}\label{eq:sard1}
        d G_\mathcal{J}(x,y)=\begin{pmatrix}
        I_n & 0\\
        \nabla_x H_\mathcal{J}(x,y) & \nabla_y H_\mathcal{J}(x,y)
        \end{pmatrix}\in\mathbb{R}^{(n+|\mathcal{J}|)\times(n+m)}.
    \end{align}
    The condition that $d G_\mathcal{J}(x,y)$ is not full row rank is equivalent to $\mathrm{rank}(\nabla_y H_\mathcal{J}(x,y))<|\mathcal{J}|$. For any subvector $a_\mathcal{J}\coloneq (a_i)_{i\in \mathcal{J}}\in\mathbb{R}^{|\mathcal{J}|}$, the condition that $(x,y)$ satisfies $h_i(x,y)+a_i=0$ ($\forall i\in \mathcal{J}$) is equivalent to $G_\mathcal{J}(x,y)=(x,-a_\mathcal{J})$. Thus, $(x,-a_\mathcal{J})$ is a critical value of $G_\mathcal{J}$ if and only if $G_\mathcal{J}(x,y)=(x,-a_\mathcal{J})$ and $\mathrm{rank}(\nabla_y H_\mathcal{J}(x,y))<|\mathcal{J}|$ for some $y$.
    
    Note that LICQ fails at $(x,y)$ if and only if the active constraint gradients $\{\nabla_y \widetilde{h}_i(x,y):i\in \mathcal{J}(x,y)\}$ are linearly dependent, where $\mathcal{J}(x,y)$ denotes the active index set at $(x,y)$. This condition is precisely equivalent to $G_{\mathcal{J}(x,y)}(x,y)=(x,-a_{\mathcal J(x,y)})$ and $\mathrm{rank}(\nabla_y H_{\mathcal{J}(x,y)}(x,y))<|\mathcal{J}(x,y)|$, which implies $(x,-a_{\mathcal J(x,y)})$ is a critical value of $G_{\mathcal{J}(x,y)}$ by the argument above. It follows that $(x,a) \in E_{\mathcal{J}(x,y)} \subset E$. Therefore, the failure of LICQ at some feasible point $(x,y)$ for the perturbed constraints implies $(x,a)$ belonging to the set $E$. By the claim that for almost every $a \in [0,\nu]^k$, the set of $x$ such that $(x,a) \in E$ has Lebesgue measure zero in $\mathcal{X}$, we complete the proof.

\subsection{Proof of Theorem~\ref{thm:SCSC}}\label{proof:thm:SCSC}
Throughout the proof, Sard's theorem is applied on an open neighborhood of 
the original set \(\mathcal X\), where \(g\) and \(h_i\), \(i=1,\ldots,k\), are smooth. 
For notational simplicity, this neighborhood is still denoted by \(\mathcal X\);
the final measure-zero conclusion is then restricted back to the original set. By Theorem~\ref{thm:LICQ}, with probability one (i.e., almost surely with respect to the random choice of $a$), the perturbed constraints satisfy LICQ at every feasible point $y$ for almost every $x$. Hence, under this regularity, the strict complementary slackness condition can be well-defined.
    
    The main idea is to construct, for each index subset $\mathcal{J} \subset \{1,\dots,k\}$ and each $i \in \mathcal{J}$, a parameterized system $H_{\mathcal{J},i}(z;x,b,a)=0$ whose variables are the primal–dual pair $z=(y,\lambda_\mathcal{J})$ and whose parameters are $(x,b,a)$. If, for some parameters $(x,b,a)$, there exists a KKT point $(y,\lambda_\mathcal{J})$ of the perturbed problem that violates the strict complementary slackness condition, then the corresponding system $H_{\mathcal{J},i}(z;x,b,a)=0$ admits a solution. We then analyze the transversality properties of this system with respect to the zero set. By showing that the family map $H_{\mathcal{J},i}(\cdot;\cdot,\cdot,\cdot)=0$ is transverse to $\{0\}$, the Parametric Transversality Theorem guarantees that, for almost every choice of parameters $(x,b,a)$, the slice map $H_{\mathcal{J},i}(\cdot;x,b,a)=0$ with those parameters is also transverse to $\{0\}$. Since the Jacobian $D_{z}H_{\mathcal{J},i}$ with respect to $z=(y,\lambda_\mathcal{J})$ has more rows than columns, transversality implies that the slice map cannot have any zeros, i.e., the system has no solution. Consequently, for almost every $(x,b,a)$, there exists no KKT point violating SCSC, which establishes the desired result.

    Define $\mathcal{J}\subset\{1,...,k\}$, $\mathcal{J}^c\coloneq \{1,...,k\}\setminus \mathcal{J}$, and $h_\mathcal{J}(x,y)\coloneq (h_i(x,y))_{i\in \mathcal{J}}\in\mathbb{R}^{|\mathcal{J}|}$, $a_\mathcal{J}\coloneq (a_i)_{i\in \mathcal{J}}\in\mathbb{R}^{|\mathcal{J}|}$, $a_{\mathcal{J}^c}\coloneq (a_i)_{i\in \mathcal{J}^c}\in\mathbb{R}^{k-|\mathcal{J}|}$. Set $z\coloneq (y,\lambda_\mathcal{J})\in\mathbb{R}^{m+|\mathcal{J}|}$ and choose index $i\in \mathcal{J}$. We consider the following system:
    \begin{align*}
        H_{\mathcal{J},i}(z;x,b,a)\coloneq \begin{pmatrix}
            \nabla_y g(x,y)+b+\sum_{j\in \mathcal{J}}\lambda_j\nabla_y h_j(x,y)\\
            h_\mathcal{J}(x,y)+a_\mathcal{J}\\
            \lambda_i
        \end{pmatrix}=0\in\mathbb{R}^{m+|\mathcal{J}|+1}
    \end{align*}
    We will prove that $H_{\mathcal{J},i}(\cdot;\cdot,\cdot,\cdot)$ is transverse to $\{0\}$, then by Parametric Transversality Theorem, the slice map $H_{\mathcal{J},i}(\cdot;x,b,a)$ is transverse to $\{0\}$ for almost every $(x,b,a)$.

    Define the zero set
    \begin{align*}
        \mathcal{M}_{\mathcal{J},i}\coloneq \{(z,x,b,a):H_{\mathcal{J},i}(z;x,b,a)=0\}\subset\mathbb{R}^{m+|\mathcal{J}|}\times\mathbb{R}^{n}\times\mathbb{R}^{m}\times\mathbb{R}^{k}.
    \end{align*}
    
    For any $(z,x,b,a)\in\mathcal{M}_{\mathcal{J},i}$, we compute the Jacobian of $H_{\mathcal{J},i}(z;x,b,a)$ with respect to $(z,x,b,a)$:
    \begin{align*}
        D_z H_{\mathcal{J},i}(z;x,b,a)=\begin{pmatrix}
            \nabla^2_{yy}g(x,y)+\sum_{j\in \mathcal{J}}\lambda_j\nabla_{yy}^2 h_j(x,y) & \nabla_y h_\mathcal{J}(x,y)\\
            \nabla_y h_\mathcal{J}(x,y)^\top & 0\\
            0 & e_i^\top
        \end{pmatrix}\in\mathbb{R}^{(m+|\mathcal{J}|+1)\times(m+|\mathcal{J}|)}.
    \end{align*}
    \begin{align*}
        D_{x} H_{\mathcal{J},i}(z;x,b,a)=\begin{pmatrix}
            \nabla_{xy}^2 g(x,y)+\sum_{j\in \mathcal{J}}\lambda_j\nabla_{xy}^2 h_j(x,y)\\
            \nabla_x h_\mathcal{J}(x,y)^\top\\
            0
        \end{pmatrix}\in\mathbb{R}^{(m+|\mathcal{J}|+1)\times n}.
    \end{align*}
    \begin{align*}
        D_{(b,a_\mathcal{J},a_{\mathcal{J}^c})} H_{\mathcal{J},i}(z;x,b,a)=\begin{pmatrix}
            I_m & 0 & 0\\
            0 & I_{|\mathcal J|} & 0\\
            0 & 0 & 0
        \end{pmatrix}\in\mathbb{R}^{(m+|\mathcal{J}|+1)\times(m+k)}.
    \end{align*}
    It is clear that $D_{(z,x,b,a)}H_{\mathcal{J},i}(z;x,b,a)$, i.e., the following matrix, has full row rank
    \begin{align*}
        \begin{pmatrix}
            \nabla^2_{yy}g(x,y)+\sum_{j\in \mathcal{J}}\lambda_j\nabla_{yy}^2 h_j(x,y) & \nabla_y h_\mathcal{J}(x,y) & \nabla_{xy}^2 g(x,y)+\sum_{j\in \mathcal{J}}\lambda_j\nabla_{xy}^2 h_j(x,y) & I_m & 0 & 0\\
            \nabla_y h_\mathcal{J}(x,y)^\top & 0 & \nabla_x h_\mathcal{J}(x,y)^\top & 0 & I_{|\mathcal{J}|} & 0\\
            0 & e_i^\top & 0 & 0 & 0 & 0
        \end{pmatrix}.
    \end{align*}
    Therefore, we have $H_{\mathcal{J},i}(\cdot;\cdot,\cdot,\cdot)\pitchfork \{0\}$. By the Parametric Transversality Theorem, since the family map $H_{\mathcal{J},i}(\cdot;\cdot,\cdot,\cdot)$ is transverse to $\{0\}$, there exists a measure-zero subset $$C_{\mathcal{J},i}\subset\mathbb{R}^n\times[0,\nu]^m\times[0,\nu]^{|\mathcal{J}|},$$ such that for every $(x,b,a_{\mathcal J})\in C_{\mathcal{J},i}^c=\mathbb{R}^n\times[0,\nu]^m\times[0,\nu]^{|\mathcal{J}|}\setminus C_{\mathcal{J},i}$ , the slice map
    \begin{align*}
        H_{\mathcal{J},i}(\cdot;x,b,a):z\mapsto H_{\mathcal{J},i}(z;x,b,a)
    \end{align*}
    is also transverse to $\{0\}$. By the definition of transversality in Definition~\ref{def:transversality}, and noting that $\{0\}$ is a manifold with dimension $0$, we obtain that for any $z\in(H_{\mathcal{J},i}(\cdot;x,b,a))^{-1}(0)$, the Jacobian mapping 
    $$D_z H_{\mathcal{J},i}(\cdot;x,b,a):\mathbb{R}^{m+|\mathcal{J}|}\to\mathbb{R}^{m+|\mathcal{J}|+1}$$
    is surjective. However, noting that $m+|\mathcal{J}|+1>m+|\mathcal{J}|$, the matrix $D_z H_{\mathcal{J},i}(\cdot;x,b,a)$ is impossible to have full row rank. Therefore, we obtain that $(H_{\mathcal{J},i}(\cdot;x,b,a))^{-1}(0)=\emptyset$ for every $(x,b,a_{\mathcal J})\in C_{\mathcal{J},i}^c$. 

    The above argument holds for any $\mathcal{J}$ and $i$. Consider the set
    $$E_{\mathcal{J},i}\coloneq \{(x,b,a)\in\mathcal{X}\times[0,\nu]^m\times[0,\nu]^k:(x,b,a_\mathcal{J})\in C_{\mathcal{J},i}\}$$
    and $E\coloneq \cup_{\mathcal{J}\subset\{1,...,k\}}\cup_{i\in \mathcal{J}}E_{\mathcal{J},i}$. Since for any $\mathcal{J}$ and $i$, $E_{\mathcal{J},i}$ has measure zero, and there are only finitely many combinations of $\mathcal{J}$ and $i$, it follows that $E$ also has measure zero. Define the indicator function
    \begin{align*}
        \mathcal{I}_E(x,b,a)=\begin{cases}
            1\quad \text{when}\quad (x,b,a)\in  E  \\
            0\quad \text{when}\quad (x,b,a)\notin  E.
        \end{cases}
    \end{align*}
    Note that 
    $$\int_{\mathcal{X}\times[0,\nu]^m\times[0,\nu]^k}\mathcal{I}_{E}(x,b,a) dx\ db\ da=0.$$
    By Fubini's Theorem, we have
    \begin{align}\label{eq:fubini2}
        \int_{[0,\nu]^m\times[0,\nu]^k}\int_{\mathcal{X}}\mathcal{I}_{E}(x,b,a)dx\ db\ da=\int_{\mathcal{X}\times[0,\nu]^m\times[0,\nu]^k}\mathcal{I}_{E}(x,b,a) dx\ db\ da=0
    \end{align}
    We claim that, for almost every $b\in[0,\nu]^m$, $a \in [0,\nu]^k$, the set of $x$ such that $(x,b,a) \in E$ has Lebesgue measure zero in $\mathcal{X}$. Suppose, to the contrary, that this is not the case. Then there exists a subset $U \subset [0,\nu]^m\times[0,\nu]^k$ with positive measure such that, for every $(b,a) \in U$, the following integral is positive:
    $$\int_{\mathcal{X}}\mathcal{I}_{E}(x,b,a)dx>0.$$
    This implies
    $$\int_{[0,\nu]^m\times[0,\nu]^k}\int_{\mathcal{X}}\mathcal{I}_{E}(x,b,a)dx\ db\ da\geq\int_{U}\int_{\mathcal{X}}\mathcal{I}_{E}(x,b,a)dx\ db da>0$$
    which contradicts \eqref{eq:fubini2}.

    Note that, by the construction of $H_{\mathcal{J},i}(\cdot;x,b,a)$, if there exists a KKT point of the perturbed problem that does not satisfy the strict complementary slackness condition, then for some index subset $\mathcal{J}$ and some $i \in \mathcal{J}$, the system $H_{\mathcal{J},i}(\cdot;x,b,a)=0$ must admit a solution. Hence, if $(x,b,a) \notin E$, it follows that all KKT points associated with these parameters satisfy the strict complementary slackness condition. We obtain that, for almost every $(b,a)\in[0,\nu]^m\times[0,\nu]^k$, the set of $x$ such that there exists KKT points of the perturbed problem, which does not satisfy strict complementary slackness condition, has measure zero.

\subsection{Proof of Theorem~\ref{thm:SOSC}}\label{proof:thm:SOSC}

Throughout the proof, Sard's theorem is applied on an open neighborhood of 
the original set \(\mathcal X\), where \(g\) and \(h_i\), \(i=1,\ldots,k\), are smooth. 
For notational simplicity, this neighborhood is still denoted by \(\mathcal X\);
the final measure-zero conclusion is then restricted back to the original set. By Theorem~\ref{thm:LICQ},~\ref{thm:SCSC}, with probability one (i.e., almost surely with respect to the random choice of $b$ and $a$), the perturbed constraints satisfy LICQ and SCSC for almost every $x$. At those $x$, for any associated KKT point $(y,\lambda)$, LICQ and SCSC imply that the critical cone reduces to the linear space
    \begin{align}\label{eq:linear_space}
        \mathcal{C}(x,y)\coloneq \{d\in\mathbb{R}^m:\nabla_y\widetilde{h}_{\mathcal{J}(x,y)}(x,y)d=0\},
    \end{align} 
    where $\mathcal{J}(x,y)=\{i:\widetilde{h}_i(x,y)=0\}$. Hence, the SOSC simplifies to:
    $$\text{for all }d\ne 0\text{ with }d\in\mathcal{C}(x,y),\quad d^\top\nabla^2_{yy}\widetilde{\mathcal{L}}(x,y,\lambda) d>0,$$
    with the perturbed Lagrangian
    $$\widetilde{\mathcal{L}}(x,y,\lambda)=\widetilde{g}(x,y)+\sum_{i\in \mathcal{J}(x,y)}\lambda_i \widetilde{h}_i(x,y).$$ To prove Theorem~\ref{thm:SOSC}, it suffices to show that, for almost every perturbation, the following holds: for almost every $x$, if $y$ is a local minimizer and $\lambda$ is a corresponding Lagrange multiplier, then there is no direction $d$ in $\mathcal{C}(x,y)$ such that $d^\top\nabla^2_{yy}\widetilde{\mathcal{L}}(x,y,\lambda) d=0$.

    Define $\mathcal{J}\subset\{1,...,k\}$, and $h_\mathcal{J}(x,y)\coloneq (h_i(x,y))_{i\in \mathcal{J}}\in\mathbb{R}^{|\mathcal{J}|}$, $a_\mathcal{J}\coloneq (a_i)_{i\in \mathcal{J}}\in\mathbb{R}^{|\mathcal{J}|}$. We consider the following map:
    \begin{align*}
        G_{\mathcal{J}}(x,y,\lambda_\mathcal{J})\coloneq (
            x,\nabla_y g(x,y)+\sum_{i\in \mathcal{J}}\lambda_i\nabla_y h_i(x,y),h_\mathcal{J}(x,y))
    \end{align*}
    We define the following sets 
    \begin{itemize}
        \item $C_\mathcal{J}$ be the critical values set of $G_\mathcal{J}(x,y,\lambda_\mathcal{J})$;
        \item $E_\mathcal{J}\coloneq \{(x,b,a)\in\mathcal{X}\times[0,\nu]^m\times[0,\nu]^k:(x,-b,-a_\mathcal{J})\in C_\mathcal{J}\}$;
        \item $E\coloneq \cup_{\mathcal{J}\subset\{1,...,k\}}E_\mathcal{J}$.
    \end{itemize}
    Similarly to the argument in the proof of Theorem~\ref{thm:LICQ}, we claim that, for almost every $(b,a) \in [0,\nu]^m\times[0,\nu]^k$, the set of $x$ such that $(x,b,a) \in E$ has Lebesgue measure zero in $\mathcal{X}$. The Jacobian of $G_\mathcal{J}(x,y,\lambda_\mathcal{J})$ can be computed as follows:
    \begin{align}
        d G_\mathcal{J}(x,y,\lambda_\mathcal{J})=\begin{pmatrix}
        I_n & 0 & 0\\
        \ast & \nabla^2_{yy} g(x,y)+\sum_{i\in \mathcal{J}}\lambda_i\nabla^2_{yy}h_i(x,y) & \nabla_y h_\mathcal{J}(x,y)^\top\\
        \ast & \nabla_y h_\mathcal{J}(x,y) & 0 
    \end{pmatrix}\in\mathbb R^{(n+m+|\mathcal J|)\times(n+m+|\mathcal J|)}.
    \end{align}
    The condition that $d G_\mathcal{J}(x,y,\lambda_\mathcal{J})$ is not full row rank is equivalent to the degeneracy of 
    \begin{align}\label{eq:matrix1}
        \begin{pmatrix}
        \nabla^2_{yy} g(x,y)+\sum_{i\in \mathcal{J}}\lambda_i\nabla^2_{yy}h_i(x,y) & \nabla_y h_\mathcal{J}(x,y)^\top\\
        \nabla_y h_\mathcal{J}(x,y) & 0 
    \end{pmatrix}\in\mathbb R^{(m+|\mathcal J|)\times(m+|\mathcal J|)}.
    \end{align}
    Suppose \(y\) is a local minimizer with corresponding multiplier \(\lambda\) of the perturbed problem~\eqref{eq:perturb3}. Then the KKT condition gives
    \[
    G_{\mathcal J(x,y)}(x,y,\lambda_{\mathcal J(x,y)})
    =
    (x,-b,-a_{\mathcal J(x,y)}).
    \]
    If there exists $d\ne 0$ in $\mathcal{C}(x,y)$ defined in \eqref{eq:linear_space} such that $\nabla^2_{yy}\widetilde{\mathcal{L}}(x,y,\lambda)d+\nabla_y h_{\mathcal J(x,y)}(x,y)^\top\mu=0$, consider the following problem
    \begin{align}\label{eq:opt1}
        \min_{v\in\mathcal{C}(x,y)}\frac{1}{2}v^\top\nabla^2_{yy}\widetilde{\mathcal{L}}(x,y,\lambda) v.
    \end{align}
    By the second-order necessary conditions~\cite[Theorem~12.5]{nocedal2006numerical}, we obtain that the problem~\eqref{eq:opt1} is convex in the subspace $\mathcal{C}(x,y)$. Note that $d^\top\nabla^2_{yy}\widetilde{\mathcal{L}}(x,y,\lambda) d=0$. Thus, $d$ is a minimizer of \eqref{eq:opt1}, which implies that there exists \(\mu\) such that
    \[
    \nabla^2_{yy}\widetilde{\mathcal{L}}(x,y,\lambda)d
    +
    \nabla_y h_{\mathcal J(x,y)}(x,y)\mu
    =0.
    \]
    It follows that
    \begin{align*}
        &\begin{pmatrix}
        \nabla^2_{yy} g(x,y)+\sum_{i\in {\mathcal{J}(x,y)}}\lambda_i\nabla^2_{yy}h_i(x,y) & \nabla_y h_{\mathcal{J}(x,y)}(x,y)^\top\\
        \nabla_y h_{\mathcal{J}(x,y)}(x,y) & 0 
    \end{pmatrix}
    \begin{pmatrix}
    d \\ 
    \mu
    \end{pmatrix}\\
    =&\begin{pmatrix}
        \nabla^2_{yy}\widetilde{\mathcal{L}}(x,y,\lambda) d+\nabla_y h_{\mathcal{J}(x,y)}(x,y)^\top\mu\\
        \nabla_y h_{\mathcal{J}(x,y)}(x,y)d
    \end{pmatrix}\\
    =&0,
    \end{align*}
    which means \eqref{eq:matrix1} degenerates for the index set 
    \(\mathcal J(x,y)\). Hence
    \((x,-b,-a_{\mathcal J(x,y)})\) is a critical value of 
    \(G_{\mathcal J(x,y)}\). Consequently, if a triple $(x,b,a)$ admits a local minimizer $y$ of the perturbed lower-level problem, together with an associated multiplier $\lambda$, for which the second-order sufficient condition fails, then it follows that $(x,b,a) \in E$. Noting that, for almost every $(b,a) \in [0,\nu]^m \times [0,\nu]^k$, the set of $x$ satisfying $(x,b,a) \in E$ has Lebesgue measure zero in $\mathcal{X}$, we thereby complete the proof.

\subsection{Proof of Proposition~\ref{prop:smooth_kkt_curve}}\label{proof:prop:smooth_kkt_curve}
Since $g(x, \cdot)$ is strongly convex and each $h_i(x, \cdot)$ is convex, the feasible set $\mathcal{Y}(x)$ is convex and the lower-level problem admits a unique global minimizer $y^\ast(x)$ for each $x \in \mathcal{X}$. By the Berge maximum theorem, the map $x \mapsto y^\ast(x)$ is continuous on $\mathcal{X}$. Under LICQ, $y^\ast(x)$ is a KKT point with a unique associated Lagrange multiplier $\lambda(x)$.

Let $\mathcal{J}(x,y^\ast(x)) \coloneq  \{i : h_i(x, y^\ast(x)) = 0\}$ denote the active constraint index set at $y^\ast(x)$. By Theorem~\ref{thm:main_scsc}, together with SCSC holding at every $x \in \mathcal{X}$, the active set $\mathcal{J}(x,y^\ast(x))$ is constant in $x$. Denote this common active set by $\mathcal{J}$. The pair $(y^\ast(x), \lambda_{\mathcal{J}}(x))$ satisfies the reduced KKT system
\[
\Psi(x, y, \lambda_{\mathcal{J}}) \coloneq  \begin{pmatrix} \nabla_y g(x, y) + \nabla_y h_{\mathcal{J}}(x, y)^\top \lambda_{\mathcal{J}} \\ h_{\mathcal{J}}(x, y) \end{pmatrix} = 0.
\]
The Jacobian of $\Psi$ with respect to $(y, \lambda_{\mathcal{J}})$ is exactly the matrix $M_+(x)$ defined in~\eqref{eq:sensitivity_formula}. Under LICQ, the rows of $\nabla_y h_{\mathcal{J}}(x, y^\ast(x))$ are linearly independent. Combined with the strong convexity of $g(x, \cdot)$ and the convexity of each $h_i(x, \cdot)$, this ensures that $M_+(x)$ is nonsingular at every $x \in \mathcal{X}$. By the Implicit Function Theorem applied at each $x \in \mathcal{X}$, $(y^\ast(x), \lambda_{\mathcal{J}}(x))$ is a smooth function of $x$.

\subsection{Proof of Theorem~\ref{thm:global_continuation}}\label{proof:thm:global_continuation}

To prove Theorem~\ref{thm:global_continuation}, we need the following auxiliary lemma.

\begin{lemma}\label{lem:global_extension}
    Let $X$ be a connected, locally path-connected, simply connected, and locally compact Hausdorff topological space, and let $Y$ be a locally compact Hausdorff topological space. Let the set be $S \subset X \times Y$, and denote the natural projection map by $p: S \to X$, where $p(x, y) = x$. Suppose the following conditions hold: \begin{enumerate}
        \item \textbf{Local Homeomorphism:} The natural projection map $p: S \to X$, defined by $p(x, y) = x$, is a local homeomorphism. Equivalently, for any $(x_0, y_0) \in S$, there exists an open neighborhood $U$ of $x_0$ in $X$ and an open neighborhood $V$ of $y_0$ in $Y$ such that for each $x \in U$, there is a unique $y \in V$ satisfying $(x, y) \in S$. Moreover, this unique local solution $y = f(x)$ depends continuously on $x$.
        \item \textbf{Closed Graph:} The solution set $S$ is a closed subset in the product topology of $X \times Y$;
        \item \textbf{Uniform Compactness:} There exists a compact set \(K\subset Y\) such that \(S\subset X\times K\).
    \end{enumerate}
     Then, for any given point $(x_0, y_0) \in S$, there exists a unique global continuous map $y: X \to Y$ such that $y(x_0) = y_0$, and for all $x \in X$, we have $(x, y(x)) \in S$.
\end{lemma}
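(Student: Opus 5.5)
The plan is to show that $p:S\to X$ is a covering map, and then to build the global branch by path lifting, using simple connectivity to make the result single-valued. Give $S$ the subspace topology of $X\times Y$. Since $S$ is closed in $X\times Y$ and $X\times Y$ is locally compact Hausdorff, $S$ is locally compact Hausdorff as well.

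\textbf{Properness of $p$.} Let $C\subset X$ be compact. Because $S\subset X\times K$, we have $p^{-1}(C)=S\cap(C\times K)$. This is a closed subset of the compact set $C\times K$, so it is compact.

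\textbf{Surjectivity of $p$.} The image $p(S)$ is open, since $p$ is a local homeomorphism and hence an open map. It is also closed, because $S\subset X\times K$ and the projection $X\times K\to X$ is closed by the tube lemma (Lemma~\ref{lem:tube}). It is nonempty because it contains $x_0$. Since $X$ is connected, $p(S)=X$. Proposition~\ref{prop:proper_covering} now shows that $p$ is a covering map.

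\textbf{Constructing the branch.} Since $X$ is connected and locally path-connected, it is path-connected. For each $x\in X$, pick a path $\gamma$ from $x_0$ to $x$. Covering maps have unique path lifting (the case of Theorem~\ref{thm:homotopy_lifting} with a constant homotopy parameter), so $\gamma$ has a unique lift $\widetilde\gamma$ with $\widetilde\gamma(0)=(x_0,y_0)$. Define $y(x)$ as the $Y$-component of $\widetilde\gamma(1)$.

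\textbf{Well-definedness.} Take two paths $\gamma_0,\gamma_1$ from $x_0$ to $x$. Because $X$ is simply connected, they are homotopic relative to endpoints. Lift the homotopy $H$ by Theorem~\ref{thm:homotopy_lifting}, starting from the lift of $\gamma_0$. The image of the side $s=0$ under $H$ is the constant point $x_0$, and its lift starts at $(x_0,y_0)$, so by uniqueness of lifts that side stays at $(x_0,y_0)$. The side $s=1$ maps to the constant point $x$, and its lift lies in the fiber $p^{-1}(x)$. That fiber is discrete, so a connected lift of it is constant. By uniqueness of path lifting, the lifts of $\gamma_0$ and $\gamma_1$ therefore share the same endpoint, and $y(x)$ does not depend on the choice of path. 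Clearly $(x,y(x))\in S$ for every $x$, and $y(x_0)=y_0$ (use the constant path).

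\textbf{Continuity.} This is the main delicate step. Fix $x_1$, and take an evenly covered path-connected neighborhood $U$ of $x_1$. For $x\in U$, join $x_0$ to $x$ by first going from $x_0$ to $x_1$ and then from $x_1$ to $x$ inside $U$. The lift of the second segment stays in the sheet over $U$ that contains $(x_1,y(x_1))$. Hence on $U$, $x\mapsto(x,y(x))$ coincides with the inverse of $p$ restricted to that sheet, which is continuous. So $y$ is continuous on $U$, and therefore on all of $X$.

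\textbf{Uniqueness.} Suppose $y'$ is another continuous map with $y'(x_0)=y_0$ and $(x,y'(x))\in S$ for all $x$. Then $x\mapsto(x,y'(x))$ is a lift of the identity map of $X$ that agrees with $x\mapsto(x,y(x))$ at $x_0$. Lifts of a map from a connected space through a covering map are unique once they agree at one point: the set where two lifts agree is open (each point has a sheet neighborhood on which both lifts are forced to coincide) and closed (disjoint sheets separate points where they differ). Since $X$ is connected, the agreement set is all of $X$, so $y'=y$.
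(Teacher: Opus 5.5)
Your proof is correct and follows essentially the same route as the paper: properness from $S\subset X\times K$, surjectivity by an open--closed argument using the tube lemma, and then Proposition~\ref{prop:proper_covering} to conclude that $p$ is a covering map (you also check that $S$ is locally compact Hausdorff, which the paper leaves implicit). The only difference is the last step. The paper restricts to the connected component $S_0\ni(x_0,y_0)$ and uses simple connectivity to show that $p|_{S_0}$ is a one-sheeted covering, hence a homeomorphism whose inverse gives $y$. You instead define $y$ directly by path lifting and verify well-definedness, continuity and uniqueness by hand, via the monodromy argument and the unique lifting property.
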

\begin{proof}
The proof is divided into five steps.
\paragraph{Step 1 (The projection \(p\) is surjective):} 
We first show that \(p(S) = X\). Since \(X\) is connected, it suffices to show that \(p(S)\) is both open and closed in \(X\), and nonempty. 
\begin{itemize}
    \item \textit{Openness:} By assumption, \(p: S \to X\) is a local homeomorphism. A standard result in topology states that any local homeomorphism is an open map. Therefore, the image \(p(S)\) is an open subset of \(X\).
   \item \textit{Closedness:} We are given that \(S\) is a closed subset of \(X \times Y\), and \(S \subset X \times K\) for some compact set \(K \subset Y\). Hence \(S\) is closed in the subspace topology of \(X \times K\). By Lemma~\ref{lem:tube}, the projection \(\pi_X: X \times K \to X\) is a closed map. Therefore \(p(S) = \pi_X(S)\) is closed in \(X\).
    \item \textit{Nonemptiness:} The set \(S\) is nonempty because it contains the given point \((x_0, y_0)\). Hence, \(p(S)\) is nonempty.
\end{itemize}
Since \(p(S)\) is a nonempty, open, and closed subset of the connected space \(X\), we conclude that \(p(S) = X\). Thus, \(p\) is surjective.

\paragraph{Step 2 (The projection \(p\) is a proper map):}
A continuous map is proper if the preimage of every compact set is compact. Let \(C \subset X\) be an arbitrary compact set. We examine its preimage under \(p\):
\[
p^{-1}(C) = S \cap (C \times Y).
\]
Because \(S \subset X \times K\), we can write:
\[
p^{-1}(C) = S \cap (C \times K).
\]
Since \(C\) and \(K\) are both compact, we know \(C \times K\) is compact. Furthermore, since \(S\) is a closed set in \(X \times Y\), the intersection \(S \cap (C \times K)\) is a closed subset of the compact space \(C \times K\). A closed subset of a compact space is compact, hence \(p^{-1}(C)\) is compact. This proves that \(p\) is a proper map.

\paragraph{Step 3 (The map \(p\) is a covering map):}
From Steps 1 and 2, we have established that \(p: S \to X\) is a surjective proper local homeomorphism. By Proposition \ref{prop:proper_covering}, it follows directly that \(p\) is a covering map.

\paragraph{Step 4 (Constructing the global continuous section \(y(x)\)):}
Since \(p:S\to X\) is a covering map, let \(S_0\) be the connected component of \(S\) that contains the initial point \((x_0,y_0)\). We first show that \(p(S_0)=X\). Since \(X\) is connected and locally path-connected, \(X\) is path-connected. For any \(x\in X\), choose a continuous path \(\gamma:[0,1]\to X\) such that \(\gamma(0)=x_0\) and \(\gamma(1)=x\). By the path lifting property of covering maps, \(\gamma\) has a unique lift \(\widetilde\gamma:[0,1]\to S\) satisfying \(\widetilde\gamma(0)=(x_0,y_0)\). Since \(\widetilde\gamma([0,1])\) is connected and contains \((x_0,y_0)\), it must be contained in the connected component \(S_0\). Therefore, \(\widetilde\gamma(1)\in S_0\) and $p(\widetilde\gamma(1))=\gamma(1)=x.$ Since \(x\in X\) was arbitrary, we obtain \(p(S_0)=X\).

We next show that the restriction \(p|_{S_0}:S_0\to X\) is also a covering map. Fix any \(x\in X\). Since \(p:S\to X\) is a covering map, there exists an evenly covered neighborhood \(U\) of \(x\). By shrinking \(U\) if necessary, we may assume that \(U\) is connected. Write \(p^{-1}(U)=\bigsqcup_{\alpha} V_\alpha\), where each \(p|_{V_\alpha}:V_\alpha\to U\) is a homeomorphism. Since \(U\) is connected, each \(V_\alpha\) is connected. Hence, if \(V_\alpha\cap S_0\neq\emptyset\), then \(V_\alpha\subset S_0\), because \(S_0\) is a connected component of \(S\). Therefore \(S_0\cap p^{-1}(U)\) is a union of some of these sheets \(V_\alpha\), each of which is mapped homeomorphically onto \(U\). Since \(x\in X\) was arbitrary, \(p|_{S_0}:S_0\to X\) is a covering map. 

We now show that this covering is one-sheeted, i.e., each fiber of \(q\coloneq p|_{S_0}:S_0\to X\) consists of exactly one point. Suppose that \(z_0,z_1\in S_0\) satisfy
\(q(z_0)=q(z_1)=x_0\). Since $X$ is locally path-connected and $q: S_0 \to X$ is a local homeomorphism, the space $S_0$ inherits local path-connectedness. Being both connected and locally path-connected, $S_0$ is path-connected. Hence there exists a path
\(\alpha:[0,1]\to S_0\) such that \(\alpha(0)=z_0\) and \(\alpha(1)=z_1\).
Then \(q\circ\alpha\) is a loop in \(X\) based at \(x_0\). Since \(X\) is simply
connected, this loop is homotopic relative to endpoints to the constant loop
at \(x_0\). By the homotopy lifting property for covering maps, the lifts of
these two homotopic paths starting at \(z_0\) have the same endpoint. The lift
of \(q\circ\alpha\) starting at \(z_0\) is \(\alpha\), whose endpoint is \(z_1\), while
the lift of the constant loop starting at \(z_0\) is the constant path at \(z_0\).
Thus \(z_1=z_0\). Therefore every fiber of \(q\) contains at most one point.
Since \(q\) is surjective, every fiber contains exactly one point. Hence \(q\)
is one-sheeted. A one-sheeted covering map is a homeomorphism, so
\(p|_{S_0}:S_0\to X\) is a homeomorphism. Now define
\[
y(x)\coloneq \pi_Y\bigl((p|_{S_0})^{-1}(x)\bigr),
\]
where \(\pi_Y:X\times Y\to Y\) is the projection onto the second component. Since \(p|_{S_0}\) is a homeomorphism and \(\pi_Y\) is continuous, the map \(y:X\to Y\) is continuous. Moreover, because \((p|_{S_0})^{-1}(x_0)=(x_0,y_0)\), we have \(y(x_0)=y_0\). Finally, for every \(x\in X\),
\[
(x,y(x))=(p|_{S_0})^{-1}(x)\in S_0\subset S.
\]
This establishes the existence of the desired global continuous section.

\paragraph{Step 5 (Global uniqueness):}
Suppose that \(\widetilde y:X\to Y\) is another continuous map such that
\(\widetilde y(x_0)=y_0\) and \((x,\widetilde y(x))\in S\) for all \(x\in X\).
Define \(\widetilde f(x)\coloneq (x,\widetilde y(x))\). Then \(\widetilde f:X\to S\)
is continuous. Since \(X\) is connected, \(\widetilde f(X)\) is connected.
Moreover, \(\widetilde f(x_0)=(x_0,y_0)\in S_0\). Since \(S_0\) is the
connected component of \(S\) containing \((x_0,y_0)\), we have
\(\widetilde f(X)\subset S_0\).

Therefore, for every \(x\in X\), the point \(\widetilde f(x)\) lies in \(S_0\)
and satisfies \(p(\widetilde f(x))=x\). Since \(p|_{S_0}:S_0\to X\) is a
homeomorphism, there is only one point in \(S_0\) that projects to \(x\).
Thus \(\widetilde f(x)=(p|_{S_0})^{-1}(x)\) for all \(x\in X\). Taking the
second component gives \(\widetilde y(x)=y(x)\) for all \(x\in X\). This proves
uniqueness.
\end{proof}

\begin{lemma}\label{lem:closedness_minimizers}
    Suppose Assumption~\ref{assumption:general2} holds, and the lower-level feasible set $\mathcal{Y}(x)$ is uniformly bounded for all $x \in \mathcal{X}$. Furthermore, assume that, at every $x \in \mathcal{X}$, LICQ, SCSC, and uniform SOSC hold for the lower-level problem.  Let $\mathcal{K}$ denote the graph of the local minimizers:
    \[
    \mathcal{K} \coloneq  \{ (x, y) \mid x \in \mathcal{X}, y \text{ is a local minimizer at } x \}.
    \]
    Then, $\mathcal{K}$ is a closed set.
\end{lemma}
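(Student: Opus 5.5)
We will prove closedness sequentially. Take $(x_\ell,y_\ell)\in\mathcal K$ with $(x_\ell,y_\ell)\to(\overline x,\overline y)$, where $\overline x\in\mathcal X$ (closedness is understood relative to $\mathcal X\times\mathbb R^m$). By continuity of the $h_i$, $\overline y\in\mathcal Y(\overline x)$. By LICQ each $y_\ell$ has a unique multiplier $\lambda_\ell\ge 0$. We first show $\{\lambda_\ell\}$ is bounded, using the same normalization argument as in Step~1 of the proof of Theorem~\ref{thm:stab_strat_lmins_perturb_noM}. If $\|\lambda_\ell\|\to\infty$, divide stationarity by $\|\lambda_\ell\|$ and pass to a limit. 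The limit $\mu\ge0$, $\mu\ne0$, is supported on indices active at $(\overline x,\overline y)$ by complementarity, and satisfies $\sum\mu_i\nabla_y h_i(\overline x,\overline y)=0$, contradicting LICQ at $\overline y$.

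Passing to a subsequence, $\lambda_\ell\to\overline\lambda$, and $(\overline y,\overline\lambda)$ is a KKT pair at $\overline x$. Hence $\overline\lambda$ is the unique multiplier there; this is the multiplier-continuity fact \eqref{eq:stabilityofactiveset} referred to in the proof of Theorem~\ref{thm:main_sosc}. We then stabilize the active set. Since SCSC holds at every KKT point of $\overline x$, every $i\in\overline A:=\mathcal J(\overline x,\overline y)$ has $\overline\lambda_i>0$. So $(\lambda_\ell)_i>0$ and $h_i(x_\ell,y_\ell)=0$ for large $\ell$, while inactive indices stay inactive. Thus $\mathcal J(x_\ell,y_\ell)=\overline A$ eventually, and $\mathcal C(x_\ell,y_\ell)=\ker\nabla_yh_{\overline A}(x_\ell,y_\ell)$ by SCSC at $(x_\ell,y_\ell)$, and similarly at the limit.

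Next we pass uniform SOSC to the limit. For $d\in\mathcal C(\overline x,\overline y)$, LICQ gives the full row rank of $\nabla_yh_{\overline A}$, so the kernels vary continuously. We can therefore pick $d_\ell\in\mathcal C(x_\ell,y_\ell)$ with $d_\ell\to d$, for instance by orthogonal projection onto the kernel. Uniform SOSC gives $d_\ell^\top\nabla^2_{yy}\mathcal L(x_\ell,y_\ell,\lambda_\ell)d_\ell\ge\sigma\|d_\ell\|^2$ with $\sigma$ independent of $\ell$. Letting $\ell\to\infty$ and using the continuity of second derivatives yields $d^\top\nabla^2_{yy}\mathcal L(\overline x,\overline y,\overline\lambda)d\ge\sigma\|d\|^2$. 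Hence SOSC holds at the KKT pair $(\overline y,\overline\lambda)$, so $\overline y$ is a strict local minimizer at $\overline x$ and $(\overline x,\overline y)\in\mathcal K$.

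The main obstacle is the step where uniformity is essential: without a $\sigma$ independent of $x$, the limiting curvature can collapse to zero, as in Example~\ref{example:sosc_bifurcation}. A second delicate point is that SCSC must be applied at the limit point itself, not only along the sequence, so that the active set does not jump and the critical cone does not enlarge (cf.\ Example~\ref{example:scsc_discontinuity}). This is exactly why SCSC is assumed at every $x$. The remaining steps are routine compactness arguments.
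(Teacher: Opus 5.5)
Your proof is correct and follows essentially the same route as the paper. You bound the multipliers using LICQ at the limit, pass to a limiting KKT pair, use SCSC at $(\overline x,\overline y)$ to freeze the active set and hence the critical cone, and let the uniform constant $\sigma$ pass to the limit to obtain SOSC and strict local minimality. The differences are cosmetic: you bound the multipliers by a normalization-by-contradiction argument and stabilize the active set directly from $\overline\lambda_i>0$, whereas the paper first fixes the active set $A$ along a subsequence, bounds $\lambda_{k,A}$ via uniform full row rank, and then uses SCSC to rule out $\mathcal J(\overline x,\overline y)\setminus A\neq\emptyset$. Your remark that closedness is meant relative to $\mathcal X\times\mathbb R^m$, so that $\overline x\in\mathcal X$, makes explicit an assumption the paper uses tacitly.
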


\begin{proof}
Let $\{(x_k,y_k)\}\subset \mathcal K$ be an arbitrary convergent sequence such that
$(x_k,y_k)\to(\overline x,\overline y)$ as $k\to\infty$. We need to show that
$(\overline x,\overline y)\in\mathcal K$, i.e., $\overline y$ is a local minimizer
at $\overline x$. By continuity of the constraints, $\overline y$ is feasible for
the lower-level problem at $\overline x$. Since each $y_k$ is a local minimizer at $x_k$, it satisfies the KKT conditions.
By LICQ, the associated multiplier $\lambda_k$ is unique. Let
$A_k\coloneq \{i:h_i(x_k,y_k)=0\}$. Since there are only finitely many active index sets,
after passing to a subsequence and relabeling if necessary, we may assume
$A_k=A$ for all $k$. For each $i\in A$, taking the limit in
$h_i(x_k,y_k)=0$ gives $h_i(\overline x,\overline y)=0$, so
$A\subseteq A(\overline x,\overline y)$. Since LICQ holds at
$(\overline x,\overline y)$, the gradients
$\{\nabla_y h_i(\overline x,\overline y):i\in A\}$ are linearly independent.
Thus, for all large $k$, the matrices $\nabla_y h_A(x_k,y_k)$ have uniformly full
row rank. The stationarity condition can be written as
\begin{align}\label{eq:stabilityofactiveset}
    \nabla_y g(x_k,y_k)+\nabla_y h_A(x_k,y_k)^\top\lambda_{k,A}=0 .    
\end{align}
Hence $\{\lambda_{k,A}\}$ is bounded. Passing to a further subsequence if necessary,
we have $\lambda_{k,A}\to\overline\lambda_A\ge 0$. Set
$\overline\lambda_i=0$ for $i\notin A$. Taking limits in the KKT system gives that
$(\overline y,\overline\lambda)$ satisfies the KKT conditions at $\overline x$.
Thus, $\overline y$ is a KKT point at $\overline x$.

Note that SCSC holds at this KKT point. If
$j\in A(\overline x,\overline y)\setminus A$, then by construction
$\overline\lambda_j=0$, contradicting SCSC. Therefore
$A=A(\overline x,\overline y)$, and the active set is eventually constant along
the subsequence. Consequently, the critical cones vary continuously: for any
$d\in\mathcal{C}(\overline x,\overline y)$ with $\|d\|=1$, there exists
$v_k\in\mathcal{C}(x_k,y_k)$ such that $v_k\to d$. Since each $y_k$ is a local minimizer, the uniform SOSC assumption gives $v_k^\top \nabla^2_{yy}L(x_k,y_k,\lambda_k)v_k
\ge \sigma\|v_k\|^2$. Taking the limit and using the continuity of the Hessian yields $d^\top\nabla^2_{yy}L(\overline x,\overline y,\overline\lambda)d
\ge \sigma\|d\|^2=\sigma>0$. Thus, the limit KKT point satisfies SOSC. By the standard second-order sufficient
condition in nonlinear programming, $\overline y$ is a strict local minimizer of
the lower-level problem at $\overline x$. Hence
$(\overline x,\overline y)\in\mathcal K$, and the graph $\mathcal K$ is closed.
\end{proof}

\noindent\textbf{Detailed proof of Theorem~\ref{thm:global_continuation}:}
The proof proceeds in two steps. We first obtain a unique global continuous
continuation by applying Lemma~\ref{lem:global_extension}, and then upgrade its
regularity by the Implicit Function Theorem.

\paragraph{Step 1 (Global continuous continuation):}
Define the graph of local minimizers by
\[
\mathscr K
\coloneq 
\{(x,y): x\in\mathcal X,\ y \text{ is a local minimizer of the lower-level problem at } x\}.
\]
Let \(p:\mathscr K\to\mathcal X\) be the natural projection \(p(x,y)=x\).

We first note that \(p(\mathscr K)=\mathcal X\). Indeed, for every
\(x\in\mathcal X\), the feasible set \(\mathcal Y(x)\) is nonempty, closed, and
bounded, hence compact. Since \(g(x,\cdot)\) is continuous, the lower-level
problem admits a global minimizer, and therefore at least one local minimizer.
Thus every \(x\in\mathcal X\) belongs to \(p(\mathscr K)\).

We now verify the three conditions of Lemma~\ref{lem:global_extension}.

\begin{itemize}
    \item \textit{Condition 1 (Local Homeomorphism).}
    Fix any \((x_0,y_0)\in\mathscr K\), and let $\mathcal J\coloneq \mathcal J(x_0,y_0).$
    Let \(\lambda_0\) be the KKT multiplier associated with \(y_0\), which is unique
    by LICQ. By SCSC, \((\lambda_0)_i>0\) for every \(i\in\mathcal J\), and all
    constraints outside \(\mathcal J\) are strictly inactive.

    Consider the reduced KKT system
    \[
    \Phi_{\mathcal J}(x,y,\lambda_{\mathcal J})
    \coloneq 
    \begin{pmatrix}
    \nabla_y g(x,y)+\nabla_y h_{\mathcal J}(x,y)^\top\lambda_{\mathcal J}\\
    h_{\mathcal J}(x,y)
    \end{pmatrix}
    =0.
    \]
    Under LICQ and uniform SOSC, the Jacobian of
    \(\Phi_{\mathcal J}\) with respect to \((y,\lambda_{\mathcal J})\) is nonsingular
    at \((x_0,y_0,(\lambda_0)_{\mathcal J})\). Therefore, by the Implicit Function
    Theorem, there exist neighborhoods \(U\) of \(x_0\), \(V\) of \(y_0\), and a
    unique smooth map $x\mapsto (y(x),\lambda_{\mathcal J}(x))$
    satisfying the reduced KKT system in \(U\times V\).

    Shrinking \(U\) and \(V\) if necessary, SCSC and continuity imply that this
    branch has the same active index set \(\mathcal J\), remains feasible, and
    satisfies the second-order sufficient condition. Hence \(y(x)\) remains a
    local minimizer for all \(x\in U\).

    We claim that this gives the required local homeomorphism for the graph
    \(\mathscr K\). Suppose \((x,z)\in\mathscr K\cap(U\times V)\). By LICQ, \(z\)
    has a unique KKT multiplier \(\mu\). By the active-set stability argument used
    in Lemma~\ref{lem:closedness_minimizers}, after shrinking \(U\) and \(V\) if
    necessary, any such nearby local minimizer has the same active index set
    \(\mathcal J\). Thus \((z,\mu_{\mathcal J})\) solves the same reduced KKT
    system. By the local uniqueness from the Implicit Function Theorem, we have $(z,\mu_{\mathcal J})=(y(x),\lambda_{\mathcal J}(x))$. Hence \(z=y(x)\). Therefore, $\mathscr K\cap(U\times V)=\{(x,y(x)):x\in U\}$,    which proves that \(p:\mathscr K\to\mathcal X\) is a local homeomorphism.

    \item \textit{Condition 2 (Closed Graph).}
    By Lemma~\ref{lem:closedness_minimizers}, the graph \(\mathscr K\) is closed.

    \item \textit{Condition 3 (Uniform Compactness).}
    Since the feasible sets \(\mathcal Y(x)\) are uniformly bounded, there exists
    a compact set \(K_Y\subset\mathbb R^m\) such that
    $\mathcal Y(x)\subseteq K_Y,\ \forall x\in\mathcal X.$
    Every local minimizer lies in \(\mathcal Y(x)\). Therefore, $\mathscr K\subseteq \mathcal X\times K_Y.$
\end{itemize}

All conditions of Lemma~\ref{lem:global_extension} are satisfied. Hence, for the
given \((x_0,y_0)\in\mathscr K\), there exists a unique global continuous map
\(y^\ast:\mathcal X\to\mathbb R^m\) such that
\[
y^\ast(x_0)=y_0,
\qquad
(x,y^\ast(x))\in\mathscr K,\quad \forall x\in\mathcal X.
\]

\paragraph{Step 2 (Smoothness):}
It remains to show that \(y^\ast\) is smooth. Fix any \(\overline x\in\mathcal X\), and
set
\[
\overline y\coloneq y^\ast(\overline x),
\qquad
\overline{\mathcal J}\coloneq \mathcal J(\overline x,\overline y).
\]
Let \(\overline\lambda\) be the unique KKT multiplier associated with \(\overline y\). By the
same Implicit Function Theorem argument as above, the reduced KKT system with
active index set \(\overline{\mathcal J}\) admits a unique smooth local solution $x\mapsto (\widehat y(x),\widehat\lambda_{\overline{\mathcal J}}(x))$
near \(\overline x\), with \(\widehat y(\overline x)=\overline y\). Since \(y^\ast\) is continuous and consists of local minimizers, the active-set
stability argument used above implies that, after shrinking the neighborhood of
\(\overline x\) if necessary, \(y^\ast(x)\) has the same active index set
\(\overline{\mathcal J}\). Let \(\lambda^\ast(x)\) be its unique KKT multiplier. Then $(y^\ast(x),\lambda^\ast_{\overline{\mathcal J}}(x))$ solves the same reduced KKT system near
\((\overline x,\overline y,\overline\lambda_{\overline{\mathcal J}})\). By the local uniqueness from
the Implicit Function Theorem, $(y^\ast(x),\lambda^\ast_{\overline{\mathcal J}}(x))
=
(\widehat y(x),\widehat\lambda_{\overline{\mathcal J}}(x))$ holds for all \(x\) sufficiently close to \(\overline x\). Hence \(y^\ast=\widehat y\) locally
near \(\overline x\). Since \(\widehat y\) is smooth and \(\overline x\) was arbitrary,
\(y^\ast\) is smooth on \(\mathcal X\).

\subsection{Proof of Theorem~\ref{prop:uniform_quadratic_growth}}\label{proof:prop:uniform_quadratic_growth}
Suppose for contradiction that the uniform local quadratic growth condition does not hold. Then, for any $k \in \mathbb{N}^+$, there exist $x_k \in \mathcal{X}$, a local minimizer $y^*(x_k)$ at $x_k$, and a feasible point $y_k \in \mathcal{Y}(x_k)$ such that $0 < \|y_k - y^*(x_k)\| < \frac{1}{k}$, and\begin{equation}\label{eq:contradiction_growth}g(x_k, y_k) < g(x_k, y^*(x_k)) + \frac{1}{k}|y_k - y^*(x_k)|^2.\end{equation}Since $\mathcal{X}$ is compact and $\mathcal{Y}(x)$ is uniformly bounded, the sequence of pairs $\{ (x_k, y^*(x_k)) \}$ is bounded. We can extract a convergent subsequence (without relabeling) such that $x_k \to \overline{x}$ and $y^*(x_k) \to \overline{y}$. Because $\|y_k - y^*(x_k)\| < \frac{1}{k} \to 0$, we also have $y_k \to \overline{y}$. Let $\mathcal{K}$ denote the graph of the local minimizers. By Lemma~\ref{lem:closedness_minimizers}, $\mathcal{K}$ is a closed set. Since $(x_k, y^*(x_k)) \in \mathcal{K}$ for all $k$ and converges to $(\overline{x}, \overline{y})$, the closedness directly implies $(\overline{x}, \overline{y}) \in \mathcal{K}$. Therefore, $\overline{y}$ is a valid local minimizer at $\overline{x}$. Let $\overline{\lambda}$ be its unique multiplier. By LICQ at $\overline{x}$, the multipliers $\lambda(x_k)$ of $y^*(x_k)$ converge to $\overline{\lambda}$. Define $t_k \coloneq  \|y_k - y^*(x_k)\| > 0$ and the normalized direction $d_k \coloneq  \frac{y_k - y^*(x_k)}{t_k}$. Since $\|d_k\| = 1$, we can pass to a subsequence such that $d_k \to d$ with $\|d\| = 1$. We now show that $d$ belongs to the critical cone $\mathcal{C}(\overline{x}, \overline{y})$. By assumption, SCSC holds at the limit KKT point $\overline{y}$. Thus, $\overline{\lambda}_i > 0$ for all active constraints $i \in \overline{A}$. Since $\lambda_i(x_k) \to \overline{\lambda}_i$, we have $\lambda_i(x_k) > 0$ for sufficiently large $k$. By complementary slackness at $y^*(x_k)$, this forces $h_i(x_k, y^*(x_k)) = 0$ for all $i \in \overline{A}$. Using $y_k \in \mathcal{Y}(x_k)$, we have $h_i(x_k, y_k) \le 0 = h_i(x_k, y^*(x_k))$. Taking the first-order Taylor expansion and dividing by $t_k$ yields $\nabla_y h_i(\overline{x}, \overline{y})^\top d \le 0$. Similarly, expanding \eqref{eq:contradiction_growth} yields $\nabla_y g(\overline{x}, \overline{y})^\top d \le 0$. Combined with the KKT stationarity condition and $\overline{\lambda}_i > 0$, this ensures $\nabla_y h_i(\overline{x}, \overline{y})^\top d = 0$ for all $i \in \overline{A}$. Hence, $d \in \mathcal{C}(\overline{x}, \overline{y})$. Finally, we consider the Lagrangian $L(x_k, y, \lambda(x_k))$. Since $y^*(x_k)$ is a KKT point, $\nabla_y L(x_k, y^*(x_k), \lambda(x_k)) = 0$. Since $y_k \in \mathcal{Y}(x_k)$ and $\lambda(x_k) \ge 0$, we have $L(x_k, y_k, \lambda(x_k)) \le g(x_k, y_k)$. Applying a second-order Taylor expansion to the Lagrangian at $y^*(x_k)$ along direction $d_k$, and utilizing \eqref{eq:contradiction_growth}, we obtain:$$\frac{1}{2} d_k^\top \nabla^2_{yy} L(x_k, \widetilde{y}_k, \lambda(x_k)) d_k \le \frac{g(x_k, y_k) - g(x_k, y^*(x_k))}{t_k^2} < \frac{1}{k},$$where $\widetilde{y}_k$ lies between $y_k$ and $y^*(x_k)$. Taking the limit as $k \to \infty$, the continuity of the Hessian gives $d^\top \nabla^2_{yy} L(\overline{x}, \overline{y}, \overline{\lambda}) d \le 0$. However, as shown above, SOSC holds at $\overline{y}$. For the non-zero critical direction $d \in \mathcal{C}(\overline{x}, \overline{y})$, SOSC requires $d^\top \nabla^2_{yy} L(\overline{x}, \overline{y}, \overline{\lambda}) d > 0$, which is a contradiction. Therefore, the uniform local quadratic growth condition must hold.

\end{document}

%% file: _macros.tex
\usepackage{comment,url,graphicx,relsize}
\usepackage{amssymb,amsfonts,amsmath,amsthm,amscd,dsfont,mathrsfs,mathtools,nicefrac, bm}
\usepackage{float,psfrag,epsfig,color,xcolor,url}
\usepackage{epstopdf,bbm,mathtools,enumitem}
\usepackage{subfigure}
\usepackage[ruled,vlined]{algorithm2e}
\usepackage{tablefootnote}
\graphicspath{{Plots/}}
\usepackage{diagbox}
\usepackage{tikz}
\usetikzlibrary{calc,patterns,angles,quotes}
\usepackage{makecell}
\usepackage{multirow}
\usepackage{booktabs}

\providecommand{\argmin}{\mathop\mathrm{arg min}}

\ifdefined\nonewproofenvironments\else
\ifdefined\ispres\else
\renewenvironment{proof}{\noindent\textbf{Proof.}\hspace*{.3em}}{\qed\\}
\newenvironment{proof-sketch}{\noindent\textbf{Proof Sketch}
  \hspace*{0.em}}{\qed\bigskip\\}
\newenvironment{proof-idea}{\noindent\textbf{Proof Idea}
  \hspace*{0.em}}{\qed\bigskip\\}
\newenvironment{proof-of-lemma}[1][{}]{\noindent\textbf{Proof of Lemma {#1}.}
  \hspace*{0.em}}{\qed\\}
\newenvironment{proof-of-corollary}[1][{}]{\noindent\textbf{Proof of Corollary {#1}.}
  \hspace*{0.em}}{\qed\\}
\newenvironment{proof-of-theorem}[1][{}]{\noindent\textbf{Proof of Theorem {#1}.}
  \hspace*{0.em}}{\qed\\}
\newenvironment{proof-attempt}{\noindent\textbf{Proof Attempt}
  \hspace*{0.em}}{\qed\bigskip\\}

\fi
\newtheorem{theorem}{Theorem}[section]
\newtheorem{lemma}{Lemma}[section]

\newtheorem{proposition}{Proposition}[section]
\newtheorem{assumption}{Assumption}[section]
\newtheorem{remark}{Remark}[section]
\newtheorem{example}{Example}[section]
\newtheorem{counterexample}[theorem]{Counterexample}
\newtheorem{definition}{Definition}[section]

\usetikzlibrary{calc}